\author{Malo Jézéquel}
\date{}
\address{
CNRS, Sorbonne Université, Université Paris Diderot, Institut de Mathématiques de Jussieu-Paris Rive Gauche, IMJ-PRG, F-75005, Paris, France. \\  
Current adress: \\
Laboratoires de Probabolité, Statistiques et Modélisation (LPSM), CNRS, Sorbonne Université, Université de Paris, 4, Place Jussieu, 75005, Paris.
\\(email: jezequel@lpsm.paris)}
\title{Local and global trace formulae for smooth hyperbolic diffeomorphisms}
\newcommand{\B}{\mathcal{B}}
\newcommand{\p}[1]{\left( #1 \right)}
\newcommand{\bin}[2]{\left(\begin{array}{c}
                       #2 \\
                       #1
                       \end{array} \right)}
\newcommand{\F}{\mathbb{F}}
\newcommand{\N}{\mathbb{N}}
\newcommand{\R}{\mathbb{R}}
\newcommand{\T}{\mathbb{T}}
\newcommand{\Z}{\mathbb{Z}}
\newcommand{\C}{\mathbb{C}}
\newcommand{\M}{\mathcal{M}}
\newcommand{\A}{\mathcal{A}}
\newcommand{\G}{\mathcal{G}}
\newcommand{\U}{\mathcal{U}}
\renewcommand{\L}{\mathcal{L}}
\newcommand{\h}{\mathcal{H}}
\newcommand{\s}[3]{\int_{#1} #2 \mathrm{d}#3}
\newcommand{\set}[1]{\left\{#1\right\}}
\newcommand{\bul}[1]{\stackrel{\circ}{#1}}
\renewcommand{\phi}{\varphi}
\renewcommand{\epsilon}{\varepsilon}
\renewcommand{\bar}[1]{\overline{#1}}
\newcommand{\n}[1]{\left\|#1\right\|}
\newcommand{\hra}{\hookrightarrow}
\newcommand{\nhra}{\not\hookrightarrow}
\renewcommand{\b}[1]{\left| #1 \right|}
\newcommand{\tf}[1]{\textup{tr}^\flat\p{#1}}
\renewcommand{\sqsubset}{\Subset}
\newtheorem{lm}{Lemma}[section]
\newtheorem{prop}[lm]{Proposition}
\newtheorem{cor}[lm]{Corollary}
\newtheorem{thm}[lm]{Theorem}
\newtheorem*{conj}{Conjecture}
\theoremstyle{definition}
\newtheorem{rmq}[lm]{Remark}
\newtheorem{df}[lm]{Definition}
\newtheorem{ex}[lm]{Example}
\begin{document}

\begin{abstract}
We define and study local and global trace formulae for discrete-time uniformly hyperbolic weighted dynamics. We explain first why dynamical determinants are particularly convenient tools to tackle this question. Then we construct counter-examples that highlight that the situation is much less well-behaved for smooth dynamics than for real-analytic ones. This suggests to study this question for Gevrey dynamics. We do so by constructing an anisotropic space of ultradistributions on which a transfer operator acts as a trace class operator. From this construction, we deduce trace formulae for Gevrey dynamics, as well as bounds on the growth of their dynamical determinants and the asymptotics of their Ruelle resonances.
\end{abstract}

\maketitle

\section*{Introduction}

The aim of this paper is to discuss \emph{trace formulae} for discrete-time uniformly hyperbolic weighted dynamics. This work is thought as a first step toward the following conjecture of Dyatlov and Zworski.

\begin{conj}[Dyatlov-Zworski \cite{DZdet}]
Let $M$ be a compact manifold and $\phi_t : M \to M$ a $\mathcal{C}^\infty$ Anosov flow. Then the trace formula
\begin{equation}\label{cdz}
\sum_{\mu \in \textrm{ Res}\p{P}} e^{-i \mu t} = \sum_{\gamma} \frac{T_\gamma^{\#} \delta\p{t - T_\gamma}}{\b{\det\p{I - \mathcal{P}_\gamma}}}, t > 0
\end{equation}
holds in the sense of distributions on $\R_+^*$. The sum on the left-hand side ranges over resonances of the operator $P = -i V$ where $V$ is the generator of the flow $\phi_t$, the sum of the right-hand side ranges over periodic orbits $\gamma$ of $\phi_t$. If $\gamma$ is a periodic orbit, $T_\gamma$ denotes its period, $T_\gamma^{\#}$ its primitive period and $\mathcal{P}_\gamma$ the associated linearized Poincaré map. See \cite{DZdet} for definitions.
\end{conj}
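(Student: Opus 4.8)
The plan is to realise the left-hand side of \eqref{cdz} spectrally and the right-hand side dynamically, and to match the two through the Laplace transform. \emph{First}, following the anisotropic-space philosophy, one builds a scale of Hilbert spaces $\h_r$ of distributions on $M$, adapted to the stable and unstable bundles of $\phi_t$, on which the generator $P = -iV$ has a resolvent $R(\lambda) = \p{P-\lambda}^{-1}$ — defined for $\mathrm{Im}\,\lambda$ large by $R(\lambda) = i\int_0^{+\infty} e^{i\lambda t}\, \phi_{-t}^*\, \mathrm{d}t$ — that continues meromorphically to all of $\C$, with poles and multiplicities independent of $r$; these are by definition the elements of $\textup{Res}(P)$. \emph{Second}, for $\chi \in \mathcal{C}^\infty_c\p{\R_+^*}$ the smoothed operator $\int \chi(t)\,\phi_{-t}^*\,\mathrm{d}t$ is trace class on every $\h_r$: on one side its trace equals $\sum_{\mu \in \textup{Res}(P)} m_\mu\, \widehat{\chi}(\mu)$ (by deforming a contour through the meromorphic $R(\lambda)$), and on the other side its flat trace is given by the Guillemin trace formula as $\sum_\gamma \p{\int \chi(t)\, T_\gamma^{\#}\, \delta\p{t - T_\gamma}\, \mathrm{d}t}/\b{\det\p{I - \mathcal{P}_\gamma}}$; matching them is \eqref{cdz} tested against $\chi$, \emph{once one knows} that the flat trace and the honest trace of this operator coincide.

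Both the coincidence of the two traces and the convergence of $\sum_\mu e^{-i\mu t}$ in $\mathcal{D}'\p{\R_+^*}$ — which is what turns the family of tested identities into \eqref{cdz} itself — follow from one quantitative input: a finite-order growth bound on the dynamical (Ruelle) determinant $\zeta(\lambda)$, whose zero/pole divisor is $\textup{Res}(P)$ and which satisfies, for $\mathrm{Re}\,\lambda$ large, $\zeta'/\zeta(\lambda) = \sum_\gamma T_\gamma^{\#} e^{-\lambda T_\gamma}/\b{\det\p{I - \mathcal{P}_\gamma}}$, i.e. the Laplace transform of the right-hand side of \eqref{cdz}. If $\zeta$ is entire of finite order (or meromorphic of finite order after division by a controlled entire factor), Hadamard factorisation yields $\zeta'/\zeta(\lambda) = \sum_\mu m_\mu/\p{\lambda - i\mu} + \text{(polynomial)}$, whose inverse Laplace transform is precisely $\sum_\mu e^{-i\mu t}$ on $\R_+^*$, and the same bound supplies the polynomial resonance-counting estimate that makes this sum a distribution. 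In short, \eqref{cdz} is \emph{equivalent} to: $\zeta$ continues meromorphically to $\C$ with divisor $\textup{Res}(P)$, and $\zeta$ has finite order.

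The first half of this equivalence is by now standard for $\mathcal{C}^\infty$ Anosov flows and is recovered from the construction above together with the Guillemin trace formula. \emph{The main obstacle is the finite-order bound on $\zeta$.} For real-analytic flows it holds by the methods of Rugh and Fried, but in the merely $\mathcal{C}^\infty$ category no such bound is known; moreover, as the counter-examples constructed in this paper show already for discrete-time hyperbolic dynamics, dynamical determinants of $\mathcal{C}^\infty$ systems can grow faster than any finite order, so that $\sum_\mu e^{-i\mu t}$ genuinely fails to converge and the naive form of \eqref{cdz} can break down. This is exactly what motivates the remainder of the paper: replacing the anisotropic Sobolev spaces $\h_r$ by the anisotropic spaces of ultradistributions constructed below produces a trace-class transfer operator whose dynamical determinant \emph{does} have finite order, with that order depending explicitly on the Gevrey exponent, so that the argument above applies and gives a Gevrey version of \eqref{cdz}; the full $\mathcal{C}^\infty$ conjecture must await a genuinely new idea able to replace finite-order growth by a weaker substitute still forcing distributional convergence of the resonance sum.
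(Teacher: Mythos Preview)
The statement you are addressing is the Dyatlov--Zworski \emph{conjecture}; the paper does not prove it, and indeed presents evidence (the discrete-time counter-examples of \S\ref{explicit}) that it may fail in the full $\mathcal{C}^\infty$ generality. There is therefore no proof in the paper to compare your proposal against.

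Your write-up is not a proof either, and you say so yourself in the last paragraph: you correctly reduce \eqref{cdz} to finite order of the dynamical determinant, note that this is open for $\mathcal{C}^\infty$ flows, and point to the Gevrey case as the scope of the present paper. This is an accurate summary of the strategy the paper itself describes in its introduction (``A common way to prove the validity of \eqref{cdz} is to establish the finiteness of the order of the dynamical determinant \eqref{detcont}\ldots''), but it is a discussion of the problem, not a resolution of it. The honest gap you name --- the absence of any finite-order bound on $\zeta$ for general $\mathcal{C}^\infty$ Anosov flows --- is exactly the gap the paper leaves open; your proposal does not close it, nor does it claim to.

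One technical caveat on the outline itself: you assert that $\int \chi(t)\,\phi_{-t}^*\,\mathrm{d}t$ is trace class on the anisotropic Sobolev spaces $\h_r$ and that its spectral trace equals $\sum_\mu m_\mu\,\widehat{\chi}(\mu)$. Neither of these is known in the $\mathcal{C}^\infty$ setting without precisely the growth input you later identify as missing; so the reduction ``\eqref{cdz} is equivalent to finite order of $\zeta$'' is the clean part of your argument, whereas the preceding spectral-trace paragraph quietly presupposes what is at stake.
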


This conjecture is known to be true when $\phi_t$ is the geodesic flow on the unit tangent bundle of a compact Riemannian surface with constant negative curvature, as a consequence of Guillemin trace formula and Selberg trace formula. A local version of \eqref{cdz} has been proven by Jin and Zworski in \cite{ltfzwor}. The formula \eqref{cdz} is called a trace formula because it intuitively expresses that the trace of a transfer operator is the sum of its eigenvalues. A common way to prove the validity of \eqref{cdz} is to establish the finiteness of the order of the dynamical determinant (see Definition \ref{order} for the definition of the order of an entire function)
\begin{equation}\label{detcont}
d_{\phi_t}\p{\lambda} = \exp\p{-\sum_{\gamma} \frac{T_\gamma^{\#} e^{i \lambda T_\gamma}}{T_\gamma \b{\det\p{I - \mathcal{P}_\gamma}}}},
\end{equation}
which is known to be entire when the stable and unstable bundles of $\phi_t$ are orientable (see \cite{DZdet}). The formula \eqref{cdz} then follows from results of pure complex analysis (see \cite[Theorem 17]{MunozMarco} and \cite[Theorem 3.1]{MunozMarco2}). Consequently, classical results of Ruelle, Rugh and Fried \cite{Ruelle1,R2,R2,fried,friedzeta} imply that the conjecture of Dyatlov and Zworski is true under some assumptions of analyticity of $\phi_t$.

Our point here is to study an analogue of the conjecture of Dyatlov and Zworski for discrete-time uniformly hyperbolic weighted dynamics that are $\mathcal{C}^\infty$ but \emph{a priori} non-analytic. Our results suggest that \eqref{cdz} may not hold for all $\mathcal{C}^\infty$ uniformly hyperbolic flows. However, we introduce tools in \S \ref{locspace} and \S \ref{lto} that could presumably be improved to prove \eqref{cdz} for flows that are Gevrey (Gevrey differentiability is a class of regularity intermediate between $\mathcal{C}^\infty$ and real-analyticity, the needed definitions are recalled in \S \ref{gevsec}).

The main tool to formulate an analogue of the conjecture of Dyatlov and Zworski in our discrete time context is the following transfer operator ($T : M \to M$ is the dynamics and $g : M \to \C$ a weight)
\begin{equation}
L : u \to g.\p{u \circ T}.
\end{equation}
The operator $L$ can for instance be defined on $\mathcal{C}^\infty$ and then be extended to an operator $\L$ on some carefully designed anisotropic Banach spaces. Associated to this operator are the Ruelle resonances of the system $\p{T,g}$, which are in a sense the relevant eigenvalues of $L$ (see Definition \ref{RR}) and describe statistical properties of the dynamics. If $g >0$, the Ruelle resonances allow to describe accurately the asymptotics of the correlations for the Gibbs measure $\mu_{ g}$ associated to the observable $\log g - J_u$ where $J_u$ is the unstable jacobian of $T$. That is, for all $\phi,\psi : M \to \C$ smooth, we can give an asymptotic development for 
\begin{equation}\label{fusil}
\s{}{\phi . \psi \circ T^n}{\mu_{ g}},
\end{equation}
when $n$ tends to $+ \infty$, with a geometric error term of arbitrarily small ratio. A natural candidate for the trace of the transfer operator is the "flat" trace which is defined for $n \geqslant 1$ by
\begin{equation}
\textrm{tr}^\flat \p{\L_g^n} = \sum_{ T^n x = x} \frac{\prod_{k=0}^{n-1}g\p{T^k x}}{\b{\det\p{I-D_x T^n}}}.
\end{equation}
This is actually the trace of the transfer operator in certain (real-analytic) contexts in which the transfer operator can be made nuclear (or trace class, see \cite{BanNaud} for instance). See \cite[Remark 3.1]{Bal2} for a heuristic justifying the definition of the flat trace. Consequently, a natural analogue of \eqref{cdz} in our context is
\begin{equation}\label{itr}
\textrm{tr}^\flat \p{\L_g^n} = \sum_{\lambda \textrm{ resonance}} \lambda^n.
\end{equation}
Notice that we can then ask for which values of $n$ the equality holds, as well as the sense in which the right-hand side is defined (in particular, whether the convergence is absolute or not). The analogue of \eqref{detcont} in our discret-time context is the following dynamical determinant 
\begin{equation}\label{idet}
d_{T,g}\p{z} = \exp\p{-\sum_{n=1}^{+ \infty} \frac{1}{n} \sum_{T^n x = x} \frac{\prod_{k=0}^{n-1}g\p{T^k x}}{\b{\det\p{I-D_x T^n}}}z^n}
\end{equation}
which has been widely studied, see for instance \cite{Tsu,Bal2,livdet,LivTsu}. Our first task will be to establish the link between trace formulae and the dynamical determinant \eqref{idet}, which happens to be quite strong : trace formulae can be rephrased merely as properties of the dynamical determinant that imply its order (see Theorems \ref{carac} and \ref{ac} and Remark \ref{application}).

The second class of results of the paper consists in counter-examples in the $\mathcal{C}^\infty$ case: constructing systems $\p{T,g}$ with explicit dynamical determinants, we see that the situation is much more intricate for smooth systems than for real-analytic ones, in particular trace formulae need not hold (see Proposition \ref{rapp1} and \ref{rapp2}, Corollaries \ref{mechant}, \ref{varie}, \ref{tout},\ref{asbad} and \ref{rien}, and remarks below them).

These important differences between smooth and real-analytic systems suggest to study the behaviour of an intermediate class of regularity. This is the third focus of our paper. A natural candidate is the class of Gevrey functions, that have been introduced by Maurice Gevrey in \cite{gevorg} (see \S \ref{gevsec} for basic definitions). This class shares many features with the class of smooth function (in particular, there are Gevrey bump functions) but also has some properties that recalls those of real-analytic functions, in particular the Fourier transform of a Gevrey function decreases faster than a stretched exponential (see Lemma \ref{decroi}). To study Gevrey systems we introduce a new family of anisotropic Hilbert spaces, whose elements are ultradistributions (that is continuous linear forms on Gevrey functions). The main feature of this new spaces is that the transfer operator act on them as a trace class operator, which will in particular imply that trace formula \eqref{itr} holds (for every $n$). We also get bounds on the asymptotic of the number of Ruelle resonances and on the growth of the dynamical determinant \eqref{idet}. All these results are summed up in Theorem \ref{main} (see also Remark \ref{ord0}). Apart from these new Hilbert spaces, the main new tools that we introduce are a Paley--Littlewood-like decomposition on thinner bands than usual that are adapted to our spaces of ultradistributions (see \S \ref{locspace}) and a notion of generalized cone-hyperbolicity (see Definition \ref{ch}) without which we should require that our systems have enough hyperbolicity in Theorem \ref{main}. Notice that, as far as we know, there was no known constructions of anisotropic Banach spaces on which the transfer operator acts compactly for non-analytic dynamics.

The paper is structured as follows.

In \S \ref{settings} we give precise definitions for the objects we are going to study and state our main results.

In \S \ref{tac}, we highlight the relations between trace formulae and dynamical determinants (this is the content of Theorem \ref{ac} and Remark \ref{application}). We also construct entire functions with particular properties in prevision of \S \ref{explicit}.

In \S \ref{explicit}, we construct systems with explicit dynamical determinants. We first construct symbolic systems (as in \cite{port} or \cite[Example 1 p.165]{PP}) with explicit zeta functions (defined by \eqref{defzeta}) in \S \ref{symbexp}. We then conjugate these symbolic systems to smooth ones in \S \ref{smoothexp}, using Whitney's extension theorem \cite{whitney} as in \cite{sabowen}. This construction is summed up in Proposition \ref{construction}, from which we deduce, using results from \S \ref{tac}, some corollaries that illustrate some behaviours of smooth dynamics that are impossible for real-analytic ones.

Finally, \S \ref{gevsec}, \S \ref{locspace}, \S \ref{lto} and \S \ref{preuve} are dedicated to the proof of our main result, Theorem \ref{main}, by constructing anisotropic Hilbert spaces of ultradistributions adapted to uniformly hyperbolic dynamics. We begin by recalling some elementary facts about Gevrey functions and ultradistributions in \S \ref{gevsec}. Then we construct "local" anisotropic spaces in \S \ref{locspace}, and we study the action of a "local" transfer operator on these spaces in \S \ref{lto}. Then, we glue our local spaces together to construct a global one and prove Theorem \ref{main} in \S \ref{preuve}.

\tableofcontents

\section{Settings and statement of results}\label{settings}

Let $M$ be a smooth $d$-dimensional manifold (smooth means $\mathcal{C}^\infty$ throughout the paper), let $T : M \to M$ be a smooth diffeomorphism and recall the following definition.

\begin{df}[Hyperbolic basic set]\label{bashyp}
A $T$-invariant compact subset $K$ of $M$ is said to be a \emph{hyperbolic basic set} for $T$ if the following properties are fulfilled :
\begin{itemize}
\item $K$ is \emph{hyperbolic}, i.e. there is a Riemannian metric on a neighbourhood of $K$ and for all $x \in K$ a decomposition $T_x M = E_x^u \oplus E_x^s$ such that $D_x T\p{E_x^i} = E_{Tx}^i$ for $i \in \set{u,s}$, and there exist $\lambda > 1$ and $c > 0$ such that, for all $n \in \N$ and $x \in K$, we have $ \n{\left. D_x T^n \right|_{E_x^s}} \leqslant c \lambda^{-n}$ and $ \n{\left. D_x T^{-n} \right|_{E_x^s}} \leqslant c \lambda^{-n}$;
\item $K$ is \emph{isolated}, i.e. there is a neighbourhood $V$ of $K$ in $M$ such that $K = \bigcap_{n \in \Z} T^n\p{V}$;
\item $\left. T \right|_{K}$ is transitive.
\end{itemize}  
\end{df}

Let us fix a hyperbolic basic set $K$ for $T$ and a smooth weight $g : M \to \C$. Below, we will need $g$ to be supported in small enough isolating neighbourhood of $K$. We are interested in the transfer operator associated to the system $\p{T,g}$ which is defined by
\begin{equation}\label{transT}
L : u \mapsto g.\p{u \circ T}.
\end{equation}
In particular, we shall investigate the Ruelle resonances of the system $\p{T,g}$ which are in a sense the relevant eigenvalues of $L$. To define these resonances, we first recall the notion of the essential spectral radius of an operator.

\begin{df}[Essential spectral radius]
Let $\B$ be a Banach space\footnote{All the vector spaces that we consider are vector spaces over $\C$.} and let $\L : \B \to \B$ be a bounded operator. The \emph{essential spectral radius} of $\L$ is the smallest non-negative real number $\rho$ such that the intersection of the spectrum of $\L$ with $\set{z \in \C :\b{z} >\rho}$ consists of isolated eigenvalues of finite multiplicity.
\end{df}

Since $T$ and $g$ are smooth, \cite[Theorem 1.1]{Tsu},\cite{GLK} or \cite{Bal2} yield the following.

\begin{thm}\label{fond}
For every small enough compact isolating neighbourhood $V$ for $K$ and for each $\epsilon >0$ there is a Banach space $\B$ such that
\begin{itemize}
\item $\B$ is contained in the space $\mathcal{D}'\p{V}$ of distributions on $M$ whose support is contained in $V$ and the inclusion is continuous;
\item $\mathcal{C}^\infty\p{V}$, the space of smooth functions on $M$ that are supported in $V$, is contained in $\B$ and the inclusion is continuous with dense image;
\item if $g$ is supported in $V$ then the operator $L$ defined on $\mathcal{C}^\infty\p{V}$ by \eqref{transT} extends to a bounded operator $\L : \B \to \B$ whose essential spectral radius is smaller than $\epsilon$.
\end{itemize}
\end{thm}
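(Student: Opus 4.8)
The plan is to reduce the statement to one of the existing constructions of anisotropic Banach spaces for smooth hyperbolic diffeomorphisms, rather than building a space from scratch, since the cited references already do the hard analytic work. First I would fix a small isolating neighbourhood $V$ of $K$ on which the hyperbolic splitting $T_x M = E_x^u \oplus E_x^s$ extends to a continuous (not necessarily $DT$-invariant) cone field, and a smooth Riemannian metric adapted to the hyperbolicity constants $\lambda$ and $c$ of Definition \ref{bashyp}. The neighbourhood $V$ must be chosen small enough that the cone condition and the contraction/expansion estimates survive on all of $V$ (and not merely on $K$); this is a standard consequence of the compactness of $K$ and the openness of the cone conditions. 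One then has to be slightly careful that $V$ is forward- and backward-isolating, i.e. that $T(\bar V)$ and $T^{-1}(\bar V)$ still meet $V$ in a controlled way, so that the transfer operator $L$ genuinely maps distributions supported in $V$ to distributions supported in $V$ when $g$ is supported in $V$.

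Next I would invoke the construction of \cite[Theorem 1.1]{Tsu} (or equivalently the spaces of \cite{GLK} or \cite{Bal2}), which, given the cone field and a choice of anisotropy parameters, produces for each $\epsilon > 0$ a Hilbert (or Banach) space $\B$ of distributions supported in $V$ such that the continuous inclusions $\mathcal{C}^\infty(V) \hra \B \hra \mathcal{D}'(V)$ hold, the first with dense image, and such that $\L$ acts boundedly on $\B$ with essential spectral radius bounded by a quantity that can be driven below $\epsilon$ by taking the anisotropy exponents large enough (the bound in those references is of the form $C \lambda^{-r}$ times the relevant growth of $g$ and of $DT$, where $r$ is the order of the anisotropic Sobolev scale, so it tends to $0$ as $r \to \infty$). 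Strictly speaking these references are often stated for the weight $g \equiv 1$, or for a positive weight, or on a global Anosov manifold; the adaptation to a general smooth complex weight supported near a basic set is routine and is carried out e.g. in \cite{Bal2}, so I would cite that and note that the weight only affects the multiplicative constant in the Lasota--Yorke estimate, not its structure.

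The main obstacle — and really the only non-bookkeeping point — is to make sure the three bullet points are simultaneously satisfied by a \emph{single} space $\B$: density of $\mathcal{C}^\infty(V)$ in $\B$ together with continuity of $\B \hra \mathcal{D}'(V)$ pins down $\B$ as an honest space of distributions on which the a priori formally-defined operator $L$ has a unique continuous extension $\L$, and one needs the essential spectral radius bound to refer to \emph{that} extension. In the Hilbert space constructions (\cite{Tsu, GLK}) density of smooth functions is built in by definition (the space is the completion of $\mathcal{C}^\infty(V)$ for an anisotropic norm), and the embedding into $\mathcal{D}'(V)$ is verified by testing against the anisotropic norm; so the argument is: build the anisotropic norm $\n{\cdot}_{\B}$ with exponent $r = r(\epsilon)$ large, let $\B$ be the completion, check the two inclusions, and then the Lasota--Yorke estimate of the reference gives $L \n{u}_{\B} \le C \n{u}_{\B'} + \theta \n{u}_{\B}$ with $\B' \Subset \B$ compactly and $\theta < \epsilon$, whence by Hennion's theorem the essential spectral radius of $\L$ on $\B$ is $\le \theta < \epsilon$. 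I would present this as a short verification that the hypotheses of the cited theorems hold in the present setting, emphasising that the only input specific to $(T,g)$ is the existence of the adapted cone field and metric, which is exactly the content of $K$ being a hyperbolic basic set.
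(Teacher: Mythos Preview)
Your proposal is correct and is essentially the same as what the paper does: the paper does not give its own proof of Theorem~\ref{fond} but simply attributes it to \cite[Theorem 1.1]{Tsu}, \cite{GLK}, and \cite{Bal2}. Your sketch is a reasonable expansion of how one would verify that those constructions apply in the present setting, and nothing in it diverges from the intended argument.
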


\begin{rmq}
Using an abstract functional analytic lemma, for instance \cite[Lemma A.3]{Tsu}, we can see that if $\B$ and $\widetilde{\B}$ are two Banach spaces that satisfy the conclusion of Theorem \ref{fond}, then the intersection of $\set{z \in \C : \b{z} > \epsilon}$ with the spectrum of $\L$ is the same when $\L$ acts on $\B$ or on $\widetilde{\B}$. For the Banach spaces that are constructed in \cite{Tsu}, the discrete part of the spectrum of $\L$ can be described in terms of a dynamical determinant, in particular it does not depend on $V$. In fact, the formulae \eqref{defdet} and \eqref{flat} below makes clear that this spectrum only depend of the values of $g$ on $K$. However, we need to $g$ to be supported in a small enough isolating neighbourhood of $K$ in order to make sense of this spectrum.
\end{rmq}

Consequently, we can give the following definition of the Ruelle resonances.

\begin{df}[Ruelle resonances and resonant states]\label{RR}
Let $\lambda \in \C$ and $m \in \N^*$. We say that $\lambda$ is a Ruelle resonance of multiplicity $m$ for $\p{T,g}$ if there are $\epsilon > 0$, a compact isolating neighbourhood $V$ for $K$ and a Banach space $\B$ satisfying the conclusion of Theorem \ref{fond} such that $\b{\lambda} > \epsilon$ and $\lambda$ is an eigenvalue of algebraic multiplicity $m$ of $\L$ acting on $\B$.

Equivalently, $\lambda$ is a Ruelle resonance of multiplicity $m$ for $\p{T,g}$ if for every $0<\epsilon < \b{\lambda}$, every small enough compact isolating neighbourhood $V$ for $K$ and every Banach space $\B$ satisfying the conclusion of Theorem \ref{fond}, the complex number $\lambda$ is an eigenvalue of algebraic multiplicity $m$ of $\L$ acting on $\B$.
\end{df}

To study Ruelle resonances, a convenient tool is the dynamical determinant \eqref{idet}, which can also be expressed as the following formal power series :
\begin{equation}\label{defdet}
d_{T,g}\p{z}= \exp\p{- \sum_{n=1}^{+ \infty} \frac{1}{n} \tf{\L^n} z^n},
\end{equation}
where the "flat trace" of the transfer operator is defined for $n \geqslant 1$ by
\begin{equation}\label{flat}
\tf{\L^n} = \sum_{\substack{x \in K \\ T^n x =x}} \frac{g^{\p{n}}\p{x}}{\b{\det\p{I-D_x T^n}}},
\end{equation}
where, for all $n \in \N$ and $x \in M$, we set $g^{\p{n}}\p{x} = \prod_{k=0}^{n-1} g\p{T^k x}$. While we take \eqref{flat} as a definition of the flat trace, we could give another definition which would make the flat trace an actual function of the transfer operator $\L$, see for instance \cite{Bal2}. In this smooth setting, \cite[Theorem 1.5]{Tsu} expresses the link between dynamical determinant and Ruelle resonances in the following way.

\begin{thm}
$T,g$ and $M$ being smooth, the formal power series \eqref{defdet} converges to an entire function $d_{T,g}$ and the Ruelle resonances for $\p{T,g}$ are exactly the inverses of the zeros of $d_{T,g}$ (multiplicity taken into account).
\end{thm}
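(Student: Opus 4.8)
The plan is to follow the standard route linking flat traces to genuine operator traces for hyperbolic systems, as carried out in \cite{Bal2} and \cite[Theorem 1.5]{Tsu}. I would organise it in three steps.

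\textbf{Step 1 (the flat trace detects the discrete spectrum).} Fix $\epsilon > 0$ and let $\B = \B_\epsilon$ be a Banach space as in Theorem \ref{fond}, so that $\L : \B \to \B$ has essential spectral radius $< \epsilon$. Then, outside the disc of radius $\epsilon$, the operator $\L$ has only finitely many eigenvalues $\lambda_1,\dots,\lambda_{N\p{\epsilon}}$ of finite algebraic multiplicity; let $\Pi_\epsilon$ be the associated spectral projector, $\L_{\mathrm{fin}} := \L \Pi_\epsilon$ (finite rank, with $\tr\p{\L_{\mathrm{fin}}^n} = \sum_{j} \lambda_j^n$) and write $\L = \L_{\mathrm{fin}} \oplus \L_{\mathrm{rest}}$ with $\L_{\mathrm{rest}}$ of spectral radius $\leqslant \epsilon$. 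The key — and genuinely non-trivial — analytic input is the estimate
\[
\b{\tf{\L^n} - \tr\p{\L_{\mathrm{fin}}^n}} \leqslant C_\epsilon \, \epsilon^n, \qquad n \geqslant 1,
\]
i.e. that the flat trace \eqref{flat} sees the eigenvalues of modulus $> \epsilon$ up to a remainder controlled by the essential spectral radius. This is obtained by approximating $\L^n$, after composition with a smoothing operator acting on frequency bands adapted to the hyperbolic splitting $E^u \oplus E^s$, by nuclear operators whose traces are computed by the Atiyah--Bott--Guillemin fixed-point formula; that formula reproduces exactly the sum over periodic points in \eqref{flat} (invertibility of $I - D_x T^n$, hence transversality of the graph of $D_x T^n$ to the diagonal, comes from hyperbolicity), while the contribution of $\L_{\mathrm{rest}}$ is absorbed into $C_\epsilon \epsilon^n$. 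One may instead simply quote this from \cite[Theorem 1.5]{Tsu}.

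\textbf{Step 2 (entireness via local factorisation).} Granting Step 1, summing the logarithmic series gives, for $\b{z} < 1/\epsilon$,
\[
\sum_{n \geqslant 1} \frac{1}{n}\tf{\L^n} z^n = - \log \det\p{I - z\L_{\mathrm{fin}}} + h_\epsilon\p{z},
\]
where $\det\p{I - z\L_{\mathrm{fin}}} = \prod_{j}\p{1 - z\lambda_j}$ is a polynomial of degree $N\p{\epsilon}$ and $h_\epsilon$ is holomorphic on $\set{\b{z} < 1/\epsilon}$ by the bound of Step 1. Exponentiating, $d_{T,g}\p{z} = \det\p{I - z\L_{\mathrm{fin}}}\, e^{-h_\epsilon\p{z}}$ on that disc, so the formal power series \eqref{defdet} extends holomorphically to $\set{\b{z} < 1/\epsilon}$, and without poles since $e^{-h_\epsilon}$ never vanishes. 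As $\epsilon \downarrow 0$ these extensions all continue the same germ at $0$, hence glue to a single entire function $d_{T,g}$.

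\textbf{Step 3 (identification of the zeros).} On $\set{\b{z} < 1/\epsilon}$ the nonvanishing of $e^{-h_\epsilon}$ shows that the zeros of $d_{T,g}$, with multiplicity, are exactly those of the polynomial $\det\p{I - z\L_{\mathrm{fin}}}$, i.e. the $\lambda_j^{-1}$ with their algebraic multiplicities. By the remark following Theorem \ref{fond} the eigenvalues $\lambda_j$ and their multiplicities are independent of the choices of $V$ and $\B$, so by Definition \ref{RR} they are precisely the Ruelle resonances of $\p{T,g}$ of modulus $> \epsilon$. Letting $\epsilon \to 0$ gives the stated bijection between the zeros of $d_{T,g}$ and the inverses of the Ruelle resonances, multiplicities included. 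The whole argument hinges on the estimate of Step 1 — that the dynamically defined quantity $\tf{\L^n}$ agrees with the operator trace $\tr\p{\L_{\mathrm{fin}}^n}$ up to $O\p{\epsilon^n}$ — and this is the expected main obstacle: its proof is the microlocal/harmonic-analytic core of the theory, requiring one to realise iterates of $\L$ as nuclear operators on spaces adapted to the stable/unstable directions and to apply a fixed-point trace formula, and it is exactly where the smoothness of $T$ and $g$ is used, through the availability in Theorem \ref{fond} of anisotropic spaces with arbitrarily small essential spectral radius.
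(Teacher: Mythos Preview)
The paper does not give its own proof of this theorem: it is stated as a known result and attributed directly to \cite[Theorem 1.5]{Tsu} (see the sentence immediately preceding the statement). So there is nothing to compare against beyond the citation itself.

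Your outline is a faithful sketch of the standard argument behind that citation. The decomposition $\L = \L_{\mathrm{fin}} \oplus \L_{\mathrm{rest}}$ on an anisotropic space with arbitrarily small essential spectral radius, the key estimate $\tf{\L^n} = \tr(\L_{\mathrm{fin}}^n) + O(\epsilon^n)$, the ensuing local factorisation $d_{T,g}(z) = \det(I - z\L_{\mathrm{fin}}) e^{-h_\epsilon(z)}$ on $\{|z|<1/\epsilon\}$, and the identification of zeros with inverse resonances via the invariance remark after Theorem~\ref{fond} --- this is exactly the architecture of \cite{Tsu,Bal2}. You are also right that the genuine work sits entirely in Step~1, and that the rest is soft; your own parenthetical ``One may instead simply quote this from \cite[Theorem 1.5]{Tsu}'' is precisely what the paper does.
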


Consequently, a control on the growth of the dynamical determinant $d_{T,g}$ implies through Jensen's formula a control on the number of Ruelle resonances outside of a small disc centered at zero (see for instance the proof of \eqref{nombre} in Lemma \ref{abstrait} below). We are particularly interested in trace formulae and we will see in \S \ref{tac} that the dynamical determinant is a particularly convenient tool to study those formulae. We shall say that the \emph{global trace formula} for $\p{T,g}$ holds at step $n \geqslant 1$ if 
\begin{equation}\label{tracefor}
\tf{\L^n} = \sum_{\lambda \textrm{ resonances}} \lambda^n,
\end{equation}
where the series in the right hand side converges absolutely. As in \cite{ltfzwor}, we also define a local counterpart to this notion. We say that $\p{T,g}$ satisfies \emph{local trace formula} if for all $r >0$ such that $\p{T,g}$ has no resonances of modulus $r$ we have
\begin{equation}
\tf{\L^n} \underset{n \to + \infty}{=} \sum_{\substack{ \lambda \textrm{ resonances} \\ \b{\lambda} > r}} \lambda^n + o\p{r^n}
.\end{equation}

\begin{rmq}
The formula \eqref{tracefor} always holds for every $n \geqslant 1$ when $\L$ is replaced by a trace class operator $A$ acting on a Hilbert space and the Ruelle resonances by its eigenvalues thanks to Lidskii trace theorem \cite[Theorem 6.1 p.63]{Gohb}. If $A$ is only supposed to be a nuclear operator acting on a Banach space, \eqref{tracefor} holds for $n \geqslant 2$ but not necessarily for $n =1$ (see \cite[Theorem 4.1 p.106]{Gohb} and the remark preceding it, see also \cite[Corollary 2 p.17, second part of the book]{Groth} and the proof of Theorem \ref{ac}). Thus, it seems natural not to consider \eqref{tracefor} only for $n=1$ but to wonder for which $n$ this formula holds.
\end{rmq}

\subsection{General results for smooth systems}

We will see in \S \ref{tac} that local and global trace formulae can be expressed merely as properties of the dynamical determinant $d_{T,g}$. Then, a result of elementary complex analysis, Theorem \ref{ac}, yields the following theorem (see also Remark \ref{application}, see Definiton \ref{order} or \cite[Definitions 2.1.1 and 2.7.3]{Boas} for the definitions of the order and the genus of an entire function). 

\begin{thm}\label{carac}
$T,g$ and $M$ being smooth, the local trace formula always holds. Furthermore, the following properties are equivalent:
\begin{enumerate}[label=(\roman*)]
\item there is an integer $n_0 \geqslant 1$ such that the global trace formula \eqref{tracefor} holds at step $n$ for all $n \geqslant n_0 + 1$;
\item the dynamical determinant $d_{T,g}$ has finite order. 
\end{enumerate}
In addition, when this holds, the optimal $n_0$ is the genus of $d_{T,g}$.
\end{thm}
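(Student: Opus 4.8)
The plan is to reduce everything to statements about the entire function $d_{T,g}$ and its zero set, which by the theorem quoted just above is exactly $\set{1/\lambda : \lambda \text{ resonance}}$ (with multiplicity). The starting point is the identity of formal power series \eqref{defdet}: writing $\mu_j = 1/\lambda_j$ for the zeros of $d_{T,g}$, one has $\tf{\L^n} = \sum_j \lambda_j^n$ \emph{as coefficients of the logarithmic derivative}, i.e.\ $- z \frac{d_{T,g}'(z)}{d_{T,g}(z)} = \sum_{n\geq 1}\tf{\L^n}z^n$ and simultaneously $- z\frac{d_{T,g}'(z)}{d_{T,g}(z)} = \sum_j \frac{\mu_j^{-1}z}{1-\mu_j^{-1}z}\cdot(\text{correction from the entire factor})$. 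So the content of the global trace formula at step $n$ is exactly that the power-series coefficient of $z^n$ in the logarithmic derivative of $d_{T,g}$ equals $\sum_j \lambda_j^n$ \emph{and} that the latter series converges absolutely; and the local trace formula is the corresponding asymptotic statement. First I would make this dictionary precise: I expect to invoke (and perhaps first prove, in \S\ref{tac}) an elementary complex-analysis statement — this is presumably the ``Theorem \ref{ac}'' referenced — saying that for an entire function $f$ with $f(0)=1$ and zeros $\mu_j$, (a) $\sum_j |\mu_j|^{-n} < \infty$ for $n$ large with $\sum_j \mu_j^{-n}$ matching the Taylor coefficients of $-zf'/f$ iff $f$ has finite order, and (b) the local/asymptotic version always holds regardless of order. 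Hadamard factorization is the engine here.

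For the ``local trace formula always holds'' part: I would use Hadamard's theorem only for entire functions of finite order, so instead for the general smooth case I would argue directly. Order the resonances $|\lambda_1| \geq |\lambda_2| \geq \cdots \to 0$. Fix $r>0$ with no resonance of modulus $r$. Split $\tf{\L^n} = \sum_{|\lambda_j|>r}\lambda_j^n + R_n$ where the first sum is \emph{finite}, hence converges absolutely and the asymptotics reduce to showing $R_n = o(r^n)$. The remainder $R_n$ is the $z^n$-coefficient of $-z(f'/f)$ restricted to the zeros inside $|z|\geq 1/r$, i.e.\ it is governed by the behavior of $d_{T,g}$ (which is entire, no poles) on the disc $|z|\leq 1/r$; a contour-integral / Cauchy-estimate argument on a circle $|z| = 1/r$ (legitimate since $d_{T,g}$ has no zero there) gives $|R_n| \leq C_r \, r^n$, and then refining the radius slightly inward past the next resonance improves $r^n$ to $o(r^n)$. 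This is the standard argument behind the Jin–Zworski local trace formula \cite{ltfzwor}, transplanted to the determinant.

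For the equivalence (i)$\Leftrightarrow$(ii) and the genus statement: Suppose (ii), so $d_{T,g}$ has finite order $\rho$; by Hadamard it factors as $d_{T,g}(z) = e^{Q(z)}\prod_j E_p(z/\mu_j)$ with $\deg Q \leq p$ and $p$ the genus, and $\sum_j |\mu_j|^{-(p+1)} < \infty$. Differentiating logarithmically, $-zf'/f = -zQ'(z) + \sum_j (\text{rational terms})$; the Taylor coefficient of $z^n$ for $n \geq p+1$ picks up no contribution from the polynomial $Q$ and the Weierstrass-factor corrections cancel beyond degree $p$, leaving exactly $\sum_j \mu_j^{-n} = \sum_j \lambda_j^n$, with absolute convergence for $n\geq p+1$ because $\sum_j|\lambda_j|^n \leq \sum_j |\lambda_j|^{p+1} < \infty$ once $|\lambda_j|\leq 1$ eventually — and all but finitely many satisfy this. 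This gives (i) with $n_0 = p$ (genus). Conversely, if (i) holds with some $n_0$, then $\sum_j |\lambda_j|^{n_0+1} < \infty$, i.e.\ $\sum_j |\mu_j|^{-(n_0+1)} < \infty$, so the exponent of convergence of the zeros is $\leq n_0+1 < \infty$; moreover the matching of Taylor coefficients for all $n \geq n_0+1$ forces $-zf'/f$ minus the sum of rational terms to be a polynomial of degree $\leq n_0$, i.e.\ the entire factor $e^{Q}$ has $\deg Q \leq n_0$, so the Hadamard genus is $\leq n_0$ and hence $d_{T,g}$ has finite order $\leq n_0$ — this is (ii), and it also shows the optimal $n_0$ is the genus. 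The one delicate point, and the step I expect to be the main obstacle, is the clean bookkeeping that the $z^n$-coefficient of $\sum_j \log E_p(z/\mu_j)'$ is $\sum_j \mu_j^{-n}$ for $n > p$ with \emph{absolute} convergence and nothing left over — in particular that absolute convergence of $\sum_j \lambda_j^n$ is equivalent to $n \geq \text{(exponent of convergence)}$ and interacts correctly with the genus-versus-order subtlety (genus $p$ allows order exactly $p+1$ with divergent $\sum |\mu_j|^{-p}$, or order $p$; this is where ``genus'' rather than ``order'' enters as the sharp $n_0$). I would isolate this as the lemma ``Theorem \ref{ac}'' proved in \S\ref{tac} and quote it here, so that the proof of Theorem \ref{carac} itself is just: apply the quoted complex-analytic equivalence to $f = d_{T,g}$, using the theorem identifying zeros of $d_{T,g}$ with inverse resonances and \eqref{defdet} identifying $\tf{\L^n}$ with the log-derivative coefficients.
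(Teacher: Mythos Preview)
Your proposal is correct and follows essentially the same route as the paper: Theorem~\ref{carac} is deduced by applying the abstract complex-analytic result (Theorem~\ref{ac}) to $f=d_{T,g}$, using that the zeros of $d_{T,g}$ are the inverse resonances and that $\tf{\L^n}$ are the coefficients $a_n$ of \eqref{defanv}. Your argument for the local formula (divide out the finite Blaschke-type product for $|z_m|<r$ and use that the quotient is nonvanishing on a slightly larger disc, hence has a holomorphic logarithm with radius of convergence $>r$) and for the equivalence via Hadamard factorization (genus $p$ forces absolute convergence for $n\geq p+1$ and matches the coefficients; conversely, absolute convergence lets you build the canonical product and identify the cofactor as $\exp$ of a polynomial of degree $\leq n_0$) match the paper's proof of Theorem~\ref{ac} essentially line for line.
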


Using results from \cite{Ruelle1,R1,R2,fried,friedzeta}, Theorem \ref{carac} implies that if $T,g$ and $M$ are real analytic then the global trace formula always holds at all steps. However, we shall see in \S \ref{explicit} that if $T$ and $g$ are only $\mathcal{C}^\infty$ the situation is very different. Indeed, we construct in Proposition \ref{construction} systems $\p{T,g}$ with explicit dynamical determinants. With the results from \S \ref{tac}, we deduce a bunch of consequences that suggests that the smooth category is not the best context to study trace formulae. Here are two examples, the first one is about trace formulae and the second about asymptotics of Ruelle resonances. 

\begin{prop}\label{rapp1}
Let $E$ be a subset of $\N^*$. Then there exists a system $\p{T,g}$ such that for all $n \in \N^*$ the global trace formula holds at step $n$ if and only if $n \in E$. Moreover, $g$ may be chosen positive on $K$.
\end{prop}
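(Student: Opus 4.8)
The strategy is to reduce the statement to a statement about dynamical determinants via Theorem \ref{carac} (and the finer complex-analytic results of \S\ref{tac}), and then to realize the prescribed set $E$ of ``good steps'' by a careful choice of the zero set of $d_{T,g}$ among the explicit systems produced in Proposition \ref{construction}. Concretely, by Theorem \ref{carac} and the discussion around Theorem \ref{ac}, the global trace formula at step $n$ is equivalent to a purely complex-analytic property of the entire function $d_{T,g}$ (absolute convergence of $\sum_{\lambda} \lambda^{-n}$ over the inverse zeros, together with the vanishing of the relevant coefficient in the Hadamard factorization / the $n$-th power sum matching the trace). So it suffices to exhibit, for each prescribed $E \subseteq \N^*$, an entire function $f$ that (a) is realizable as a dynamical determinant of some $\mathcal{C}^\infty$ system $(T,g)$ with $g>0$ on $K$ via Proposition \ref{construction}, and (b) has the property that the global trace formula holds at step $n$ exactly when $n \in E$.

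\textbf{Step 1: translate ``good step $n$'' into a condition on the zeros.} First I would make precise, using the results of \S\ref{tac} (Theorems \ref{carac}, \ref{ac} and Remark \ref{application}), the dictionary between the set of $n$ for which \eqref{tracefor} holds and the sequence of inverse zeros $(\lambda_j)_j$ of $d_{T,g}$. The key point: global trace formula at step $n$ holds iff $\sum_j \b{\lambda_j}^{-n} < + \infty$ (absolute convergence) \emph{and} the $n$-th Newton power sum of the $\lambda_j^{-1}$ coincides with $\tf{\L^n}$, the latter being automatic once the power series manipulations in \eqref{defdet} are justified, i.e. once the convergence holds. So the whole problem becomes: choose the moduli of the inverse zeros so that $\sum_j \b{\lambda_j}^{-n}$ converges precisely for $n \in E$.

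\textbf{Step 2: design the zero set.} I would look for inverse zeros that are, say, positive reals $\mu_k \searrow 0$, grouped so that the convergence of $\sum_k \mu_k^{n}$ is sensitive to $n$ in the prescribed way. A clean device: for each $m \in \N^* \setminus E$ introduce a block of zeros forcing divergence of the $m$-th power sum but convergence of all $n$-th power sums with $n$ ``sufficiently larger'' — except this naive idea fails because convergence of $\sum \mu_k^n$ is monotone in $n$ (larger $n$ only helps). Hence $E$ cannot be arbitrary unless I use a genuinely two-sided mechanism: I would instead take inverse zeros of modulus $\geqslant 1$ as well, i.e. allow some $\mu_k \to +\infty$, so that small $n$ can fail by divergence ``at infinity'' while large $n$ can fail by divergence ``at zero''. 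Concretely: pick a finite or countable family of moduli going to $0$ chosen so that $\sum \mu_k^n$ converges iff $n \geqslant N_0$ for a threshold $N_0$, and simultaneously a family of resonances of modulus growing so that $\sum \b{\lambda_j}^{-n}$ converges iff $n \leqslant N_1$; then the global trace formula holds exactly on a prescribed \emph{interval}. To get a genuinely arbitrary $E$ one subdivides further: the correct reading, which I would verify against Proposition \ref{construction}, is that Proposition \ref{construction} produces systems whose determinants are essentially arbitrary entire functions of the form $\prod_j(1 - z/\lambda_j)$ (times an exponential factor) with the $\lambda_j$ freely prescribable subject only to the constraints coming from the symbolic-to-smooth conjugation (finitely many zeros per ``scale'', growth compatible with Gevrey-free smoothness). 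Given that freedom, I would place, for each $n \in \N^*$, a block $B_n$ of inverse zeros with $\sum_{\lambda \in B_n}\b{\lambda}^{-k} = +\infty$ for $k = n$ and finite for all $k \neq n$ — achievable because a single block can be tuned to have a power sum that diverges at exactly one exponent (e.g. inverse zeros comprising both a part accumulating at $0$ like $k^{-1/n}$ and a part escaping to $\infty$, arranged so the critical exponent is exactly $n$), and then include $B_n$ in the construction precisely when $n \notin E$. The positivity $g>0$ on $K$ is guaranteed by the last sentence of Proposition \ref{construction}, which I would invoke directly, noting that it only constrains the weight and not the freedom in choosing zeros.

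\textbf{Step 3: assemble.} Finally I would feed the chosen countable collection of inverse zeros into Proposition \ref{construction} to obtain the system $(T,g)$ with $d_{T,g}$ having exactly these zeros (with multiplicity), check via Step 1 that the resulting set of good steps is exactly $E$, and record that $g$ is positive on $K$.

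\textbf{Main obstacle.} The crux is Step 2: arranging a zero configuration realizable by Proposition \ref{construction} whose family of ``convergent power-sum exponents'' is an \emph{arbitrary} subset $E \subseteq \N^*$, rather than merely an up-set or a down-set or an interval. This requires exploiting both ``ends'' of the spectrum (accumulation at $0$ governing large $n$, escape to $\infty$ governing small $n$) and superposing blocks whose individual bad exponent is a prescribed singleton; the delicate part is checking that superposition does not create \emph{new} bad exponents (i.e. that the tails of the blocks sum to something whose power sums converge off the prescribed set) and that the global growth stays within what Proposition \ref{construction} allows. I expect the bookkeeping of these interacting tail estimates, together with verifying the Gevrey-free smoothness constraint on the resulting determinant, to be the technical heart of the argument; everything else is an application of Theorem \ref{carac} and Proposition \ref{construction}.
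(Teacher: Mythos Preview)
Your Step 1 contains a genuine error that propagates through the whole plan. You claim that once the series $\sum_m \b{\lambda_m}^n$ converges absolutely, the equality $a_n = \sum_m \lambda_m^n$ is ``automatic''. It is not: for any entire $f$ with $f(0)=1$ and any entire $Q$ with $Q(0)=0$, the function $e^{Q}f$ has the same zeros as $f$ (so the same convergence properties of the power sums) but its coefficients $a_n$ are shifted by $-n\beta_n$ where $Q(z)=\sum_n \beta_n z^n$. This freedom is exactly what the paper exploits, and it is exactly what you are discarding.

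Because of this, your Step 2 cannot succeed. The Ruelle resonances are the discrete eigenvalues of a bounded operator, hence bounded in modulus; they only accumulate at $0$. Consequently the set of $n$ for which $\sum_m \b{\lambda_m}^n$ converges is always a final segment of $\N^*$: if it converges for $n$ it converges for $n+1$. Your device of letting some resonances ``escape to $+\infty$'' is simply unavailable. So controlling the \emph{convergence} set alone can never produce an arbitrary $E$.

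The paper's route is orthogonal to yours and short. By Lemma \ref{tech3} one builds an entire $Q$ with $Q(0)=Q(1)=0$ whose Taylor coefficients $\beta_n$ vanish exactly on $E$. Then $f(z)=(1-2z)e^{\alpha Q(z)}$ has a \emph{single} zero at $z=\tfrac12$, so the power sum $\sum_m z_m^{-n}=2^n$ converges absolutely for every $n$; but $a_n = 2^n - n\alpha\beta_n$, so the trace formula $a_n=\sum_m z_m^{-n}$ holds iff $\beta_n=0$ iff $n\in E$. Lemma \ref{tech3} also writes $f$ in the form $1-2z-z(1-z)h(z)$ with the $\alpha_\ell$ small, so Proposition \ref{construction} (together with Lemma \ref{tech2}) realizes the corresponding infinite product as a dynamical determinant with $g>0$ on $K$; the infinite product only multiplies the mismatch $a_n-\sum_m z_m^{-n}$ by a positive factor and thus preserves the set $E$. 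This is Corollary \ref{tout}.
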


\begin{prop}\label{rapp2}
Let $N_0 : \R_+^* \to \R_+$ be a locally bounded function. Then there is a system $\p{T,g}$ such that if $N\p{r}$ is the number of Ruelle resonances for $\p{T,g}$ outside of the closed disc of center $0$ and radius $r$ (counted with multiplicity) we have
\begin{equation*}
N_0\p{r} \underset{r \to 0}{=} o\p{N\p{r}}
.\end{equation*}
Moreover, $g$ may be chosen positive on $K$.
\end{prop}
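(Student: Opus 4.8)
The plan is to reduce the statement to the construction of an entire function with a sufficiently fast-growing zero-counting function, and then to realize it as a dynamical determinant via Proposition \ref{construction}. Recall that, by \cite[Theorem 1.5]{Tsu}, the Ruelle resonances of a system $\p{T,g}$ are exactly the inverses of the zeros of $d_{T,g}$, counted with multiplicity; writing $n_F\p{R}$ for the number of zeros of an entire function $F$ in the open disc $\set{z \in \C : \b{z} < R}$, this gives $N\p{r} = n_{d_{T,g}}\p{1/r}$. So it suffices to exhibit a $\mathcal{C}^\infty$ system $\p{T,g}$, with $g$ positive on $K$, whose dynamical determinant has at least as many zeros as some entire function $F$ with $F\p{0} = 1$ satisfying $N_0\p{r} = o\p{n_F\p{1/r}}$ as $r \to 0$.

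One may first replace $N_0$ by a tamer majorant. Since $N_0$ is locally bounded on $\R_+^*$, the function $R \mapsto N_0\p{1/R}$ is bounded on every compact subset of $[1,+\infty)$, so
\[
G\p{R} = 1 + \sup_{1 \leqslant \rho \leqslant R} N_0\p{1/\rho}
\]
is finite for every $R \geqslant 1$, nondecreasing, and satisfies $N_0\p{r} \leqslant G\p{1/r}$ for $0 < r \leqslant 1$. It is therefore enough to find $F$ with $G\p{1/r} = o\p{n_F\p{1/r}}$. For this, fix any increasing sequence of radii $1 < \rho_1 < \rho_2 < \cdots$ with $\rho_k \to +\infty$, set $m_k = \lceil k\, G\p{\rho_{k+1}} \rceil \in \N^*$, and apply the Weierstrass factorization theorem to obtain an entire function $F$ with $F\p{0}=1$ whose zeros, with multiplicity, consist of $m_k$ zeros of modulus $\rho_k$ for each $k \geqslant 1$ (placed, if needed, so that the list of zeros is invariant under complex conjugation and compatible with Proposition \ref{construction}, the count $n_F$ depending only on the moduli). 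For $R \in \p{\rho_k, \rho_{k+1}}$ we then have $n_F\p{R} \geqslant m_k \geqslant k\, G\p{\rho_{k+1}} \geqslant k\, G\p{R}$, so $G\p{R}/n_F\p{R} \to 0$ as $R \to +\infty$, hence $N_0\p{r} \leqslant G\p{1/r} = o\p{n_F\p{1/r}}$ as $r \to 0$. Finally, invoking Proposition \ref{construction} to produce a $\mathcal{C}^\infty$ system $\p{T,g}$, with $g$ positive on $K$, realizing $F$, one gets $N\p{r} = n_{d_{T,g}}\p{1/r} \geqslant n_F\p{1/r}$, and therefore $N_0\p{r} = o\p{N\p{r}}$.

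The main obstacle is the last step: one must check that the function $F$ above — equivalently, the sequence $\tf{\L^n}$ imposed through \eqref{defdet} by requiring $d_{T,g} = F$ — lies in the class of data that Proposition \ref{construction} can realize by a genuine smooth hyperbolic system with positive weight. This is exactly what the symbolic constructions of \S\ref{symbexp} are designed to produce, the freedom being in the choice of the subshift, its alphabet, and the positive weights on sufficiently long cylinders; so the work is to phrase $F$ (for instance as $1/\zeta$ for a suitable weighted subshift, with the prescribed distribution of zeros) in a form compatible with that construction, after which the elementary counting above finishes the proof. If Proposition \ref{construction} is stated in terms of the zeta function rather than of $d_{T,g}$ itself, one has in addition to note that passing from $\zeta$ to $d_{T,g}$ multiplies by the explicit factors coming from the (say, linear) horseshoe's contraction and expansion rates, which only creates further Ruelle resonances and hence only enlarges $N\p{r}$, so the bound is preserved.
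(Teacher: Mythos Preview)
Your overall strategy---build an entire function with fast-growing zero count and then realize it via Proposition~\ref{construction}---is exactly the paper's approach, and your reduction from $N_0$ to the majorant $G$ and the choice of multiplicities $m_k$ is fine. The gap is precisely where you locate it, and it is a genuine one rather than a formality.

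Proposition~\ref{construction} does \emph{not} realize an arbitrary entire function $F$ with $F(0)=1$. Its input is an entire function $h$ with Taylor coefficients $\alpha_k \neq -1$, and what gets realized as $\zeta_{T,g}^{-1}$ is the very specific shape $1-2z-z(1-z)h(z)$; for $g$ to be positive on $K$ one further needs all $\alpha_k \in \p{-1,+\infty}$, hence real. There is no freedom ``in the choice of the subshift or its alphabet'': the construction is rigidly the full shift on two symbols, and the only parameter is $h$. So to carry out your plan you must (i) force $F(1)=-1$ so that $h(z) = -\frac{F(z)-1+2z}{z(1-z)}$ is entire, and (ii) arrange that the Taylor coefficients of this $h$ are real and lie in, say, $[-\epsilon/\rho^k,\epsilon/\rho^k]$. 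Neither follows from a bare Weierstrass product with zeros at the $\rho_k$.

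The paper resolves exactly this via Lemma~\ref{tech4}: take the zeros $z_m$ on the positive real axis, form $\prod_{m \geqslant m_0} E(z/z_m,p_m)$ with orders $p_m \to +\infty$, normalize by the value at $z=1$, and multiply by $(1-2z)$. As $m_0 \to +\infty$ this tends to $1-2z$ uniformly on compacta, which (via Cauchy's formula) forces the coefficients of the corresponding $h_{m_0}$ into any prescribed interval $[-\epsilon/\rho^k,\epsilon/\rho^k]$. Dropping finitely many zeros does not affect the asymptotics of the counting function. Your sketch is missing precisely this normalization-and-truncation step; once you insert it, the rest of your argument (including the observation that passing from $\zeta^{-1}$ to $d_{T,g}$ via \eqref{detexp} only adds zeros) goes through.
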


Proposition \ref{rapp2} shows that that the asymptotics of the Ruelle resonances for smooth systems may be arbitrarily bad, recall that that for real-analytic systems we have $N\p{r} \underset{r \to 0}{=} O\p{\b{\log r}^{1+d}} $ where $d$ is the dimension of $M$. In Propositions \ref{rapp1} and \ref{rapp2}, the fact that the weights $g$ may be chosen to be positive on $K$ implies that they are associated to physically meaningful Gibbs measures whose asymptotics of correlations are described by the Ruelle resonances for $\p{T,g}$, see \cite{GouLiv2}.

Notice that Proposition \ref{construction} also encompasses some counter-examples in finite differentiability that allows for instance to discuss sharpness of \cite[Theorem 1.5]{Tsu} (see Remark \ref{sharpness}, in particular, for any $r \in \N^*$, there is a $\mathcal{C}^r$ system $\p{T,g}$ for which the dynamical determinant $d_{T,g}$ does not extend to the whole complex plane).

\subsection{Results for Gevrey systems}

Since we observe very different behaviors for real-analytic and $\mathcal{C}^\infty$ systems, it seems natural to investigate the case of Gevrey systems, and that is the object of \S \ref{locspace}, \ref{lto} and \ref{preuve}. We develop there an approach based on the construction of a new Hilbert space on which $\L$ acts as a trace class operator (in particular $\L$ is nuclear). This approach is legitimate thanks to the following short lemma that implies that we can investigate Ruelle resonances by working with spaces of ultradistributions\footnote{In fact, the main point is that we can work with a space that does not contain $\mathcal{C}^\infty\p{V}$.}. The required definitions and properties of Gevrey functions and ultradistributions are given in \S \ref{gevsec}. A proof is given in Appendix \ref{proofequiva}.

\begin{lm}\label{equiva}
Let $\sigma >1$ and assume that $M$, $g$, and $T$ are $\sigma$-Gevrey (see Definition \ref{gfum}). Let $V$ be a compact isolating neighbourhood for $K$, let $\epsilon >0$, and let $\B$ be a Banach space such that ($\U_\sigma\p{V}$ and $\G_\sigma\p{V}$ are defined in Definition \ref{gfum})
\begin{itemize}
\item $\B$ is a subspace of $\U_\sigma\p{V}$ and the injection is continuous;
\item $\G_\sigma\p{V}$ is contained in $\B$ and the injection is continuous with dense image;
\item $L$ extends to a bounded operator $\L :\B \to \B$ whose essential spectral radius is less than $\epsilon$.
\end{itemize}
Then the intersection of $\set{z \in \C : \b{z} > \epsilon}$ with the spectrum of $\L$ on $\B$ coincides with the intersection of $\set{z \in \C : \b{z} > \epsilon}$ with the set of Ruelle resonances of $\p{T,g}$ (multiplicity taken into account) and the corresponding eigenvectors are the resonant states (in particular, the eigenvectors are in fact distributions). 
\end{lm}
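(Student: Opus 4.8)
The plan is to reduce Lemma \ref{equiva} to the already-established theory of Ruelle resonances (Theorem \ref{fond} and Definition \ref{RR}), which is phrased in terms of anisotropic Banach spaces sitting between $\mathcal{C}^\infty\p{V}$ and $\mathcal{D}'\p{V}$, by comparing such a "classical" space with the given space $\B$ of ultradistributions via an intermediate space contained in both. The core mechanism will be the abstract functional-analytic lemma already invoked in the remark after Theorem \ref{fond} — namely \cite[Lemma A.1]{Tsu}: if two operators agree on a common dense subspace and both have small essential spectral radius, then the parts of their spectra of modulus $>\epsilon$ coincide (with multiplicities and generalized eigenspaces), provided one can find a suitable third space continuously included in both on which both operators act compatibly. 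So the real content is purely topological: producing the comparison spaces and checking the inclusions are continuous with dense image.

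The steps, in order. First I would fix, for the given $\epsilon$, a classical anisotropic Banach space $\B_0$ as in Theorem \ref{fond} (shrinking $V$ if necessary so both $\B$ and $\B_0$ are defined for the same isolating neighbourhood), so that $\mathcal{C}^\infty\p{V} \hookrightarrow \B_0 \hookrightarrow \mathcal{D}'\p{V}$ continuously with the first inclusion dense, and $\L$ acts on $\B_0$ with essential spectral radius $<\epsilon$. Second, I would record the continuous inclusions linking the Gevrey and smooth worlds: $\G_\sigma\p{V} \hookrightarrow \mathcal{C}^\infty\p{V}$ and $\mathcal{D}'\p{V} \hookrightarrow \U_\sigma\p{V}$ (every distribution supported in $V$ restricts to a continuous functional on $\sigma$-Gevrey functions), both continuous — these are the basic facts recalled in \S\ref{gevsec}. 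Chaining: $\G_\sigma\p{V} \hookrightarrow \B_0 \hookrightarrow \U_\sigma\p{V}$, all continuous, and $\G_\sigma\p{V}$ is dense in $\B_0$ because it is dense in $\mathcal{C}^\infty\p{V}$ (Gevrey functions are dense in $\mathcal{C}^\infty$ for the usual topology — one convolves against a Gevrey mollifier) which in turn is dense in $\B_0$. Third, I would take as the common "small" space $\G_\sigma\p{V}$ itself: by hypothesis it is continuously and densely included in $\B$, and by the previous step it is continuously and densely included in $\B_0$; on $\G_\sigma\p{V}$ the operator $L$ is given by the single formula $u \mapsto g\cdot(u\circ T)$, so the actions of $\L$ on $\B$ and on $\B_0$ genuinely restrict to the same operator on this dense subspace (here one uses that $g$ is supported in $V$, so $L$ does preserve $\G_\sigma\p{V}$, being a Gevrey-regularity-preserving composition-and-multiplication operator — this uses $T$ and $g$ $\sigma$-Gevrey). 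Fourth, apply \cite[Lemma A.1]{Tsu} (or the equivalent lemma cited in the remark after Theorem \ref{fond}) to conclude that the spectra of $\L\colon \B\to\B$ and $\L\colon\B_0\to\B_0$ coincide in $\set{z\in\C:\b{z}>\epsilon}$, with equal algebraic multiplicities, and that the generalized eigenspaces are literally the same subspace of $\G_\sigma'$-type objects — in particular, since for $\B_0$ these eigenvectors are distributions (the resonant states of Definition \ref{RR}), the same vectors serve as eigenvectors for $\B$, giving the claim that the eigenvectors "are in fact distributions." Finally, by Definition \ref{RR} the spectrum of $\L$ on $\B_0$ in $\set{\b{z}>\epsilon}$ is exactly the set of Ruelle resonances of modulus $>\epsilon$ with their multiplicities, which closes the argument.

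The main obstacle I anticipate is not any single deep estimate but rather the careful bookkeeping around two points: (a) verifying that $\G_\sigma\p{V}$ is genuinely $\L$-invariant and that the restriction of $\L\colon\B\to\B$ to it agrees with the elementary formula — this needs the stability of $\sigma$-Gevrey regularity under composition with a $\sigma$-Gevrey diffeomorphism and multiplication by a $\sigma$-Gevrey weight, together with the support condition, all of which are in \S\ref{gevsec} but must be cited precisely; and (b) checking the \emph{density} of $\G_\sigma\p{V}$ in $\B$ is already assumed, but the density of $\G_\sigma\p{V}$ in $\B_0$ must be proved, which I would get by factoring through $\mathcal{C}^\infty\p{V}$ and invoking density of compactly-supported Gevrey functions in $\mathcal{C}^\infty$ with its Fréchet topology. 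Once the hypotheses of \cite[Lemma A.1]{Tsu} are seen to hold, the conclusion — including the equivalence of the two formulations in Definition \ref{RR} applied to $\B$ — is automatic; I would relegate the full proof, with these verifications spelled out, to Appendix \ref{proofequiva} as the excerpt indicates.
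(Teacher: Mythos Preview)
Your proposal is correct and follows essentially the same approach as the paper: take a classical anisotropic space $\widetilde{\B}$ from Theorem \ref{fond}, record the chain of continuous inclusions $\G_\sigma\p{V} \subseteq \mathcal{C}^{\infty}\p{V} \subseteq \widetilde{\B} \subseteq \mathcal{D}'\p{V} \subseteq \U_\sigma\p{V}$, observe that $\G_\sigma\p{V}$ is dense in $\widetilde{\B}$ via density in $\mathcal{C}^\infty\p{V}$, and then apply \cite[Lemma A.1]{Tsu}. The paper's appendix proof is exactly this, only more terse; your additional care about $\L$-invariance of $\G_\sigma\p{V}$ and compatibility of the restrictions is appropriate bookkeeping that the paper leaves implicit.
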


We can then state our main theorem, whose proof is given in \S \ref{preuve}, using tools from \S \ref{locspace} and \S \ref{lto}

\begin{thm}\label{main}
Let $\sigma >1$ and assume that $M$, $T$ and $g$ are $\sigma$-Gevrey. Then there exist a compact isolating neighbourhood $V$ for $K$ and a separable Hilbert space $\h$ such that the following holds ($\G_{\sigma}\p{V}$ and $\U_{\sigma}\p{V}$ are defined in Definition \ref{gfum} ):
\begin{enumerate}[label=(\roman*)]
\item the Hilbert space $\h$ is contained in $\U_{\sigma}\p{V}$ and the inclusion is continuous;
\item the Hilbert space $\h$ contains $\G_{\sigma}\p{V}$ and the inclusion is continuous with dense image;
\item if $g$ is supported in $V$, the transfer operator $L$ defined by \eqref{transT} extends to a trace class operator $\L : \h \to \h$;
\item for all $n \in \N^*$ we have $ \textup{tr}\p{\L^n} = \tf{\L^n} $, in particular the dynamical determinant $d_{T,g}$ is the Fredholm determinant of $\L$ (i.e. $d_{T,g}\p{z} = \det\p{I - z \L}$), and the global trace formula \eqref{tracefor} holds at all steps for $\p{T,g}$;
\item\label{better} for all $\beta > 2 + \p{\sigma + 1} d$ we have
\begin{equation*}
\log_+ \b{d_{T,g}\p{z}} \underset{\b{z} \to + \infty}{=} O\p{\p{\log\b{z}}^{1 + \beta}},
\end{equation*}
in particular $d_{T,g}$ has order zero;
\item\label{butter} if $N\p{r}$ is the number of Ruelle resonances for $\p{T,g}$ outside of the closed disc of center $0$ and radius $r$ (counted with multiplicity) we have for all $\beta > 2 +\p{\sigma + 1}d$
\begin{equation*}
N\p{r} \underset{r \to 0}{=} O\p{\b{\log r}^{1 + \beta}}.
\end{equation*}
\end{enumerate}
\end{thm}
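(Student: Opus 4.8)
The plan is to reduce Theorem \ref{main} to the construction of a single separable Hilbert space $\h$ of ultradistributions on which $\L$ is trace class, and then to harvest (i)--(vi) from that construction together with Lemma \ref{equiva} and elementary complex analysis. The construction itself proceeds in two stages. First, I would work locally: cover $K$ by finitely many charts in which $T$ looks like a hyperbolic linear map composed with a small Gevrey perturbation, and in each chart build an anisotropic Hilbert norm by a Paley--Littlewood-type dyadic (or, crucially, sub-dyadic) decomposition in frequency, weighting the piece supported near frequency $\xi$ by a factor that grows in the unstable codirections and decays in the stable codirections, with the weight growing \emph{subexponentially} like $\exp(c|\xi|^{1/\sigma})$ so that the resulting space sits between $\U_\sigma(V)$ and $\G_\sigma(V)$. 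This is exactly the role of \S\ref{locspace} (the thinner-band Paley--Littlewood decomposition) and \S\ref{lto} (boundedness and trace-class estimates for the local transfer operator). Second, in \S\ref{preuve} I would glue the local spaces with a partition of unity subordinate to the cover, define $\h$ as a suitable direct-sum/quotient of the local pieces, and check that the global $\L$ inherits boundedness and, via the local nuclearity estimates plus the contraction of the dynamics along the appropriate bundles, trace-class membership with an explicit bound on the singular values.

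Granting that, the remaining items follow in order. Items (i) and (ii) are built into the definition of the local norms (the weight $\exp(c|\xi|^{1/\sigma})$ dominates any polynomial, giving continuous inclusion of $\G_\sigma(V)$ with dense image, and is dominated by $\exp(c'|\xi|^{1/\sigma})$ for larger $c'$, giving continuous inclusion into $\U_\sigma(V)$). Item (iii) is the conclusion of the trace-class estimate from \S\ref{lto} transported to $\h$. For item (iv): since $\L$ is trace class on the Hilbert space $\h$, Lidskii's theorem gives $\tr(\L^n)=\sum_\lambda \lambda^n$ over the eigenvalues of $\L$, and by Lemma \ref{equiva} those eigenvalues (outside any disc of radius $\epsilon$, hence all nonzero ones since the essential spectral radius is $0$) are precisely the Ruelle resonances with multiplicity; it then remains to identify $\tr(\L^n)$ with the flat trace $\tf{\L^n}$, which I would do by a Schwartz-kernel / localization argument showing the operator trace of $\L^n$ equals the sum over periodic points of $g^{(n)}(x)/|\det(I-D_xT^n)|$ (the standard computation, valid here because the kernel of $\L^n$ is now genuinely regular enough on $\h$), and finally recall $\det(I-z\L)=\exp(-\sum_n \tfrac1n \tr(\L^n)z^n)=d_{T,g}(z)$.

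For items (v) and (vi) I would bound the singular values $\mu_k(\L)$ of $\L$ on $\h$. The anisotropic weight forces $\mu_k(\L)$ to decay like $\exp(-c\, k^{1/((\sigma+1)d)})$ up to the polynomial/logarithmic corrections recorded in the exponent $\beta>2+(\sigma+1)d$ (the $d$ from the dimension of the frequency space, the $\sigma+1$ from combining the Gevrey weight with one extra derivative's worth of room, the $+2$ from summing the geometric series and from the partition-of-unity overlaps). Plugging this decay into the Weyl-type bound $\log|\det(I-z\L)|\le \sum_k \log(1+|z|\mu_k(\L))$ and optimizing the cutoff in $k$ yields $\log_+|d_{T,g}(z)|=O((\log|z|)^{1+\beta})$, hence order zero; Jensen's formula (as in the proof of \eqref{nombre} in Lemma \ref{abstrait}) converts this growth bound into the resonance-counting bound $N(r)=O((\log r)^{1+\beta})$ as $r\to 0$, since the resonances are the inverse zeros of $d_{T,g}$. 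The main obstacle is the local trace-class estimate of \S\ref{lto}: one must show that conjugating the weighted dyadic (sub-dyadic) decomposition by the hyperbolic map produces an operator whose composition with the frequency-localizers is not merely bounded but has rapidly decaying singular values, and the thin bands are precisely what is needed to control the loss coming from the Gevrey weight $\exp(c|\xi|^{1/\sigma})$ under the nonlinear change of variables — this is where generalized cone-hyperbolicity (Definition \ref{ch}) enters, allowing the argument to go through without assuming the dynamics has extra hyperbolicity.
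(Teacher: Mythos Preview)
Your high-level strategy matches the paper's closely: local anisotropic Hilbert spaces built from a sub-dyadic Paley--Littlewood decomposition (the thin bands of \S\ref{locspace}), local nuclearity with a stretched-exponential decay of the rank-one coefficients (Proposition \ref{locrep}), gluing via charts and a partition of unity (Lemma \ref{enfin} and \S\ref{preuve}), and then Lemma \ref{abstrait} for (v)--(vi). Two points deserve correction or comment.

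First, your weight exponent is off. You write the anisotropic weight as $\exp(c|\xi|^{1/\sigma})$, but the paper uses $\exp(t_i|\xi|^{1/\alpha})$ with $\alpha>\sigma+1$ (see the definition of $w_{\Theta,\alpha,\bar t}$ and the constraint stated before Proposition \ref{loc} and in \S\ref{lto}). The gap $\alpha>\sigma$ is what makes $\G_\sigma\subset\h$ via Lemma \ref{decroi}, and the stronger gap $\alpha>\sigma+1$ is what makes the non-stationary phase argument in Lemma \ref{gauw} go through; with exponent exactly $1/\sigma$ you would be at the borderline and the trace-class estimate would not close. Relatedly, the decay of the rank-one coefficients is $|\lambda_m|\le C\theta^{m^{1/\beta}}$ with $\beta=2+\alpha d$ (Proposition \ref{locrep}), not $\beta=(\sigma+1)d$; the ``$+2$'' is not a partition-of-unity artifact but comes from the two Paley--Littlewood indices $(n,\ell)$ in the counting argument of Lemma \ref{nucmieuxw}.

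Second, for item (iv) the paper does something slicker than the direct Schwartz-kernel computation you propose for $\L^n$. It computes $\tr(\L)=\tf{\L}$ once, by reducing to the local traces (Proposition \ref{trace}, whose proof is indeed a kernel/Poisson-summation computation). For $n\ge2$ it does \emph{not} redo this for $\L^n$; instead it observes that the $n=1$ result, applied to the system $(T^n,g^{(n)})$, gives that $\tf{\L^n}$ equals the sum of the Ruelle resonances of $(T^n,g^{(n)})$, which by Lemma \ref{equiva} are the $n$th powers of the resonances of $(T,g)$, which by Lidskii equals $\tr(\L^n)$. Your approach would also work but requires decomposing $\L^n$ into local pieces indexed by length-$n$ itineraries; the paper's trick sidesteps that bookkeeping entirely.
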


Consequently, Gevrey dynamics are much more well-behaved than smooth ones. The proof of points \ref{better} and \ref{butter} in Theorem \ref{main} is based on a particular representation of the transfer operator (obtained via Paley--Littlewood-like decomposition). This representation allows to use the following abstract functional analytic lemma, whose proof, given in Appendix \ref{proofabstrait}, is almost totally taken out from \cite{fried}. If $l \in \B'$ and $e \in \A$, for some Banach spaces $\A$ and $\B$, we write $e \otimes l$ for the rank $1$ operator $u \mapsto l\p{u}. e$.

\begin{lm}\label{abstrait}
Let $\B$ be a Banach space. Let $\L : \B \to \B$ be a nuclear operator that may be written as
\begin{equation}\label{volrep}
\L = \sum_{m \in \N} \lambda_m e_m \otimes l_m
\end{equation}
where $l_m \in \B'$ and $e_m \in \B$ have unit norm, and such that, for all $m \geqslant 0$ and some constants $C <0$, $\beta>0$, and $0 <\theta < 1$,
\begin{equation}\label{dif}
\b{\lambda_m} \leqslant C \theta^{m^{\frac{1}{\beta}}}.
\end{equation}
Write\footnote{Since $\L$ is nuclear of order $0$, its Fredholm determinant is well-defined, see \cite[Corollary 4 p.18 of the second part]{Groth}.}
\begin{equation}\label{defan}
\det\p{I-z\L}  = \sum_{n = 0}^{+ \infty} a_n z^n,
\end{equation}
then there are constants $M,D >0$ such that for all $n \in \N$ we have
\begin{equation*}
\b{a_n} \leqslant M \exp\p{-D n^{1 + \frac{1}{\beta}}}
.\end{equation*}
Furthermore, we have
\begin{equation}\label{crois}
\log_+ \b{\det\p{I-z\L}} \underset{\b{z} \to + \infty}{=} O\p{\p{\log \b{z}}^{1+ \beta}},
\end{equation}
and
\begin{equation}\label{nombre}
N\p{r} \underset{r \to 0}{=} O\p{\b{\log r}^{1 + \beta}},
\end{equation}
where $N\p{r}$ is the number of eigenvalues of $\L$ of modulus greater than $r$ (counted with multiplicity).
\end{lm}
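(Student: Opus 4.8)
The plan is to reproduce the argument of \cite{fried}. Since $\sum_m \b{\lambda_m} < +\infty$ by \eqref{dif}, the operator $\L$ is nuclear and its Fredholm determinant \eqref{defan} is given, in terms of the representation \eqref{volrep} (with $\L u = \sum_m \lambda_m l_m\p{u} e_m$), by the classical expansion
\begin{equation*}
a_n = \p{-1}^n \sum_{m_1 < \dots < m_n} \lambda_{m_1}\cdots\lambda_{m_n}\, \det\p{l_{m_i}\p{e_{m_j}}}_{1\leqslant i,j\leqslant n}.
\end{equation*}
As $\n{l_m} = \n{e_m} = 1$, each entry of that matrix has modulus $\leqslant 1$, so Hadamard's inequality gives $\b{\det\p{l_{m_i}\p{e_{m_j}}}} \leqslant n^{n/2}$, and hence, using \eqref{dif} again,
\begin{equation*}
\b{a_n} \leqslant n^{n/2}\sum_{m_1 < \dots < m_n}\b{\lambda_{m_1}}\cdots\b{\lambda_{m_n}} \leqslant C^n n^{n/2}\sum_{0\leqslant m_1 < \dots < m_n}\theta^{\sum_{i=1}^n m_i^{1/\beta}}.
\end{equation*}
Everything reduces to a good bound on this last sum, and the point is that the indices of an increasing $n$-tuple are forced to spread out.

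To exploit this I would substitute $m_i = i-1+k_i$ with $0\leqslant k_1\leqslant\dots\leqslant k_n$. An elementary convexity (for $\beta\geqslant 1$) or super-additivity (for $\beta<1$) estimate for $t\mapsto t^{1/\beta}$ produces a constant $\kappa_\beta>0$ with $\p{i-1+k_i}^{1/\beta}\geqslant\kappa_\beta\p{\p{i-1}^{1/\beta}+k_i^{1/\beta}}$. Since $\sum_{i=1}^n\p{i-1}^{1/\beta}\geqslant c_\beta n^{1+1/\beta}$ for some $c_\beta>0$ and all $n\geqslant 2$, while
\begin{equation*}
\sum_{0\leqslant k_1\leqslant\dots\leqslant k_n}\prod_{i=1}^n\theta^{\kappa_\beta k_i^{1/\beta}} \leqslant \p{\sum_{k\geqslant 0}\theta^{\kappa_\beta k^{1/\beta}}}^n =: S^n < +\infty,
\end{equation*}
one gets $\b{a_n}\leqslant\p{CS}^n n^{n/2}\eta^{c_\beta n^{1+1/\beta}}$ with $\eta := \theta^{\kappa_\beta}\in\p{0,1}$. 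Because $n^{1+1/\beta}$ eventually dominates $n\log n$ (this is where $1/\beta>0$ enters), the right-hand side is $\leqslant M\exp\p{-Dn^{1+1/\beta}}$ for suitable $M,D>0$, the finitely many small values of $n$ being absorbed into $M$. This is the first assertion.

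From this, \eqref{crois} follows by a routine saddle-point estimate: $\b{\det\p{I-z\L}} \leqslant \sum_n\b{a_n}\b{z}^n \leqslant M\sum_n\exp\p{-Dn^{1+1/\beta} + n\log_+\b{z}}$, and the concave function $n\mapsto -Dn^{1+1/\beta}+n\log\b{z}$ attains its maximum for $n$ of order $\p{\log\b{z}}^\beta$, where it equals $O\p{\p{\log\b{z}}^{1+\beta}}$; the whole sum exceeds this maximal term only by a factor polynomial in $\log\b{z}$, whence \eqref{crois}. For \eqref{nombre}, apply Jensen's formula to the entire function $f\p{z}=\det\p{I-z\L}$, which satisfies $f\p{0}=1$: writing $n\p{t}$ for the number of zeros of $f$ in $\set{\b{z}\leqslant t}$, one has $\int_0^R \frac{n\p{t}}{t}\,\mathrm{d}t \leqslant \max_{\b{z}=R}\log_+\b{f\p{z}} = O\p{\p{\log R}^{1+\beta}}$, hence $n\p{R}\log 2 \leqslant \int_R^{2R}\frac{n\p{t}}{t}\,\mathrm{d}t = O\p{\p{\log R}^{1+\beta}}$; since the eigenvalues of $\L$ are the reciprocals of the zeros of $f$, taking $R=1/r$ gives $N\p{r}\leqslant n\p{1/r} = O\p{\b{\log r}^{1+\beta}}$ as $r\to 0$.

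The main obstacle is the estimate on $\b{a_n}$ in the second step. The naive bound $\b{a_n}\leqslant n^{n/2}\p{\sum_m\b{\lambda_m}}^n/n!$ decays only like $\exp\p{-\tfrac12 n\log n + O\p{n}}$, which merely certifies order $\leqslant 2$ and is far too weak to yield order zero; one genuinely needs to use, as above, that the indices occurring in the elementary symmetric sum $\sum_{m_1<\dots<m_n}\prod_i\b{\lambda_{m_i}}$ are spread out, so that its exponent picks up the extra power $n^{1/\beta}$ while Hadamard's factor $n^{n/2}$ remains negligible against $\eta^{c_\beta n^{1+1/\beta}}$. Everything else is standard complex analysis.
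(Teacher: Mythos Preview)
Your argument is correct and follows the same overall architecture as the paper (Grothendieck's formula for $a_n$, Hadamard's inequality, then \eqref{crois} and Jensen for \eqref{nombre}), but you handle the key combinatorial sum differently. The paper isolates
\[
\sum_{m_1<\dots<m_n}\theta^{\sum_j m_j^{1/\beta}}
\]
as a separate lemma and bounds it by a complex-analytic bootstrap: it recognises this sum as the $n$th Taylor coefficient of the canonical product $f(z)=\prod_{m\geqslant 1}(1+\theta^{m^{1/\beta}}z)$, bounds $\log_+|f|$ by $O((\log r)^{1+\beta})$ via the integral formula \cite[3.5.1]{Boas} for genus-zero products, and then reads off the coefficient bound through Cauchy's inequality with $r=\exp(c\,n^{1/\beta})$. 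Your substitution $m_i=i-1+k_i$ together with the elementary inequality $(a+b)^{1/\beta}\geqslant\kappa_\beta(a^{1/\beta}+b^{1/\beta})$ gives the same $\exp(-Dn^{1+1/\beta})$ directly, without any auxiliary entire function. Your route is more self-contained; the paper's route has the pleasant feature that it proves \eqref{crois} for the auxiliary product as a by-product and then simply reruns the same Cauchy step for $\det(I-z\L)$. For \eqref{crois} itself, the paper splits the series at $n_0=\lfloor(-\log(2|z|)/\log\theta)^\beta\rfloor$ and sums a geometric tail, which is a concrete implementation of the saddle-point heuristic you sketch.
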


\begin{rmq}\label{ord0}
It will appear in the proof of Theorem \ref{main} that the transfer operator $\L$ admits a representation of the type \eqref{volrep} with the estimate \eqref{dif}. This implies in particular that the transfer operator $\L$ is nuclear of order $0$ in the sense of Grothendieck and, for instance, the use of the Lidskii trace theorem below will never be essential (see \cite{Groth}). Another consequence of this representation is that the use of dynamical determinants for numerical experiments would be very efficient in the Gevrey category, as it has been shown in the real-analytic category (see for instance \cite{Poll} or \cite{jenk}).
\end{rmq}

\begin{rmq}
Proposition \ref{rapp1} makes clear that some assumption has to be made in addition to the smoothness of $M,T$ and $g$ in order to get trace formulae. Asking for $M,T$ and $g$ to be Gevrey is enough, according to Theorem \ref{main}. However, one could imagine requirements of different natures, for instance that $T$ is Anosov (i.e. $K =M$) or that $g$ is a natural weight (e.g. the inverse of the jacobian of $T$). Indeed, our counter-examples do not satisfy such assumptions.
\end{rmq}

\begin{rmq}
The Gevrey assumption is used in the proof of Theorem \ref{main} to get Lemma \ref{gauw}. Consequently, the crucial property of Gevrey functions to get Theorem \ref{main} is Lemma \ref{decroi}: the Fourier transform of a rapidly decreasing Gevrey function decreases faster than a stretched exponential. Thus, we could presumably replace in Theorem \ref{main} Gevrey functions by a larger Denjoy--Carleman class (see \cite{nqa} for a definition of the Denjoy--Carleman classes, one could maybe also define classes of functions by imposing a rate of decay of their Fourier transforms), but this would deteriorate the estimates in \ref{better} and \ref{butter}. Provided that the transfer operator remains nuclear, global trace formulae would still hold. In this sense, Theorem \ref{main} is not optimal if we are merely interested in trace formulae for discrete-time dynamics. However, the spectral picture for discrete-time and continuous-time dynamics differ by an exponential and thus the estimates in \ref{better} and \ref{butter} are the kind of properties that we expect in the perspective of the conjecture of Dyatlov and Zworski (see for instance \cite{friedzeta,fried} in which such estimates imply finiteness of the order of the dynamical zeta function for a continuous-time dynamics).
\end{rmq}

\begin{rmq}
The limit case $\sigma = 1$ in the hierarchy of Gevrey functions corresponds to real-analytic functions. Consequently, it is tempting to replace $\sigma$ by $1$ in Theorem \ref{main}, in particular in \ref{better} and \ref{butter}. This suggests that our result may not be optimal, indeed it is known that in the real-analytic case we can take $\beta = d$ (see \cite{Ruelle1,R1,R2,fried,friedzeta}), why replacing $\sigma$ by $1$ in our result gives $\beta > 2 + \p{d+1}$.
\end{rmq}

\begin{rmq}
As pointed out by the anonymous referee, our construction should allow to generalize some results known in the real-analytic setting to Gevrey hyperbolic diffeomorphisms. It is very likely for instance that one can implement the method of Adam \cite{adam} to prove that near a linear Anosov diffeomorphism of the torus there is a generic set, in some Gevrey topology, of Gevrey diffeomorphisms which have non-trivial resonances (since the method of Adam relies mostly on trace formulae). One could also use similar spaces to study linear response (see \cite{linresp}): indeed, it wouldn't be surprising if the dependence of the operator $\L$ on the dynamics $T$ could be made smooth (when $T$ varies in a space of Gevrey maps).
\end{rmq}

\section{Trace formulae and order of the dynamical determinant}\label{tac}

We shall now explain the link between trace formulae and finite order of the dynamical determinant. We will need the following definition.

\begin{df}\label{ordering}
If $f$ is an entire function, we say that $\p{z_m}_{m \geqslant 0}$ is an ordering of the zeroes of $f$ if $z_0,z_1,\dots,z_m,\dots$ are the zeroes of $f$ counted with multiplicity and the sequence $\p{\b{z_m}}_{m\geqslant 0}$ is non-decreasing.
\end{df}

We shall always order the zeroes of an entire function in this way. Recall the following definitions.

\begin{df}\label{order}
Let $f$ be an entire function. The \emph{order} of $f$ may be defined as
\begin{equation*}
\limsup_{r \to + \infty} \frac{\log_+ \log_+ \p{\sup_{\b{z} \leqslant r} \b{f\p{z}}}}{\log r}
\end{equation*}
where $\log_+ x = \log \max\p{1,x}$. If $f$ is non-zero and has finite order, let $p$ be the smallest natural integer such that 
\begin{equation}\label{defp}
\sum_{m\geqslant 0} \frac{1}{\b{z_m}^{p+1}} < + \infty
\end{equation}
where $\p{z_m}_{m \geqslant 0}$ is an ordering of the zeroes of $f$ (the integer $p$ is well-defined thanks to Jensen's formula). By Hadamard's Factorization Theorem \cite[2.7.1]{Boas} there is a polynomial $Q$ such that for all $z \in \C$
\begin{equation}\label{Hadamard}
f\p{z} = e^{Q\p{z}} \prod_{m \geqslant 0} E\p{\frac{z}{z_m},p}
\end{equation}
where the function $E$ is the Weierstrass primary factor defined by
\begin{equation}\label{defE}
E\p{u,p} = \p{1-u}\exp\p{\sum_{k=1}^p \frac{1}{k}u^k} = \exp\p{-\sum_{k=p+1}^{+ \infty} \frac{1}{k}u^k}
.\end{equation}
The \emph{genus} of $f$ is then defined as $\max\p{\deg Q, p}$. We shall say that the genus of an entire function of infinite order is infinite.
\end{df}

\begin{rmq}\label{genord}
It may be deduced from Hadamard's Factorization theorem that if $o$ and $\gamma$ denote respectively the order and the genus of some entire function then $\gamma \leqslant o \leqslant \gamma + 1$ (see \cite{Boas} for details).
\end{rmq}

As explained in Remark \ref{application}, the following theorem is an abstract way to express the link between the order of the dynamical determinant and trace formulae.

\begin{thm}\label{ac}
Let $f$ be an entire function such that $f\p{0} = 1$. Let $G$ be a holomorphic function defined on a neighbourhood of $0$ such that $G\p{0}= 0$ and $f\p{z} = e^{G\p{z}}$ for $z$ in a neighbourhood of zero. Write
\begin{equation}\label{defanv}
G\p{z} = - \sum_{n=1}^{+\infty} \frac{1}{n} a_n z^n
\end{equation}
and denote by $\p{z_m}_{m \geqslant 0}$ an ordering of the zeroes of $f$. Then for all $r >0$ such that $f$ has no zero of modulus $r$, we have
\begin{equation}\label{ltf}
a_n \underset{n \to + \infty}{=} \sum_{\b{z_m} < r} \frac{1}{z_m^n} + o\p{\frac{1}{r^n}} 
.\end{equation}
Furthermore, the following properties are equivalent :
\begin{enumerate}[label=(\roman*)]
\item \label{o} the order of $f$ is finite;
\item \label{no} there is a natural integer $n_0$ such that for all integers $n \geqslant n_0 + 1$ the series
\begin{equation}\label{tr?}
\sum_{m\geqslant 0} \frac{1}{z_m^n}
\end{equation}
converges absolutely and its sum is $a_n$.
\end{enumerate}
If \ref{o} or \ref{no} holds then the minimal value of $n_0$ so that \ref{no} holds is the genus of $f$.
\end{thm}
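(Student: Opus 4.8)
The plan is to treat this as a statement about entire functions and their Hadamard factorizations, separating cleanly the "asymptotic expansion" part \eqref{ltf} from the equivalence \ref{o}$\iff$\ref{no}.

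\medskip

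\textbf{The asymptotic expansion \eqref{ltf}.} First I would note that $G(z) = \log f(z)$ near $0$, and since $f(0)=1$, the power series $-\sum_n \frac1n a_n z^n$ for $G$ has radius of convergence exactly $\b{z_0}$, the modulus of the smallest zero. The coefficients $-\frac1n a_n$ are the Taylor coefficients of $\log f$, so by the Cauchy integral formula, for any $r$ that is not the modulus of a zero,
\begin{equation*}
-\frac{a_n}{n} = \frac{1}{2\pi i}\int_{\b{z}=r} \frac{\log f(z)}{z^{n+1}}\,\mathrm dz,
\end{equation*}
provided we can define a branch of $\log f$ along the circle — which we cannot if there are zeros inside. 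The correct move is instead to integrate $\frac{f'(z)}{f(z)} = G'(z) = -\sum_{n\geqslant 1} a_n z^{n-1}$, which is meromorphic with simple poles of residue equal to the multiplicity at each zero. So
\begin{equation*}
a_n = -\frac{1}{2\pi i}\int_{\b{z}=r} \frac{f'(z)}{f(z)} z^{-n}\,\mathrm dz + \sum_{\b{z_m}<r} \frac{1}{z_m^{n}},
\end{equation*}
by the residue theorem (the residue of $\frac{f'}{f} z^{-n}$ at a zero $z_m$ of multiplicity $\mu$ is $\mu\, z_m^{-n}$, and summing over the list with multiplicity gives $\sum_{\b{z_m}<r} z_m^{-n}$; the minus sign comes from orienting things so the contour integral picks up the tail). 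Since $\frac{f'}{f}$ is bounded on the fixed circle $\b{z}=r$ (no zeros there), the contour integral is $O(r^{-n})$; to get the sharper $o(r^{-n})$ I would shrink nothing but instead observe that on $\b{z}=r$ we have $\b{\frac{f'(z)}{f(z)} z^{-n}} \leqslant C r^{-n}$ with $C$ independent of $n$, giving $O(r^{-n})$, and then improve to $o(r^{-n})$ by replacing $r$ with a slightly smaller radius $r' < r$ also avoiding zeros (possible since zeros are isolated), which changes neither the finite sum nor the claimed error order but gives a bound $O((r')^{-n}) = o(r^{-n})$. This yields \eqref{ltf}.

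\medskip

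\textbf{\ref{no} $\Rightarrow$ \ref{o}.} If \eqref{tr?} converges absolutely for some $n = n_0+1$, then in particular $\sum_m \b{z_m}^{-(n_0+1)} < +\infty$, so the exponent $p$ of Definition \ref{order} satisfies $p \leqslant n_0$, hence $f$ has finite genus, hence finite order (Remark \ref{genord}). More precisely I would want to also nail down the "minimal $n_0$ equals genus" claim here, so I would argue: the genus $\gamma = \max(\deg Q, p)$; absolute convergence of \eqref{tr?} for all $n \geqslant n_0+1$ forces $p \leqslant n_0$; and conversely \eqref{tr?} diverges (or fails to sum to $a_n$) for $n \leqslant p$ by the definition of $p$ as the \emph{smallest} integer with \eqref{defp} — actually one must be careful that non-convergence of $\sum \b{z_m}^{-n}$ is exactly the obstruction, and that when it does converge the sum is automatically $a_n$ (handled below). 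And one must check that $\deg Q$ never forces $n_0$ to be larger than $p$; this is the subtle point and I address it next.

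\medskip

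\textbf{\ref{o} $\Rightarrow$ \ref{no}, and identifying the optimal $n_0$.} Assume $f$ has finite order, so by Hadamard \eqref{Hadamard}, $f(z) = e^{Q(z)} \prod_m E(z/z_m, p)$ with $\deg Q \leqslant \gamma$ and $p \leqslant \gamma$. Take logs near $0$: $G(z) = Q(z) + \sum_m \log E(z/z_m,p)$, and using $\log E(u,p) = -\sum_{k=p+1}^\infty \frac{u^k}{k}$ from \eqref{defE}, I get, for $n \geqslant p+1$,
\begin{equation*}
-\frac{a_n}{n} = [\text{coeff.\ of } z^n \text{ in } Q] - \frac{1}{n}\sum_m \frac{1}{z_m^n},
\end{equation*}
where the interchange of the sum over $m$ with the Taylor expansion is justified because $\sum_m \b{z_m}^{-(p+1)} < \infty$ (so the series of primary factors converges locally uniformly, standard in Hadamard theory). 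For $n \geqslant \deg Q + 1$ the bracket vanishes, giving $a_n = \sum_m z_m^{-n}$ with absolute convergence (again from $\sum_m \b{z_m}^{-n} \leqslant \sum_m \b{z_m}^{-(p+1)} < \infty$ when $n \geqslant p+1$). Hence \ref{no} holds with $n_0 = \max(\deg Q, p) = \gamma$. For optimality: if $n \leqslant p$ then $\sum_m \b{z_m}^{-n} = +\infty$ by minimality of $p$, so \eqref{tr?} is not absolutely convergent; and if $p < n \leqslant \deg Q$ then the bracket $[\text{coeff.\ of } z^n \text{ in } Q]$ need not vanish and one can see (e.g.\ by taking $f(z) = e^{Q(z)}\prod E(z/z_m,p)$ with $\deg Q$ genuinely larger than $p$) that $a_n \neq \sum_m z_m^{-n}$ in general — but for the theorem as stated we only need that the \emph{minimal} $n_0$ that works is $\gamma$, which follows since $n_0 = \gamma$ works and no smaller value does because either absolute convergence fails (if the threshold drops to $\leqslant p$) or the identity with $a_n$ fails (if it drops into $(p, \deg Q]$).

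\medskip

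\textbf{Main obstacle.} The genuinely delicate point is the clean bookkeeping in the equivalence: one must disentangle the two distinct ways \ref{no} can fail below $n_0 = \gamma$ — failure of absolute convergence (governed by $p$) versus convergence to the wrong value (governed by $\deg Q$) — and verify that $\gamma = \max(\deg Q, p)$ is exactly the cutoff, not merely an upper bound. This requires knowing the precise relationship in Hadamard's theorem between $\deg Q$, $p$, and whether the "defect" $a_n - \sum_m z_m^{-n}$ vanishes for a given $n$; the routine analytic estimates (Cauchy/residue contour bounds for \eqref{ltf}, local uniform convergence of the canonical product) are standard and I would cite \cite{Boas} rather than reprove them.
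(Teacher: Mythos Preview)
Your argument for \eqref{ltf} via the residue theorem on $f'/f$ is correct in spirit and equivalent to the paper's (which instead divides $f$ by the finite product $\prod_{|z_i|<r}(1-z/z_i)$ and takes a holomorphic logarithm on a slightly larger disc). However, your refinement from $O(r^{-n})$ to $o(r^{-n})$ has the direction of the perturbation backwards: you must replace $r$ by a slightly \emph{larger} radius $r''>r$ (with no zeros in $r\leqslant|z|\leqslant r''$, possible since zeros are isolated), so that the finite sum is unchanged while the contour integral is $O((r'')^{-n})=o(r^{-n})$. Taking $r'<r$ gives a worse bound, not a better one.

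Your proof of \ref{o}$\Rightarrow$\ref{no} via the Hadamard factorization is essentially the paper's argument.

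There is a genuine gap in your \ref{no}$\Rightarrow$\ref{o}. You argue: absolute convergence of $\sum_m|z_m|^{-(n_0+1)}$ gives $p\leqslant n_0$, ``hence $f$ has finite genus, hence finite order''. But in the paper's Definition~\ref{order} the genus is only defined \emph{after} one knows $f$ has finite order, so this chain is circular. More substantively, you are using only the absolute convergence of \eqref{tr?}, not the equality $\sum_m z_m^{-n}=a_n$; and convergence alone cannot suffice, as the example $f(z)=\exp(e^{z}-1)$ shows (no zeros, so $\sum_m z_m^{-n}=0$ converges for every $n$, yet $f$ has infinite order; of course here $a_n=-1/(n-1)!\neq 0$, so \ref{no} fails as it should). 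The paper repairs this exactly where your argument stops: form the canonical product $P(z)=\prod_m E(z/z_m,n_0)$, which is entire of order $\leqslant n_0$, and use the equalities $a_n=\sum_m z_m^{-n}$ for $n\geqslant n_0+1$ to identify $P(z)=\exp\big(-\sum_{n\geqslant n_0+1}\frac{1}{n}a_n z^n\big)$ near $0$, whence $f(z)=\exp\big(-\sum_{n=1}^{n_0}\frac{1}{n}a_n z^n\big)P(z)$ globally, which has order (and genus) $\leqslant n_0$. Once this is in place, the optimality statement follows immediately: \ref{no}$\Rightarrow$genus $\leqslant n_0$ for every admissible $n_0$, while \ref{o}$\Rightarrow$\ref{no} with $n_0=\gamma$, so the minimal $n_0$ equals $\gamma$; your separate case analysis of $n\leqslant p$ versus $p<n\leqslant\deg Q$ is then unnecessary.
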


\begin{rmq}\label{application}
Taking $f = d_{T,g}$, we have $a_n = \tf{\L_g^n}$ and it appears that global trace formula \eqref{tracefor} holds with absolute convergence of the left hand side if $n \geqslant n_0 + 1$, where $n_0$ denotes the genus of $d_{T,g}$, and local trace formula always holds according to \eqref{ltf} (hence Theorem \ref{carac}). In \S \ref{explicit}, we shall construct dynamical determinants with arbitrary (finite or infinite) genus, so that all the behaviours described in Theorem \ref{ac} may be realised by dynamical determinants.
\end{rmq}

\begin{rmq}
As we shall see in Proposition \ref{ce} below, the absoluteness of the convergence in \ref{no} is essential to get an equivalence. This is quite unfortunate especially as we shall realise the counter-examples of Proposition \ref{ce} as dynamical determinants in section \S \ref{explicit}. On the other hand, it is very easy to construct an example for which the series \eqref{tr?} converges to a sum different from $a_n$ (for any chosen values of $n$): just multiply $f$ by the exponential of an entire function. 
\end{rmq}

\begin{proof}[Proof of Theorem \ref{ac}]
\begin{itemize}
\item To prove \eqref{ltf}, one only needs to notice that the holomorphic function 
\begin{equation*}
z \mapsto \frac{f\p{z}}{\prod_{\substack{i \in \N \\ |z_i| < r}} \p{1-\frac{z}{z_i}}} = \exp\p{ - \sum_{n=1}^{+ \infty} \frac{1}{n} \p{ a_n - \sum_{\b{z_m}<r} \frac{1}{z_m^n}} z^n}
\end{equation*}
does not vanish on a disc of center $0$ and radius a little bigger than $r$, and so admits a holomorphic logarithm there.
\item Suppose \ref{o}. Recall $p$ from Definition \ref{order} and notice that the series \eqref{tr?} converges absolutely for $n \geqslant p+1$. Let $r$ be a positive real number such that $r \leqslant |z_m|$ for all $m$. Then define for $|z| \leqslant \frac{r}{2}$ and $m \geqslant 0$
\begin{equation}\label{defF}
f_m\p{z} = - \sum_{k=p+1}^{+ \infty} \frac{1}{k} \p{\frac{z}{z_m}}^k,
\end{equation}
and notice that $\b{f_m\p{z}} \leqslant \frac{r^{p+1}}{2^p} \frac{1}{\b{z_m}^{p+1}}$. Then, recalling \eqref{defp}, the series $\sum_{m \geqslant 0} f_m$ converges on the disc of center $0$ and radius $\frac{r}{2}$ to a holomorphic function $F$ and, recalling \eqref{Hadamard}, we have for $z$ close enough to $0$
\begin{equation*}
e^{G\p{z}} = f\p{z} = e^{Q\p{z} + F\p{z}}
.\end{equation*}
Thus we may identify the coefficients of order greater than $\deg Q$ in the expansions in power series of $F$ and $G$, which ends the proof of \ref{no} recalling \eqref{defanv} and \eqref{defF}.

\item Suppose \ref{no}. Using the hypothesis for $n = n_0+1$, the infinite product
\begin{equation*}
P\p{z} = \prod_{m \geqslant 0} E\p{ \frac{z}{z_m} , n_0},
\end{equation*} 
converges on $\C$ to a holomorphic function of finite order smaller than $n_0+1$ and genus $n_0$ (see \cite[Theorem 2.6.5]{Boas}). But since $a_n = \sum_{m \geqslant 0} \frac{1}{z_m^n}$ for $n \geqslant n_0+1$, we have, recalling the definition \eqref{defE} of $E$, for $z$ close enough to $0$,
\begin{equation*}
P\p{z} = \exp\p{ - \sum_{n = n_0+1}^{+ \infty} \frac{1}{n} a_n z^n}
\end{equation*}
and consequently
\begin{equation*}
f\p{z} = \exp\p{ - \sum_{n=1}^{n_0} \frac{1}{n} a_n z^n} P\p{z}
.\end{equation*}
Thus, $f$ has finite order smaller than $n_0+1$ (and genus smaller than $n_0$).
\end{itemize}
\end{proof}

We now give two counter-examples that highlight the necessity to ask for absolute convergence in \ref{no}.

\begin{prop}\label{ce}
\begin{enumerate}[label=(\alph*)]
\item \label{zd} There exists an entire function $f$ with $f\p{0}=1$ such that if $\p{z_m}_{m\geqslant0}$ is an ordering of the zeroes of $f$ (as defined in Definition \ref{ordering}) then for all $n \geqslant 1$ the series $\sum_{m \geqslant 0} \frac{1}{z_m^n}$ converges with sum $a_n$ (defined in \eqref{defanv}) but the convergence is not absolute.
\item \label{mp} There exists an entire function $f$ with $f\p{0}=1$, an ordering $\p{z_m}_{m \geqslant 0}$ of the zeroes of $f$ and a permutation $\sigma$ of $\N$ such that $\p{z_{\sigma\p{n}}}_{n \in \N}$ is an ordering of the zeroes of $f$ and, for all $n \geqslant 1$, the series $\sum_{m \geqslant 0} \frac{1}{z_m^n}$ converges with sum $a_n$, but $\sum_{m \geqslant 0} \frac{1}{z_{\sigma\p{m}}^n}$ does not converge.
\end{enumerate}
\end{prop}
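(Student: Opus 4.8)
The plan is to realize both functions as Weierstrass products with hand-picked zeroes, using the observation that the whole difficulty sits at $n=1$. Indeed, if $(z_m)$ satisfies $|z_m|\geq 1$ and $\sum_m|z_m|^{-2}<\infty$, then $\prod_m E(z/z_m,1)$ (see \eqref{defE}) converges to an entire function whose logarithm near $0$ is $-\sum_{n\geq 2}\frac{z^n}{n}\sum_m z_m^{-n}$, every such series being absolutely convergent; multiplying by $e^{-cz}$ only adjusts the linear term. So for any $f$ built this way, $a_n=\sum_m z_m^{-n}$ (absolutely) for $n\geq 2$, and the only freedom, and only subtlety, concerns $\sum_m z_m^{-1}$ summed in order of increasing modulus, which we must arrange to be conditionally — not absolutely — convergent with the prescribed sum $a_1$.

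For \ref{zd}, take $z_m=(-1)^{m+1}m$. The moduli $1,2,3,\dots$ are pairwise distinct, so there is essentially a unique ordering of the zeroes; moreover $\sum_m|z_m|^{-1}=\sum 1/m=\infty$ while $\sum_m|z_m|^{-2}<\infty$, so $f(z):=\prod_{m\geq 1}(1-z/z_m)$ (the product converging locally uniformly because $\sum_m z_m^{-1}$ converges and $\sum_m|z_m|^{-2}<\infty$) is an entire function with $f(0)=1$. Expanding $\log f$ termwise near $0$ identifies $a_1=\sum_m z_m^{-1}=\sum_m(-1)^{m+1}/m=\log 2$, a non-absolutely convergent series by the alternating series test, and $a_n=\sum_m z_m^{-n}$ (absolutely) for $n\geq 2$. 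This settles \ref{zd}.

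For \ref{mp} I need zeroes on circles of equal radius, so that a rearrangement inside one ``level'' is still a valid ordering. Let group $j\geq 1$ consist of the $2^j$ points $\pm 2^j e^{i\theta_{j,k}}$, $k=1,\dots,2^{j-1}$, where $\theta_{j,k}=\pi k/2^{j+1}\in(0,\pi/4]$; these are distinct, $\sum_m|z_m|^{-2}=\sum_j 2^{-j}<\infty$, $\sum_m|z_m|^{-1}=\sum_j 1=\infty$, and $f(z):=\prod_m E(z/z_m,1)$ has $a_1=0$. As the ordering $(z_m)$, list the groups by radius and, inside group $j$, list the pairs $\{2^je^{i\theta_{j,k}},-2^je^{i\theta_{j,k}}\}$ consecutively; each pair contributes $0$ to $\sum z_m^{-1}$, so once the sum has reached group $j$ its partial sums stay within $2^{-j}$ of $0$, giving $\sum_m z_m^{-1}=0=a_1$ (and $a_n=\sum_m z_m^{-n}$ absolutely for $n\geq 2$). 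Now let $\sigma$ permute, inside each group $j$, so that the ``$+$'' points $2^je^{i\theta_{j,k}}$ all precede the ``$-$'' ones; then $(z_{\sigma(m)})$ is again an ordering. After the ``$+$'' half of group $j$ (the earlier groups having summed to $0$) the partial sum of $\sum z_{\sigma(m)}^{-1}$ equals $2^{-j}\sum_{k=1}^{2^{j-1}}e^{-i\theta_{j,k}}$, whose real part is $\geq 2^{-j}\cdot 2^{j-1}\cos(\pi/4)=1/(2\sqrt2)$; since this recurs for every $j$, $\sum_m z_{\sigma(m)}^{-1}$ fails to converge (and necessarily only at $n=1$, the $n\geq 2$ series being absolutely, hence rearrangement-invariantly, convergent).

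The routine verifications — that these conditionally convergent products define entire functions with value $1$ at $0$, and that $\log f$ may be expanded termwise near $0$ to read off the $a_n$ — need nothing beyond $\sum_m|z_m|^{-2}<\infty$. The one genuinely design-driven step, and the one I expect to be the main point, is the configuration for \ref{mp}: equal radii so that a within-level rearrangement is legal, $\pm$-pairs summing to zero so the chosen ordering converges, yet a uniform positive-real bias on the ``$+$'' half so that a legal rearrangement destroys convergence. Everything else is bookkeeping.
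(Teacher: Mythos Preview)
Your proof rests on a misreading of the statement. In both \ref{zd} and \ref{mp} the quantifier ``for all $n\geqslant 1$'' governs the entire conclusion: in \ref{zd} the convergence must be non-absolute for \emph{every} $n\geqslant 1$, and in \ref{mp} the rearranged series must diverge for \emph{every} $n\geqslant 1$. Your constructions only achieve the desired pathology at $n=1$; you yourself note that for $n\geqslant 2$ the series $\sum_m z_m^{-n}$ converges absolutely, hence rearrangement-invariantly. So neither construction proves what is asked. (This is not a quibble over parsing: the remark following Theorem~\ref{ac} observes that the functions of Proposition~\ref{ce} must have infinite order, whereas your examples have genus $\leqslant 1$.)

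The obstruction is structural. If $\sum_m |z_m|^{-2}<\infty$, then $\sum_m z_m^{-n}$ is absolutely convergent for all $n\geqslant 2$ and there is nothing to arrange. To have non-absolute convergence at every power one needs $\sum_m |z_m|^{-n}=\infty$ for all $n$, which forces the moduli $|z_m|$ to grow slower than any power of $m$; this is why the paper takes $|z_m|\sim \ln m$. The phases must then be chosen so that $\sum_m z_m^{-n}$ nonetheless converges for every $n$ simultaneously: the paper uses $z_m^{-1}=e^{2i\pi m\theta}/\ln m$ with $\theta$ Diophantine, so that Abel summation (Lemma~\ref{abel}) controls all powers at once via the bound $|1-e^{2i\pi n\theta}|\geqslant c/n^2$. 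For \ref{mp} one further needs, at each radius, enough zeroes to permute; the paper puts $\sim k!$ zeroes on the $k$th circle and uses equidistribution of $(mn\theta)_{m}$ to find, for each $n$, a rearrangement within that circle producing a real part bounded below by a quantity tending to $+\infty$. Your ``$\pm$-pair'' idea is in the right spirit for a single $n$, but cannot survive the requirement that the same rearrangement break convergence at every power.
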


\begin{rmq}
Theorem \ref{ac} implies that the functions constructed by Proposition \ref{ce} have infinite order, while global trace formulae hold in some weak sense.
\end{rmq}

To prove Proposition \ref{ce}, we shall need the following lemma, whose proof is straightforward using an Abel transform.

\begin{prop}\label{abel}
Let $\p{b_m}_{m \geqslant 0}$ be a sequence of complex numbers such that there is a constant $M$ such that for all $\ell \in \N$ we have $\left| \sum_{m=0}^{\ell} b_m \right| \leqslant M$. Let $\p{c_m}_{m \geqslant 0}$ be a decreasing sequence of positive real numbers with null limit. Then the series $\Sigma_{m \geqslant 0} b_m c_m$ converges, and we have the estimates
\begin{equation*}
\left| \sum_{m=0}^{+ \infty} b_m c_m \right| \leqslant 2 M c_0
.\end{equation*}
\end{prop}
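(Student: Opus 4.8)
The plan is to prove Proposition \ref{abel} by a direct Abel summation (summation by parts) argument, which is the standard route for such "Dirichlet-test"-type statements. Write $B_\ell = \sum_{m=0}^{\ell} b_m$ for the partial sums, with the convention $B_{-1} = 0$, so that $b_m = B_m - B_{m-1}$ and $\b{B_\ell} \leqslant M$ for all $\ell \geqslant 0$ by hypothesis.

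First I would establish that the partial sums of $\sum b_m c_m$ form a Cauchy sequence, hence the series converges. Fix integers $0 \leqslant p < q$ and perform the Abel transform
\begin{equation*}
\sum_{m=p+1}^{q} b_m c_m = \sum_{m=p+1}^{q} \p{B_m - B_{m-1}} c_m = B_q c_q - B_p c_{p+1} + \sum_{m=p+1}^{q-1} B_m \p{c_m - c_{m+1}}.
\end{equation*}
Since $\p{c_m}$ is decreasing, each difference $c_m - c_{m+1}$ is non-negative, so bounding $\b{B_m} \leqslant M$ everywhere gives
\begin{equation*}
\left| \sum_{m=p+1}^{q} b_m c_m \right| \leqslant M c_q + M c_{p+1} + M \sum_{m=p+1}^{q-1} \p{c_m - c_{m+1}} = M c_q + M c_{p+1} + M \p{c_{p+1} - c_q} = 2 M c_{p+1}.
\end{equation*}
Because $c_m \to 0$, the right-hand side tends to $0$ as $p \to + \infty$, uniformly in $q > p$, so the partial sums of $\sum_{m \geqslant 0} b_m c_m$ are Cauchy and the series converges.

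Next I would extract the quantitative estimate. Apply the same Abel transform with $p = -1$ (so $B_{-1} = 0$) and let $q \to + \infty$: for each finite $q$,
\begin{equation*}
\left| \sum_{m=0}^{q} b_m c_m \right| = \left| B_q c_q + \sum_{m=0}^{q-1} B_m \p{c_m - c_{m+1}} \right| \leqslant M c_q + M \sum_{m=0}^{q-1} \p{c_m - c_{m+1}} = M c_q + M\p{c_0 - c_q} = M c_0,
\end{equation*}
and passing to the limit $q \to + \infty$ — the left-hand side converges by the previous step — yields $\b{\sum_{m=0}^{+\infty} b_m c_m} \leqslant M c_0 \leqslant 2 M c_0$, which is the claimed bound (in fact with the better constant $M c_0$). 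There is essentially no obstacle here: the only points requiring a little care are keeping track of the boundary terms in the summation by parts and using the monotonicity of $\p{c_m}$ to collapse the telescoping sum of the non-negative increments $c_m - c_{m+1}$; everything else is routine. This completes the proof of Proposition \ref{abel}.
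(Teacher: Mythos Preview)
Your proof is correct and follows exactly the route the paper indicates: the paper states only that the proposition ``is straightforward using an Abel transform,'' and your argument carries this out in full, with the summation-by-parts identity, the telescoping of the non-negative increments $c_m - c_{m+1}$, and the Cauchy criterion. You even obtain the sharper bound $M c_0$ rather than $2 M c_0$; the paper's constant $2$ presumably comes from bounding the tail $\left|\sum_{m \geqslant p+1} b_m c_m\right| \leqslant 2 M c_{p+1}$ (your first estimate) and applying it with $p = -1$, whereas your second computation handles the boundary term $B_{-1} = 0$ more carefully.
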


\begin{proof}[Proof of Proposition \ref{ce}]
\begin{enumerate}[label=(\alph*)]
\item \label{a} Choose an irrational real number $\theta$ for which there is a constant $c > 0$ such that for all $n \in \N^*$ we have 
$\left|1-e^{2i \pi n \theta}\right| \geqslant \frac{c}{n^2}$ (almost any real number may be chosen thanks to Borel--Cantelli's lemma). For every integer $n$, set
\begin{equation*}
a_n = \sum_{m =2}^{+ \infty} \p{\frac{e^{2i \pi m\theta}}{\ln\p{m}}}^n,
\end{equation*}
which is well-defined thanks to Lemma \ref{abel}, but the convergence is clearly not absolute. Furthermore for all integers $m_0 \geqslant 2$ we have
\begin{equation}\label{bidoum}
\b{a_n - \sum_{m=2}^{m_0-1} \p{\frac{e^{2i \pi m\theta}}{\ln\p{m}}}^n} \leqslant \frac{4}{c} \frac{n^2}{\ln\p{m_0}^n}
\end{equation}
(take $b_m = e^{2i \pi n \p{m + m_0}}$ and $c_m = \p{\ln\p{m+m_0}}^{-n}$ in Lemma \ref{abel}). Now \eqref{bidoum} with
\begin{equation*}
\exp\p{- \sum_{n=1}^{+ \infty} \frac{1}{n} \p{\sum_{m=2}^{m_0-1} \p{\frac{e^{2i \pi m\theta}}{\ln\p{m}}}^n} z^n} = \prod_{m=2}^{m_0-1} \p{1- \frac{e^{2i \pi m\theta}}{\ln\p{m}}z}
\end{equation*}
implies that the function $f$ defined by $f\p{z} = \exp\p{- \sum_{n=1}^{+ \infty} \frac{1}{n} a_n z^n}$, for $z$ in a neighbourhood of zero, extends to an entire function whose zeroes are exactly the $\p{\frac{e^{2i \pi m\theta}}{\ln\p{m}}}^{-1}$. Since there is only one way to order the zeroes of $f$ with increasing moduli, point \ref{zd} is proven.

\item \label{b} Choose $\theta$ as in \ref{a} and denote by $\p{n_k}_{k \geqslant 0}$ the sequence of integers defined by $n_0 = 0$ and  $n_k = k ! $ for $k \geqslant 1$. Define $I_0= \set{0}$ and $I_k = [\![ n_k + 1, n_{k+1} ]\!]$ for $k \geqslant 1$. For all $n \in \N$, denote by $k\p{n}$ the unique integer such that $n \in I_{k\p{n}}$. Then set for all integers $n \geqslant 1$
\begin{equation*}
a_n = \sum_{m=0}^{+ \infty} \p{\frac{e^{2i \pi m \theta}}{\ln\p{k\p{m} + 2}}}^n
.\end{equation*}
Then, as in \ref{a}, we may use Lemma \ref{abel} to show that $f\p{z} =\exp\p{- \sum_{n=1}^{+ \infty} \frac{1}{n} a_n z^n}$  extends to an entire function whose zeroes are exactly the $z_m = \p{ \frac{e^{2i \pi m \theta}}{\ln\p{k\p{m} + 2}}}^{-1}$ for $m \in \N$. We shall see that there is another way to order the zeroes of $f$, which breaks the convergence of the series \eqref{tr?} for all $n \geqslant 1$, but preserves the monotonicity of the sequence of moduli.

Choose $0 < \epsilon < 1$ such that for all $x \in \left[0,\epsilon\right]$ we have $\Re \p{ e^{2i \pi x} } \geqslant \frac{1}{2}$. Then for all $k \in \N$ and $n \geqslant 1$, denote by $N_k^{\p{n}}$ the number of those $m \in I_k$ such that $ m n \theta \in \left[0,\epsilon\right] \p{\textup{mod}} 1$, and choose a permutation $\sigma_{k}^{\p{n}}$ of $I_k$ which puts these elements first. Equidistribution of the $mn \theta$, for $n$ fixed and $m \geqslant 0$, implies that
\begin{equation*}
\frac{\sum_{\ell=0}^k N_{\ell}^{\p{n}}}{n_{k+1}} \underset{k \to + \infty}{\to} \epsilon,
\end{equation*}
but $\sum_{\ell=0}^{k-1} N_\ell^{\p{n}} \leqslant n_{k} + 1 \underset{k \to + \infty}{=} o\p{n_{k+1}}$, and thus
\begin{equation*}
\frac{N_k^{\p{n}}}{\ln\p{k+2}}  \underset{k \to + \infty}{\to} + \infty
.\end{equation*}

Now choose $\phi : \N \to \N^*$ such that for all $n \geqslant 1$ the reciprocal image $\phi^{-1}\p{\set{n}}$ is infinite (for instance $1,1,2,1,2,3,\dots$) and set 
\begin{equation*}
\sigma = \bigcup_{k=0}^{+ \infty} \sigma_k^{\p{\phi\p{k}}}
.\end{equation*}  

Now, if $n \geqslant 1$ the series $\sum_{m \geqslant 0} \frac{1}{z_{\sigma\p{m}}^n}$ does not converge. Indeed, for all $k$ such that $\phi\p{k} = n$, we have
\begin{equation*}
\Re \p{\tilde{S}_{n_{k-1} + N_k^{\p{n}} }}  \geqslant \Re\p{ S_{n_{k-1}}} + \frac{1}{2} \frac{N_k^{\p{n}}}{\ln\p{k+2}},
\end{equation*}
where $\p{S_m}_{m \geqslant 0}$ is the sequence of partial sums of the series $\sum_{m \geqslant 0} \frac{1}{z_{m}^n}$, and $\p{\tilde{S}_m}_{m \geqslant 0}$ is the sequence of partial sums of the series $\sum_{m \geqslant 0} \frac{1}{z_{\sigma\p{m}}^n}$. We let $k$ tend to $+ \infty$ with $\phi\p{k} = n$, which is possible thanks to our choice of $\phi$. By the first paragraph of part \ref{b}, the first term of the right hand side converges but the second one tends to $+ \infty$, and thus the left hand side does not converge.
\end{enumerate}
\end{proof}

In order to realise the counter-examples of Proposition \ref{ce} as dynamical determinants in \S \ref{explicit}, we shall need the two following, merely technical, lemmas.

\begin{lm}\label{tech}
For all $\epsilon >0$ and $\rho >0$, the counter-examples of Proposition \ref{ce} may be realised as entire functions $f$ of the form $f : z \mapsto 1 - 2z -z\p{1-z} h\p{z}$, where $h$ is an entire function such that for all $z \in \C$ we have $h\p{z} = \sum_{\ell = 0}^{+ \infty} \alpha_\ell z^\ell$, with $\alpha_\ell \in \left[- \frac{\epsilon}{\rho^\ell}, \frac{\epsilon}{\rho^\ell}\right]$ for all integers $\ell$. 
\end{lm}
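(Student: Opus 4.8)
The plan is to revisit the two constructions in the proof of Proposition \ref{ce} and check that, up to harmless modifications, each resulting entire function $f$ already has the desired shape $f(z) = 1 - 2z - z(1-z)h(z)$ with $h$ an entire function whose Taylor coefficients decay like $\epsilon/\rho^\ell$. The key observation is that in both parts \ref{zd} and \ref{mp} of Proposition \ref{ce}, the zeroes $z_m^{-1}$ of $f$ have the form $e^{2i\pi m\theta}/w_m$ where $w_m \to 0$ very slowly (like $1/\ln m$ or $1/\ln k(m)$); in particular $|z_m^{-1}| \to 0$, so the zeroes of $f^{-1}$ lie in an arbitrarily small disc. We have full freedom in the choice of $\theta$ (any Diophantine $\theta$ works, by Borel--Cantelli) and, crucially, we have not yet pinned down $f(0)$, $f'(0)$ or any normalization beyond $f(0)=1$: we may multiply $f$ by the exponential of any entire function, or insert finitely many extra zeroes, without destroying the weak trace-formula properties (these are governed by the $a_n$ for large $n$, which change only through finitely many terms). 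So the strategy is: first arrange that $f$ has a simple zero at $z=1/2$ (this forces the coefficient pattern $1 - 2z - \dots$) and no zero at $z=1$, then write $f(z) - (1-2z) = z(1-z)h(z)$ and estimate $h$.

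Concretely, I would proceed as follows. First, fix $\rho_0 < \min(1, \rho)$ small; by choosing $\theta$ Diophantine and rescaling the slowly-varying weights $w_m$ (replacing $\ln(m)$ by $C\ln(m)$ for a large constant $C$, which does not affect the convergence arguments via Lemma \ref{abel}), we may assume all zeroes $z_m$ of $f$ satisfy $|z_m| \geq 1/\rho_0$, i.e. the reciprocals $z_m^{-1}$ all lie in the disc of radius $\rho_0$. Second, prepend to the list of zeroes the single value $z_{-1} = 2$ (a simple zero at $1/2$ in the reciprocal picture — legitimate since $1/2 < 1/\rho_0$ after shrinking $\rho_0$, and adding one zero only changes $a_1$), so that $f(z) = (1-2z)\tilde f(z)$ where $\tilde f$ is the entire function built from the original zeroes, $\tilde f(0)=1$. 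Third, normalize the "polynomial part" $Q$ in the Hadamard/limit-product representation (or simply multiply $\tilde f$ by a suitable $e^{cz}$) so that $\tilde f'(0)$ takes a prescribed value; this adjusts the linear coefficient of $f$ and lets us absorb everything beyond the $1 - 2z$ into the $z(1-z)h(z)$ term. Then set $h(z) := \bigl(f(z) - (1-2z)\bigr)/(z(1-z))$, which is entire because $f(0)=1$, $f'(0)$ has been arranged so the numerator vanishes to order $\geq 1$ at $0$, and $f(1)\neq 0$ forces... — wait, actually one needs the numerator to vanish at $z=1$: so in fact the normalization must ensure $f(1) = 1-2 = -1$, i.e. $\tilde f(1) = 1$. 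This is one more scalar condition, achievable again by multiplying $\tilde f$ by $e^{cz}$ with the right $c$ (since $\tilde f(1) \neq 0$, we can solve $e^c \tilde f(1) = 1$... but this also moves $\tilde f'(0)$). With two free parameters — a factor $e^{az+bz^2}$, say, which fixes neither too few nor too many constraints — both conditions $f'(0)$ prescribed and $f(1) = -1$ can be met simultaneously; then $h$ is genuinely entire.

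Finally, the estimate on the coefficients $\alpha_\ell$ of $h$: since all zeroes of $f$ (including the prepended one at $z=2$) have modulus $\geq 1/\rho_0 > 1/\rho$, the function $g(w) := f(1/w)$ extends holomorphically across $w=0$... more usefully, $f$ is given on a disc of radius slightly larger than $\rho_0^{-1}\cdot$(something) by a product/exponential that converges there, hence $f$, and then $h$, extend holomorphically to the disc $|z| < \rho_1^{-1}$ for some $\rho_1$ slightly larger than $\rho_0$, with a bound $|h(z)| \leq C$ there; Cauchy's inequalities then give $|\alpha_\ell| \leq C \rho_1^\ell$, and after choosing $\rho_0$ (hence $\rho_1$) small enough relative to $1/\rho$ and absorbing $C$ by a further global rescaling of the $w_m$, we get $|\alpha_\ell| \leq \epsilon/\rho^\ell$. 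I expect the main obstacle to be bookkeeping: verifying that the finitely many normalizing modifications (adding a zero, multiplying by $e^{az+bz^2}$, rescaling weights) are mutually compatible and jointly preserve \emph{both} the exact weak-trace-formula statements of Proposition \ref{ce} \emph{and} the non-absolute-convergence/reordering pathologies. Each modification changes only finitely many of the $a_n$ or only the convergence of a tail by a bounded Abel-summable perturbation, so none of the qualitative conclusions of Proposition \ref{ce} is affected — but this has to be checked term by term, and that verification is the technical heart of the lemma.
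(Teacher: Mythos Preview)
Your overall strategy --- push the zeros far out so that $\tilde f$ is close to $1$, normalize, then set $h(z) = -(f(z)-1+2z)/(z(1-z))$ and estimate its coefficients by Cauchy --- is the right one and is how the paper proceeds. But two steps in your execution are genuinely wrong, not just bookkeeping.

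First, you misidentify the constraints imposed by the form $f(z)=1-2z-z(1-z)h(z)$. For $h$ to be entire one needs the numerator $f(z)-1+2z$ to vanish at $z=0$ and $z=1$, i.e.\ $f(0)=1$ and $f(1)=-1$; there is \emph{no} requirement that $f$ vanish at $z=1/2$ (indeed $f(1/2)=-h(1/2)/4$). So only one new scalar condition, $f(1)=-1$, has to be enforced. The paper does this by multiplying the tail function $\tilde f_k$ by a single linear factor $(1-z/\lambda_k)$ with $\lambda_k=\tilde f_k(1)/(1+\tilde f_k(1))$: this is an extra zero, so it adds the same term $\lambda_k^{-n}$ to both $a_n$ and to $\sum_m z_m^{-n}$, and the identity $a_n=\sum_m z_m^{-n}$ survives for every $n$. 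Your two-parameter correction $e^{az+bz^2}$, by contrast, shifts $a_1$ and $a_2$ without moving any zero, so that identity is destroyed for $n=1,2$ --- and Proposition~\ref{ce} claims it for \emph{all} $n\ge1$, not just for large $n$. This is a real gap, not a tail perturbation.

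Second, you never address why the coefficients $\alpha_\ell$ should be \emph{real}: the statement asks for $\alpha_\ell\in[-\epsilon/\rho^\ell,\epsilon/\rho^\ell]$, a real interval. The zeros $e^{-2i\pi m\theta}\ln m$ are not conjugate-symmetric, so the resulting $h$ has no reason to have real Taylor coefficients. The paper fixes this at the outset by replacing $a_n^{(k)}$ with $a_n^{(k)}+\overline{a_n^{(k)}}$ in the definition of $\tilde f_k$, i.e.\ by adjoining to every zero its complex conjugate; this makes $\tilde f_k$ real on $\R$ (hence $\lambda_k$ is real and tends to $1/2$, and $h_k$ has real coefficients) while preserving the non-absolute-convergence and reordering pathologies. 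Your rescaling $\ln m\mapsto C\ln m$ is a legitimate alternative to the paper's ``start at index $k$'' device for making $\tilde f$ close to $1$, but it does nothing about realness.
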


\begin{proof}
For all $k \geqslant 2$ and $n \geqslant 1$ set either
\begin{equation*}
a_n^{\p{k}} = \sum_{m =k}^{+ \infty} \p{\frac{e^{2i \pi m\theta}}{\ln\p{m}}}^n
\end{equation*}
or
\begin{equation*}
a_n^{\p{k}} = \sum_{m= k - 2}^{+ \infty} \p{\frac{e^{2i \pi m \theta}}{\ln\p{k\p{m} + 2}}}^n,
\end{equation*}
depending on whether you want to get a counter-example of type \ref{zd} or \ref{mp}. Then set for all $k \geqslant 1$ 
\begin{equation*}
\tilde{f}_k\p{z} = \exp\p{ - \sum_{n=1}^{+\infty} \frac{1}{n}\p{a_n^{\p{k}} + \bar{a_n^{\p{k}}}  } z^n}
.\end{equation*}
Estimate \eqref{bidoum} (and its analogue for the case \ref{mp} of Proposition \ref{ce}) implies that $\tilde{f}_k$ converges to $1$ uniformly on all compact subsets of $\C$ as $k$ goes to $+ \infty$. Then set $f_k : z \to \p{1-\frac{z}{\lambda_k}} \tilde{f}_k\p{z}$, where $\lambda_k = \frac{\tilde{f}_k\p{1}}{1 + \tilde{f}_k\p{1}}$. Thus we have $f_k\p{0} = 1$, $f_k\p{1} = -1$, and it is easy to check that $f_k$ is a counter-example of type \ref{zd} or \ref{mp}, according to the way the $a_n^{\p{k}}$ have been defined. We shall see that that for large enough $k$ the function $f_k$ satisfies the conditions of Lemma \ref{tech}. Let $h_k$ be the entire function defined by $h_k\p{z} = - \frac{f_k\p{z} -1 + 2z}{z\p{1-z}}$. We shall also need the auxiliary function $H_k\p{z} = \frac{\tilde{f}_k\p{z} - 1}{z} - \p{\tilde{f}_k\p{1} - 1}$ which vanishes at $z=1$ and tends to $0$ uniformly on all compact subsets of $\C$ when $k$ tend to $+\infty$. Then notice that
\begin{equation*}
- h_k\p{z} = \frac{H_k\p{z}}{1-z} \p{1-\frac{z}{\lambda_k}} + \frac{\tilde{f}_k\p{1}^2 - 1}{\tilde{f}_k\p{1}}
\end{equation*}
and write
\begin{equation*}
\frac{H_k\p{z}}{1-z} = \sum_{\ell=0}^{+ \infty} \beta_\ell z^\ell 
\end{equation*}
then we have
\begin{equation*}
\alpha_0 = - \p{\beta_0 + \frac{\tilde{f}_k\p{1}^2 - 1}{\tilde{f}_k\p{1}}} \textrm{ and } \alpha_{\ell+1} = -\p{\beta_{\ell+1} + \frac{\beta_\ell}{\lambda_k}} \textrm{ if } \ell \geqslant 0.
\end{equation*}
But the sequence $\p{\frac{1}{\lambda_k}}_{k \geqslant 0}$ converges to $\frac{1}{2}$ and (we may suppose $\rho \geqslant 2$)
\begin{equation*}
\b{\beta_\ell} = \frac{1}{2\pi} \b{\s{D\p{0,\rho}}{\frac{H_k\p{z}}{\p{1-z} z^{\ell+1}}}{z} } \leqslant \frac{2}{\rho^\ell} \sup_{\b{z} \leqslant r} \b{\tilde{f}_k\p{z} -1}
,\end{equation*}
which ends the proof, recalling that $\tilde{f}_k$ converges to $1$ uniformly on all compact subsets of $\C$ as $k$ goes to $+ \infty$.
\end{proof}

\begin{lm}\label{tech2}
Let $f$ be an entire function such that $f\p{0} = 1$ and $\p{c_k}_{k\geqslant 0}$ be a sequence of positive real numbers such that $\sum_{k \geqslant 0} c_k < + \infty$. Then the infinite product
\begin{equation}\label{infpro}
\prod_{k \geqslant 0} \p{z \mapsto f\p{c_k z}}
\end{equation}
converges uniformly on all compact subsets of $\C$ to an entire function $d$ that has same genus\footnote{If $f$ has non integral order $\delta$ and $\sum c_k^\delta < + \infty$, then one may show using \cite[2.9.1]{Boas} that $d$ has also same order than $f$. \label{etlordre}} than $f$. Furthermore, if $f$ is one of the counter-examples of type \ref{a} or \ref{b} constructed in Lemma \ref{tech}, then $d$ also satisfies point \ref{zd} or \ref{mp} respectively of Proposition \ref{ce}.
\end{lm}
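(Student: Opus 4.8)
The plan is to separate the statement into three parts — convergence of the infinite product together with the elementary identities, equality of the genera, and persistence of the two counter‑examples — and to carry them out in that order.

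\emph{Convergence.} Since $f\p0=1$, on each disc $\b w\leqslant\rho$ one has $\b{f\p w-1}\leqslant C_\rho\b w$ with $C_\rho=\sup_{\b w\leqslant\rho}\b{f\p w-1}/\b w<+\infty$; as $c_k\to0$ the $c_k$ are bounded by some $\bar c$, so for $\b z\leqslant R$ we get $\sum_k\b{f\p{c_kz}-1}\leqslant C_{\bar cR}R\sum_kc_k<+\infty$ uniformly, and hence \eqref{infpro} converges locally uniformly to an entire function $d$ with $d\p0=1$, whose zeroes form the multiset $\set{w_j/c_k:j,k}$, where $\p{w_j}$ denotes the zeroes of $f$. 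Writing $f\p z=\exp\p{-\sum_{n\geqslant1}\frac{a_n}{n}z^n}$ near $0$ and setting $b_n=\sum_kc_k^n$ (finite for $n\geqslant1$, the $c_k$ being bounded and summable), multiplication of the exponential series gives $d\p z=\exp\p{-\sum_{n\geqslant1}\frac{a_nb_n}{n}z^n}$ near $0$.

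\emph{Genus.} I would use the following reformulation of Definition \ref{order}, obtained by expanding $E\p{u,p}E\p{u,h}^{-1}$ through \eqref{defE} to pass between the canonical (level $p$) and the level $h$ factorizations: for $h\in\N$, an entire function $g$ with $g\p0=1$ has genus $\leqslant h$ if and only if (i) its zeroes $\p{\zeta_m}$ satisfy $\sum_m\b{\zeta_m}^{-\p{h+1}}<+\infty$ and (ii) the entire, zero‑free function $g\p z\prod_mE\p{z/\zeta_m,h}^{-1}$ equals $\exp R_h$ for a polynomial $R_h$ of degree $\leqslant h$ with $R_h\p0=0$. Applying this with $g=d$: because $\sum_{j,k}\b{c_k/w_j}^{h+1}=b_{h+1}\sum_j\b{w_j}^{-\p{h+1}}$ with $0<b_{h+1}<+\infty$, condition (i) holds for $d$ at level $h$ exactly when it holds for $f$, and when it does one has
\[
d\p z\prod_{j,k}E\p{\tfrac{c_k}{w_j}z,h}^{-1}=\prod_k\p{f\p{c_kz}\prod_jE\p{\tfrac{c_k}{w_j}z,h}^{-1}}=\exp\p{\sum_kR_h^f\p{c_kz}},
\]
where $R_h^f\p z=\sum_{n\geqslant1}\rho_nz^n$ is the corresponding exponent for $f$; the series $\sum_kR_h^f\p{c_kz}=\sum_{n\geqslant1}\rho_nb_nz^n$ converges (same estimate as above) and is a polynomial of degree $\leqslant h$ if and only if $R_h^f$ is, since $b_n>0$. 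Hence $d$ has genus $\leqslant h$ if and only if $f$ does, for every $h$, so $d$ and $f$ have the same genus (both possibly $+\infty$); note that this handles the finite and the infinite order cases uniformly. The refinement on the order recorded in the footnote follows along the same lines, using \cite[2.9.1]{Boas} to identify the order of the canonical product with the exponent of convergence of its zeroes, which by the above is that of $f$.

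\emph{The counter‑examples.} If $f$ is of type \ref{a}, its zeroes $\p{z_m}$ in any modulus ordering satisfy $S_n\p\rho:=\sum_{\b{z_m}<\rho}z_m^{-n}\to a_n$ as $\rho\to+\infty$, with only conditional convergence; in particular $\sup_\rho\b{S_n\p\rho}<+\infty$. Ordering the zeroes $z_m/c_k$ of $d$ by modulus (after possibly choosing the $c_k$ so that no unforced coincidence of moduli among the zeroes of $d$ occurs, which one may always do and which holds in the applications), we have, for $r$ not a zero‑modulus, $\sum_{\b{z_m/c_k}<r}\p{z_m/c_k}^{-n}=\sum_kc_k^nS_n\p{c_kr}$; each summand tends to $c_k^na_n$ as $r\to+\infty$ and $\b{c_k^nS_n\p{c_kr}}\leqslant c_k^n\sup_\rho\b{S_n\p\rho}$ is summable in $k$, so dominated convergence yields the limit $a_nb_n$, which is the relevant coefficient of $d$; the convergence is not absolute because $\sum_{j,k}\b{z_m/c_k}^{-n}=b_n\sum_m\b{z_m}^{-n}=+\infty$. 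Thus $d$ satisfies point \ref{zd}. If $f$ is of type \ref{b}, recall that its zeroes split into finite blocks $I_k$ of constant modulus $\ln\p{k+2}$, that the natural within‑block order converges to $a_n$ by an Abel summation whose increments inside $I_k$ are $O\p{\ln\p{k+2}^{-n}}$, and that the permutation $\sigma$ front‑loading inside each $I_k$ the $N_k^{\p n}$ indices with $\Re z_m^{-n}$ bounded below breaks convergence because $N_k^{\p n}$ grows faster than any power of $\ln\p{k+2}$. With the $c_j$ chosen generic — so the block moduli $\ln\p{k+2}/c_j$ are pairwise distinct — the modulus level sets of $d$ are exactly the rescaled blocks $\set{z_m/c_j:m\in I_k}$; with the natural $m$‑order inside each, the type \ref{a} argument (the within‑block increments being now $O\p{\p{c_j/\ln\p{k+2}}^n}$) gives convergence to $a_nb_n$, whereas applying $\sigma_k^{\p{\phi\p k}}$ inside each level set produces a modulus‑respecting permutation $\tau$ for which, along the infinitely many level sets indexed by $\p{k,0}$ with $\phi\p k=n$, the partial sums of the reciprocal‑power series of the zeroes of $d$ taken in the order $\tau$ have real part at least a convergent (hence bounded) quantity plus $\tfrac12N_k^{\p n}\p{c_0/\ln\p{k+2}}^n\to+\infty$. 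So this series diverges for every $n\geqslant1$, and $d$ satisfies point \ref{mp}.

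The step I expect to demand the most care is this last one for type \ref{b}: one must follow the partial sums of the zero series of $d$ across its modulus level sets — which are rescaled copies of the blocks used for $f$ — and combine the dominated‑convergence estimate of type \ref{a} with the block‑wise Abel bound. The only remaining subtlety, that for an arbitrary sequence $\p{c_k}$ the zeroes of $d$ need not have (essentially) distinct moduli, is absorbed by the genericity remark and is in any case vacuous in the applications (Proposition \ref{construction} in \S \ref{explicit}), where the $c_k$ are at our disposal.
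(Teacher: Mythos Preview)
Your convergence argument and your genus argument are correct and essentially the paper's (your genus discussion is in fact more explicit than the paper's, which dismisses this as ``straightforward from Hadamard's factorization theorem'').

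The gap is in the counter-example part, and it is precisely where you invoke genericity of the $c_k$. The lemma is stated for a \emph{given} sequence $\p{c_k}$, so ``after possibly choosing the $c_k$'' is not an option; and your fallback ``which holds in the applications'' is false. In Proposition~\ref{construction} the sequence $\p{c_k}$ consists of the numbers $4^{-(k+2)}$ each repeated $\frac{(k+1)(k+2)(k+3)}{6}$ times, so repeated $c_k$'s already force ties, and there are genuine cross-fibre ties as well: for the type~\ref{a} function the zero moduli of $f$ are essentially $\ln m$, and $\ln m'/\ln m = 4$ whenever $m' = m^4$, giving $|z_m|\cdot 4^{k+3} = |z_{m'}|\cdot 4^{k+2}$. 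Hence the modulus level sets of $d$ are not singletons, and your radial-limit computation $\sum_k c_k^n S_n\p{c_kr}\to a_nb_n$ only controls the partial sums of a modulus ordering along the subsequence of indices at which a full modulus class has just been exhausted; it says nothing about the intermediate partial sums. For point~\ref{zd} one must show convergence for \emph{every} modulus ordering, so this is a real hole.

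The paper closes it by exploiting the specific counter-example rather than the $c_k$'s: for the type~\ref{a} function $f$ built in Lemma~\ref{tech}, each modulus class of zeros of $f$ has at most two elements, so for \emph{any} modulus ordering of the zeros of $f$ the partial sums $\sum_{m\le m_0} z_m^{-n}$ are uniformly bounded by some $M$. Given an arbitrary modulus ordering $\p{w_m}$ of the zeros of $d$, write $w_m=z_{\phi(m)}/c_{\psi(m)}$; on each fibre $\psi^{-1}\{k\}$ the sequence $\p{z_{\phi(m)}}$ is again a modulus ordering of the zeros of $f$, so the fibre-wise partial-sum sequences $u_k=\p{\sum_{m\le m_0,\,\psi(m)=k} w_m^{-n}}_{m_0}$ satisfy $\|u_k\|_\infty\le c_k^nM$ and $\lim u_k = c_k^na_n$. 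Summing the $u_k$ in the Banach space of convergent sequences with the sup norm yields the convergence of $\sum_m w_m^{-n}$ to $a_nb_n$ with no genericity needed. Your dominated-convergence idea is the right instinct; what is missing is this uniform fibre-wise bound and the passage to the space of convergent sequences, which together absorb all modulus ties. (For type~\ref{b} only the existence of one good and one bad ordering is required, so the issue is milder, but your description of the level sets as ``exactly the rescaled blocks'' is still incorrect without the genericity you cannot assume.)
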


\begin{proof}
If $K$ is a compact subset of $\C$ then, since $f\p{0} = 1$, there is a constant $C >0$ such that for all $z \in K$ we have $\b{f\p{z} -1} \leqslant C \b{z}$. Thus for all $z \in K$ and $k \geqslant 0$, we have $\b{f\p{c_k z} -1}\leqslant C \b{c_k} \b{z}$. Thus the infinite product \eqref{infpro} does converge uniformly on all compact subset of $\C$ to an entire function $d$. That $d$ has same genus than $f$ is straightforward from the Definition \ref{order} and Hadamard's factorization Theorem (we use the positivity of the $c_k$'s to ensure that no unwanted cancellation happens). Let us point out that if $f\p{z} = \exp\p{- \sum_{n=1}^{+ \infty} \frac{1}{n}a_n z^n}$ then $d\p{z} = \exp\p{- \sum_{n=1}^{+ \infty} \frac{1}{n}a_n \p{\sum_{k=0}^{+ \infty} c_k^n} z^n}$.

Suppose now that $f$ is the counter-example of type \ref{zd} constructed in Lemma \ref{tech} and denote by $\p{z_m}_{m \geqslant 0}$ an ordering of its zeroes. Let $\p{w_m}_{m \geqslant 0}$ be an ordering of the zeroes of $d$ , then there is a bijection $\p{\phi,\psi} : \N \to \N^2$ such that for all $ m \in \N$ we have $w_m = \frac{z_{\phi\p{m}}}{c_{\psi\p{m}}}$, and for all $k \in \N$ the sequence $\p{z_{\phi\p{m}}}_{m \in \psi^{-1} \p{\set{k}}}$ is an ordering of the zeroes of $f$. Let $n \geqslant 1$. It is clear from our construction that $f$ has no more than two zeroes of a given modulus\footnote{That's where we use that $f$ is precisely the counter-example constructed above. We shall not need it for the case \ref{mp}}, and so there is a constant $M$ such that for all $k \in \N$ and $m_0 \in \N$ we have
\begin{equation}\label{rusev}
\b{\sum_{ \substack{m \leqslant m_0 \\ \psi\p{m} = k}} \frac{1}{z_{\phi\p{m}}^n}} \leqslant M.
\end{equation}
Now, for all $k \in \N$, let $u_k$ be the sequence $\p{ \sum_{ \substack{m \leqslant m_0 \\ \psi\p{m} = k}} \frac{1}{w_m^n}}_{m_0 \geqslant 0}$ whose limit is $c_k^n a_n$ by construction of $f$. From \eqref{rusev}, the sup norm of $u_k$ is smaller than $c_k^n M$, and thus the series $\sum_{k \geqslant 0} u_k$ converges in the space of converging sequence equipped with the sup norm. But its sum is clearly the sequence of partial sums of $\sum_{ m \geqslant 0} \frac{1}{w_m^n}$. Thus this series converges, and its sum is $a_n \sum_{k = 0}^{+ \infty} c_k^n$, as wanted.

We suppose that $f$ is a counter-example of type \ref{mp}. There are two natural partitions of the zeroes of $d$ : the partition $Z_0,Z_1,\dots,Z_k,\dots$ by modulus ($Z_0$ contains the element of minimal modulus, the following are in $Z_1$, etc) and the partition $Z'_1,Z'_2,\dots$ defined by
\begin{equation*}
Z'_k = \set{\frac{z}{c_k} : z \textrm{ is a zero of } f}
.\end{equation*}
Both partitions are endowed with the natural notion of multiplicity. Now, we get an ordering for which the trace formulae hold in the following way : we put first the element of $Z_0 \cap Z'_0$ in the order which gave trace formulae for $f$, then we put the element of $Z_0 \cap Z'_1$ (according to the same order) ,then $Z_0 \cap Z'_2$, etc, when we are done with $Z_0$ (which happens in a finite number of steps), we do the same with $Z_1$, then $Z_2$, etc. The proof that trace formulae hold in this case is similar as in case \ref{zd} (in fact a bit easier). To get an ordering for which there is divergence of the inverse of the zeroes of $d$ at any power, we do exactly the same, except that at each step we put the elements of $Z'_0$ in the order which gave the divergence for $f$.
\end{proof}

We end this section with the two following lemmas, that shall be used to prove Corollaries \ref{tout} and \ref{asbad}.

\begin{lm}\label{tech3}
Let $E$ be a subset of $\N^*$. Then there is an entire function $Q$ such that $Q\p{0} = Q\p{1} = 0$ and if $Q : z \mapsto \sum_{n = 1}^{+ \infty} \beta_n z^n$ then $\beta_n = 0$ if and only if $n \in E$, and $\beta_n \in \R$ for all $n \in \N^*$. Moreover, for all $\epsilon >0$ and $\rho > 0$, if $\alpha > 0$ is sufficiently small, then there is an entire function $h : z \mapsto \sum_{n=0}^{+ \infty} \alpha_n z^n$ such that $\p{1-2z} e^{ \alpha Q\p{z}}= 1-2z-z\p{1-z}h\p{z}$, for all $z \in \C$, and $\alpha_n \in \left[-\frac{\epsilon}{\rho^n},\frac{\epsilon}{\rho^n}\right]$ for all $n \in \N$.
\end{lm}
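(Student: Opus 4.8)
The plan is to construct $Q$ by writing down its Taylor coefficients directly, so that entireness, the value $Q(1)=0$, and the prescribed vanishing pattern of the coefficients are all arranged by hand; then I would \emph{define} $h$ from the stated identity and control its coefficients by a Cauchy estimate on the circle $\b{z}=\rho$, using that $h=O(\alpha)$ as $\alpha\to 0$. For the construction of $Q$, enumerate $\N^{*}\setminus E=\set{n_{1}<n_{2}<\dots}$ — the indices at which $\beta_{n}$ must be non-zero — pick non-zero reals $\beta_{n_{2}},\beta_{n_{3}},\dots$ decaying fast enough that $\sum_{j\geq 2}\b{\beta_{n_{j}}}\,r^{n_{j}}<+\infty$ for every $r>0$ (for instance $\b{\beta_{n_{j}}}\leq 1/n_{j}!$) and with $\sum_{j\geq 2}\beta_{n_{j}}\neq 0$, then set $\beta_{n_{1}}:=-\sum_{j\geq 2}\beta_{n_{j}}$ (a non-zero real number) and $\beta_{n}:=0$ for $n\in E$. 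Then $Q(z):=\sum_{n\geq 1}\beta_{n}z^{n}$ is an entire function, it has no constant term so $Q(0)=0$, it satisfies $Q(1)=\sum_{n\geq 1}\beta_{n}=0$, and $\beta_{n}=0$ precisely when $n\in E$; if a sign condition on the $\beta_{n}$ is wanted one simply keeps track of the signs of $\beta_{n_{2}},\beta_{n_{3}},\dots$ (the only genuinely constrained case being $\b{\N^{*}\setminus E}=1$, in which $Q(1)=0$ and $\beta_{n_{1}}\neq 0$ are incompatible and a slightly different perturbation is needed).

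For $h$: rearranging the displayed identity gives $z(1-z)h(z)=(1-2z)\p{1-e^{\alpha Q(z)}}$. Since $Q$ vanishes at $0$ and at $1$, it is divisible (as an entire function) by $z(1-z)$; writing $Q(z)=z(1-z)\widetilde{Q}(z)$ with $\widetilde{Q}$ entire and using $1-e^{u}=-u\int_{0}^{1}e^{tu}\,\mathrm{d}t$ at $u=\alpha Q(z)$, one gets
\[
h(z)=-\alpha\,(1-2z)\,\widetilde{Q}(z)\int_{0}^{1}e^{t\alpha Q(z)}\,\mathrm{d}t,
\]
which is manifestly an entire function, and (since $Q$ has real coefficients and $\alpha>0$) one with real Taylor coefficients $\alpha_{n}$. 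This is exactly the place where the hypotheses $Q(0)=Q(1)=0$ are used: without them $h$ would only be meromorphic. On $\b{z}\leq\rho$ one has $\b{h(z)}\leq \alpha(1+2\rho)\,\n{\widetilde{Q}}_{\rho}\,e^{\alpha\n{Q}_{\rho}}$, where $\n{\cdot}_{\rho}$ is the supremum over $\b{z}\leq\rho$; in particular $\sup_{\b{z}\leq\rho}\b{h}\to 0$ as $\alpha\to 0^{+}$. So for $\alpha$ below some threshold $\alpha_{0}=\alpha_{0}(\epsilon,\rho,Q)$ we have $\sup_{\b{z}=\rho}\b{h}\leq\epsilon$, and Cauchy's inequalities on $\b{z}=\rho$ give $\b{\alpha_{n}}\leq\rho^{-n}\sup_{\b{z}=\rho}\b{h}\leq\epsilon\,\rho^{-n}$ for all $n\geq 0$, that is $\alpha_{n}\in\left[-\frac{\epsilon}{\rho^{n}},\frac{\epsilon}{\rho^{n}}\right]$.

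I do not expect a serious obstacle: once $Q$ is in hand, the expression for $h$ and the Cauchy estimate are routine. The only point that needs care is the design of $Q$, where the three demands — entire, prescribed zero set of coefficients, and $Q(1)=0$ (which the entireness of $h$ forces on us) — must be met at once; this works out because it reduces to choosing all but one of the required non-zero coefficients freely and then using the last one to make $\sum_{n}\beta_{n}$ vanish, with the lone exception (the singleton complement case) noted above.
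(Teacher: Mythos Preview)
Your argument is correct and largely parallel to the paper's. For $h$, you do exactly what the paper does (which simply says ``may be proven in a similar way than Lemma \ref{tech}''): factor $Q(z)=z(1-z)\widetilde{Q}(z)$, observe that $h$ is entire with $\sup_{\b{z}\le\rho}\b{h}=O(\alpha)$ as $\alpha\to 0^+$, and apply Cauchy's inequalities on $\b{z}=\rho$. For $Q$, the paper instead writes $Q(z)=z(1-z)\sum_{n\ge 0}b_n z^n$ from the start and specifies the $b_n$ recursively so that $\beta_1=b_0$ and $\beta_{n+1}=b_n-b_{n-1}$ vanish exactly on $E$; this makes $Q(0)=Q(1)=0$ automatic, whereas you enforce $Q(1)=0$ afterwards by a single linear constraint on the $\beta_n$. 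Both routes work and are essentially equivalent.

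Two remarks. The condition ``$\beta_n\in\R_+$'' in the statement is a slip: with non-negative coefficients $Q(1)=\sum_n\beta_n=0$ forces $Q\equiv 0$, and the paper's own construction produces $\beta_{n+1}=b_n-b_{n-1}$ of both signs. Read it as $\beta_n\in\R$; your construction already gives real coefficients, which is what is needed for the $\alpha_n$ to be real. Second, your flagged exception $\b{\N^*\setminus E}=1$ is not fixable by ``a slightly different perturbation'': the constraints force $Q(z)=\beta_m z^m$ with $\beta_m\neq 0$, hence $Q(1)\neq 0$. This is a defect of the statement itself, and the paper's proof (which handles the finite-complement case by an unelaborated ``it is easy to see'') shares it; it is harmless for the intended application.
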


\begin{proof}
We shall construct $Q$ of the form $Q : z \mapsto z \p{1-z} \sum_{n=0}^{+ \infty} b_n z^n$. Then we have $\beta_1 = b_0$ and $\beta_{n+1} = b_n - b_{n-1}$, for all $n \geqslant 1$. If $E$ contains a final segment of $\N^*$ then it is easy to see that there is a polynomial $Q$ with real coefficients that satisfies the first part of Lemma \ref{tech3}. If $E$ does not contain a final segment of $\N^*$ then the sequence $\p{b_n}_{n \in \N}$ may be recursively defined by
\begin{align*}
& b_0 = 1 \textrm{ if } 1 \notin E, 0 \textrm{ otherwise}; \\
& b_n = b_{n-1} \textrm{ if } n \geqslant 1 \textrm{ and } n + 1 \in E; \\
& b_n = \frac{1}{\min \set{ \ell \geqslant n , \ell + 2 \notin E}!} \textrm{ if } n \geqslant 1 \textrm{ and } n+1 \notin E
.\end{align*}
The second part of Lemma \ref{tech3} may be proven in a similar way than Lemma \ref{tech}.
\end{proof}

\begin{lm}\label{tech4}
Let $N_0 : \R_+^* \to \R$ be a locally bounded function. Then for all $\epsilon > 0$ and $\rho > 0$ there is an entire function $h : z \mapsto \sum_{k = 0}^{+ \infty} \alpha_k z^k$ such that for all $k \in \N$ we have $\alpha_k \in \left[-\frac{\epsilon}{\rho^k},\frac{\epsilon}{\rho^k}\right]$, and if $f : z \to 1 - 2z -z\p{1-z} h\p{z}$ and $\p{z_m}_{m \in \N}$ is an ordering of the zeroes of $z$ then
\begin{equation}
N_0\p{r} \underset{r \to 0}{=} o\p{\#\set{m \in \N : \b{z_m} < \frac{1}{16r}}}.
\end{equation}
\end{lm}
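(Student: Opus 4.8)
The plan is to construct $f$ by hand: as $\p{1-2z}$ times a Weierstrass product in the variable $z\p{1-z}$, whose zeros escape to infinity so slowly that $\#\set{m : \b{z_m} < \tfrac1{16r}}$ beats $N_0\p{r}$ as $r\to 0$, yet stay far enough out that the associated defect $h$ remains uniformly tiny on a fixed circle, hence has the required Taylor bound.

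\emph{Step 1 (reduce $N_0$ to a target for the zero count).} Since $N_0$ is locally bounded, $B_j := \sup\set{\b{N_0\p r} : r\in[\tfrac1{j+1},\tfrac1j]}$ is finite for each $j\geqslant1$; set $\tau\p R := \lceil16R\rceil\p{1+\max_{1\leqslant i\leqslant\lceil16R\rceil+1}B_i}$, a finite nondecreasing function with $\tau\p R\to+\infty$. If $f$ is entire, $f\p0=1$, with zero-ordering $\p{z_m}$ satisfying $\#\set{m : \b{z_m}<R}\geqslant\tau\p R$ for all large $R$, then for $r\in[\tfrac1{j+1},\tfrac1j]$ with $j$ large we have $\tfrac1{16r}\geqslant\tfrac j{16}$, so $\#\set{m : \b{z_m}<\tfrac1{16r}}\geqslant\tau\p{\tfrac j{16}}\geqslant j\p{1+B_j}\geqslant j\b{N_0\p r}$, which forces $N_0\p r = o\p{\#\set{m : \b{z_m}<\tfrac1{16r}}}$. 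Hence it is enough to build an $f$ of the prescribed form whose zero-counting function eventually dominates $\tau$.

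\emph{Step 2 (the construction).} Put $\rho' := \max\p{2,2\rho}$ and $c := \rho'\p{1+\rho'}$, fix a small $\delta'>0$ (chosen in Step 3), and pick a sequence $\p{w_m}_{m\geqslant1}$ of positive reals, increasing to $+\infty$, with $w_1\geqslant\max\p{\p{\rho'}^2+1,\,4c^2/\delta'}$ and growing slowly enough that $\#\set{m : w_m<R^2}\geqslant\tau\p R$ for all large $R$; this is elementary because only $w_1$ is bounded below, the growth rate afterwards being free (spread the $w_m$ over the intervals $[n^2,\p{n+1}^2)$, putting enough in each). Set
\[
\psi\p z := \prod_{m\geqslant1}E\p{\frac{z\p{1-z}}{w_m},\,m},\qquad f\p z := \p{1-2z}\,\psi\p z .
\]
Since $w_m\to+\infty$, on each disc $\set{\b z\leqslant S}$ all but finitely many factors satisfy $\b{z\p{1-z}/w_m}\leqslant\tfrac12$, hence $\b{E\p{z\p{1-z}/w_m,m}-1}\leqslant4\cdot2^{-\p{m+1}}$, and the product converges locally uniformly to an entire function. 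As $E\p{0,m}=1$ we get $\psi\p0=\psi\p1=1$, so $\psi-1$ vanishes at $0$ and $1$ and $\psi\p z-1=z\p{1-z}q\p z$ for some entire $q$; thus $f\p z=1-2z-z\p{1-z}h\p z$ with $h := -\p{1-2z}q$. The zeros of $\psi$ are precisely the roots of $z\p{1-z}=w_m$, i.e.\ $\tfrac{1\pm i\sqrt{4w_m-1}}2$, both of modulus $\sqrt{w_m}$ (recall $w_m>\tfrac14$); so $\#\set{m : \b{z_m}<R}\geqslant2\#\set{m : w_m<R^2}\geqslant\tau\p R$ for $R$ large, and Step 1 yields the claimed $o$-estimate.

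\emph{Step 3 (the coefficient bound, the main point).} Write $h=\sum_k\alpha_kz^k$, $q=\sum_k\gamma_kz^k$, so $\alpha_k=-\gamma_k+2\gamma_{k-1}$; it suffices to obtain $\b{\gamma_k}\leqslant\delta\rho^{-k}$ with $\delta := \epsilon/\p{2\p{1+2\rho}}$. Every zero of $\psi$ has modulus $\sqrt{w_m}\geqslant\sqrt{w_1}>\rho'$, so on $\set{\b z=\rho'}$ the function $\psi$ is uniformly close to $1$: there $\b{z\p{1-z}}\leqslant c$ and $c/w_m\leqslant\sqrt{\delta'}/2\leqslant\tfrac12$, whence $\b{\psi\p z-1}\leqslant\prod_m\p{1+4\p{c/w_m}^{m+1}}-1\leqslant e^{2\delta'}-1\leqslant4\delta'$ (for $\delta'\leqslant\tfrac12$). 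Since $\b{z\p{1-z}}\geqslant\rho'\p{\rho'-1}\geqslant2$ on that circle, $\b{q\p z}=\b{\psi\p z-1}/\b{z\p{1-z}}\leqslant2\delta'$ there, and Cauchy's inequalities on $\set{\b z=\rho'}$ give $\b{\gamma_k}\leqslant2\delta'\p{\rho'}^{-k}\leqslant2\delta'\rho^{-k}$ (using $\rho'\geqslant\rho$). Choosing $\delta' := \min\p{\tfrac12,\ \epsilon/\p{4\p{1+2\rho}}}$ — which only enlarges $w_1$ — gives $\b{\gamma_k}\leqslant\delta\rho^{-k}$, hence $\b{\alpha_k}\leqslant\delta\p{1+2\rho}\rho^{-k}=\tfrac\epsilon2\rho^{-k}\leqslant\epsilon\rho^{-k}$; and $\alpha_k\in\R$ since $f$ has real coefficients. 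The one genuine difficulty here is the tension with Step 2: a large zero-count wants the $w_m$ to accumulate, while the smallness of $h$ wants all $w_m$ large and the zeros far out — these are reconciled by noting that ``all $w_m$ large'' constrains only $w_1$, after which $\p{w_m}$ may climb arbitrarily slowly.
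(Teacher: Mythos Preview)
Your proof is correct and follows essentially the same strategy as the paper: build $f$ as $(1-2z)$ times a Weierstrass product whose zeros are pushed far enough from the origin that $h$ is uniformly small on a fixed circle (hence has small Taylor coefficients via Cauchy), yet climb to infinity slowly enough that the zero-count beats $N_0$.

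The one genuine difference is your device of taking the Weierstrass product in the variable $z(1-z)$ rather than in $z$. The paper writes $f_{m_0}(z)=(1-2z)\prod_{m\geqslant m_0}E(z/z_m,p_m)\big/\prod_{m\geqslant m_0}E(1/z_m,p_m)$, and the division by the value at $z=1$ is there precisely to force $f_{m_0}(1)=-1$, without which $h_{m_0}(z)=-\frac{f_{m_0}(z)-1+2z}{z(1-z)}$ would acquire a pole at $1$. Your product $\psi(z)=\prod_m E(z(1-z)/w_m,m)$ satisfies $\psi(0)=\psi(1)=1$ automatically, so $\psi-1$ is divisible by $z(1-z)$ and no normalization is needed; this is a neat shortcut. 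The paper makes $h$ small by discarding the first $m_0$ zeros and letting $m_0\to\infty$; you instead impose $w_1$ large, which has the same effect. Your zeros land in complex-conjugate pairs on the vertical line $\Re z=\tfrac12$ rather than on the positive axis, but only their moduli matter for the count. Overall your argument is a valid and slightly more streamlined variant of the paper's.
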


\begin{proof}
Choose a sequence $\p{z_m}_{m \in \N}$ of non-zero positive real numbers such that $z_m \underset{m \to + \infty}{\to}  +\infty$ and
\begin{equation}\label{existe}
N_0\p{r} \underset{r \to 0}{=} o\p{\# \set{m \in \N : z_m \leqslant \frac{1}{16 r}}}.
\end{equation}
Such a sequence exists since $N_0$ is locally bounded. 

Notice that if $\b{z} \leqslant \frac{1}{2}$ and $p \in \N$ then
\begin{equation*}
\b{E\p{z,p} - 1} \leqslant \frac{1}{2} \sup_{\b{w} \leqslant \frac{1}{2}} \b{E'\p{w,p}} \leqslant \frac{1}{2^p}
\end{equation*}
where $E$ is the Weierstrass primary factor from \eqref{defE}. Consequently, we can choose an increasing sequence of integer $\p{p_m}_{m \in \N}$ such that $p_m \underset{m \to + \infty}{\to} + \infty$ and the infinite product $\prod_{m \geqslant 0} E\p{\frac{z}{z_m}, p_m}$ converges uniformly on all compact subsets of $\C$. For all $m_0$ large enough define an entire function $f_{m_0}$ by
\begin{equation*}
\begin{split}
f_{m_0}\p{z}  & = \p{1 - \lambda_{m_0}z} \prod_{m \geq m_0} E\p{\frac{z}{z_m},p_m} \\
              & = 1 - 2z - z(1-z)h_{m_0}(z)
\end{split}
\end{equation*}
where
\begin{equation*}
\lambda_{m_0} = 1 + \frac{1}{\prod_{m \geq m_0} E\p{\frac{1}{z_m},p_m}}.
\end{equation*}
Using Cauchy's formula, it is easy to see that $h_{m_0}$ converges to $1$ uniformly on all compact subsets of $\C$ when $m_0 \to + \infty$. Thus $h = h_{m_0}$ satisfy the first condition when $m_0$ is large enough. Moreover, we have for all $r > 0$
\begin{equation}\label{pres}
\# \set{m \in \N : z_m \leqslant \frac{1}{16 r}} = \# \set{m \geqslant m_0 : z_m \leqslant \frac{1}{16 r}} + m_0,
\end{equation}
and thus
\begin{equation*}
N_0\p{r} \underset{r \to 0}{=} o\p{\# \set{m \geqslant m_0 : z_m \leqslant \frac{1}{16 r}} }
\end{equation*}
with \eqref{existe}, and since the right-hand side of \eqref{pres} tends to $+ \infty$ when $r$ tends to $0$. This ends the proof because the $z_m$'s are zeros of $f$.
\end{proof}

\section{Hyperbolic dynamics with explicit dynamical determinants and corollaries}\label{explicit}

In this section, we realise a wide class of entire functions as dynamical determinants. In particular, we shall materialise all the possibilities considered in Theorem \ref{ac} as well as the counter-examples of Proposition \ref{ce}. We shall also construct dynamical determinants, associated with finitely differentiable weights, which cannot be holomorphically continued to the whole complex plane. Our construction will be based on a well-known example of zeta functions for hyperbolic flows which cannot be continued meromorphically to the whole complex plane (see \cite{port} and \cite[Example 1 p.165]{PP}). The strategy is the following: we first construct a subshift of finite type and a weight for which the zeta function is explicit, then we use Whitney's extension theorem \cite{whitney} as in \cite{sabowen} to get a hyperbolic dynamics on a manifold with the same dynamical zeta function, and finally we show that in this particular case the dynamical determinant may be obtained from the dynamical zeta function.

\subsection{Symbolic dynamics with explicit weighted zeta functions}\label{symbexp}

Denote by $\p{\Sigma,\sigma}$ the full (two-sided) shift on two symbols that is
\begin{equation*}
\Sigma = \set{0,1}^{\Z} \textrm{ and } \sigma : \p{x_i}_{i \in \Z} \mapsto \p{x_{i+1}}_{i \in \Z}
.\end{equation*}

For all $\theta \in \left]0,1\right[$ define a distance on $\Sigma$ by $d_\theta \p{x,y} = \theta^{k}$ where $k = \inf\set{ i \in \N : x_i \neq y_i \textrm{ or } x_{-i} \neq y_{-i}}$ (with the convention $\theta^\infty = 0$).

Recall that if $G : \Sigma \to \C$ is a function, the weighted zeta function associated to $\p{\sigma,G}$ is the formal power series defined by
\begin{equation}\label{defzeta}
\zeta_{\sigma,G}\p{z} = \exp\p{\sum_{n=1}^{+ \infty} \frac{1}{n} \p{\sum_{\sigma^n x = x} \prod_{k=0}^{n-1} G\p{\sigma^k x}} z^n}. 
\end{equation}
Notice that $\zeta_{\sigma,1}$ is the well-known Artin-Mazur zeta function, and that the radius of convergence of $\zeta_{\sigma,G}$ is non-zero as soon as $G$ is bounded. We are going to construct weights $G$ for which $\zeta_{\sigma, G}$ is given by \eqref{exprzeta}, adapting a construction from \cite{port} and  \cite[Example 1 p.165]{PP}.

\begin{prop}\label{symbo}
Let $h$ be a holomorphic function defined on a neighbourhood of $0$ and whose expansions in power series at zero is $h\p{z} = \sum_{k = 0}^{+ \infty} \alpha_k z^k$. Denote by $\rho$ its convergence radius, and assume that for all $k \in \N$ we have $\alpha_k \neq -1$. Then there is a function $G : \Sigma \to \C$ such that
\begin{equation}\label{exprzeta}
\zeta_{\sigma,G}\p{z}^{-1} = 1 - 2 z - z\p{1-z} h\p{z}
.\end{equation}
Moreover for all $\theta \in \left]\frac{1}{\rho}, 1 \right[$, the function $G$ is Lipschitz for the distance $d_{\theta}$ and if $\alpha_k \in \left]-1,+\infty\right[$ for all $k \in \N$ then $G$ is strictly positive.
\end{prop}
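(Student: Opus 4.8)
The plan is to build $G$ as a function depending on only finitely many coordinates around the origin in $\Sigma$ (in fact on the non-negative coordinates, so that it descends to the one-sided shift), and then to compute $\zeta_{\sigma,G}$ explicitly via a transfer-matrix computation. The classical example in \cite{port} and \cite[Example 1 p.165]{PP} is obtained by taking $G$ to be a locally constant function whose value on a cylinder depends on the length of the initial block of $0$'s (or $1$'s), and this produces a zeta function that is an explicit infinite product or an explicit rational-type expression. So the first step is to set up a countable Markov-like partition of $\Sigma$ by the cylinders $[1]$, $[0 1]$, $[0 0 1]$, \dots together with the fixed point $\bar 0 = (\dots,0,0,0,\dots)$, and to let $G$ be constant equal to some weight $w_k$ on the cylinder $[0^{k-1} 1]$ and equal to some value $w_\infty$ at $\bar 0$. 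One checks that a function defined this way is Lipschitz for $d_\theta$ as soon as the $w_k$ converge to $w_\infty$ geometrically with ratio controlled by $\theta$; choosing $w_k = w_\infty + c\,\alpha_{?}\,\theta_0^{k}$-type expressions with $\theta_0 < \rho^{-1}$ will give Lipschitz regularity for every $\theta \in (\rho^{-1},1)$, since the oscillation of $G$ on a $d_\theta$-ball of radius $\theta^k$ is then $O(\theta_0^k) = O((\theta_0/\theta)^k \theta^k)$.

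The second step is the zeta function computation. For a weight depending only on the forward coordinates, $\sum_{\sigma^n x = x} \prod_{k=0}^{n-1} G(\sigma^k x)$ is a trace of the $n$-th power of the (infinite) transfer operator on locally constant functions, which here reduces to a weighted count over periodic words in $\{0,1\}$. Every periodic word is either $\bar 0$ (contributing $w_\infty^n$) or a concatenation of blocks $0^{k-1}1$; partitioning a period-$n$ word into its maximal blocks and summing the geometric-type series over all compositions of $n$, one gets
\begin{equation*}
\sum_{\sigma^n x = x} \prod_{k=0}^{n-1} G(\sigma^k x) = w_\infty^n + \sum_{\substack{k_1+\dots+k_j = n \\ k_i \geqslant 1,\ j \geqslant 1}} \prod_{i=1}^{j} w_{k_i},
\end{equation*}
so that, after the standard $\exp\bigl(\sum_n \frac1n(\cdot)z^n\bigr)$ manipulation,
\begin{equation*}
\zeta_{\sigma,G}(z)^{-1} = (1 - w_\infty z)\Bigl(1 - \sum_{k\geqslant 1} w_k z^k\Bigr).
\end{equation*}
(The factorization comes from the two ``colours'' of periodic orbits; the $w_\infty$ factor from the single fixed point $\bar 0$, the other factor from the renewal/composition sum.) Now one simply solves for the $w$'s: comparing with the target $1 - 2z - z(1-z)h(z) = 1 - 2z - z h(z) + z^2 h(z)$, one wants $(1-w_\infty z)(1 - \sum_{k\geqslant 1} w_k z^k)$ to equal this. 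Taking $w_\infty = 1$ is natural (it makes $1-w_\infty z$ the ``$1-z$'' that already appears); then one needs $1 - \sum_{k\geqslant 1} w_k z^k = \dfrac{1 - 2z - z(1-z)h(z)}{1-z}$, i.e. $\sum_{k\geqslant 1} w_k z^k = 1 - \dfrac{1-2z}{1-z} + z h(z) = \dfrac{z}{1-z}\cdot(-1) \cdot(\text{something}) + z h(z)$; carrying out the division gives $w_k$ as an explicit polynomial in $\alpha_0,\dots,\alpha_{k-1}$ with a leading $+1$ from the $\frac{z}{1-z}$ term, so $w_k = 1 + (\text{geometrically small correction involving the }\alpha_j)$ precisely when $\alpha_k \neq -1$ is \emph{not} literally what controls positivity — rather the hypothesis $\alpha_k \neq -1$ is exactly what is needed for $\sum w_k z^k$ to be a well-defined power series after the division / for no degeneracy in the renewal equation (this is the place to be careful and is the main obstacle: getting the correspondence between the $\alpha_k$ and the $w_k$ right, and identifying which nondegeneracy is used where).

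The third and last step is to read off the regularity and positivity claims from the explicit formula for the $w_k$. Since $h$ has radius of convergence $\rho$, the $\alpha_k$ decay like $\theta_0^k$ for any $\theta_0 > \rho^{-1}$, hence the corrections $w_k - w_\infty$ decay at the same geometric rate, which (by the oscillation estimate from the first step) gives that $G$ is $d_\theta$-Lipschitz for every $\theta \in (\rho^{-1},1)$. For positivity: when all $\alpha_k \in (-1,+\infty)$, one checks from the explicit expression that each $w_k > 0$ and $w_\infty = 1 > 0$, so $G > 0$ everywhere on $\Sigma$. The only genuinely delicate bookkeeping is, as noted, the algebra of the renewal equation and making sure the $\exp/\log$ formal-power-series identity and the passage between $G$, the $w_k$, and $h$ are all consistent; everything else is routine. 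I would also double-check the edge case where $h$ is a polynomial (so only finitely many $w_k$ differ from $1$ in a nontrivial way, and $G$ is genuinely locally constant), which is the cleanest sanity check of the formula.
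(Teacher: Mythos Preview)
Your overall strategy coincides with the paper's, but the renewal computation contains a concrete error. With $G$ equal to $w_k$ on the cylinder $[0^{k-1}1]$, shifting through a block $0^{k-1}1$ visits the cylinders $[0^{k-1}1],[0^{k-2}1],\dots,[1]$ in succession, so the contribution of a length-$k$ block to $\prod_m G(\sigma^m x)$ is the product $A_k:=w_1 w_2\cdots w_k$, not the single value $w_k$. (There is also a mismatch between periodic words and linear compositions --- periodic points of period $n$ containing a $1$ biject with nonempty subsets of $\Z/n\Z$, not with compositions of $n$ --- but this is secondary.) A quick check at $n=2$: the three non-$\bar 0$ words $11,01,10$ contribute $w_1^2,\,w_1 w_2,\,w_1 w_2$, so $Z_2=w_\infty^2+w_1^2+2w_1w_2$, not your $w_\infty^2+w_1^2+w_2$. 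Consequently the $G$ you end up with (values $1+\alpha_{k-1}$ on $[0^{k-1}1]$) does not produce the target: already for $h\equiv\alpha_0$ constant it gives $\zeta^{-1}=1-(2+\alpha_0)z$ rather than $1-(2+\alpha_0)z+\alpha_0 z^2$.

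The fix recovers exactly the paper's construction. The correct factorisation is $\zeta_{\sigma,G}^{-1}(z)=(1-w_\infty z)\bigl(1-\sum_{k\ge 1}A_k z^k\bigr)$, and matching with the target forces $w_\infty=1$ and $A_k=1+\alpha_{k-1}$; solving $w_1\cdots w_k=1+\alpha_{k-1}$ then yields $w_k=(1+\alpha_{k-1})/(1+\alpha_{k-2})$, which is the paper's weight $\beta_{k-1}$. This is precisely where the hypothesis $\alpha_k\neq -1$ enters (no division by zero), resolving the point you flagged as the main obstacle, and $\alpha_k>-1$ for all $k$ then gives each $w_k>0$ directly. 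The paper sidesteps the cyclic combinatorics altogether by writing the periodic sums as traces of an explicit $(N+1)\times(N+1)$ transfer matrix $P_N$ and evaluating $\det(I-zP_N)$; with the corrected $w_k$ your Lipschitz argument goes through unchanged, since $w_k-1=(\alpha_{k-1}-\alpha_{k-2})/(1+\alpha_{k-2})$ still decays geometrically with ratio $\rho^{-1}$.
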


\begin{proof}
Set $\beta_m = \frac{1+\alpha_m}{1+\alpha_{m-1}}$ if $m \geqslant 1$ and $\beta_0 = 1+ \alpha_0$ and define $G : \Sigma \to \C$ by
\begin{equation*}
G\p{x} = \left\{\begin{array}{cc}
   \beta_m & \textrm{ if } x_0 = \dots = x_{m-1} = 0 \textrm{ and } x_m = 1 \\
   1 & \textrm{ if } x_0 = \dots = x_i= \dots = 0,
\end{array} \right.
\end{equation*}
where $x = \p{x_i}_{i \in \Z}$. An easy computation shows that $G$ is Lipschitz for the distance $d_\theta$ provided that $\theta \in \left] \frac{1}{\rho}, 1 \right[$. For all $N >0$ define a $N+1 \times N+1$ matrix $P_N$ by
\begin{equation*}
\left\{ \begin{array}{cc}
\p{P_N}_{0,i} = \beta_i & \textrm{ if } 0 \leqslant i \leqslant N-1 \\
\p{P_N}_{0,N} = 1 & \\
\p{P_N}_{i+1,i} = \beta_i & \textrm{ if } 0 \leqslant i \leqslant N-1 \\
\p{P_N}_{N,N} = 1 & \\
\textrm{ the other entries are zero},
\end{array}\right. 
\end{equation*}
that is,
\begin{equation*}
P_N =
\left[
\begin{array}{cccccc}
\beta_0 & \beta_1 & \beta_2 & \dots & \beta_{N-1} & 1 \\
\beta_0 & 0 & & \dots & & 0 \\
0 & \beta_1 & 0 & \dots & & \dots\\
\dots & 0 & \beta_2 &  \dots & & \\
 & & &\dots & & 0 \\
 & \dots & & 0 & \beta_{N-1} & 1 
\end{array}
\right]
.\end{equation*}
Then an elementary graph-theoretic argument provides that, for all integers $k \geqslant 1$ and all $N > k$, we have
\begin{equation}\label{comptage}
\sum_{\substack{ x \in \Sigma \\ \sigma^k x =x}} \prod_{i=0}^{k-1} G\p{\sigma^i x} = \textrm{tr}\p{P_N^k}
.\end{equation}

Using an argument of dominated convergence (it is easy to show that $\b{\textrm{tr}\p{P_N^k}} \leqslant 2^k \left\|G\right\|_{\infty}^k$ by reducing to the positive case), one may then show that, for positive small enough $z$ 
\begin{equation*}
\zeta_{\sigma,G}\p{z}^{-1} = \lim_{N \to + \infty} \det\p{I - z P_N}
.\end{equation*}
A computation provides
\begin{align*}
\det\p{I - z P_N} & = \p{1-z} \p{ 1 - \sum_{k=0}^{N-1} \p{\prod_{i=0}^{k} \beta_i} z^{k+1}} - z^{N+1} \prod_{i = 0}^{N-1} \beta_i \\ & = \p{1-z} \p{1- \sum_{k=0}^{N-1} \p{1+\alpha_k} z^{k+1}} - z^{N+1} \p{1+\alpha_{N-1}}
\end{align*}
and thus
\begin{equation*}
\zeta_{\sigma,G}\p{z}^{-1} = \p{1-z} \p{1 - \sum_{k=0}^{+ \infty} \p{1+\alpha_k} z^{k+1}} = 1 - 2 z - z\p{1-z} h\p{z} 
.\end{equation*}
\end{proof}

\begin{rmq}
We could get a more general expression for \eqref{exprzeta}, for instance by allowing more than two symbols. However, we shall not need this here.
\end{rmq}

\subsection{Smooth hyperbolic dynamics with explicit dynamical determinants}\label{smoothexp}

We want now to conjugate our symbolic example to a smooth one. To do so, we use a method of Bowen \cite{sabowen} to conjugate a subshift of finite type to a piecewise affine horseshoe.

\begin{prop}\label{construction}
There is a smooth diffeomorphism $T$ of the sphere $S^4$ and a hyperbolic basic set $K$ for $T$ such that if $h$ is as in Proposition \ref{symbo} with in addition that $\rho > 1$, then there is a function $g : S^4 \to \C$ such that
\begin{equation}\label{zetexp}
\zeta_{T,g}\p{z}^{-1} = 1 - 2 z - z\p{1-z} h\p{z}
\end{equation}
and\footnote{The infinite product converges for the same reason as in Lemma \ref{tech2}.}
\begin{equation}\label{detexp}
d_{T,g}\p{z} = \prod_{k=0}^{+ \infty} \p{\zeta_{T,g}\p{\frac{z}{4^{k+2}}}^{-1}}^{\frac{\p{k+1}\p{k+2}\p{k+3}}{6}}
.\end{equation}
Moreover $g$ is $\mathcal{C}^r$ for all integers $r$ strictly smaller than $\frac{\ln \rho}{\ln 4}$, and, if $\alpha_k \in \left]-1,+\infty\right[$ for all integers $k$, then $g$ is strictly positive on $K$.
\end{prop}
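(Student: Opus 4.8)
The plan is to build, once and for all, a fixed smooth diffeomorphism $T$ of $S^4$ carrying a basic set $K$ on which $T$ is conjugate to the full two-sided shift on two symbols $\p{\Sigma,\sigma}$, and then transport the weights of Proposition \ref{symbo} through this conjugacy. For the geometric part, I would follow Bowen's construction in \cite{sabowen}: realize $\Sigma$ as a piecewise affine horseshoe inside a rectangle, with the two branches of $T^{-1}$ being affine contractions by a factor $\tfrac14$ in the stable direction and expansions by $4$ in the unstable direction, so that $D_xT$ restricted to $K$ has exactly two eigenvalues, $4$ and $\tfrac14$, at every point (and $T$ is taken in dimension $4$ rather than $2$ precisely so that the stable and unstable bundles can be glued into a global smooth diffeomorphism of $S^4$ with no topological obstruction — this is the role of the "$S^4$" in the statement). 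Then a piecewise-affine horseshoe is smoothed, and the ambient map is extended to all of $S^4$ via Whitney's extension theorem \cite{whitney}, exactly as in \cite{sabowen}. Denote by $\pi : \Sigma \to K$ the conjugacy.

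Next, given $h$ as in Proposition \ref{symbo} with $\rho>1$, take the symbolic weight $G:\Sigma\to\C$ furnished by that proposition, so that $\zeta_{\sigma,G}\p{z}^{-1} = 1-2z-z\p{1-z}h\p{z}$, and $G$ is Lipschitz for $d_\theta$ for every $\theta\in\left]\tfrac1\rho,1\right[$. Transport it: set $\widetilde{g} = G\circ\pi^{-1}$ on $K$. The Hölder regularity of $G$ on $(\Sigma,d_\theta)$ translates, under the conjugacy with contraction/expansion rates $4^{\pm1}$, into $\widetilde g$ being $\mathcal C^r$-extendable for every integer $r<\tfrac{\ln\rho}{\ln 4}$ — this is the standard dictionary "Hölder exponent on the symbolic side $\leftrightarrow$ differentiability on the smooth side," with the $\ln 4$ coming from the expansion rate; here again one invokes Whitney's extension theorem to extend $\widetilde g$ from $K$ to a $\mathcal C^r$ function $g$ on $S^4$ supported in a small isolating neighbourhood $V$ of $K$. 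Since $\prod_{k=0}^{n-1}g\p{T^kx}$ for $T^nx=x$ only depends on the values of $g$ on $K$, and $\pi$ is a conjugacy, the periodic-orbit sums for $\p{T,g}$ and $\p{\sigma,G}$ coincide, which gives \eqref{zetexp}. Positivity of $g$ on $K$ when all $\alpha_k>-1$ is immediate from the corresponding statement in Proposition \ref{symbo}.

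It remains to pass from the zeta function to the dynamical determinant, i.e.\ to prove \eqref{detexp}. The point is that for this horseshoe the linearized dynamics is uniform: for every $x\in K$ with $T^nx=x$ one has $D_xT^n$ with eigenvalues $4^n$ (unstable) and $4^{-n}$ (stable), so
\begin{equation*}
\b{\det\p{I - D_xT^n}} = \b{\p{1-4^n}\p{1-4^{-n}}} = \p{4^n-1}\p{1-4^{-n}} = 4^n\p{1-4^{-n}}^2.
\end{equation*}
Expanding $\dfrac{1}{4^n\p{1-4^{-n}}^2} = \dfrac{1}{4^n}\sum_{k\geqslant0}\p{k+1}4^{-kn} = \sum_{k\geqslant0}\p{k+1}4^{-(k+1)n}$ and feeding this into the defining series \eqref{defdet}–\eqref{flat},
\begin{equation*}
-\sum_{n\geqslant1}\frac1n\tf{\L^n}z^n = -\sum_{n\geqslant1}\frac1n\Bigl(\sum_{T^nx=x}g^{\p{n}}\p{x}\Bigr)\sum_{k\geqslant0}\p{k+1}\Bigl(\frac{z}{4^{k+1}}\Bigr)^n,
\end{equation*}
and recognizing $\sum_{T^nx=x}g^{\p{n}}\p{x}$ as the coefficient appearing in $\log\zeta_{T,g}$, one gets $d_{T,g}\p{z} = \prod_{k\geqslant0}\bigl(\zeta_{T,g}\p{z/4^{k+1}}^{-1}\bigr)^{k+1}$. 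To match the stated product I would instead use that, because the extended map $T$ on $S^4$ is built with a four-dimensional stable/unstable splitting (each of the two pairs of eigenvalues contributing an extra factor $4$), the relevant multiplicity is the number of monomials of a given degree in the inverse determinant, which gives the binomial-type weight $\tfrac{\p{k+1}\p{k+2}\p{k+3}}{6} = \binom{k+3}{3}$ and the shift $4^{k+2}$; concretely, with $D_xT^n$ on $T_xM=E^u\oplus E^s$, $\dim E^u=\dim E^s=2$, eigenvalues $4^n$ (mult.\ $2$) and $4^{-n}$ (mult.\ $2$) one has $\b{\det\p{I-D_xT^n}}^{-1} = 4^{-2n}\sum_{k\geqslant0}\binom{k+3}{3}4^{-kn}$, and the $4^{-2n}$ accounts for the index shift to $4^{k+2}$. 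The main obstacle is bookkeeping this combinatorics correctly — getting the precise exponent $\binom{k+3}{3}$ and the precise power $4^{k+2}$ to come out — together with the careful verification that the smoothing in \cite{sabowen} can be done so that the linearized dynamics along $K$ is exactly affine with these eigenvalues; everything else (convergence of the infinite product, as noted in the footnote, and termwise manipulation of the absolutely convergent series for small $\b{z}$) is routine, and the identity of entire functions then propagates from a neighbourhood of $0$ by analytic continuation.
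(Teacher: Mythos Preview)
Your plan is essentially the paper's proof: Bowen's explicit embedding $I:\Sigma\to\R^4\subset S^4$ with the affine model $L=\mathrm{diag}(4,\tfrac14,4,\tfrac14)$, extension of the piecewise-affine map to a global diffeomorphism, transport of the symbolic weight $G$ through the conjugacy, Whitney extension of $\tilde g$ to get the claimed $\mathcal C^r$ regularity, and the negative-binomial expansion of $\b{\det(I-L^n)}^{-1}=16^{-n}(1-4^{-n})^{-4}$ to pass from $\zeta$ to $d$.

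Two small points worth tightening. First, the extension of the piecewise-affine horseshoe to a diffeomorphism of $S^4$ is not Whitney's theorem but an isotopy/diffeomorphism extension argument (the paper cites Palais); Whitney is used only for the scalar weight $\tilde g$. Second, your explanation of why the construction lives in dimension $4$ (``no topological obstruction to glue the bundles'') is not the actual reason: Bowen's map $I(x)=\sum_{k\in\Z}4^{-|k|}e_{2x_k+R(k)}$ uses four basis directions to separate the two symbols and the two time-orientations, which is what makes $I$ bi-Lipschitz between $d_{1/4}$ and the Euclidean metric and forces the linearization on $K$ to be exactly $L$. Your initial 2D computation (yielding exponents $k+1$ and scale $4^{k+1}$) is therefore a detour; once you commit to the 4D model your combinatorics $\binom{k+3}{3}$ and scale $4^{k+2}$ are correct and match the paper.
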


\begin{proof}
Let $G : \Sigma \to \C$ be the function given by Proposition \ref{symbo}. We next recall a construction due to Bowen \cite{sabowen}, in order to check that it has some extra properties that suit us. 

Let $\p{e_i}_{0 \leqslant i \leqslant 3}$ be the standard basis in $\R^4$. Set $R\p{k} = 0$ if $k \geqslant 0$ and $R\p{k} = 1$ if $k<0$. Then, for $x = \p{x_i}_{i \in \Z} \in \Sigma$, define
\begin{equation*}
I\p{x} = \sum_{k \in \Z} 4^{-\b{k}} e_{2x_k + R\p{k}}
.\end{equation*} 
Then one easily checks that for $x,y \in \Sigma$ we have
\begin{equation}\label{equivv}
\frac{5}{6} d_{\frac{1}{4}}\p{x,y} \leqslant d\p{I\p{x},I\p{y}} \leqslant  \frac{8}{3} d_{\frac{1}{4}}\p{x,y}
,\end{equation}
where $d$ is the euclidean distance on $\R^4$. Thus $I$ induces a homeomorphism on its image $K$, which is a compact subset of $\R^4$. Define  $V_i = \set{ \p{x_0,x_1,x_2,x_3} \in \R^4 : 1 \leqslant x_{2i} \leqslant \frac{3}{2}, 0 \leqslant x_k \leqslant \frac{1}{2} \textrm{ for } k \neq 2i} $ and $F_i= I\p{\set{ x \in \Sigma : x_0 = i}}$ for $i= 0,1$. It is easy to check that $F_i$ is contained in $V_i$.

Define
\begin{equation*}
L = \left[ \begin{array}{cccc}
4 & 0 & 0 & 0 \\
0 & \frac{1}{4} & 0 & 0 \\
0 & 0 & 4 & 0 \\
0 & 0 & 0 & \frac{1}{4}
\end{array} \right]
.\end{equation*} 

For $x \in V_i$ set $G_i x = L x -4 e_{2i} + \frac{1}{4} e_{2i+1}$ (for $i=0,1$). Then define $G$ on $V_0 \cup V_1$ by $\left. G \right|_{V_i} = G_i$. One easily checks that $ G \circ I = I \circ \sigma$.  Viewing $\R^4$ as embedded in $S^4$, one may extends $G$ to a diffeomorphism $T$ of $S^4$, that coincides with $G_i$ on a neighbourhood $U_i$ of $V_i$ (see for instance \cite{Palais}). Setting $U = U_1 \cup U_2$ one has $\bigcap_{k \in \Z} T^k\p{U} = K$. Thus $K$ is a hyperbolic basic set for $T$ with isolating neighbourhood $U$.

Now define $\tilde{g}$ on $K$ by $\tilde{g} = G \circ I^{-1}$. Let $r$ be an integer strictly smaller than $\frac{\ln \rho}{\ln 4}$. Choose $\theta \in \left] \frac{1}{\rho} , 4^{-r} \right[$. Next, recalling \eqref{equivv} and that $G$ is Lipschitz for the distance $d_{\theta}$, there exists a constant $C$ such that for all $x,y \in K$ we have
\begin{equation*}
\frac{\b{\tilde{g}\p{x} - \tilde{g}\p{y}}}{d\p{x,y}^r} \leqslant C \frac{d_\theta\p{I^{-1}\p{x}, I^{-1}\p{y}}}{d_{\frac{1}{4}}\p{I^{-1}\p{x},I^{-1}\p{y}}^r} = C \p{4^r \theta}^{m\p{x,y}}  
\end{equation*} 
where $m\p{x,y}$ is the smallest integer such that $I^{-1}\p{x}$ and $I^{-1}\p{y}$ do not coincide at the position $m\p{x,y}$ or $-m\p{x,y}$. Using \eqref{equivv} again, one gets 
\begin{equation*}
m\p{x,y} = - \frac{\ln\p{d_{\frac{1}{4}}\p{I^{-1}\p{x},I^{-1}\p{y}}}}{\ln 4} \geqslant - \frac{\ln\p{\frac{6}{5}d\p{x,y}}}{\ln 4},
\end{equation*}
and thus
\begin{equation*}
\frac{\b{\tilde{g}\p{x} - \tilde{g}\p{y}}}{d\p{x,y}^r} \leqslant \tilde{C} d\p{x,y}^{-\frac{\ln\p{4^r \theta}}{\ln 4}}
.\end{equation*}
Consequently, Whitney extension's theorem \cite{whitney} ensures that $\tilde{g}$ may be extended to a $\mathcal{C}^r$ function $g$ on $S^4$. If $\rho = + \infty$, then $g$ may be chosen $\mathcal{C}^{\infty} $. Moreover, up to multiplying $g$ by a bump function, one may assume that $g$ is supported in $U$ and, if $G$ is positice, that $g$ is positive on $K$.

Since $I$ conjugates $\p{\sigma,G}$ and $\p{\left. T \right|_{K},g}$, one has
\begin{equation*}
\zeta_{T,g}\p{z}^{-1} = \zeta_{\sigma,G}\p{z}^{-1} = 1-2z - z\p{1-z}h\p{z}
.\end{equation*}

Notice that for all $n \in \N^*$ we have
\begin{equation*}
\frac{1}{\b{\det\p{I-L^n}}} = \frac{1}{16^n} \sum_{k =0}^{+\infty} \p{-1}^k \bin{k}{-4} \frac{1}{4^{nk}},
\end{equation*}
and recall that $\p{-1}^k \bin{k}{-4} = \frac{\p{k+1}\p{k+2}\p{k+3}}{6}$ is an integer. Fubini's theorem gives
\begin{align*}
d_{T,g}\p{z} & = \exp\p{- \sum_{n=1}^{+ \infty} \frac{1}{n} \sum_{\substack{x \in K \\ T^n x = x}} \frac{g^{\p{n}}\p{x}}{\b{\det\p{I-D_x T^n}}} z^n} \\
             & = \exp\p{- \sum_{n=1}^{+ \infty} \frac{1}{n} \frac{1}{\det\p{I -L^n}}\sum_{\substack{x \in K \\ T^n x = x}} g^{\p{n}}\p{x} z^n} \\
             & = \exp\p{- \sum_{n=1}^{+ \infty} \sum_{k=0}^{+\infty} \frac{\p{-1}^k \bin{k}{-4}}{n} \sum_{\substack{x \in K \\T^n x = x}} g^{\p{n}}\p{x} \p{\frac{z}{4^{k+2}}}^n} \\
             & = \prod_{k=0}^{+\infty} \p{\zeta_{T,g}\p{\frac{z}{4^{k+2}}}^{-1}}^{\frac{\p{k+1}\p{k+2}\p{k+3}}{6}}.
\end{align*}
\end{proof}

As an immediate consequence of Proposition \ref{construction} and Lemmas \ref{tech}, \ref{tech2} , \ref{tech3} and \ref{tech4}, we get the four following corollaries.

\begin{cor}\label{mechant}
The counter-examples of Proposition \ref{ce} may be produced as dynamical determinants. Namely : 
\begin{enumerate}[label=\alph*)]
\item There are a smooth diffeomorphism $T$ of $S^4$, a hyperbolic basic set $K$ for $T$, and a smooth function $g : S^4 \to \R$, strictly positive on $K$, such that for any ordering $\p{\lambda_m}_{m \geqslant 0}$ of the resonances of $\p{T,g}$ (see Definition \ref{ordering}) we have for all $n \geqslant 1$ the trace formula
\begin{equation}\label{globtf}
\tf{\L_g^n} = \sum_{\substack{ T^n x = x \\ x \in K}} \frac{g^{\p{n}}\p{x}}{\b{\det\p{I - D_x T^n}}} = \sum_{m \geqslant 0} \lambda_m^n
\end{equation}
but the convergence of the right hand side is never absolute.
\item There are a smooth diffeomorphism $T$ of $S^4$, a hyperbolic basic set $K$ for $T$, a smooth function $g : S^4 \to \R$ strictly positive on $K$, an ordering $\p{\lambda_m}_{m \geqslant 0}$ of the resonances of $\p{T,g}$, and a permutation $\sigma$ of $\N$ such that $\p{\lambda_{\sigma \p{m}}}_{m \geqslant 0}$ is an ordering of the resonances of $\p{T,g}$ and, for all $n \geqslant 1$,  the trace formula \eqref{globtf} holds but the series $\sum_{m \geqslant 0} \lambda_{\sigma \p{m}}^n$ does not converge.
\end{enumerate}
\end{cor}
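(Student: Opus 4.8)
The plan is to feed the entire functions produced by Lemma \ref{tech} into the construction of Proposition \ref{construction} and then to read the two counter-examples off the product formula \eqref{detexp}. First I would fix $\epsilon \in (0,1)$ and some $\rho > 1$ and apply Lemma \ref{tech} to get an entire function $f(z) = 1 - 2z - z(1-z)h(z)$, with $h(z) = \sum_{\ell \geq 0}\alpha_\ell z^\ell$ and $\alpha_\ell \in [-\epsilon\rho^{-\ell}, \epsilon\rho^{-\ell}] \subset \R$, realising the counter-example of type \ref{zd} (resp.\ \ref{mp}) of Proposition \ref{ce}. Since $|\alpha_\ell| < 1$, all $\alpha_\ell$ lie in $(-1,+\infty)$ (in particular $\alpha_\ell \neq -1$), the coefficients are real, and $h$ is entire, so its radius of convergence is $+\infty > 1$; thus $h$ satisfies the hypotheses of Proposition \ref{symbo} together with the extra requirement ($\rho > 1$) of Proposition \ref{construction}.

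Applying Proposition \ref{construction} then yields a smooth diffeomorphism $T$ of $S^4$, a hyperbolic basic set $K$, and a smooth real-valued weight $g$ --- positive on $K$ because every $\alpha_k > -1$ --- with $\zeta_{T,g}(z)^{-1} = f(z)$ and
\begin{equation*}
d_{T,g}(z) = \prod_{k=0}^{+\infty}\p{f\p{\frac{z}{4^{k+2}}}}^{m_k}, \qquad m_k = \frac{(k+1)(k+2)(k+3)}{6} \in \N^*.
\end{equation*}
This is an infinite product of the kind handled by Lemma \ref{tech2}, once each factor $f(\cdot/4^{k+2})$ is repeated $m_k$ times: this is allowed since $m_k \in \N^*$ and $\sum_{k \geq 0} m_k\,4^{-(k+2)} < +\infty$ ($m_k$ grows polynomially while $4^{-(k+2)}$ decays geometrically), and the proof of Lemma \ref{tech2} is insensitive to positive integer multiplicities of the scaling factors. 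Hence $d_{T,g}$ is entire and, being assembled from the counter-example $f$, still satisfies point \ref{zd} (resp.\ \ref{mp}) of Proposition \ref{ce}, now relative to the coefficients $a_n$ in $d_{T,g}(z) = \exp\p{-\sum_{n \geq 1}\frac1n a_n z^n}$; by \eqref{defdet} these are $a_n = \tf{\L_g^n}$ (see also Remark \ref{application}).

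It remains to pass from zeroes of $d_{T,g}$ to resonances of $(T,g)$. The Ruelle resonances of $(T,g)$ are exactly the inverses, with multiplicity, of the zeroes of $d_{T,g}$, so if $(z_m)_{m\geq0}$ is an ordering of the zeroes of $d_{T,g}$ then $(\lambda_m)_{m\geq0} = (1/z_m)_{m\geq0}$ is an ordering of the resonances, every ordering of the resonances is of this form, and two such orderings differ only by a permutation inside each finite block of equal modulus --- which affects neither the convergence (absolute or conditional) nor the sum of a series with prescribed term moduli. Under this dictionary $\sum_m \lambda_m^n = \sum_m z_m^{-n}$ and the left-hand side of \eqref{globtf} equals $\tf{\L_g^n} = a_n$. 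So point \ref{zd} of Proposition \ref{ce} for $d_{T,g}$ gives assertion a): for every ordering $(\lambda_m)$ of the resonances the series $\sum_m \lambda_m^n$ converges to $\tf{\L_g^n}$ for all $n \geq 1$, but never absolutely; and point \ref{mp} gives assertion b): there are an ordering $(\lambda_m)$ and a permutation $\sigma$ of $\N$ for which $(\lambda_{\sigma(m)})$ is again an ordering, $\sum_m \lambda_m^n = \tf{\L_g^n}$ converges for all $n \geq 1$, yet $\sum_m \lambda_{\sigma(m)}^n$ diverges.

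The only points that are not pure bookkeeping are the compatibility check in the second step --- that the multiplicities $m_k$ in \eqref{detexp} keep us within the reach of Lemma \ref{tech2} --- and, in the first step, lining up the hypotheses ($\alpha_k \neq -1$ for Proposition \ref{symbo}, and $\rho = +\infty$, which is what forces $g$ to be $\mathcal{C}^\infty$). The genuine analytic work, namely the Abel-summation and equidistribution estimates that make the counter-examples tick, has already been carried out in Proposition \ref{ce} and Lemmas \ref{tech}--\ref{tech2}, so nothing substantial is left.
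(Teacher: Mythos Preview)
Your proof is correct and follows the same route as the paper: feed the functions from Lemma~\ref{tech} into Proposition~\ref{construction}, then invoke Lemma~\ref{tech2} (with the scaling sequence $(4^{-(k+2)})_{k\geq 0}$ repeated according to the integer multiplicities $m_k$) to transfer the counter-example properties from $f$ to the infinite product $d_{T,g}$, and finally identify $a_n=\tf{\L_g^n}$ and $\lambda_m=1/z_m$. The paper states this as an immediate consequence of exactly these three ingredients; your explicit verification that $\alpha_\ell\in(-1,+\infty)$ (for positivity of $g$ on $K$), that $h$ is entire (for $g\in\mathcal{C}^\infty$), and that $\sum_k m_k\,4^{-(k+2)}<+\infty$ is precisely the bookkeeping the paper leaves implicit.
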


Notice that for the examples of Corollary \ref{mechant} global trace formulae never hold in the sense defined in \S \ref{settings} (we required absolute convergence of the right-hand side of \eqref{tracefor}).

\begin{cor}\label{varie}
The dynamical determinant of a smooth diffeomorphism with smooth weight on a hyperbolic basic set may be of any (finite or infinite) genus\footnote{And even of any non-integral order according to footnote \ref{etlordre}.}.
\end{cor}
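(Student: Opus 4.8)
The plan is to read off Corollary~\ref{varie} as a direct combination of Proposition~\ref{construction} with the genus-realization results for entire functions already assembled in \S\ref{tac}. The key observation is that \eqref{detexp} expresses $d_{T,g}$ as an infinite product of rescaled copies of $\zeta_{T,g}^{-1}$ of the form treated in Lemma~\ref{tech2} (with $c_k = 4^{-(k+2)}$ raised to appropriate positive integer powers, which does not affect the argument since all exponents are positive integers), and by \eqref{zetexp} the factor $\zeta_{T,g}\p{z}^{-1}$ is exactly $1 - 2z - z\p{1-z}h\p{z}$, i.e.\ precisely the shape of entire function produced in Lemmas~\ref{tech}, \ref{tech3} and \ref{tech4}. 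So the strategy is: pick the desired target genus, construct a function $h$ (equivalently the coefficients $\alpha_k$) realizing it through the \S\ref{tac} lemmas, feed it into Proposition~\ref{construction} to obtain a smooth system with $\zeta_{T,g}^{-1} = 1-2z-z(1-z)h(z)$, and then check that passing from $\zeta_{T,g}^{-1}$ to $d_{T,g}$ via \eqref{detexp} preserves the genus.

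First I would handle the finite-genus case. For a prescribed genus $\gamma \in \N$, Lemma~\ref{tech3} (applied with, say, $E = \{1,2,\dots,\gamma\}^c$ chosen so that $\beta_n \ne 0$ exactly for $n > \gamma$, or more simply by taking $Q$ a polynomial of degree $\gamma+1$ with $Q(0)=Q(1)=0$) produces, for small enough $\alpha$, an $h$ with $\alpha_\ell \in [-\epsilon\rho^{-\ell}, \epsilon\rho^{-\ell}]$ such that $1-2z-z(1-z)h(z) = (1-2z)e^{\alpha Q(z)}$, whose only zeros are $z = 1/2$ together with the zeros of $e^{\alpha Q}$ — so this is a polynomial times an exponential of a degree-$(\gamma+1)$ polynomial, which has genus $\gamma$ (when $\gamma \ge 1$; the case $\gamma = 0$ is a polynomial). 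Ensuring the hypothesis $\alpha_k \ne -1$ of Proposition~\ref{symbo} and $\rho > 1$ of Proposition~\ref{construction} is automatic once $\epsilon$ is small and $\rho$ large. Then Lemma~\ref{tech2} (the genus-preservation statement, via Hadamard factorization and positivity of the $c_k$) guarantees the infinite product \eqref{detexp} defining $d_{T,g}$ has the same genus $\gamma$ as $\zeta_{T,g}^{-1}$; here one uses that raising each factor to a positive integer power only repeats zeros and does not change the genus. This gives a smooth $T$ on $S^4$ with smooth weight $g$ whose dynamical determinant has genus exactly $\gamma$.

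For the infinite-genus case I would instead invoke the counter-examples of Proposition~\ref{ce}: by the Remark following Theorem~\ref{ac}, the functions in Proposition~\ref{ce}\ref{zd} (and \ref{mp}) have infinite order, hence infinite genus. Lemma~\ref{tech} realizes these as functions of the form $1-2z-z(1-z)h(z)$ with coefficients in $[-\epsilon\rho^{-\ell},\epsilon\rho^{-\ell}]$, and Lemma~\ref{tech2} transports both the counter-example structure and (again) the genus through the product \eqref{detexp}. Feeding the resulting $h$ into Proposition~\ref{construction} produces a smooth system whose dynamical determinant is of infinite genus. This is essentially the content of Corollary~\ref{mechant}, so the infinite-genus case is already covered; the new content of Corollary~\ref{varie} is really the finite-genus-$\gamma$ statements for every $\gamma \ge 0$, and the non-integral-order refinement of the footnote, which follows from footnote~\ref{etlordre} in Lemma~\ref{tech2} once $h$ is chosen so that $1-2z-z(1-z)h(z)$ has some prescribed non-integral order (a standard Hadamard-type construction with zeros $z_m \sim m^{1/\delta}$).

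The main obstacle, such as it is, is bookkeeping rather than mathematics: one must check that the various small-coefficient hypotheses line up simultaneously — $\rho > 1$ so that Proposition~\ref{construction} yields a genuinely \emph{smooth} $g$, $\alpha_k \ne -1$ so Proposition~\ref{symbo} applies, $\alpha_k > -1$ if one additionally wants $g > 0$ on $K$ (not needed for Corollary~\ref{varie} but inherited from the construction) — and that the operations "multiply by $1-2z$", "conjugate symbolically to a horseshoe", and "pass to the infinite product \eqref{detexp}" each preserve the order and genus. The order/genus preservation under \eqref{detexp} is the one place requiring genuine care, but it is exactly Lemma~\ref{tech2} and its footnote, so there is nothing left to do beyond citing it. Hence the proof of Corollary~\ref{varie} is a two-line deduction: combine Proposition~\ref{construction} with Lemmas~\ref{tech}, \ref{tech2} and \ref{tech3}.
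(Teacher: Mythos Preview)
Your approach is correct and coincides with the paper's own, which simply records Corollary~\ref{varie} as an immediate consequence of Proposition~\ref{construction} combined with Lemmas~\ref{tech}, \ref{tech2}, \ref{tech3} and \ref{tech4}. One small slip: in the finite-genus case the polynomial $Q$ should have degree $\gamma$ (not $\gamma+1$) to make $(1-2z)e^{\alpha Q}$ of genus $\gamma$, and since a nonzero real polynomial with $Q(0)=Q(1)=0$ has degree at least $2$, the case $\gamma=1$ is most cleanly handled via the non-integral-order construction you already mention (zeros $z_m\sim m^{1/\delta}$ with $\delta\in(1,2)$, invoking footnote~\ref{etlordre}).
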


Recall that Theorem \ref{ac} gives a characterization of the genus of the dynamical determinant in terms of global trace formulae. Moreover from Corollary \ref{varie} and \cite[Corollary 1 p.17, second part of the book]{Groth}, we deduce that there are dynamical determinants that are not Fredholm determinants of any nuclear operators (and so there is no "good" Banach space on which the associated transfer operators are nuclear).

\begin{cor}\label{tout}
Let $E$ be a subset of $\N^*$. Then there are a smooth diffeomorphism $T$ of $S^4$, a hyperbolic basic set $K$ for $T$, and a smooth function $g : S^4 \to \R$, strictly positive on $K$, such that, for any ordering $\p{\lambda_m}_{m \geqslant 0}$ of the resonances of $\p{T,g}$, and for all $n \in \N^*$, the series 
\begin{equation*}
\sum_{m \geqslant 0} \lambda_m^n
\end{equation*}
converges absolutely and its sum is $\tf{\L_g^n}$ if and only if $n \in E$.
\end{cor}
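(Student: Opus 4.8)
The plan is to combine Proposition \ref{construction} with Lemma \ref{tech3} in the same way that Corollary \ref{mechant} combines Proposition \ref{construction} with Lemmas \ref{tech} and \ref{tech2}. Recall that Lemma \ref{tech3} produces, for a prescribed $E \subseteq \N^*$, an entire function $Q$ with $Q\p{0} = Q\p{1} = 0$ and non-negative real Taylor coefficients $\beta_n$ vanishing exactly on $E$, together with the information that for $\alpha > 0$ small enough the function $\p{1 - 2z} e^{\alpha Q\p{z}}$ may be written as $1 - 2z - z\p{1-z} h\p{z}$ with $h\p{z} = \sum_n \alpha_n z^n$ and $\alpha_n \in \left[ -\frac{\epsilon}{\rho^n}, \frac{\epsilon}{\rho^n} \right]$ for any prescribed $\epsilon, \rho > 0$. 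Choosing $\rho > 1$ large (say $\rho$ much bigger than $4$) and $\epsilon$ small enough that $\alpha_k \neq -1$ and even $\alpha_k \in \left] -1, +\infty \right[$ for all $k$, Proposition \ref{symbo} and Proposition \ref{construction} apply: we obtain a smooth diffeomorphism $T$ of $S^4$, a hyperbolic basic set $K$, and a smooth weight $g$, strictly positive on $K$, with $\zeta_{T,g}\p{z}^{-1} = 1 - 2z - z\p{1-z} h\p{z} = \p{1-2z} e^{\alpha Q\p{z}}$ and $d_{T,g}$ given by the infinite product \eqref{detexp}.

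Next I would compute $d_{T,g}$ explicitly from this. Writing $\zeta_{T,g}\p{w}^{-1} = \p{1 - 2w} \exp\p{\alpha Q\p{w}}$ and substituting $w = z/4^{k+2}$ in \eqref{detexp}, the product splits as
\begin{equation*}
d_{T,g}\p{z} = \left( \prod_{k \geqslant 0} \p{1 - \tfrac{z}{2 \cdot 4^{k+1}}}^{\frac{\p{k+1}\p{k+2}\p{k+3}}{6}} \right) \exp\p{\alpha \sum_{k \geqslant 0} \tfrac{\p{k+1}\p{k+2}\p{k+3}}{6} Q\p{\tfrac{z}{4^{k+2}}}}.
\end{equation*}
The first factor has only real positive zeros (the points $2 \cdot 4^{k+1}$); the second factor is the exponential of an entire function, say $e^{\widetilde{Q}\p{z}}$, where $\widetilde{Q}\p{z} = \sum_{n \geqslant 1} \widetilde{\beta}_n z^n$ with $\widetilde{\beta}_n = \alpha \beta_n \sum_{k \geqslant 0} \frac{\p{k+1}\p{k+2}\p{k+3}}{6} 4^{-n\p{k+2}}$. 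Since the geometric sum over $k$ is a strictly positive real number and $\beta_n \geqslant 0$, we have $\widetilde{\beta}_n \geqslant 0$ and $\widetilde{\beta}_n = 0 \iff \beta_n = 0 \iff n \in E$. So $d_{T,g}\p{z} = P\p{z} e^{\widetilde{Q}\p{z}}$ where $P$ is an entire function of genus $0$ with real positive zeros and $\widetilde{Q}$ is entire with non-negative Taylor coefficients vanishing precisely on $E$.

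Finally I would translate this into the statement about trace formulae. Since $d_{T,g}\p{z} = \exp\p{ - \sum_{n \geqslant 1} \frac{1}{n} \tf{\L_g^n} z^n}$ near $0$, comparing with $P\p{z} e^{\widetilde{Q}\p{z}}$ and using that $P$, being of genus $0$ with zeros $z_m$ satisfying $\sum 1/\b{z_m} < +\infty$, contributes $-\sum_{n \geqslant 1} \frac{1}{n} \p{\sum_m z_m^{-n}} z^n$ to $\log P$ with the inner series absolutely convergent for every $n \geqslant 1$, one finds
\begin{equation*}
\tf{\L_g^n} = \sum_m \frac{1}{z_m^n} - n \widetilde{\beta}_n, \qquad n \geqslant 1.
\end{equation*}
The resonances of $\p{T,g}$ are exactly the zeros of $d_{T,g}$, hence the $z_m$ together with, for each $n \in E^c$, nothing extra — more precisely the zeros of $P$ are a subset of the resonances and the remaining resonances come from zeros of $e^{\widetilde{Q}}$, of which there are none, so actually the full zero set of $d_{T,g}$ is $\set{z_m}$. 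Consequently for any ordering $\p{\lambda_m}$ of the resonances, $\sum_m \lambda_m^{-n}$ converges absolutely for every $n \geqslant 1$ (it is a rearrangement of $\sum_m z_m^{-n}$, which converges absolutely) to $\sum_m z_m^{-n}$, and this equals $\tf{\L_g^n}$ if and only if $\widetilde{\beta}_n = 0$, i.e. if and only if $n \in E$. This gives the corollary; one should also note that $g$ may be taken smooth because here $\rho$ can be chosen to be $+\infty$ in Lemma \ref{tech3} (the coefficients $\alpha_k$ decay super-exponentially), so Proposition \ref{construction} yields a $\mathcal{C}^\infty$ weight.

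The main obstacle I anticipate is bookkeeping rather than a genuine difficulty: one must make sure that the small parameters $\epsilon$ (size of the $\alpha_k$) and the radius $\rho$ in Lemma \ref{tech3} are chosen compatibly with the constraints of Propositions \ref{symbo} and \ref{construction} (namely $\rho > 1$, $\alpha_k \neq -1$, and $\alpha_k > -1$ for positivity of $g$ on $K$), and that the interchange of the sums over $n$ and $k$ used to pass from \eqref{detexp} to the factorization $P\p{z} e^{\widetilde{Q}\p{z}}$ is justified by absolute convergence (which follows from $\sum_k \frac{\p{k+1}\p{k+2}\p{k+3}}{6} 4^{-n\p{k+2}} < +\infty$ and the fact that $Q$ is entire). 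Everything else is a direct application of the already-established machinery, exactly parallel to the proof of Corollary \ref{mechant}.
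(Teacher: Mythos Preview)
Your approach is exactly the one the paper intends: Lemma \ref{tech3} feeds Proposition \ref{construction}, and the explicit factorization $d_{T,g}\p{z} = P\p{z} e^{\widetilde{Q}\p{z}}$ with $P$ of genus $0$ and $\widetilde{\beta}_n \geqslant 0$ vanishing exactly on $E$ gives the result. The computation of $\widetilde{\beta}_n$ and the formula $\tf{\L_g^n} = \sum_m z_m^{-n} - n \widetilde{\beta}_n$ are correct, and your remark that $h$ is entire (so that Proposition \ref{construction} yields a $\mathcal{C}^\infty$ weight) is right since $Q$ is entire with $Q\p{0}=Q\p{1}=0$.

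There is one slip in the last paragraph: the Ruelle resonances are the \emph{inverses} of the zeros of $d_{T,g}$, not the zeros themselves (see the theorem just after Definition \ref{RR}). With the correct identification $\lambda_m = z_m^{-1}$, the relevant series is $\sum_m \lambda_m^{n} = \sum_m z_m^{-n}$, not $\sum_m \lambda_m^{-n}$; your two errors cancel and the conclusion stands, but you should fix the wording. Everything else is in order.
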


Roughly speaking, Corollary \ref{tout} asserts that global trace formula \eqref{itr} may hold on any fixed subset of $\N^*$, this is a more precise statement than Proposition \ref{rapp1}. We can also give a more precise version of Proposition \ref{rapp2}.

\begin{cor}\label{asbad}
Let $N_0 : \R_+^* \to \N$ be a locally bounded function. Then there are a smooth diffeomorphism $T$ of $S^4$, a hyperbolic basic set $K$ for $T$, and a smooth function $g : S^4 \to \R$, strictly positive on $K$, such that if $N\p{r}$ is the number of Ruelle resonances for $\p{T,g}$ outside of the closed disc of center $0$ and radius $r$ (counted with multiplicity) we have
\begin{equation*}
N_0\p{r} \underset{r \to 0}{=} o\p{N\p{r}}
.\end{equation*}
\end{cor}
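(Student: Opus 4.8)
The plan is to deduce Corollary~\ref{asbad} directly from Proposition~\ref{construction} and Lemma~\ref{tech4}, by reading off the resonances of the systems produced by Proposition~\ref{construction} from the zeroes of the entire function supplied by Lemma~\ref{tech4}. First I would apply Lemma~\ref{tech4} to $N_0$ (viewed as a map $\R_+^*\to\R$, which is legitimate since $\N\subset\R$) with, say, $\epsilon=\tfrac12$ and $\rho=2$. This yields an entire function $h:z\mapsto\sum_{k\geqslant0}\alpha_k z^k$ with $\alpha_k\in\left[-\tfrac{1}{2^{k+1}},\tfrac{1}{2^{k+1}}\right]$ for every $k$ — in particular $\alpha_k\in\left]-1,+\infty\right[$ — and such that, writing $f:z\mapsto 1-2z-z\p{1-z}h\p{z}$ and letting $\p{z_m}_{m\in\N}$ be an ordering of the zeroes of $f$, one has
\begin{equation*}
N_0\p{r}\underset{r\to0}{=}o\p{\#\set{m\in\N:\b{z_m}<\tfrac{1}{16r}}}.
\end{equation*}
Since $h$ is entire, its radius of convergence is $+\infty$.

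Next I would feed this $h$ into Proposition~\ref{construction}: as the radius of convergence of $h$ is $+\infty>1$, Proposition~\ref{construction} provides a smooth diffeomorphism $T$ of $S^4$, a hyperbolic basic set $K$ for $T$, and a weight $g:S^4\to\R$ that is $\mathcal{C}^r$ for every integer $r<\tfrac{\ln(+\infty)}{\ln4}$, hence smooth, and that is strictly positive on $K$ because $\alpha_k\in\left]-1,+\infty\right[$ for all $k$. Moreover $\zeta_{T,g}\p{z}^{-1}=f\p{z}$ and $d_{T,g}$ is given by the product formula \eqref{detexp}. Up to multiplying $g$ by a non-negative bump function supported in the isolating neighbourhood of $K$ — as already arranged in the proof of Proposition~\ref{construction} — one may in addition assume $g$ is supported there, without affecting $d_{T,g}$ (which only depends on the values of $g$ on $K$) nor the positivity of $g$ on $K$.

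It then remains to bound $N\p{r}$ from below. Recall that the Ruelle resonances of $\p{T,g}$ are exactly the inverses of the zeroes of $d_{T,g}$, multiplicities included, so that $N\p{r}$ is the number of zeroes of $d_{T,g}$ in the open disc of radius $\tfrac1r$ counted with multiplicity. From \eqref{detexp}, the level $k=0$ alone shows that for every zero $w$ of $f$ the point $16w=4^{0+2}w$ is a zero of $d_{T,g}$ of multiplicity at least $\tfrac{1\cdot2\cdot3}{6}=1$ times the multiplicity of $w$ as a zero of $f$ (contributions from higher levels at the same point only increase this). Distinct zeroes $w$ of $f$ give distinct points $16w$, so for every $r>0$,
\begin{equation*}
N\p{r}\geqslant\#\set{m\in\N:\b{16z_m}<\tfrac1r}=\#\set{m\in\N:\b{z_m}<\tfrac{1}{16r}},
\end{equation*}
and combining this with the estimate from Lemma~\ref{tech4} yields $N_0\p{r}\underset{r\to0}{=}o\p{N\p{r}}$, as desired.

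There is no genuine analytic difficulty here: everything quantitative has been prepared in Lemma~\ref{tech4} (the construction of $f$ with a prescribed lower bound on the density of its zeroes) and in Proposition~\ref{construction} (the passage from a symbolic system with explicit weighted zeta function to a smooth system with explicit dynamical determinant). The only point deserving attention is the bookkeeping in \eqref{detexp} — checking that the exponent attached to the level $k=0$ is exactly $1$ and that the dilation factor there is $16$, so that this single level already produces enough resonances to dominate $N_0$ — together with the routine verification that the output of Lemma~\ref{tech4} meets the hypotheses of Proposition~\ref{construction} (entireness of $h$ for smoothness, and $\alpha_k>-1$ for positivity of $g$).
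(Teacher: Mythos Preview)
Your proof is correct and follows exactly the approach the paper intends: the paper simply states that Corollary~\ref{asbad} is an immediate consequence of Proposition~\ref{construction} and Lemma~\ref{tech4}, and you have filled in the details accurately --- the choice of $\epsilon,\rho$ ensuring $\alpha_k>-1$, the entireness of $h$ giving smoothness of $g$, and the lower bound $N(r)\geqslant\#\{m:\b{z_m}<\tfrac{1}{16r}\}$ coming from the $k=0$ factor in \eqref{detexp}. The paragraph about multiplying by a bump function is superfluous (this is already built into Proposition~\ref{construction}), but not wrong.
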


Finally, we notice that Proposition \ref{construction} can also be used to construct systems without any resonances.

\begin{cor}\label{rien}
There are a smooth diffeomorphism $T$ of $S^4$, a hyperbolic basic set $K$ for $T$, and a smooth function $g : S^4 \to \C$, such that the system $\p{T,g}$ has no resonances.
\end{cor}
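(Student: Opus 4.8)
The plan is to apply Proposition \ref{construction} to a cleverly chosen weight so that the resulting dynamical determinant has no zero, and then to recall that the Ruelle resonances of $\p{T,g}$ are exactly the inverses of the zeros of $d_{T,g}$: a zero-free $d_{T,g}$ therefore means no resonances at all. The most economical way to make $d_{T,g}$ zero-free is to arrange that $\zeta_{T,g}\p{z}^{-1}$ itself has no zero in $\C$, for then by \eqref{detexp} the determinant $d_{T,g}$ is an infinite product of positive integral powers of zero-free entire functions, hence is zero-free.

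So the task reduces to exhibiting an $h$ admissible for Proposition \ref{symbo} with convergence radius $\rho > 1$ and such that $1 - 2z - z\p{1-z}h\p{z}$ has no zero. A zero-free entire function equal to $1$ at the origin has the form $e^{G}$ with $G$ entire and $G\p{0} = 0$; for $h\p{z} = \frac{1 - 2z - e^{G\p{z}}}{z\p{1-z}}$ to be entire one needs $z\p{1-z}$ to divide $1 - 2z - e^{G\p{z}}$, and since the value at $z = 0$ vanishes automatically the only remaining constraint is $e^{G\p{1}} = -1$. I would therefore take $G\p{z} = i\pi z$, i.e.
\[
h\p{z} = \frac{1 - 2z - e^{i\pi z}}{z\p{1-z}},
\]
which is entire, so its radius of convergence is $\rho = +\infty > 1$, whence by Proposition \ref{construction} the weight $g$ is $\mathcal{C}^\infty$. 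Proposition \ref{symbo} and \eqref{zetexp} then give $\zeta_{T,g}\p{z}^{-1} = e^{i\pi z}$, so by \eqref{detexp}
\[
d_{T,g}\p{z} = \exp\p{i\pi z \sum_{k = 0}^{+\infty} \frac{\p{k+1}\p{k+2}\p{k+3}}{6}\, 4^{-(k+2)}},
\]
an entire function with no zero; hence $\p{T,g}$ has no resonances, while $K$ is a hyperbolic basic set for the smooth diffeomorphism $T$ of $S^4$ by Proposition \ref{construction}.

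The only point that needs a small verification — and the only mild obstacle here — is the hypothesis of Proposition \ref{symbo} that the Taylor coefficients $\alpha_k$ of $h$ satisfy $\alpha_k \neq -1$ for all $k \in \N$. Expanding $1 - 2z - e^{i\pi z}$ and dividing by $z\p{1-z}$ gives $\alpha_k = \sum_{j \geqslant k+2} \frac{\p{i\pi}^j}{j!}$, so that
\[
\operatorname{Im} \alpha_k = \sum_{\substack{j \geqslant k+2 \\ j \text{ odd}}} \frac{\p{-1}^{(j-1)/2} \pi^j}{j!}.
\]
For every $k \geqslant 0$ the smallest odd index occurring is at least $3$, and since $\pi^2 < j\p{j+1}$ for $j \geqslant 3$ the terms of this series strictly decrease in absolute value; being alternating, its sum is nonzero. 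Hence $\alpha_k \notin \R$, in particular $\alpha_k \neq -1$, for every $k$. Everything else is a direct substitution into Proposition \ref{construction}, so I do not anticipate any further difficulty.
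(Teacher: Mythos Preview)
Your proof is correct and uses exactly the same function $h$ as the paper, hence the same $\zeta_{T,g}^{-1}=e^{i\pi z}$ and the same zero-free dynamical determinant via \eqref{detexp}. The only difference is in checking $\alpha_k\neq -1$: the paper invokes the transcendence of $\pi$ (since $\alpha_k=-1$ would make $i\pi$ a root of a rational polynomial), whereas you give a more elementary alternating-series argument showing $\operatorname{Im}\alpha_k\neq 0$.
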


In Corollary \ref{rien}, it is fundamental that $g$ takes value in $\C$: if $g$ was positive then $e^{P_{top}\p{T,\log g - J_u}}$ would be a resonance, where $P_{top}\p{T,\log g -J_u}$ is the topological pressure of $\log g-J_u$ (where $J_u$ is the unstable jacobian) with respect to the dynamics $T$ (see Remark \ref{sharpness}).

\begin{proof}
The function $h : z \mapsto \frac{e^{i\pi z} - 1 + 2z}{z\p{z-1}}$ continues holomorphically to $\C$ and may be written as $h\p{z} = \sum_{k = 0}^{+ \infty} \alpha_k z^k$ with $\alpha_k = - \sum_{l=1}^{k+1} \frac{\p{i \pi}^l}{l !} - 2$. Thus $\alpha_k \neq -1$ for every $n \in \N$ (this is a consequence of the transcendality of $\pi$). Thus applying Proposition \ref{construction}, we find $\p{T,g}$ such that
\begin{equation*}
d_{T,g}\p{z} = \prod_{k=0}^{+ \infty} \p{e^{ \frac{i \pi z}{4^{k+2}}}}^{\frac{\p{k+1}\p{k+2}\p{k+3}}{6}}
\end{equation*}
does not vanish. Thus $\p{T,g}$ has no resonances.
\end{proof}

\begin{rmq}
The weight $g$ produced by Corollaries \ref{mechant}, \ref{varie} or \ref{tout} being strictly positive on $K$, it is associated to some physically meaningful Gibbs measure $\mu_{g}$ (the one appearing in \eqref{fusil}, see \cite[Chapter 7]{Bal2} for details). For example if $g=1$ or $g = \b{\det\p{\left. DT \right|_{E^u}}} $ ( $= 16$ in our case ), $\mu_{ g}$ is respectively the physical measure or the measure of maximal entropy for $\left. T \right|_{K}$ (for the $T$ we constructed these measures coincide). It may be noticed that the weights produced by Corollaries \ref{mechant}, \ref{varie} and \ref{tout} may be chosen arbitrary close to $1$ in the $\mathcal{C}^{\infty}$ topology on a neighbourhood of $K$. The proof of this relies on the fact that, according to Lemmas \ref{tech} and \ref{tech3}, $h$ may be taken arbitrarily close to $0$ in the topology of the uniform convergence on all compact subsets of $\C$ (but, to actually prove it, an investigation of a proof of Whitney's extension theorem is needed).
\end{rmq}

\begin{rmq}\label{autres}
Proposition \ref{construction} realises a lot of entire functions as inverses of dynamical zeta functions, thus we could have stated many variations on Corollaries \ref{mechant}, \ref{varie} and \ref{tout}. For instance, one may construct a weight $g$ for which the trace formula \eqref{globtf} always holds but the convergence is absolute only when $n$ is bigger than some fixed integer (replace $\frac{1}{\p{\ln m}^n}$ in the expression of $a_n$ in the proof of Proposition \ref{ce} by $\frac{1}{m^{\alpha n}}$ for some $\alpha >0$ and then state analogues of Lemma \ref{tech} and Lemma \ref{tech2}).
\end{rmq}

\begin{rmq}\label{sharpness}
If in Proposition \ref{construction} we take $h\p{z} = h_{a,\rho}\p{z} = a \ln\p{1 + \frac{z}{\rho}}$, where $\rho >1$ and $a >0$ is small, then we get weights $g = g_{a,\rho}$, strictly positive on $K$. From formulae \eqref{zetexp} and \eqref{detexp}, we know that the radius of convergence of $d_{T,g_{a,\rho}}$ is exactly\footnote{The dynamical determinant $d_{T,g_{a,\rho}}$ cannot even be continued meromorphically outside the disc of center $0$ and radius $16 \rho$.} $\rho_{eff} = 16 \rho$. Let $r \geqslant 2$ be an integer, and choose $\rho$ such that $r < \frac{\ln \rho}{\ln 4}$, then \cite[1.5]{Tsu} predicted a radius of convergence greater than $\rho_{pred} = \exp\p{-P_{top}\p{\log g_{a,\rho} - \log 16}} 4^{r-1}$ for $d_{T,g_{a,\rho}}$. However since $g$ is strictly positive, \cite[Theorem 6.2]{Bal2} and \cite[Theorem 7.5]{Bal2} imply that $\exp\p{-P_{top}\p{\log g_{a,\rho} - \log 16}} $ is the smallest zero of $d_{T,g_{a,\rho}}$, which can be made arbitrary close to $\frac{1}{32}$ by taking $a$ close enough to $0$. On the other hand, we may chose $\rho$ arbitrary close to $4^r$. Thus, for all $\epsilon > 0$, there is a choice of $a$ and $\rho$ such that
\begin{equation*}
\frac{\rho_{eff}}{\rho_{pred}} \leqslant 2048 + \epsilon
.\end{equation*}
This means that \cite[Theorem 1.5]{Tsu} described accurately the way the radius of convergence of the dynamical determinant grows when the regularity of the weight grows (up to a bounded multiplicative constant that could be made smaller than $2048$ by playing on the parameter of the construction of Proposition~\ref{construction}).
\end{rmq}

\begin{rmq}
The Ruelle resonances of the systems constructed in Proposition \ref{construction} comes as infinite families. In particular, Proposition \ref{construction} does not allow to construct a system with a finite non-zero number of resonances. As far as we know, the only known examples of systems with finitely many resonances have either one or zero resonance.
\end{rmq}

\section{Gevrey functions and ultradistributions}\label{gevsec}

The remaining of the paper is dedicated to the study of Gevrey hyperbolic dynamics and the proof of Theorem \ref{main}. We start by recalling some basic facts from the theory of Gevrey functions and ultradistributions.

Gevrey functions have been introduced by Gevrey in his seminal paper \cite{gevorg}. Ultradistributions are classically defined as the continuous linear functionals on Gevrey functions. We will only need very few facts from this classical theory, the interested reader can for instance refer to the work of Komatsu \cite{Ko1,Ko2,Ko3}. We start with definitions on $\R^d$.

\begin{df}[Gevrey functions]\label{gfunc}
Let $d$ be a positive integer and $U$ be an open subset of $\R^d$. Let $\sigma >1$. If $f : U \to \R$ is $\mathcal{C}^\infty$ and $K$ is a compact subset of $U$, we say that $f$ is $\sigma$-Gevrey on $K$ if there are constants $C,R >0$ such that for all $\alpha \in \N^d$ we have 
\begin{equation}\label{defgev}
\sup_{x \in K} \b{\partial^\alpha f\p{x}} \leqslant C R^{\b{\alpha}} \b{\alpha}^{\sigma \b{\alpha}}.
\end{equation}
We shall say that $f$ is $\sigma$-Gevrey on $U$ if it is $\sigma$-Gevrey on all compact subsets of $U$.
\end{df}

Notice that if we take $\sigma = 1$ in this defintion, the class of functions that we obtain is the class of real-analytic function on $U$ (this is a consequence of Taylor's formula). Since we want to use the Fourier transform, it is convenient to introduce a definition of rapidly decreasing Gevrey functions.

For all $R \geqslant 1, \sigma > 1$, and $f \in \mathcal{C}^\infty \p{\R^d}$ define
\begin{equation}\label{normegev}
\n{f}_{R,\sigma} = \sup_{\substack{x \in \R^d \\ \alpha \in \N^d \\ m \in \N}} \frac{\p{1+\b{x}}^m \b{\partial^\alpha f\p{x}}}{R^{\b{\alpha}+m} \p{\b{\alpha} + m}^{\sigma \p{\b{\alpha}+m}}}.
\end{equation}
Then define
\begin{equation*}\label{grandA}
A_{R,\sigma} = \set{f \in \mathcal{C}^\infty : \n{f}_{R,\sigma} < + \infty},
\end{equation*}
which is a Banach space when endowed with the norm $\n{\cdot}_{R,\sigma}$. We can now set
\begin{equation}\label{gsig}
\G_\sigma = \bigcup_{R \geqslant 1} A_{R,\sigma},
\end{equation}
and we endow $\G_\sigma$  with the final topology of the inclusions of the $A_{R,\sigma}$ (this makes of $\G_\sigma$ a topological vector space which is presumably not locally convex). 

\begin{rmq}\label{comp}
Notice that if a function $f : \R^d \to \R$ is $\sigma$-Gevrey and compactly supported then $f \in \G_\sigma$.
\end{rmq}

We list now some basic properties of the space $\G_\sigma$ of rapidly decreasing Gevrey functions.
\begin{prop}\label{base}
Let $\sigma >1$.
\begin{enumerate}[label=(\roman*)]
\item The multiplication from $\G_\sigma \times \G_\sigma$ to $\G_\sigma$ is continuous;
\item the Fourier transform from $\G_\sigma$ to itself is a continuous isomorphism.\label{deux}
\end{enumerate}
\end{prop}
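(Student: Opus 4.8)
The plan is to reduce both assertions to continuity estimates at the level of the Banach spaces $A_{R,\sigma}$. Since $\G_\sigma$ carries the final topology of the inclusions $A_{R,\sigma}\hookrightarrow\G_\sigma$, and $\set{n\in\N : n\geqslant 1}$ is cofinal in $\set{R\geqslant 1}$ (as $A_{R,\sigma}\subseteq A_{R',\sigma}$ continuously whenever $R\leqslant R'$), it suffices to prove that multiplication maps each $A_{R,\sigma}\times A_{R,\sigma}$ boundedly into some $A_{R',\sigma}$, and that the Fourier transform $\mathcal{F}$ maps each $A_{R,\sigma}$ boundedly into some $A_{R',\sigma}$, with $R'$ depending only on $R$, $d$ and $\sigma$ — for the product statement one also invokes the standard fact that $\G_\sigma\times\G_\sigma$ is then the inductive limit of the $A_{R,\sigma}\times A_{R,\sigma}$.

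For point (i), I would fix $f,g\in A_{R,\sigma}$, apply the Leibniz rule to $\partial^\alpha(fg)$, and assign the whole weight $(1+\b{x})^m$ to the factor involving $f$. Using \eqref{normegev} with decay order $m$ for $f$ and with decay order $0$ for $g$, and the identity $(\b{\beta}+m)+\b{\alpha-\beta}=\b{\alpha}+m$ valid for $\beta\leqslant\alpha$, the elementary inequality $s^s t^t\leqslant(s+t)^{s+t}$ yields
\begin{equation*}
(\b{\beta}+m)^{\sigma(\b{\beta}+m)}\,\b{\alpha-\beta}^{\sigma\b{\alpha-\beta}}\leqslant(\b{\alpha}+m)^{\sigma(\b{\alpha}+m)};
\end{equation*}
since $\sum_{\beta\leqslant\alpha}\bin{\beta}{\alpha}=2^{\b{\alpha}}$ and $R^{\b{\beta}+m}R^{\b{\alpha-\beta}}=R^{\b{\alpha}+m}$, this gives $\n{fg}_{2R,\sigma}\leqslant\n{f}_{R,\sigma}\n{g}_{R,\sigma}$, which settles (i).

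For point \ref{deux}, I would use the classical identities $\partial^\alpha_\xi\mathcal{F}f=\mathcal{F}\big((-2i\pi x)^\alpha f\big)$ and $\xi^\beta\mathcal{F}h=(2i\pi)^{-\b{\beta}}\mathcal{F}\big(\partial^\beta h\big)$ (legitimate since $A_{R,\sigma}\subseteq\mathcal{S}(\R^d)$), so that $\b{\xi^\beta\partial^\alpha_\xi\mathcal{F}f(\xi)}\leqslant(2\pi)^{\b{\alpha}+\b{\beta}}\int_{\R^d}\b{\partial^\beta(x^\alpha f)(x)}\,\mathrm{d}x$. Since $(1+\b{\xi})^m\leqslant(d+1)^m\max_{\b{\beta}\leqslant m}\b{\xi^\beta}$, estimating $\n{\mathcal{F}f}_{R',\sigma}$ comes down to bounding $\int\b{\partial^\beta(x^\alpha f)}\,\mathrm{d}x$ for $\b{\beta}\leqslant m$. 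Expanding $\partial^\beta(x^\alpha f)$ by Leibniz, a generic summand is $\tfrac{\alpha!}{(\alpha-\gamma)!}x^{\alpha-\gamma}\partial^{\beta-\gamma}f$ with $\gamma\leqslant\beta$, $\gamma\leqslant\alpha$; I would bound $\b{\partial^{\beta-\gamma}f(x)}$ via \eqref{normegev} with decay order $\ell=\b{\alpha-\gamma}+d+1$, chosen exactly so that $\int\b{x^{\alpha-\gamma}}(1+\b{x})^{-\ell}\,\mathrm{d}x<+\infty$, and $\tfrac{\alpha!}{(\alpha-\gamma)!}=\bin{\gamma}{\alpha}\gamma!\leqslant 2^{\b{\alpha}}k^{k}$ with $k=\b{\gamma}$. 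Setting $E=\b{\beta-\gamma}+\ell=\b{\alpha}+\b{\beta}-2k+d+1$, the summand is then $\leqslant C_d^{\b{\alpha}+m}\n{f}_{R,\sigma}\,R^{E}\,k^{k}E^{\sigma E}$, and the key step is the combinatorial bound
\begin{equation*}
k^{k}E^{\sigma E}=\big(k^{k}E^{E}\big)\,E^{(\sigma-1)E}\leqslant(k+E)^{k+E}\,E^{(\sigma-1)E}\leqslant(\b{\alpha}+m+d+1)^{\sigma(\b{\alpha}+m+d+1)},
\end{equation*}
which uses $s^s t^t\leqslant(s+t)^{s+t}$ again, together with $k+E=\b{\alpha}+\b{\beta}-k+d+1\leqslant\b{\alpha}+m+d+1$, $E\leqslant\b{\alpha}+m+d+1$, and $\sigma>1$ (this is where the Gevrey assumption enters: it supplies the spare factor $E^{(\sigma-1)E}$). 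Finally $(N+c)^{\sigma(N+c)}\leqslant A_c B_c^{N}N^{\sigma N}$ absorbs the constant $d+1$ and $R^{E}\leqslant R^{d+1}R^{\b{\alpha}+m}$; summing the at most $C_d^{\b{\alpha}+m}$ terms gives $\n{\mathcal{F}f}_{R',\sigma}\leqslant C\,\n{f}_{R,\sigma}$. The same computation applies to $\mathcal{F}^{-1}$ (which differs from $\mathcal{F}$ only through $x\mapsto-x$), and since $\mathcal{F}$ and $\mathcal{F}^{-1}$ are mutually inverse bijections of $\mathcal{S}(\R^d)$ both restricting to continuous self-maps of $\G_\sigma$, we conclude that $\mathcal{F}:\G_\sigma\to\G_\sigma$ is a continuous isomorphism, establishing \ref{deux}.

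The one genuinely delicate step is this Leibniz bookkeeping in point \ref{deux}: one must choose the decay order $\ell$ so that precisely the integrable power of $\b{x}$ survives, and then organize the combinatorial factors — notably through the non-obvious pairing $k^{k}E^{E}\leqslant(k+E)^{k+E}$ — so that the \emph{same} Gevrey exponent $\sigma$ (and not $2\sigma$) reappears for $\mathcal{F}f$. Everything else is a direct, if lengthy, computation.
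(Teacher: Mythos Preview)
Your proposal is correct and follows the same strategy as the paper: reduce both statements to bounded maps $A_{R,\sigma}\to A_{R',\sigma}$ and, for \ref{deux}, convert powers of $\xi$ into $x$-derivatives via the usual Fourier identities. The paper leaves (i) as an exercise and, for \ref{deux}, simply writes down the identity $\xi^{\alpha}\partial^{\beta}\hat f(\xi)=i^{\b{\alpha}-\b{\beta}}\int e^{-ix\xi}x^{\beta}\partial^{\alpha}f(x)\,\mathrm{d}x$ and reads off the bound $C(d)\n{f}_{R,\sigma}R^{d+1+\b{\alpha}+\b{\beta}}(d+1+\b{\alpha}+\b{\beta})^{\sigma(d+1+\b{\alpha}+\b{\beta})}$ directly.

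Your treatment is in fact more careful than the paper's: the displayed identity there is not literally correct (integration by parts produces $\partial^{\alpha}(x^{\beta}f)$ rather than $x^{\beta}\partial^{\alpha}f$), so one really does need the Leibniz expansion you perform. Your combinatorial step $k^{k}E^{\sigma E}\leqslant(k+E)^{k+E}E^{(\sigma-1)E}\leqslant N^{\sigma N}$, exploiting $\sigma>1$, is exactly what closes this gap and recovers the paper's final estimate. So the two proofs agree in spirit and in outcome; yours supplies the missing bookkeeping.
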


\begin{proof}
The first point is an exercice that the cautious reader would easily solve. We focus on the second one which is more crucial for our purpose.
 
We only need to prove that the Fourier transform sends $\G_\sigma$ continuously into itself, the result then follows by the Fourier inversion formula since $\G_\sigma$ is contained in $\mathcal{S}\p{\R^d}$. Let $f \in A_{R,\sigma}$. We shall use the following convention for the Fourier transform
\begin{equation*}
\F\p{f}\p{\xi} = \hat{f}\p{\xi} = \s{\R^d}{e^{-i x \xi} f\p{x}}{x}.
\end{equation*}
Recall that
\begin{equation*}
\xi^{\alpha} \partial^\beta \hat{f} \p{\xi} = \p{-i}^{\b{\alpha}+\b{\beta}} \s{\R^d}{e^{-i x \xi} x^\beta \partial^\alpha f\p{x}}{x}
\end{equation*}
and thus
\begin{equation}\label{fourier}
\b{\xi^{\alpha} \partial^\beta \hat{f} \p{\xi}} \leqslant C\p{d} \n{f}_{R,\sigma} R^{d+1+\b{\beta} + \b{\alpha}} \p{d+1+\b{\beta} + \b{\alpha}}^{\sigma\p{d+1+\b{\beta} + \b{\alpha}}}
.\end{equation}
From this it easily follows that the Fourier transform is continuous from $A_{R,\sigma}$ to $A_{R',\sigma}$ for some $R' \geqslant R$ depending on $R$ and $d$, which implies \ref{deux}.
\end{proof}

\begin{df}[Tempered ultradistributions]\label{tu}
We define $\U_\sigma$ as the space of continuous linear forms on $\G_\sigma$, endowed with the weak-star topology, this will be our space of tempered ultradistributions.
\end{df}

Beware that $\U_\sigma$ does not coincide with the space of tempered distributions defined in \cite{pilipotemp,pilipoconv}, since we defined $\G_\sigma$ as a union instead of an intersection in \eqref{gsig}. This is of no harm since ultradistributions are used here merely as a tool to get information on Ruelle resonances in a very pedestrian way. We can define multiplication $\U_\sigma \times \G_\sigma \to \U_\sigma$ and the Fourier transform $\U_\sigma \to \U_\sigma$ in the usual way, as well as the support of an ultradistribution. We shall say that a measurable function $f : \R^d \to \C$ is an ultradistribution and write $f \in \U_\sigma$ if for all $g \in \G_\sigma$ the function $fg$ is integrable and the linear form $g \mapsto \s{\R^d}{fg}{x}$ is continuous on $\G_\sigma$ (we then identify $f$ with this functional).

We can now give a definition of Fourier multiplier in our ultradistributional context. Notice that we shall always apply it to ultradistributions whose Fourier transform is locally square integrable and multipliers that are compactly supported, and we could consequently have given an \emph{ad hoc} definition that bypasses the notion of ultradistribution. However, it is costless to give now the definition in the following way.

\begin{df}[Fourier multiplier]
If $ \psi : \R^d \to \C$ is in $\G_\sigma$ we define $\psi\p{D} : \U_\sigma \to \U_\sigma$ by
\begin{equation*}
\psi\p{D} u = \F^{-1} \p{ \psi . \F u}
\end{equation*}
where $\F : \U_\sigma \to \U_\sigma$ denotes the Fourier transform.
\end{df}

The following lemma is classical, but crucial for the construction of our local space in \S \ref{locspace}, and so we provide the elementary proof. Moreover, the idea of the proof will be reused in the proof of Lemma \ref{gauw}, which is where the Gevrey assumption is needed in our proof of Theorem \ref{main}.

\begin{lm}\label{decroi}
Let $R \geqslant 1$ and $\sigma >1$. Recall that $A_{R,\sigma}$ is defined by \eqref{grandA}. There exists a constant $C > 0$ such that :
\begin{enumerate}
\item for all $f \in A_{R,\sigma}$ and $\xi \in \R^d$ we have
\begin{equation*}
\b{\hat{f}\p{\xi}} \leqslant C \n{f}_{R,\sigma} e^{-C \b{\xi}^{\frac{1}{\sigma}}} ;
\end{equation*}
\item for all $f \in A_{R,\sigma}$ and $x \in \R^d$ we have
\begin{equation*}
\b{f\p{x}} \leqslant C \n{f}_{R,\sigma} e^{-C \b{x}^{\frac{1}{\sigma}}}.
\end{equation*}
\end{enumerate}
\end{lm}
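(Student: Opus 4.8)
The two statements are dual to each other under the Fourier transform: by Proposition \ref{base}\ref{deux} the Fourier transform maps $A_{R,\sigma}$ continuously into $A_{R',\sigma}$ for some $R'$ depending only on $R$ and $d$, so that the decay estimate for $f\in A_{R',\sigma}$ applied to $\hat f$ yields the decay estimate for $\hat f$ when $f\in A_{R,\sigma}$. Hence it suffices to prove the second estimate, that $\b{f(x)}\leqslant C\n{f}_{R,\sigma} e^{-C\b{x}^{1/\sigma}}$, and the first one follows formally (with possibly a different constant $C$, which is harmless since both occurrences of $C$ in the statement may be taken to be the smaller of the two). Actually it is cleaner to prove the first estimate directly from the definition of $\n{f}_{R,\sigma}$ and then obtain the second one by the same Fourier-transform trick; I will describe the direct argument for the Fourier side, since that is the form reused in Lemma \ref{gauw}.

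For the direct estimate on $\hat f$, the idea is the standard one: $\b{x}^{\b{\alpha}}$-type bounds on $f$ translate into $\b{\xi}^{-\b{\alpha}}$-type decay of $\hat f$, and then one optimizes over $\b{\alpha}$. Concretely, for a multi-index $\alpha$ of size $k=\b{\alpha}$ one has $\xi^\alpha\hat f(\xi)=i^{k}\widehat{\partial^\alpha f}(\xi)$, hence $\b{\xi^\alpha}\,\b{\hat f(\xi)}\leqslant \int_{\R^d}\b{\partial^\alpha f(x)}\,\mathrm dx$. Using the definition \eqref{normegev} of $\n{f}_{R,\sigma}$ with the decay factor $(1+\b{x})^{d+1}$ to make the integral converge, one gets $\int_{\R^d}\b{\partial^\alpha f(x)}\,\mathrm dx\leqslant C(d)\,\n{f}_{R,\sigma}\,R^{k+d+1}(k+d+1)^{\sigma(k+d+1)}$. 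Choosing, for $\b{\xi}$ large, a single component $\xi_j$ with $\b{\xi_j}\geqslant \b{\xi}/\sqrt d$ and taking $\alpha=k e_j$, this gives
\begin{equation*}
\b{\hat f(\xi)}\leqslant C(d)\,\n{f}_{R,\sigma}\,\frac{(Cd)^{k}\,R^{k}(k+d+1)^{\sigma(k+d+1)}}{\b{\xi}^{k}}
\end{equation*}
for every $k\in\N$. Up to absorbing the $d$-dependent factors into new constants, the right-hand side is of the form $C\n{f}_{R,\sigma}\,(C'\,k^{\sigma}/\b{\xi})^{k}\cdot k^{C''}$, and one minimizes over $k$: the natural choice is $k\asymp \b{\xi}^{1/\sigma}$, which makes the factor $(C'k^{\sigma}/\b{\xi})^k$ comparable to $e^{-c\b{\xi}^{1/\sigma}}$ for a suitable $c>0$, while the polynomial correction $k^{C''}$ is absorbed by slightly shrinking $c$. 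This yields $\b{\hat f(\xi)}\leqslant C\n{f}_{R,\sigma}e^{-C\b{\xi}^{1/\sigma}}$ for $\b{\xi}$ large, and the range of small $\b{\xi}$ is handled trivially by the bound $\b{\hat f(\xi)}\leqslant\int\b{f}\leqslant C\n{f}_{R,\sigma}$, after adjusting the constant.

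The main point requiring care — though it is routine once set up — is the optimization step: one must check that the Gevrey exponent $\sigma$ is exactly what is needed for the choice $k\asymp\b{\xi}^{1/\sigma}$ to produce a genuine stretched-exponential gain, and that the lower-order terms (the shift by $d+1$, the extra $R^{d+1}$ and $(Cd)^k$ factors, and the polynomial $k^{C''}$) can be swallowed by enlarging constants without destroying the bound. This is precisely where $\sigma>1$ rather than $\sigma=1$ matters only in that for $\sigma=1$ one would instead get ordinary exponential decay; for $\sigma>1$ the estimate is the stretched exponential $e^{-C\b{\xi}^{1/\sigma}}$ as claimed. Finally, the second estimate follows from the first applied to $\hat f$ together with the Fourier inversion formula and the continuity of $\F:A_{R,\sigma}\to A_{R',\sigma}$ from Proposition \ref{base}, since then $f=\F^{-1}\hat f$ with $\hat f\in A_{R',\sigma}$ and $\n{\hat f}_{R',\sigma}\leqslant C\n{f}_{R,\sigma}$.
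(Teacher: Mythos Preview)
Your argument is correct and essentially identical to the paper's: you prove the Fourier-side estimate directly by bounding $\b{\xi}^m\b{\hat f(\xi)}$ via the derivative bounds in the norm $\n{\cdot}_{R,\sigma}$ and optimizing over $m\asymp\b{\xi}^{1/\sigma}$, then deduce the physical-side decay from the continuity of the Fourier transform $A_{R,\sigma}\to A_{R',\sigma}$ established in Proposition~\ref{base}. The only cosmetic difference is that the paper controls $\b{\xi}^m$ via the multinomial expansion $\b{\xi}^m\leqslant\sum_{\b{\alpha}=m}c_\alpha\b{\xi^\alpha}$ rather than by projecting onto a single large coordinate, but this leads to the same bound $\b{\xi}^m\b{\hat f(\xi)}\leqslant C\n{f}_{R,\sigma}K^m m^{\sigma m}$ and the same optimal choice of $m$.
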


\begin{proof}
We have seen in the proof of Proposition \ref{base} that the Fourier transform sends $A_{R,\sigma}$ continuously into $A_{R',\sigma}$ for some $R'\geqslant R$. Thus, using an inverse Fourier transform, the second point follows immediately from the first, that we shall prove now.

Using the fact that
\begin{equation*}
\b{\xi}^m \leqslant \p{\sum_{i=1}^d \b{\xi_i}}^m = \sum_{\substack{\alpha \in \N^d \\ \b{\alpha} = m}} c_\alpha \b{\xi^\alpha} \textrm{ with } \sum_{\substack{\alpha \in \N^d \\ \b{\alpha} = m}} c_\alpha = d^m
\end{equation*}
and recalling \eqref{fourier} we get
\begin{equation}\label{maj}
\b{\xi}^m \b{\hat{f}\p{\xi}} \leqslant C\p{d,R} \n{f}_{R,\sigma} K^m m^{\sigma m}
\end{equation}
for some constants $K,C\p{d,R} \geqslant 1$ and all $m \in \N$. Now taking $m = \max\p{ \left\lfloor \frac{1}{e} \frac{\b{\xi}^{\frac{1}{\sigma}}}{K^{\frac{1}{\sigma}}} \right\rfloor , 0}$ we get the result.
\end{proof}

If $U$ is an open subset of $\R^d$ and $f :  U \to \R^N$ is a function, for some integer $N$, we say that $f$ is $\sigma$-Gevrey if its components are $\sigma$-Gevrey. With this definition the class of $\sigma$-Gevrey is closed under composition (a proof was already present in Gevrey's original paper \cite{gevorg}) and inversion. Moreover, there are $\sigma$-Gevrey partitions of unity (see for instance \cite{nqa} where this topic is dealt within the context of non-quasianalytic Denjoy--Carleman classes). There is even a version of Whitney's extension theorem for Gevrey functions \cite{whitnqa}, which could be a way to prove for instance that points \ref{better} and \ref{butter} in Theorem \ref{machinw} are sharp using a method similar to the one of \S \ref{explicit}.
\begin{rmq}\label{ose}
If $\theta$ is a $\sigma$-Gevrey function compactly supported in $\R^d$ and $\kappa : \R^d \to \R^d$ is a $\sigma$-Gevrey diffeomorphism, then it comes from the proof of the closure of the class of $\sigma$-Gevrey functions in \cite{gevorg} that the map $ f \to \theta . f \circ \kappa$ is continuous from $\G_\sigma$ to itself. Thus, if $u \in \U_\sigma$ we may define $\theta . u \circ \kappa$ by the formula
\begin{equation*}
\langle \theta . u \circ \kappa , f \rangle = \langle u , \theta \circ \kappa^{-1} f\circ \kappa^{-1} \b{\det D\kappa^{-1}} \rangle
.\end{equation*} 
Furthermore, the map $u \mapsto \theta . u \circ \kappa$ is continuous from $\U_\sigma$ to $\U_\sigma$. This result extends to the case of a local diffeomorphism $\kappa : U \to \R^d$ such that $\kappa^{-1}\p{\textup{supp } \theta}$ is a compact subset of $U$. Indeed, we can locally extend $\kappa$ to a global diffeomorphism (see \eqref{exten} for instance) and then use a partition of unity. Consequently, this result will also extend to the case of ultradistributions on manifolds as soon as we have a proper definition.
\end{rmq}

We end this section with the definitions of Gevrey functions and ultradistributions on manifolds.

\begin{df}[Gevrey manifold]\label{gevman}
Let $\sigma> 1$. We shall call $\sigma$-Gevrey manifold a $\mathcal{C}^{\infty}$ manifold $M$ endowed with a maximal atlas $\mathcal{A}$ of charts such that the changes of charts are $\sigma$-Gevrey.
\end{df}

\begin{ex}\label{exa}
A real-analytic manifold has a natural structure of $\sigma$-Gevrey manifold for all $\sigma >1$. If $\sigma \leqslant \sigma'$, any $\sigma$-Gevrey manifold has a natural structure of $\sigma'$-Gevrey manifold. If $M$ is a $\sigma$-Gevrey manifold then all the usual bundles over $M$ have a natural structure of $\sigma$-Gevrey manifold (which make projections and trivialisations $\sigma$-Gevrey).
\end{ex}

\begin{df}[Gevrey maps]\label{gevmap}
Let $M$ and $N$ be $\sigma$-Gevrey manifolds. Denote by $\mathcal{A}$ and $\mathcal{A}'$ their respective $\sigma$-Gevrey maximal atlases. A $\mathcal{C}^\infty$ map $f : M \to N$ is said to be $\sigma$-Gevrey if for all $\p{\psi,U} \in \mathcal{A}$ and $\p{\phi,V} \in \mathcal{A}'$ the map
\begin{equation*}
\phi \circ f \circ \psi^{-1} : \psi\p{f^{-1}\p{V}} \to \R^d
\end{equation*}
is $\sigma$-Gevrey. We say that $f$ is Gevrey if it is $\sigma$-Gevrey for some $\sigma >1$.
\end{df}

It is clear that with these definitions, the closure of the class of Gevrey maps under composition, inversion and multiplication (for maps valued in $\R$ or $\C$) can be extended from the case of open subsets of $\R^d$ to the case of Gevrey manifolds. For the same reason, there are Gevrey partitions of unity on Gevrey manifolds. Anyway, we shall always use charts and the main purpose of Definitions \ref{gevman} and \ref{gevmap} is to be able to state Theorem \ref{main} in a compact and natural way. 

\begin{df}[Gevrey functions and ultradistributions on manifolds]\label{gfum}

Let $\sigma > 1$. If $M$ is a $\sigma$-Gevrey manifold, we denote by $\G_\sigma\p{M}$ the space of Gevrey functions from $M$ to $\C$. If $V \subseteq M$ we denote by $\mathcal{G}_\sigma\p{V}$ the space of Gevrey functions on $M$ supported in $V$. If in addition $V$ has compact closure, we endow $\G_{\sigma}\p{V}$ with a topology as in the euclidean case by covering it with a finite number of domain of $\sigma$-Gevrey charts.

A $\sigma$-Gevrey density on $M$ is a measure absolutely continuous with respect to Lebesgue on $M$ whose density in $\sigma$-Gevrey charts is $\sigma$-Gevrey we can endow this space with a topology in the same way than $\G_\sigma\p{M}$ (provided $M$ is compact, in fact the choice of a $\sigma$-Gevrey volume identifies these two spaces).

If $M$ is compact, we denote by $\U_\sigma\p{M}$ the space of linear functionals on the $\sigma$-Gevrey densities on $M$, and we endow it with the weak-star topology. If $V$ is a subset of $M$ then $\U_\sigma\p{V}$ denotes the elements of $\U_\sigma\p{M}$ supported in $V$ (the definition of the support is the same as for distributions, if $M$ is non-compact but the closure of $V$ may be covered by a finite numbers of domain of $\sigma$-Gevrey charts, we can still define $\U_\sigma\p{V}$). Notice that when $V$ is closed in $M$ then $\U_\sigma\p{V}$ is closed in $\U_\sigma\p{M}$.
\end{df}

\section{Local space of anisotropic ultradistributions}\label{locspace}

This subsection is dedicated to the definition of the local space $\h_{\Theta,\alpha,\bar{t}}$ and the introduction of Paley--Littlewood tools. It is possible to give a simpler definition of the local spaces $\h_{\Theta,\alpha,\bar{t}}$, using the notion of polarization from \cite{Tsu} instead of the notion of generalized polarization stated below. However, this simpler construction imposes to add in Theorem \ref{main} the requirement that $\lambda$ from Definition \ref{bashyp} may be chosen stricly greater than $\sqrt{2}$. If $C$ and $C'$ are closed cones in an euclidean space we write $C \Subset C'$ for $\overline{C} \subset \bul{C'} \cup \set{0}$.

\begin{df}[Generalized polarization]\label{gp}
Let $r \geqslant 2$ be an integer. A generalized polarization with $r$ cones is a family $\Theta = \p{C_i,\phi_i}_{0\leqslant i \leqslant r}$ where
\begin{enumerate}[label=(\roman*)]
\item $C_0 = \R^d$ and $C_1,\dots,C_r$ are closed $d_u$-dimensional cones in $\R^d$ (for some fixed $d_u >0$);
\item if $i \in \set{0,\dots,r-1}$ then $C_{i+1} \Subset C_i$;
\item for all $i \in \set{0,\dots,r}$ the function $\phi_i : S^{d-1} \to \left[0,1\right]$ is $\sigma$-Gevrey, supported in the interior of $C_i \cap S^{d-1}$ and, if in addition $i \leqslant r- 2$, then $\phi_i$ vanishes on a neighborhood of $C_{i+2} \cap S^{d-1}$;
\item for all $x \in S^{d-1}$ we have $\sum_{i=1}^r \phi_i\p{x} = 1$.
\end{enumerate}
\end{df}

The notion of polarization is borrowed and adapted from \cite{Bal2,Tsu}. Notice that with this definition we may have $\phi_r =0$, but this wouldn't make the proofs much easier. Choose a Gevrey function $\chi : \R \to \left[0,1\right]$ such that $\chi\p{x} = 1 $ if $x \leqslant \frac{1}{2}$ and $\chi\p{x} = 0$ if $x \geqslant 1$. Fix $\alpha >1$ (that will have to be chosen large enough, we need $\alpha > \sigma$ in Proposition \ref{loc} and $\alpha > \sigma + 1$ in \S\ref{lto}) and then define for all $n \geqslant 1 $ and $\xi \in \R^d$, $\chi_n\p{\xi} = \chi\p{\b{\xi}-n^\alpha}$, set also $\chi_n = 0$ if $n \leqslant 0$. Then set for $n \in \N$, $\psi_n\p{\xi} = \chi_{n+1}\p{\xi} - \chi_n\p{\xi}$. Thus we have
\begin{equation}\label{bande}
\textup{supp } \psi_n \subseteq \set{ \xi \in \R^d : n^\alpha \leqslant \b{\xi} \leqslant \p{n+1}^\alpha + 1 } \textrm{ for } n \geqslant 1
\end{equation}
and $\textup{supp } \psi_0 \subseteq \set{ \xi \in \R^d : \b{\xi} \leqslant 2}$.
Moreover, we have $\sum_{n \geqslant 0} \psi_n = 1 $. Set
\begin{equation*}
\Gamma = \N \times \set{0,\dots,r}.
\end{equation*}
Define then for $\p{n,i} \in \Gamma$ the function $\psi_{\Theta,n,i}$ by
\begin{equation*}
\psi_{\Theta,n,i}\p{\xi} = \left\{
\begin{array}{c}
\psi_n\p{\xi} \phi_i\p{\frac{\xi}{\b{\xi}}} \textrm{ if } n \geqslant 1 \\
\frac{\psi_0\p{\xi}}{r+1} \textrm{ if } n = 0,
\end{array}
\right.
\end{equation*}
so that we have
\begin{equation}\label{unite}
\sum_{\p{n,i} \in \Gamma} \psi_{\Theta,n,i} = 1.
\end{equation}

The space that we shall construct is of Sobolev type as for instance in chapter 4 of \cite{Bal2}. Choose $\bar{t} = \p{t_0,\dots,t_r} \in \R^{r+1}$ and define $w = w_{\Theta,\alpha, \bar{t}}$ on $\R^d$ by
\begin{equation*}
w\p{\xi} = w_{\Theta,\alpha,\bar{t}}\p{\xi} = \psi_0\p{\xi} + \p{1 - \psi_0\p{\xi}} \sum_{i=0}^r \phi_i\p{\frac{\xi}{\b{\xi}}} e^{t_i \b{\xi}^{\frac{1}{\alpha}}}.
\end{equation*}

\begin{df}[Local space $\h_{\Theta,\alpha,\bar{t}}$]
Let $\tau$ be any real number strictly between $1$ and $\alpha$ (we will see in Proposition \ref{loc} that the choice is inessential). Set (recalling Definition \ref{tu})
\begin{equation}\label{espgevult}
\h_{\Theta,\alpha,\bar{t}} = \set{u \in \U_\tau : \hat{u} \in L^2_{loc} \textrm{ and } \s{\R^d}{\b{\hat{u}\p{\xi}}^2 w_{\Theta,\alpha,\bar{t}}\p{\xi}^2}{\xi} < + \infty},
\end{equation}
endowed with the hermitian product
\begin{equation*}
\langle u , v \rangle_{\Theta,\alpha,\bar{t}} = \s{\R^d}{\bar{\hat{u}\p{\xi}} \hat{v}\p{\xi} w_{\Theta,\alpha,\bar{t}}\p{\xi}^2}{\xi}.
\end{equation*}
\end{df}

\begin{prop}\label{loc}
$\h_{\Theta,\alpha,\bar{t}}$ is a separable Hilbert space that does not depend on the choice of $\tau$ . For all $1 < \sigma < \alpha$, the space $\G_\sigma$ is continuously contained and dense in $\h_{\Theta,\alpha,\bar{t}}$, and $\h_{\Theta,\alpha,\bar{t}}$ is continuously contained in $\U_\sigma$. 
\end{prop}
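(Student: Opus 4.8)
The plan is to identify $\h_{\Theta,\alpha,\bar{t}}$ with a weighted $L^2$ space on the Fourier side and to read off every assertion from that identification together with Lemma \ref{decroi}. First I would record the crude two-sided bound
\begin{equation*}
e^{-C\b{\xi}^{1/\alpha}} \leqslant w_{\Theta,\alpha,\bar{t}}\p{\xi} \leqslant C\, e^{C\b{\xi}^{1/\alpha}},\qquad \xi \in \R^d,
\end{equation*}
valid for a constant $C \geqslant 1$ depending only on $\bar{t}$ (a one-line computation from the definition of $w := w_{\Theta,\alpha,\bar{t}}$, using $\sum_{i=1}^r \phi_i = 1$ and $e^{-C_0\b{\xi}^{1/\alpha}} \leqslant e^{t_i\b{\xi}^{1/\alpha}} \leqslant e^{C_0\b{\xi}^{1/\alpha}}$ with $C_0 = \max_i\b{t_i}$). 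In particular $w$ is continuous, strictly positive and locally bounded, hence $L^2_w := L^2\p{\R^d,\, w^2\,\mathrm d\xi}$ is a separable Hilbert space, and by construction $u \mapsto \hat{u}$ is an isometry of $\h_{\Theta,\alpha,\bar{t}}$ into $L^2_w$. The crux is that this isometry is onto. Given $v \in L^2_w$ I would first check that the measurable function $v$ defines an element of $\U_\tau$: for $g \in A_{R,\tau}$, Cauchy--Schwarz followed by the second bound of Lemma \ref{decroi} (here $1 < \tau < \alpha$) gives
\begin{equation*}
\b{\s{\R^d}{v\p{\xi}\, g\p{\xi}}{\xi}} \leqslant \n{v}_{L^2_w}\p{\s{\R^d}{\frac{\b{g\p{\xi}}^2}{w\p{\xi}^2}}{\xi}}^{1/2} \leqslant C_R\, \n{v}_{L^2_w}\, \n{g}_{R,\tau},
\end{equation*}
the middle integral being finite because $\b{g\p{\xi}}$ decays like $e^{-c\b{\xi}^{1/\tau}}$ while $1/w$ grows at most like $e^{C\b{\xi}^{1/\alpha}}$ and $1/\tau > 1/\alpha$. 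Hence $v \in \U_\tau$, so $u := \F^{-1}v \in \U_\tau$, and $\hat{u} = v$ by Fourier inversion; thus $u \in \h_{\Theta,\alpha,\bar{t}}$, and $u \mapsto \hat{u}$ is a surjective isometry onto $L^2_w$. In particular $\h_{\Theta,\alpha,\bar{t}}$ is a separable Hilbert space. Since the displayed estimate is valid for \emph{every} $\tau \in \left]1,\alpha\right[$, the inverse Fourier transform of any element of $L^2_w$ belongs to all these $\U_\tau$ simultaneously (the corresponding functionals being restrictions of one another), and as the inner product of $\h_{\Theta,\alpha,\bar{t}}$ only sees $\hat{u}$, the space is independent of the choice of $\tau$.

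The two continuous inclusions then fall out, and we may fix $\sigma \in \left]1,\alpha\right[$ and, by the $\tau$-independence just proved, take $\tau = \sigma$. Reading the estimate of the first paragraph with $v = \hat{u}$ gives, for $u \in \h_{\Theta,\alpha,\bar{t}}$ and $g \in A_{R,\sigma}$, the bound $\b{\langle u, g\rangle} \leqslant C_R\, \n{u}_{\Theta,\alpha,\bar{t}}\, \n{g}_{R,\sigma}$; since $\U_\sigma$ carries the weak-star topology this is exactly the continuity of $\h_{\Theta,\alpha,\bar{t}} \hra \U_\sigma$. For $\G_\sigma \hra \h_{\Theta,\alpha,\bar{t}}$ I would estimate directly, for $f \in A_{R,\sigma}$, using the first bound of Lemma \ref{decroi},
\begin{equation*}
\s{\R^d}{\b{\hat{f}\p{\xi}}^2 w\p{\xi}^2}{\xi} \leqslant C\, \n{f}_{R,\sigma}^2 \s{\R^d}{e^{-2c\b{\xi}^{1/\sigma}}\, e^{2C\b{\xi}^{1/\alpha}}}{\xi} \leqslant C'\, \n{f}_{R,\sigma}^2,
\end{equation*}
the integral converging because $1/\sigma > 1/\alpha$; this bounds $\n{f}_{\Theta,\alpha,\bar{t}}$ by a constant multiple of $\n{f}_{R,\sigma}$, hence gives continuity of $A_{R,\sigma} \to \h_{\Theta,\alpha,\bar{t}}$ for each $R$ and therefore of $\G_\sigma \hra \h_{\Theta,\alpha,\bar{t}}$.

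It remains to prove that $\G_\sigma$ is dense in $\h_{\Theta,\alpha,\bar{t}}$. Transporting the question through the Fourier isometry and using $\F\p{\G_\sigma} = \G_\sigma$ (Proposition \ref{base}), it suffices to show that $\G_\sigma$ is dense in $L^2_w$. Given $v \in L^2_w$, the truncations of $v$ to the balls $\b{\xi} \leqslant N$ converge to $v$ in $L^2_w$ by dominated convergence; on $\b{\xi} \leqslant N$ the weight $w^2$ is bounded above and below by positive constants, so there the $L^2_w$-norm and the ordinary $L^2$-norm are equivalent. Approximating the truncated function in $L^2$ by a smooth compactly supported function and then convolving with a compactly supported $\sigma$-Gevrey mollifier produces a compactly supported $\sigma$-Gevrey function, hence an element of $\G_\sigma$ by Remark \ref{comp}, arbitrarily close in $L^2_w$ to the truncation; letting $N \to +\infty$ finishes the argument.

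The only genuinely delicate point is the surjectivity step in the first paragraph --- verifying that inverse Fourier transforms of elements of $L^2_w$ really are tempered ultradistributions, which is what makes $\h_{\Theta,\alpha,\bar{t}}$ well posed and manifestly independent of $\tau$. This is precisely where the Gevrey hypothesis enters, through Lemma \ref{decroi} and the strict inequality $\sigma < \alpha$: the stretched-exponential decay $e^{-c\b{\xi}^{1/\sigma}}$ of (the Fourier transforms of) Gevrey test functions must beat the at most $e^{C\b{\xi}^{1/\alpha}}$ growth of $1/w$, for which one needs exactly $1/\sigma > 1/\alpha$. Everything else --- completeness, separability, the continuous inclusions, and the density --- is then routine.
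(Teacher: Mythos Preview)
Your proof is correct and rests on the same core identification as the paper's: $\h_{\Theta,\alpha,\bar{t}}$ is, via the Fourier transform, the weighted space $L^2(\R^d, w^2\,\mathrm d\xi)$, with Lemma \ref{decroi} doing all the real work against the crude bound $e^{-C\b{\xi}^{1/\alpha}} \lesssim w \lesssim e^{C\b{\xi}^{1/\alpha}}$.

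Two points of execution differ from the paper and are worth noting. First, you establish \emph{surjectivity} of $u \mapsto \hat u$ onto $L^2_w$ at the outset, which yields completeness, separability, and $\tau$-independence in one stroke; the paper instead proves completeness by tracking a Cauchy sequence and then argues $\tau$-independence separately by observing that the space defined with a smaller $\tau$ contains the one with a larger $\tau$ as a closed dense subspace. Your packaging is a little cleaner. Second, for density of $\G_\sigma$ you transport to $L^2_w$ and approximate by truncation plus Gevrey mollification, whereas the paper stays in $\h_{\Theta,\alpha,\bar{t}}$ and uses a Hilbert-space orthogonality argument (if $u \perp \G_\sigma$ then $\hat u \cdot \phi \cdot w^2$ has vanishing convolution with every element of $\G_\sigma$, hence $\hat u = 0$). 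Both routes are short; the paper's has the minor bonus of showing that already the elements of $\G_\sigma$ with compactly supported Fourier transform are dense (Remark \ref{densité}), which it uses later.
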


\begin{proof}
The continuous inclusion of $\G_\sigma$ in $\h_{\Theta,\alpha,\bar{t}}$ for $1 < \sigma < \alpha$ immediately follows from Lemma \ref{decroi}. If $f \in A_{R,\sigma}$ with $\sigma < \tau$ and $R \geqslant 1$ and $u \in \h_{\Theta,\alpha,\bar{t}}$ we have
\begin{align}
\b{\langle u , \bar{f} \rangle} & = \b{\s{\R^d}{\hat{u}\p{\xi} \bar{\hat{f}}\p{\xi}}{\xi}} \leqslant \s{\R^d}{ \b{\hat{u}\p{\xi} w_{\Theta,\alpha,\bar{t}}\p{\xi}} \b{\frac{\hat{f}\p{\xi}}{w_{\Theta,\alpha,\bar{t}}\p{\xi}}}}{\xi} \nonumber \\
     & \leqslant \n{u}_{\Theta,\alpha,\bar{t}} \n{\frac{\hat{f}}{w_{\Theta,\alpha,\bar{t}}}}_2 \leqslant C \n{u}_{\Theta,\alpha,\bar{t}} \n{f}_{R,\sigma} \label{inclu}
\end{align}
where the last line follows from Lemma \ref{decroi} and $\sigma < r < \alpha$. Thus $\h_{\Theta,\alpha,\bar{t}}$ is continuously included in $\U_\sigma$.

We shall now prove that $\G_\sigma$ is dense in $\h_{\Theta,\alpha,\bar{t}}$. Let $u \in \h_{\Theta,\alpha,\bar{t}}$ be in the orthogonal space to $\G_\sigma$. Choose $\phi \in \G_\sigma$ compactly supported. We have then for all $f \in \G_\sigma$
\begin{equation*}
\s{\R^d}{\hat{u}\p{x}\phi\p{x} w_{\Theta,\alpha,\bar{t}}\p{x}^2 f\p{x}}{x} = 0.
\end{equation*}
In particular, the convolution of $\phi \hat{u} w_{\Theta,\alpha,\bar{t}}^2$ with any element of $\G_\sigma$ is null. Noticing that $\phi \hat{u} w_{\Theta,\alpha,\bar{t}}^2$ is in $L^1$, it vanishes almost everywhere, and thus $\hat{u} = 0$ and then $u=0$.

Let $\p{u_n}_{n \in \N}$ be a Cauchy sequence in $\h_{\Theta,\alpha,\bar{t}}$. Then $\p{\hat{u}_n. w_{\Theta,\alpha,\bar{t}}}_{n \in \N}$ is a Cauchy sequence in $L^2\p{\R^d}$ and thus has a limit $v.w_{\Theta,\alpha,\bar{t}}$. Reasoning in the same way as for \eqref{inclu}, we see that for all $R \geqslant 1$ there is a constant $C$ such that for all $f \in A_{R,r}$ we have
\begin{equation*}
\b{\langle v , f \rangle} \leqslant C \n{v . w_{\Theta,\alpha,\bar{t}}}_2 \n{f}_{R,r}.
\end{equation*}
Thus $v \in \U_\tau$, we may consequently define $u = \F^{-1} v \in \U_r$, and $u$ belongs to $\h_{\Theta,\alpha,\bar{t}}$ since its Fourier transform is $v$ and $v . w_{\Theta,\alpha,\bar{t}} \in L^2\p{\R^d}$. Finally, the convergence of $\p{\hat{u}_n. w_{\Theta,\alpha,\bar{t}}}_{n \in \N}$ to $\hat{u}. w_{\Theta,\alpha,\bar{t}}$ in $L^2\p{\R^d}$ readily implies the convergence of $\p{u_n}_{n \in \N}$ to $u$ in $\h_{\Theta,\alpha,\bar{t}}$.

To prove that $\h_{\Theta,\alpha,\bar{t}}$ does not depend on $\tau$, notice that if $1<\tilde{\tau} <\tau$ then the definition \eqref{espgevult} replacing $\tau$ by $\tilde{\tau}$ yields a bigger space $\widetilde{\h}_{\Theta,\alpha,\bar{t}}$. But $\h_{\Theta,\alpha,\bar{t}}$ is dense in $\widetilde{\h}_{\Theta,\alpha,\bar{t}}$ (it contains $\G_\sigma$ for $1 < \sigma < \tilde{\tau}$) and it is closed as well since it is a Hilbert space (the inclusion is obviously isometric). Thus $\h_{\Theta,\alpha,\bar{t}}$ does not depend on $\tau$ (in particular \eqref{inclu} holds for all $1 < \sigma < \alpha$). Finally, $\h_{\Theta,\alpha,\bar{t}}$ is separable since it is isomorphic to $L^2\p{\R^d}$.
\end{proof}

\begin{rmq}\label{densité}
Notice that the proof of Lemma \ref{loc} in fact proves that the set of elements of $\G_\sigma$ whose Fourier transform is compactly supported is dense in $\h_{\Theta,\alpha,\bar{t}}$.
\end{rmq}

\begin{rmq}
The parameter $\bar{t}$ will have to be chosen wisely with respect to the dynamical system. Indeed, we want $\h_{\Theta,\alpha,\bar{t}}$ to be a Hilbert space of anistropic ultradistributions regular in the stable direction (outside of $C_1$) and dual of regular in the unstable direction (inside $C_r$). It seems then natural to require $t_0 > 0$ and $t_r < 0$. Furthermore, we want that the linearized dynamics sends areas of high regularity into areas of lower regularity, which reads $t_0 > t_1 > \dots > t_r$ with the definition of generalized cone-hyperbolicity \ref{ch} given in \S\ref{lto}. However, some technical issues (mostly due to the ultradistributional context) will be dealt with by requiring somme additional properties of $\bar{t}$ in \S\ref{lto} (namely \eqref{sueur}).
\end{rmq}

We shall now give a Paley--Littlewood type description of our local space $\h_{\Theta,\alpha,\bar{t}}$, which will turn out to be very handy to study of the transfer operator in \S \ref{lto}. The use of Paley--Littlewood decompositions to study transfer operators is not new, see \cite{Tsu,Bal2}. They have also been used in the context of Gevrey regularity, see for instance \cite{PLgevdet}. However, we are not aware of any references in which the decomposition in thinner bands than usual proposed in \eqref{bande} is used. The main idea is that we want the weight $w_{\Theta,\alpha,\bar{t}}$ to be roughly constant on the portion of annulus that correspond to our decomposition, and since it grows very fast we cannot use the usual dyadic decomposition. Notice that this imposes to work on spaces of square integrable functions (as opposed to general $L^p$ spaces), indeed we have a uniform bound on the operator norm of the Fourier multipliers $\psi_{\Theta,n,i}\p{D}$ when acting on $L^2$. Moreover, it is convenient to work with Hilbert spaces, since it allows us to use the Lidskii trace theorem (however, this is not necessary, as explained in Remark \ref{ord0}). 

\begin{prop}\label{descrip}
Let $1 < \sigma < \alpha$. Then $ u \in \U_\sigma$ belongs to $\h_{\Theta,\alpha,\bar{t}}$ if and only if 
\begin{equation}\label{equiv}
\sum_{\p{n,i} \in \Gamma} \p{ e^{t_i n} \n{\psi_{\Theta,n,i}\p{D}u}_2}^2 < + \infty.
\end{equation}
Moreover, the square root of this quantity defines an equivalent (Hilbertian) norm on $\h_{\Theta,\alpha,\bar{t}}$.
\end{prop}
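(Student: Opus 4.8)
\emph{Proof plan.} The whole statement reduces to a single pointwise estimate on the weight. The plan is to produce constants $0 < c \leqslant C$, depending only on $\Theta$, $\alpha$ and $\bar t$, such that
\begin{equation}\label{weightcomp}
c\, w_{\Theta,\alpha,\bar t}\p{\xi}^2 \leqslant \sum_{\p{n,i} \in \Gamma} e^{2 t_i n}\, \b{\psi_{\Theta,n,i}\p{\xi}}^2 \leqslant C\, w_{\Theta,\alpha,\bar t}\p{\xi}^2 , \qquad \xi \in \R^d.
\end{equation}
Once \eqref{weightcomp} is known, the proposition follows at once. Indeed, for $u \in \U_\sigma$ the ultradistribution $\psi_{\Theta,n,i}\p{D}u$ has Fourier transform $\psi_{\Theta,n,i}\hat u$ (the multiplier makes sense since $\psi_{\Theta,n,i}$ is $\sigma$-Gevrey and compactly supported, hence lies in $\G_\sigma$ by Remark \ref{comp}), so $\n{\psi_{\Theta,n,i}\p{D}u}_2^2 = c_d \s{\R^d}{\b{\psi_{\Theta,n,i}\p{\xi}}^2\b{\hat u\p{\xi}}^2}{\xi}$ by Plancherel, with $c_d>0$ depending only on $d$; moreover finiteness of the left-hand side of \eqref{equiv} already forces $\hat u \in L^2_{loc}$, since on each ball only finitely many $\psi_{\Theta,n,i}$ do not vanish while $\sum_{\p{n,i}}\psi_{\Theta,n,i} = 1$ by \eqref{unite}. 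Summing in $\p{n,i}$ by Tonelli's theorem,
\begin{equation*}
\sum_{\p{n,i}\in\Gamma}\p{e^{t_i n}\n{\psi_{\Theta,n,i}\p{D}u}_2}^2 = c_d \s{\R^d}{\b{\hat u\p{\xi}}^2 \sum_{\p{n,i}\in\Gamma} e^{2 t_i n}\b{\psi_{\Theta,n,i}\p{\xi}}^2}{\xi},
\end{equation*}
and \eqref{weightcomp} bounds this above and below by multiples of $\s{\R^d}{\b{\hat u\p{\xi}}^2 w_{\Theta,\alpha,\bar t}\p{\xi}^2}{\xi} = \n{u}_{\Theta,\alpha,\bar t}^2$. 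This gives both the membership criterion and the equivalence of norms; that the right-hand side of \eqref{equiv} is the square of a genuine Hilbert norm is clear, positive-definiteness again coming from $\sum_{\p{n,i}}\psi_{\Theta,n,i} = 1$.

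To prove \eqref{weightcomp} I would first record three elementary facts. (i) \emph{Bounded overlap:} using \eqref{bande}, the special support of $\psi_0$, and $\alpha > 1$ (so that $\p{n+2}^\alpha - \p{n+1}^\alpha > 1$ for all $n \geqslant 0$), each $\xi$ lies in $\textup{supp }\psi_n$ for at most two, necessarily consecutive, values of $n$; and by Definition \ref{gp} (the nesting $C_{i+1} \Subset C_i$ together with $\phi_i$ vanishing near $C_{i+2} \cap S^{d-1}$) each $\omega \in S^{d-1}$ lies in $\textup{supp }\phi_i$ for at most two, necessarily consecutive, values of $i$; hence for each fixed $\xi$ at most a bounded number $N$ of pairs $\p{n,i}$ contribute to the sum in \eqref{weightcomp}. (ii) \emph{Frequency localisation:} on $\textup{supp }\psi_n$ one has $n \leqslant \b{\xi}^{1/\alpha} \leqslant n + C_0$ for a constant $C_0 = C_0\p{\alpha}$; since $i$ runs over the finite set $\set{0,\dots,r}$, writing $T = \max_i \b{t_i}$ we get $e^{-C_0 T} \leqslant e^{t_i \b{\xi}^{1/\alpha}} / e^{t_i n} \leqslant e^{C_0 T}$, and similarly $e^{t_i n} \asymp e^{t_i n'}$ whenever $\psi_n\p{\xi} \neq 0 \neq \psi_{n'}\p{\xi}$, with all constants independent of $\xi$. (iii) For non-negative reals $a_1,\dots,a_N$ one has $\sum_j a_j^2 \leqslant \p{\sum_j a_j}^2 \leqslant N\sum_j a_j^2$.

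It then remains to assemble \eqref{weightcomp} by cases. For $\b{\xi} \geqslant 2$ one has $\psi_0\p{\xi} = 0$, so with $\omega = \xi/\b{\xi}$ and any $n$ with $\psi_n\p{\xi} \neq 0$, fact (ii) gives $w_{\Theta,\alpha,\bar t}\p{\xi} = \sum_i \phi_i\p{\omega} e^{t_i \b{\xi}^{1/\alpha}} \asymp \sum_i \phi_i\p{\omega} e^{t_i n}$, and fact (iii) applied to $a_i = \phi_i\p{\omega} e^{t_i n}$ (using $\sum_i \phi_i\p{\omega} = 1$ and bounded overlap) gives $w_{\Theta,\alpha,\bar t}\p{\xi}^2 \asymp \sum_i \phi_i\p{\omega}^2 e^{2 t_i n}$; on the other hand $\sum_{\p{n,i}} e^{2 t_i n}\b{\psi_{\Theta,n,i}\p{\xi}}^2 = \sum_n \psi_n\p{\xi}^2 \sum_i \phi_i\p{\omega}^2 e^{2 t_i n}$, where $\sum_n \psi_n\p{\xi}^2 \asymp 1$ by (i) and (iii) and the inner $i$-sums for the at most two active $n$'s are mutually comparable by (ii), so this too is $\asymp \sum_i \phi_i\p{\omega}^2 e^{2 t_i n} \asymp w_{\Theta,\alpha,\bar t}\p{\xi}^2$. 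For $\b{\xi} < 2$, $\b{\xi}^{1/\alpha}$ is bounded, so $w_{\Theta,\alpha,\bar t}\p{\xi} \asymp 1$ (again finitely many $i$), while $\sum_{\p{n,i}} e^{2 t_i n}\b{\psi_{\Theta,n,i}\p{\xi}}^2$ is bounded above (the active $n$ being $0$ or $1$, boundedly many terms each at most a constant) and below (since $\sum_{\p{n,i}}\psi_{\Theta,n,i}\p{\xi} = 1$ by \eqref{unite}, at most $N$ terms are non-zero, and $e^{2 t_i n} \geqslant e^{-2T}$ for $n \in \set{0,1}$), hence $\asymp 1 \asymp w_{\Theta,\alpha,\bar t}\p{\xi}^2$. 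This establishes \eqref{weightcomp} and concludes the argument.

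The only delicate aspect is the bookkeeping in \eqref{weightcomp}: one must make sure that every comparison constant occurring there — those coming from $\b{\xi}^{1/\alpha}$ not being an integer, from the overlap of the bands, and from the low-frequency interpolation by $\psi_0$ — is independent of $\xi$. This is precisely what the finiteness of the index set $\set{0,\dots,r}$ and the separation of the bands in \eqref{bande} (which uses $\alpha > 1$) provide; there is no analytic difficulty beyond this.
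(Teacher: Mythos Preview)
Your proposal is correct and follows essentially the same route as the paper's own proof: both reduce the statement to the pointwise weight comparison \eqref{weightcomp} (this is the paper's \eqref{equivfour}), and both derive it from the key observation $e^{t_i n} \asymp e^{t_i \b{\xi}^{1/\alpha}}$ on $\textup{supp }\psi_n$ (the paper's \eqref{fine}, your fact~(ii)) together with the bounded overlap of the supports of the $\psi_{\Theta,n,i}$. Your write-up is more detailed than the paper's rather terse argument---in particular the explicit case split $\b{\xi} \geqslant 2$ versus $\b{\xi} < 2$ and the Cauchy--Schwarz bookkeeping (your fact~(iii)) are spelled out---but the underlying ideas coincide.
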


\begin{proof}
First of all, an elementary asymptotical development shows that there is a constant $C$ such that for all $\p{n,i} \in \Gamma$ and all $\xi \in \textup{supp } \psi_n$ we have
\begin{equation}\label{fine}
\frac{1}{C} e^{t_i n} \leqslant e^{t_i \b{\xi}^{\frac{1}{\alpha}}} \leqslant C e^{t_i n}
.\end{equation}
Then write
\begin{equation*}
w_{\Theta,\alpha,\bar{t}}\p{\xi} = \psi_0\p{\xi}\p{1 + \p{1- \psi_0\p{\xi}}\p{\sum_{i=0}^r e^{t_i \b{\xi}^{\frac{1}{\alpha}}} }} + \p{1- \psi_0\p{\xi}}\sum_{\substack{\p{n,i} \in \Gamma \\ n \neq 0}} \psi_{\Theta,n,i}\p{\xi}e^{t_i \b{\xi}^{\frac{1}{\alpha}}},
\end{equation*}
and use \eqref{fine} and the fact that the intersection number of the support of the $\psi_{\Theta,n,i}$ is finite to show that there is another constant $C$ such that for all $\xi \in \R^d$ we have
\begin{equation}\label{equivfour}
\frac{1}{C}w_{\Theta,\alpha,\bar{t}}\p{\xi}^2 \leqslant \sum_{\p{n,i} \in \Gamma} e^{t_i 2n} \psi_{\Theta,n,i}\p{\xi}^2 \leqslant C w_{\Theta,\alpha,\bar{t}}\p{\xi}^2.
\end{equation}

Now, if $u \in \U_\sigma$ is such that \eqref{equiv} holds then $\hat{u} \in L^2_{loc}$ (using Plancherel's formula and recalling that the intersection number of the support of the $\psi_{\Theta,n,i}$ is finite) and using \eqref{equivfour} and Plancherel's formula we get that $u \in \h_{\Theta,\alpha,\bar{t}}$ with the required estimates. The other implication is easier.
\end{proof}

We shall now define an auxiliary separable Hilbert space which will be useful in the investigation of the transfer operator. Set
\begin{equation}\label{spaceB}
\B = \set{\p{u_{n,i}}_{\p{n,i} \in \Gamma} \in \prod_{\p{n,i} \in \Gamma} L^2\p{\R^d} : \sum_{\p{n,i} \in \Gamma} \p{e^{n t_i} \n{u_{n,i}}_2}^2 < + \infty}
\end{equation}
endowed with the natural Hilbertian structure. Define the map
\begin{equation*}
\begin{array}{ccccc}
\mathcal{Q}_{\Theta} & : & \h_{\Theta,\alpha,\bar{t}} & \to & \B \\
 & & u & \mapsto & \p{\psi_{\Theta,n,i}\p{D}u}_{\p{n,i} \in \Gamma}
\end{array}.
\end{equation*}
Proposition \ref{descrip} implies that $\mathcal{Q}_\Theta$ is bounded and that its image is a closed subspace of $\B$ isomorphic to $\h_{\Theta,\alpha,\bar{t}}$. For all $\p{n,i} \in \Gamma$ define also the natural projection and inclusion
\begin{equation*}
\begin{array}{ccccc}
\pi_{n,i} & : & \B & \to &L^2\p{\R^d} \\
 & & \p{u_{\ell,j}}_{ {\ell,j} \in \Gamma} & \mapsto & u_{n,i}
\end{array}
\textrm{ and }
\begin{array}{ccccc}
\iota_{n,i} & : & L^2\p{\R^d} & \to & \B \\
 & & u & \mapsto & \p{\delta_{\p{n,i} = \p{\ell,j}} u}_{\p{\ell,j} \in \Gamma}
\end{array}.
\end{equation*}

\section{Local transfer operator}\label{lto}

In this subsection, we shall investigate the properties of "local" transfer operators acting on spaces of the type $\h_{\Theta,\alpha,\bar{t}}$ from section \S \ref{locspace}. This analysis is inspired from \cite{Tsu} with some major modifications to take advantage of the Gevrey regularity. Choose a second generalized polarization with $r$ cones $\Theta' = \p{C_i',\phi_i'}_{0 \leqslant i \leqslant r}$ and a $\sigma$-Gevrey diffeomorphism $\mathcal{T} : \R^d \to \R^d$ (with $ 1 < \sigma < \alpha - 1$) and we assume that $\mathcal{T}$ is \emph{generalized cone-hyperbolic} from $\Theta'$ to $\Theta$, that is the following conditions are fulfilled :
\begin{df}[Generalized cone-hyperbolicity]
\ 
\begin{enumerate}[label=(\roman*)]\label{ch}
\item for all $x \in \R^d$ and $i \in \set{1,\dots,r}$ we have
\begin{equation}\label{fuite}
{}^t D_x \mathcal{T} \p{C_i} \subseteq C'_{\min\p{i+2,r}};
\end{equation}
\item there is $\Lambda > 1$ such that for all $x \in \R^d$ and $\xi \in C_{r-1}$ we have
\begin{equation}\label{stronghypw}
\b{{}^t D_x \mathcal{T}\p{\xi}} \geqslant \Lambda \b{\xi};
\end{equation}
\item for the same $\Lambda > 1$, for all $x \in \R^d$ and $\xi \in \R^d$ such that ${}^t D_x \mathcal{T}\p{\xi} \notin C'_2$ we have
\begin{equation}\label{stronghypw2}
\b{{}^t D_x \mathcal{T}\p{\xi}} \leqslant \Lambda^{-1} \b{\xi}.
\end{equation}
\end{enumerate}
\end{df}

\begin{rmq}
This definition is adapted from the definition of cone-hyperbolicity given in \cite{Tsu}. Notice that in this definition the unstable dimensions $d_u$ and $d_u'$, from Definition \ref{gp}, of the cones $\Theta$ and $\Theta'$ respectively must satisfy $d_u' \geqslant d_u$. There is no reason \textit{a priori} for the equality to hold. However, in \S\ref{preuve}, we will construct a family of generalized polarizations with the same unstable dimension (it will naturally be the dimension of $E_x^u$ from Definition \ref{bashyp}, which does not depend on $x \in K$, since we require the transitivity of $\left. T \right|_K$). Nevertheless, since it looks like we do not need $d'_u = d_u$ here, we could probably work with cones of different dimensions in \S\ref{preuve} and thus remove the assumption of transitivity on $\left. T\right|_K$. However, since we need $g$ to be supported on a neighbourhood of $K$ in Theorem \ref{main}, we do not get global information on the dynamics in the absence of transitivity. For instance, if $T$ is a north-south dynamics, an orbit going from a neighbourhood of the north to a neighbourhood of the south will have to enter a region where $g$ vanishes and thus the information cannot propagate between different hyperbolic basic pieces. Although, it may be interesting to know that the transitivity hypothesis is not necessary in the Anosov case (that is when $K = M$). A study of the transfer operator for Morse-Smale (non-transitive) gradient flows may be found in the work of Dang and Rivière (see \cite{DR}).
\end{rmq}

The figure above illustrates the notion of generalized cone-hyperbolicity in dimension $2$. We have here $\Theta = \Theta'$ and $r = 4$. The cones $C_1,C_2,C_3$ and $C_4$ are delimited by dashed lines. The interiors of the cones contain the solid line marked "unstable direction" which would be in the application the image of the unstable direction (for some point in $K$) by the differential of a chart. Similarly, the other solid line would be the image of the stable direction for the same point. The black arrows depict schematically the action of ${}^t D_x \mathcal{T}$ for some $x \in \R^d$.

\definecolor{qqqqcc}{rgb}{0,0,0.8}
\definecolor{ffqqff}{rgb}{1,0,1}
\definecolor{qqccqq}{rgb}{0,0.8,0}
\definecolor{ffqqqq}{rgb}{1,0,0}
\begin{figure}
\begin{tikzpicture}[line cap=round,line join=round,>=triangle 45,x=0.5018141837478447cm,y=0.7146800799736683cm]
\clip(-10.61,-6.87) rectangle (19.28,7.12);
\draw [line width=1.2pt,dash pattern=on 2pt off 2pt,color=ffqqqq,domain=-10.61:19.28] plot(\x,{(-0.83--1.74*\x)/3.62});
\draw [line width=1.2pt,dash pattern=on 2pt off 2pt,color=ffqqqq,domain=-10.61:19.28] plot(\x,{(--6.37-1.52*\x)/4.08});
\draw [line width=1.2pt,dash pattern=on 4pt off 4pt,color=qqccqq,domain=-10.61:19.28] plot(\x,{(--2.04--0.9*\x)/5.04});
\draw [line width=1.2pt,dash pattern=on 4pt off 4pt,color=qqccqq,domain=-10.61:19.28] plot(\x,{(--3.75-0.28*\x)/4.06});
\draw [line width=1.2pt,dash pattern=on 1pt off 2pt on 4pt off 4pt,color=ffqqff,domain=-10.61:19.28] plot(\x,{(-4.91--3.54*\x)/3.24});
\draw [line width=1.2pt,dash pattern=on 1pt off 2pt on 4pt off 4pt,color=ffqqff,domain=-10.61:19.28] plot(\x,{(--6.26-1.94*\x)/2.8});
\draw [line width=1.2pt,dotted,color=qqqqcc,domain=-10.61:19.28] plot(\x,{(-3.93--2.16*\x)/0.78});
\draw [line width=1.2pt,dotted,color=qqqqcc,domain=-10.61:19.28] plot(\x,{(--3.96-1.5*\x)/1.04});
\draw [line width=2pt,domain=-10.61:19.28] plot(\x,{(--3.85--0.3*\x)/5.74});
\draw [line width=2pt,domain=-10.61:19.28] plot(\x,{(-9.09--4.1*\x)/-0.62});
\draw [line width=2.8pt] (1.82,6.36)-- (2.1,4.06);
\draw [line width=2.8pt] (1.82,3.82)-- (2.2,3.34);
\draw [line width=2.8pt] (2.2,3.34)-- (2.5,3.9);
\draw [line width=2.8pt] (8.28,0.72)-- (10.22,0.72);
\draw [line width=2.8pt] (10.22,1.1)-- (10.74,0.78);
\draw [line width=2.8pt] (10.74,0.78)-- (10.28,0.42);
\draw [shift={(4.25,1.35)},line width=2.8pt]  plot[domain=0.34:1.64,variable=\t]({1*2.51*cos(\t r)+0*2.51*sin(\t r)},{0*2.51*cos(\t r)+1*2.51*sin(\t r)});
\draw [line width=2.8pt] (6.14,2.02)-- (6.8,1.66);
\draw [line width=2.8pt] (6.8,1.66)-- (7.08,2.3);
\draw (4.23,6.32) node[anchor=north west] {$C_1$};
\draw (6.46,5.24) node[anchor=north west] {$C_2$};
\draw (8.43,3.68) node[anchor=north west] {$C_3$};
\draw (9.79,2) node[anchor=north west] {$C_4$};
\draw (1.45,7.1) node[anchor=north west] {Stable direction};
\draw (11.14,1.01) node[anchor=north west] {Unstable direction};
\end{tikzpicture}
\caption{Generalized cone-hyperbolicity.}
\end{figure}
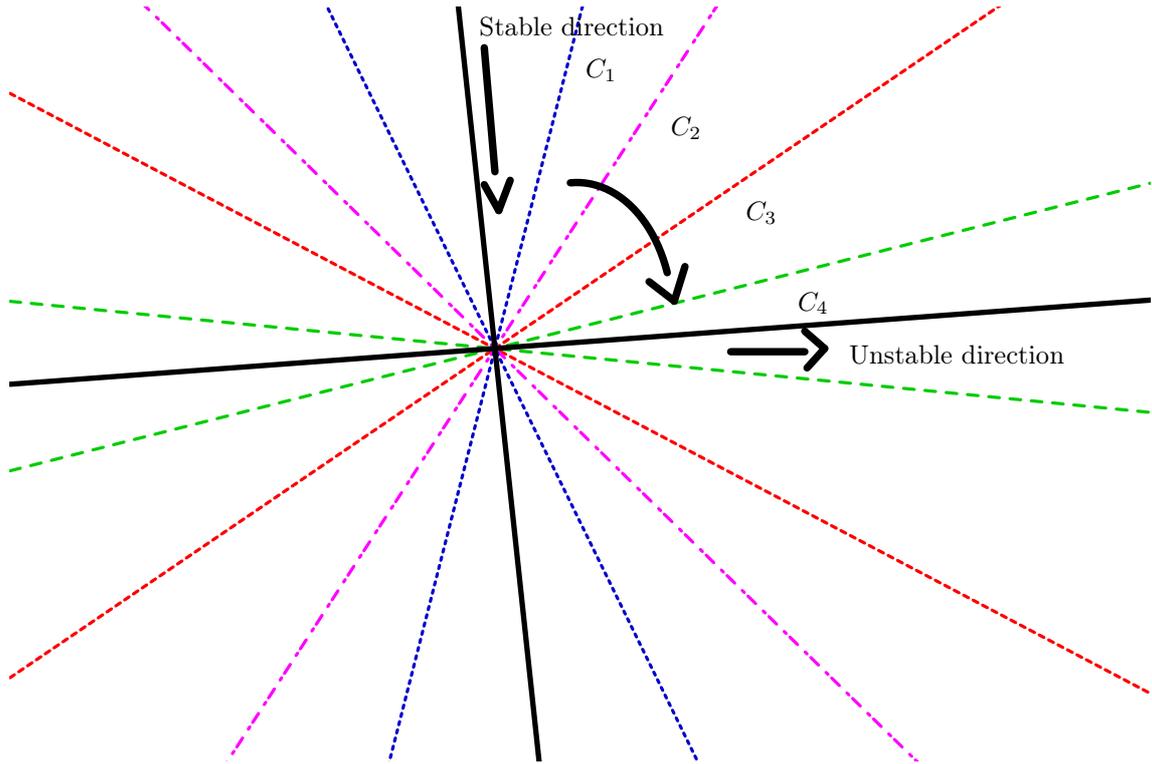

Let $G : \R^d \to \C$ be a compactly supported $\sigma$-Gevrey function. The "local" transfer operator that we are going to study is
\begin{equation*}
L : u \mapsto G. \p{u \circ \mathcal{T}},
\end{equation*}
which is well-defined as an operator from $\G_\sigma$ to itself. Fix $1 < \nu < \Lambda^{\frac{1}{\alpha}}$ and $a > 0$ such that for all $x \in \textup{supp } G$
\begin{equation}\label{defa}
a < \n{{}^t D_x \mathcal{T}^{-1}}^{-\frac{1}{\alpha}}.
\end{equation}
We choose $\bar{t} = \p{t_0,\dots,t_r} \in \R^{r+1}$ as in \S \ref{locspace}, with the additional assumptions
\begin{equation}\label{sueur}
t_0 > 0 > t_1 > \dots > t_r, \quad t_r > \nu t_{r-1} \textrm{ and } t_{i+1} < \frac{2}{a} t_i \textrm{ if } i \leqslant r-2.
\end{equation}

The aim of this section is to establish the two following propositions.

\begin{prop}\label{locrep}
The transfer operator $L$ extends to a bounded operator $\L : \h_{\Theta,\alpha,\bar{t}} \to \h_{\Theta',\alpha,\bar{t}}$. Moreover, $\L$ is nuclear. More precisely, it may be written as
\begin{equation}\label{repre}
\L = \sum_{m = 0}^{+ \infty} \lambda_m e_m \otimes l_m
\end{equation}
where the $e_m$ and $l_m$ have unit norm respectively in the Hilbert spaces $\h_{\Theta',\alpha,\bar{t}}$ and $\h_{\Theta,\alpha,\bar{t}}'$ and the $\lambda_m \in \C$ satisfy
\begin{equation}\label{reprequant}
\b{\lambda_m} \leqslant C \theta^{m^{\frac{1}{\beta}}}
\end{equation}
for all $m \in \N$, $\beta = 2 + \alpha d$ and some constants $C>0$ and $0 < \theta <1$.
\end{prop}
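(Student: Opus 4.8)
The plan is to study $\L$ through its ``matrix'' in the Paley--Littlewood decomposition of Proposition~\ref{descrip}. Since, by Remark~\ref{densité}, the Gevrey functions with compactly supported Fourier transform are dense in $\h_{\Theta,\alpha,\bar{t}}$, it is enough to prove all the estimates on this subspace and then extend by continuity. For $\p{n,i},\p{m,j} \in \Gamma$ set
\begin{equation*}
L_{\p{n,i},\p{m,j}} = \psi_{\Theta',n,i}\p{D} \circ L \circ \psi_{\Theta,m,j}\p{D} \colon L^2\p{\R^d} \to L^2\p{\R^d},
\end{equation*}
which makes sense because $G$ is compactly supported. From \eqref{unite} one gets $\psi_{\Theta',n,i}\p{D}\L u = \sum_{\p{m,j} \in \Gamma} L_{\p{n,i},\p{m,j}}\p{\psi_{\Theta,m,j}\p{D}u}$ for $u$ in the dense subspace, so that, after identifying $\h_{\Theta,\alpha,\bar{t}}$ and $\h_{\Theta',\alpha,\bar{t}}$ with closed subspaces of $\B$ via $\mathcal{Q}_\Theta$ and $\mathcal{Q}_{\Theta'}$, the operator $\L$ is intertwined with the weighted block matrix $\p{e^{t_i n}\, L_{\p{n,i},\p{m,j}}\, e^{-t_j m}}_{\p{n,i},\p{m,j}}$ acting on $\bigoplus_{\Gamma} L^2\p{\R^d}$. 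Everything then reduces to showing that
\begin{equation*}
\sum_{\p{n,i},\p{m,j} \in \Gamma} e^{t_i n - t_j m}\, \n{L_{\p{n,i},\p{m,j}}}_{\tr} < + \infty
\end{equation*}
(which gives at once boundedness and nuclearity of $\L$) together with a quantitative refinement recording how this sum is distributed across $\Gamma \times \Gamma$, which will yield \eqref{reprequant}.

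\textbf{The two estimates.} Each block $L_{\p{n,i},\p{m,j}}$ has a kernel given, after conjugation by the Fourier transform, by an oscillatory integral with phase $\mathcal{T}\p{x}\xi - x\eta$ --- whose stationary set is $\set{\eta = {}^t D_x \mathcal{T}\p{\xi}}$ --- and amplitude built from $G$, the cutoffs $\psi_n,\psi_{n+1}$ and the angular cutoffs $\phi_i,\phi_i'$. Two facts are needed. First, an \emph{off-diagonal estimate}: combining the generalized cone-hyperbolicity of Definition~\ref{ch} --- \eqref{fuite} pushes the output frequency into $C'_{\min\p{i+2,r}}$, \eqref{stronghypw} forces $\b{\eta} \geqslant \Lambda \b{\xi}$ when $\xi \in C_{r-1}$, and \eqref{stronghypw2} forces $\b{\eta} \leqslant \Lambda^{-1}\b{\xi}$ when $\eta \notin C'_2$ --- with integration by parts in the non-stationary regime and the stretched-exponential decay $\b{\widehat G\p{\zeta}} \leqslant C \n{G}_{R,\sigma}\, e^{-c \b{\zeta}^{1/\sigma}}$ of Lemma~\ref{decroi}, one shows that $\n{L_{\p{n,i},\p{m,j}}}_{\tr} \leqslant C \exp\p{-c\p{n+m}^{\alpha/\sigma}}$ (recall $\alpha/\sigma > 1$) unless the pair $\p{\p{n,i},\p{m,j}}$ is \emph{compatible}, i.e. $i$ lies in the range allowed by \eqref{fuite} starting from $j$ and the band indices $n,m$ are comparable with the ratio dictated by $\Lambda$ (recall $1 < \nu < \Lambda^{1/\alpha}$); here one uses that $\mathcal{T}$ is a global diffeomorphism, so $\b{{}^t D_x\mathcal{T}\p{\xi}} \geqslant \n{{}^t D_x\mathcal{T}^{-1}}^{-1}\b{\xi} \gtrsim m^\alpha$ cannot collapse, which is what makes the Gevrey gain beat the (possibly growing) weight $e^{t_0 n}$. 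Second, a \emph{bound on the compatible blocks}: there are only finitely many families of them, the conditions \eqref{sueur} (with $a$ as in \eqref{defa}) are arranged precisely so that $e^{t_i n - t_j m}\n{L_{\p{n,i},\p{m,j}}}_{\tr}$ decays geometrically in $\max\p{n,m}$, and --- because $G\cdot\p{u\circ\mathcal{T}}$ is supported in the fixed compact set $\textup{supp}\,G$ while $\psi_{\Theta',n,i}\p{D}$ confines the Fourier support to a ball of radius $O\p{n^\alpha}$ --- each compatible block coincides, up to a stretched-exponentially small trace-norm error (here the Gevrey regularity of $G$ and $\mathcal{T}$ is used again), with an operator of rank $O\p{\p{n+m}^{\alpha d}}$.

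\textbf{Conclusion.} Granting these two facts, the sum above converges, so $\L\colon\h_{\Theta,\alpha,\bar{t}} \to \h_{\Theta',\alpha,\bar{t}}$ is bounded and nuclear; taking its singular value decomposition $\L = \sum_{m\geqslant 0}\lambda_m\, e_m\otimes l_m$ with $\lambda_m\geqslant 0$ non-increasing and $e_m \in \h_{\Theta',\alpha,\bar{t}}$, $l_m \in \h_{\Theta,\alpha,\bar{t}}'$ of unit norm gives \eqref{repre}. For \eqref{reprequant}, fix $\delta > 0$ and take $N \asymp \log\p{1/\delta}$; by the off-diagonal estimate and the geometric decay of the compatible blocks, the total trace norm of all blocks with $\max\p{n,m} > N$ is $\leqslant \delta$, so the number of $\lambda_m > \delta$ is at most the rank of the part of $\L$ carried by the blocks with $\max\p{n,m}\leqslant N$; there are $O\p{N^2}$ compatible such blocks, each of rank $O\p{N^{\alpha d}}$, whence $\#\set{m\geqslant 0 : \lambda_m > \delta} \leqslant C\p{\log\p{1/\delta}}^{2+\alpha d}$. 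Solving this inequality for $\lambda_m$ gives exactly \eqref{reprequant} with $\beta = 2+\alpha d$ and suitable constants $C>0$, $0<\theta<1$.

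\textbf{Main difficulty.} The heart of the argument is the off-diagonal estimate together with the finite-rank approximation of the compatible blocks: both require a careful non-stationary-phase analysis in which the Gevrey bound of Lemma~\ref{decroi} is tracked quantitatively through repeated integration by parts, and in which the arithmetic of the weight conditions \eqref{sueur} must be matched exactly against the expansion and contraction rates coming from \eqref{fuite}, \eqref{stronghypw} and \eqref{stronghypw2}; the bookkeeping of the ``transition'' blocks near the boundary between the expanding and contracting regimes --- those governed by the requirement $t_{i+1} < \tfrac{2}{a} t_i$ --- is the most delicate point.
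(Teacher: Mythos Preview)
Your plan is essentially the paper's: Paley--Littlewood blocks, a non-stationary-phase off-diagonal estimate (the paper's Lemma~\ref{gauw}) combined with the cone-geometry separation lemma (the paper's Lemma~\ref{distsw}), the weight arithmetic \eqref{sueur} for the compatible blocks, and a counting argument to reach \eqref{reprequant} (the paper's Lemmas~\ref{rcpq} and~\ref{nucmieuxw}). The one genuinely different move is that, instead of a finite-rank approximation of each block followed by the singular-value decomposition of $\L$, the paper writes each block \emph{explicitly} as a sum of rank-one operators via a spatial Fourier series on $\left]-\pi,\pi\right[^d \supset \textup{supp}\,G$ (formula~\eqref{somme2w}): the rank-one pieces $\p{\psi_{\Theta',n,i}\p{D}\rho_k}\otimes\p{c_k\circ L\circ\tilde\psi_{\Theta,\ell,j}\p{D}}$ are bounded individually (Lemmas~\ref{drw} and~\ref{gauw}) and the counting is done directly on the set of triples $\p{k,\p{n,i},\p{\ell,j}}$. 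Your route via the SVD is legitimate and yields the same $\beta$.

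One point to correct: the off-diagonal bound you state, $\n{L_{\p{n,i},\p{m,j}}}_{\tr} \leqslant C\exp\p{-c\p{n+m}^{\alpha/\sigma}}$, is too optimistic. The phase $x \mapsto \mathcal{T}\p{x}\xi - x\eta$ is not linear in $x$ unless $\mathcal{T}$ is affine, so one cannot simply read the decay off from $\widehat G$ and Lemma~\ref{decroi}. One must integrate by parts in $x$; after $p$ steps the combinatorics produce on the order of $C^p p!$ terms (cf.\ the recursion for $F_p$ in the proof of Lemma~\ref{gauw}), each carrying a Gevrey factor $\lesssim R^p p^{\sigma p}$ coming from derivatives of $G$ \emph{and of the phase}, hence a total bound $\sim C^p p^{(\sigma+1)p}\,\delta^{-p}$ where $\delta$ is the distance to the stationary set. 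Optimizing in $p$ gives $\exp\p{-c\,\delta^{1/(\sigma+1)}}$, so the exponent in your off-diagonal estimate should be $\alpha/(\sigma+1)$, not $\alpha/\sigma$. This is exactly why the standing hypothesis in \S\ref{lto} is $\alpha > \sigma+1$ rather than merely $\alpha > \sigma$: one needs $\alpha/(\sigma+1) > 1$ for the stretched-exponential gain to defeat the linear growth of the weights $e^{\b{t_j} m}$. With that correction your sketch goes through.
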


\begin{prop}\label{trace}
If $\Theta = \Theta'$ then 
\begin{equation}\label{fortra}
\textup{tr}\p{\L} = \sum_{x \in \R^d : \mathcal{T}x = x} \frac{G\p{x}}{\b{\det\p{I - D_x\mathcal{T} }}}
\end{equation}
\end{prop}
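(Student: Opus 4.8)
The plan is to reduce the computation of $\textup{tr}(\L)$ to a kernel/symbol computation using the Paley--Littlewood representation from \S\ref{locspace}. First I would use the factorization $\L = \mathcal{Q}_\Theta^* \widetilde{\L} \mathcal{Q}_\Theta$ (up to the isometric identification of $\h_{\Theta,\alpha,\bar t}$ with the closed subspace $\mathcal{Q}_\Theta(\h_{\Theta,\alpha,\bar t})\subseteq\B$), where $\widetilde{\L}$ is the operator on $\B$ with blocks $\widetilde{\L}_{(n,i),(m,j)} = \psi_{\Theta,n,i}(D)\, L\, \psi_{\Theta,m,j}(D)$ acting on $L^2(\R^d)$. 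Since trace class operators have a trace independent of such factorizations and since the trace can be computed as the sum over the diagonal blocks, $\textup{tr}(\L) = \sum_{(n,i)\in\Gamma} \textup{tr}\bigl(\psi_{\Theta,n,i}(D)\, L\, \psi_{\Theta,n,i}(D)\bigr)$, each summand being the trace of a trace-class operator on $L^2(\R^d)$ (this is where we use the quantitative bounds from the proof of Proposition \ref{locrep}, which guarantee absolute convergence of everything in sight). Each operator $\psi_{\Theta,n,i}(D)\, L\, \psi_{\Theta,n,i}(D)$ is a Fourier integral operator with a smooth, rapidly decaying kernel, so its trace is the integral of its kernel on the diagonal.

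The next step is to write down the kernel explicitly. For $u\in\G_\sigma$ we have $L u(x) = G(x)\, u(\mathcal{T}x)$, so
\begin{equation*}
\psi_{\Theta,n,i}(D)\,L\,\psi_{\Theta,n,i}(D)\,u(x) = \frac{1}{(2\pi)^{2d}}\int\!\!\int\!\!\int e^{i(x-\mathcal{T}y)\cdot\xi}\,e^{i(y-z)\cdot\eta}\,\psi_{\Theta,n,i}(\xi)\,\psi_{\Theta,n,i}(\eta)\,G(y)\,u(z)\,\mathrm{d}z\,\mathrm{d}\eta\,\mathrm{d}y\,\frac{\mathrm{d}\xi}{\cdot}
\end{equation*}
so that the Schwartz kernel is $K_{n,i}(x,z) = \frac{1}{(2\pi)^{2d}}\int\!\!\int\!\!\int e^{i(x-\mathcal{T}y)\cdot\xi + i(y-z)\cdot\eta}\psi_{\Theta,n,i}(\xi)\psi_{\Theta,n,i}(\eta)G(y)\,\mathrm{d}y\,\mathrm{d}\eta\,\mathrm{d}\xi$. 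Setting $x=z$ and integrating gives
\begin{equation*}
\textup{tr}\bigl(\psi_{\Theta,n,i}(D)\,L\,\psi_{\Theta,n,i}(D)\bigr) = \frac{1}{(2\pi)^{2d}}\int\!\!\int\!\!\int\!\!\int e^{i(x-\mathcal{T}y)\cdot\xi + i(y-x)\cdot\eta}\,\psi_{\Theta,n,i}(\xi)\psi_{\Theta,n,i}(\eta)\,G(y)\,\mathrm{d}x\,\mathrm{d}y\,\mathrm{d}\eta\,\mathrm{d}\xi.
\end{equation*}
The $x$-integration produces $(2\pi)^d\delta(\xi-\eta)$, collapsing this to $\frac{1}{(2\pi)^d}\int\!\!\int e^{i(y-\mathcal{T}y)\cdot\xi}\,\psi_{\Theta,n,i}(\xi)^2\,G(y)\,\mathrm{d}y\,\mathrm{d}\xi$. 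Summing over $(n,i)\in\Gamma$ and using $\sum_{(n,i)\in\Gamma}\psi_{\Theta,n,i}^2 = \sum\psi_{\Theta,n,i}^2$ — here I would be slightly careful: $\sum\psi_{\Theta,n,i}=1$ but $\sum\psi_{\Theta,n,i}^2$ is only bounded above and below; however, since each trace integral converges absolutely and the finite-overlap structure lets us regroup, one recovers (after the same stationary-phase-free argument applied to the $\delta$'s, i.e. going back to $\sum \psi_{\Theta,n,i}(\xi)\psi_{\Theta,n,i}(\eta)$ before collapsing, which is a smooth cutoff equal to a partition-related kernel) the clean expression $\textup{tr}(\L) = \frac{1}{(2\pi)^d}\int_{\R^d}\int_{\R^d} e^{i(y-\mathcal{T}y)\cdot\xi}\,G(y)\,\mathrm{d}y\,\mathrm{d}\xi$, which is the standard flat-trace integral. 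The cleanest route is actually to observe directly that $\sum_{(n,i)}\psi_{\Theta,n,i}(D)\,L\,\psi_{\Theta,n,i}(D)$ differs from $\psi(D)L\psi(D)$-type localizations in a controlled way and that the total is, in the sense of oscillatory integrals, $\sum_{(n,i)} \psi_{\Theta,n,i}(D) L \psi_{\Theta,n,i}(D)$ has the same diagonal kernel integral as the operator with symbol $\sum \psi_{\Theta,n,i}(\xi)^2$; since all we need is the trace and since the trace of $L$ regularized this way is classical, I would invoke the standard flat-trace computation (as in \cite[Remark 3.1]{Bal2} or \cite[Appendix]{DZdet}-type arguments) for a Fourier integral operator associated to a contracting/expanding graph.

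Finally, the standard flat-trace computation evaluates $\frac{1}{(2\pi)^d}\int_{\R^d}\int_{\R^d} e^{i(y-\mathcal{T}y)\cdot\xi}\,G(y)\,\mathrm{d}y\,\mathrm{d}\xi$ by stationary phase / the distributional identity: the phase $y\mapsto y-\mathcal{T}y$ vanishes exactly at the fixed points of $\mathcal{T}$, and by the generalized cone-hyperbolicity \ref{ch}, each fixed point $x$ is nondegenerate, i.e. $\det(I-D_x\mathcal{T})\neq 0$. Writing $\mathbbm{1} = \int e^{i(y-\mathcal{T}y)\cdot\xi}\,\mathrm{d}\xi$ as a sum of Dirac masses $(2\pi)^d\sum_{\mathcal{T}x=x}\lvert\det(I-D_x\mathcal{T})\rvert^{-1}\delta_x(y)$ (change of variables $y\mapsto y-\mathcal{T}y$ near each fixed point), one gets $\textup{tr}(\L) = \sum_{\mathcal{T}x=x} G(x)\,\lvert\det(I-D_x\mathcal{T})\rvert^{-1}$, which is \eqref{fortra}. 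The main obstacle I expect is the bookkeeping in passing from the sum over $\Gamma$ of block traces to the single clean oscillatory integral: one must justify interchanging the (infinite) sum over $(n,i)$ with the oscillatory integrations, which requires the quantitative decay estimates established in the proof of Proposition \ref{locrep} together with the finite-overlap property of the supports of the $\psi_{\Theta,n,i}$, and one must handle the fact that $\sum\psi_{\Theta,n,i}^2\ne 1$ — most robustly by keeping the two frequency variables $\xi,\eta$ separate until after the $x$-integration produces $\delta(\xi-\eta)$, at which stage $\sum_{(n,i)}\psi_{\Theta,n,i}(\xi)\psi_{\Theta,n,i}(\xi)$ can be replaced, inside the remaining smooth and rapidly decreasing integral, by the effect of the genuine partition of unity, since the off-diagonal frequency interactions contribute a kernel that is Gevrey-smoothing and whose diagonal integral telescopes to zero.
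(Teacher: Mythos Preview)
Your overall strategy---lift $\L$ to the block space $\B$, compute the trace as a sum of diagonal block traces, and then evaluate the resulting oscillatory integral---is the right one and matches the paper's architecture. But the factorization you write down is not correct, and this is precisely the origin of the $\sum\psi_{\Theta,n,i}^2\neq 1$ obstruction you run into.

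Concretely: with your choice of blocks $\widetilde{\L}_{(n,i),(m,j)}=\psi_{\Theta,n,i}(D)\,L\,\psi_{\Theta,m,j}(D)$, one computes for $u\in\h_{\Theta,\alpha,\bar t}$ that
\[
(\widetilde{\L}\,\mathcal{Q}_\Theta u)_{(n,i)}
=\psi_{\Theta,n,i}(D)\,L\Bigl(\sum_{(m,j)}\psi_{\Theta,m,j}^2(D)\Bigr)u
=\psi_{\Theta,n,i}(D)\,L\,P(D)\,u,
\]
with $P=\sum_{(m,j)}\psi_{\Theta,m,j}^2$. Thus $\widetilde{\L}$ does \emph{not} intertwine with $\L$ via $\mathcal{Q}_\Theta$; it intertwines with $L\,P(D)$, whose trace has no reason to equal that of $\L$. (Also note $\mathcal{Q}_\Theta$ is only a topological isomorphism onto its image, not an isometry, so $\mathcal{Q}_\Theta^*$ is not a left inverse.) Your suggested workaround---that the off-diagonal frequency interactions ``telescope to zero''---does not hold: $1-P$ is supported on a union of shells of nonvanishing total mass at infinity, and there is no cancellation mechanism.

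The paper resolves this exactly where you expect, by replacing one of the two $\psi$'s by the fattened multiplier $\tilde\psi_{\Theta,\ell,j}$ (introduced just before \eqref{decoupew}), which equals $1$ on $\textup{supp}\,\psi_{\Theta,\ell,j}$. The auxiliary operator $\M$ on $\B$ has blocks $S_{n,i}^{\ell,j}=\psi_{\Theta,n,i}(D)\,L\,\tilde\psi_{\Theta,\ell,j}(D)$; then $\M\,\mathcal{Q}_\Theta=\mathcal{Q}_\Theta\,L$ on the nose (since $\sum_{(\ell,j)}\tilde\psi_{\Theta,\ell,j}\psi_{\Theta,\ell,j}=\sum\psi_{\Theta,\ell,j}=1$), the range of $\M$ lies in $\mathcal{Q}_\Theta(\h_{\Theta,\alpha,\bar t})$, and therefore $\M$ and $\L$ have the same nonzero spectrum, hence the same trace by Lidskii. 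Taking the trace of $\M$ blockwise and using $\tilde\psi_{\Theta,n,i}\psi_{\Theta,n,i}=\psi_{\Theta,n,i}$ yields an integrand containing $\psi_{\Theta,n,i}$ (first power), which sums to $1$ over $\Gamma$. This is the one-line fix you are missing.

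For the final oscillatory integral the paper does not use stationary phase directly; it expands in Fourier series on a cube containing $\textup{supp}\,G$ (this is the role of the $\rho_k$ and $c_k$ in \eqref{somme2w}), regularizes with a Gaussian, changes variables by $y\mapsto \mathcal{T}(y)-y$ near each fixed point, and applies the Poisson summation formula. Your distributional route via $\int e^{i(y-\mathcal{T}y)\cdot\xi}\,d\xi=(2\pi)^d\sum_{\mathcal{T}x=x}\b{\det(I-D_x\mathcal{T})}^{-1}\delta_x$ is of course equivalent and standard, but only once the preceding reduction is carried out with the correct (fattened) multipliers.
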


To carry out the proofs of Propositions \ref{locrep} and \ref{trace}, we will need $\sigma$-Gevrey functions $\tilde{\phi}_0,\dots,\tilde{\phi}_r$ such that
\begin{itemize}
\item for all $i \in \set{0,\dots,r}$ the function $\tilde{\phi}_i$ is supported in the interior of $C_i \cap S^{d-1}$ and, if in addition $i \leqslant r-2$, it vanishes on a neighborhood of $C_{i+2} \cap S^{d-1}$;
\item for all $i \in \set{0,\dots,r}$ and $x \in S^{d-1}$, if $\phi_i\p{x} \neq 0$ then $\tilde{\phi}_i\p{x} = 1$.
\end{itemize} 
Define then $\tilde{\psi}_n = \chi_{n+2} - \chi_{n-1}$ for $n \geqslant 0$, and if $\p{n,i} \in \Gamma$ set
\begin{equation*}
\tilde{\psi}_{\Theta,n,i}\p{\xi} = \left\{
\begin{array}{c}
\tilde{\psi}_n\p{\xi} \tilde{\phi}_i\p{\xi} \textrm{ if } n \geqslant 1 \\
\tilde{\psi}_0\p{\xi} \textrm{ if } n = 0.
\end{array}
\right. 
\end{equation*}
In this way $\psi_{\Theta,n,i} \p{\xi} \neq 0$ implies $\tilde{\psi}_{\Theta,n,i} \p{\xi} = 1$.

Now if $\p{n,i},\p{\ell,j} \in \Gamma$ define an operator $S_{n,i}^{\ell,j} : L^2\p{\R^d} \to L^2\p{\R^d}$ by
\begin{equation}\label{decoupew}
S_{n,i}^{\ell,j} = \psi_{\Theta',n,i}\p{D} \circ L \circ \tilde{\psi}_{\Theta,\ell,j}\p{D}.
\end{equation}
We shall see in Lemma \ref{nucmieuxw} below that the sum
\begin{equation}\label{sommew}
\sum_{\p{n,i},\p{\ell,j} \in \Gamma} \iota_{n,i} \circ S_{n,i}^{\ell,j} \circ \pi_{\ell,j}
\end{equation}
converges in trace class operator topology to an auxiliary operator $\M : \B \to \B$ closely related to $L$, where $\B$ was defined in \eqref{spaceB}.

For this, we will prove a sequence of lemmas (\ref{drw},\ref{gauw},\ref{distsw}, \ref{rcpq}, and \ref{nucmieuxw}) and we need more notation.

Without loss of generality, one may suppose\footnote{This hypothesis may be removed by adding suitable renormalization constants in the Fourier coefficients below. However, notice that it is clear from the proof of Lemma \ref{enfin} that we need no such generality to perform the proof of Theorem \ref{main}.} that $G$ is supported in $\left]-\pi,\pi\right[^d$. Then choose a $\sigma \textrm{-}$Gevrey function $\rho : \R^d \to \left[0,1\right]$ supported in $\left]-\pi,\pi\right[^d$ and such that $\rho \p{x} = 1$ if $x \in \textup{supp } G$. A $L^2$ function $u$ supported in $\left]-\pi,\pi\right[^d$ may be identified with a $2\pi \Z^d$-periodic function, and then for all $k \in \Z^d$ we can define its $k$th Fourier coefficient
\begin{equation*}
c_k\p{u} = \frac{1}{\p{2\pi}^d}\s{\left]-\pi,\pi\right[^d}{e^{-ikx}u\p{x}}{x}.
\end{equation*}
For all $k \in \Z^d$ also define $\rho_k : x \mapsto \rho\p{x} e^{ikx}$. Then if $\p{n,i},\p{\ell,j} \in \Gamma$ we can write
\begin{equation}\label{somme2w}
S_{n,i}^{\ell,j} = \sum_{k \in \Z^d} \p{ \psi_{\Theta',n,i}\p{D} \rho_k} \otimes \p{c_k \circ L \circ \tilde{\psi}_{\Theta,\ell,j}\p{D}}.
\end{equation}
This identity is merely formal right now, but we shall see that this sum actually converges in the nuclear operator topology and give explicit bound on its terms.

\begin{lm}\label{drw}
There is a constant $C >0$ such that for all $k \in \Z^d$ and $\p{n,i} \in \Gamma$ we have
\begin{equation}\label{dreqw}
\n{\psi_{\Theta',n,i}\p{D} \rho_k}_2 \leqslant C \p{n+1}^{\frac{\alpha d}{2}} \exp\p{- C^{-1} d\p{k,\textup{ supp } \psi_{\Theta',n,i}}^{\frac{1}{\sigma}}}.
\end{equation}
\end{lm}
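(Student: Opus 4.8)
The plan is to move everything to the Fourier side and combine two ingredients: the multiplier $\psi_{\Theta',n,i}(D)$ localizes in frequency to a set of polynomial volume, while the Fourier transform of the compactly supported $\sigma$-Gevrey function $\rho$ decays faster than a stretched exponential, which is exactly the content of Lemma \ref{decroi}.

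First I would note that, since $\rho_k\p{x} = \rho\p{x} e^{ikx}$, we have $\widehat{\rho_k} = \hat{\rho}\p{\cdot - k}$. As $\hat{\rho}$ is Schwartz and $\psi_{\Theta',n,i}$ is bounded and compactly supported, the product $\psi_{\Theta',n,i}\cdot\widehat{\rho_k}$ lies in $L^1 \cap L^2$, so $\psi_{\Theta',n,i}\p{D}\rho_k \in L^2\p{\R^d}$ and Plancherel's formula gives
\begin{equation*}
\n{\psi_{\Theta',n,i}\p{D}\rho_k}_2^2 = \frac{1}{\p{2\pi}^d}\s{\R^d}{\b{\psi_{\Theta',n,i}\p{\xi}}^2\b{\hat{\rho}\p{\xi-k}}^2}{\xi}.
\end{equation*}
Then I would use that $0 \leqslant \psi_{\Theta',n,i} \leqslant 1$ and that, by \eqref{bande}, $\textup{supp }\psi_{\Theta',n,i}$ is contained in the ball of radius $\p{n+1}^\alpha + 1 \leqslant 2\p{n+1}^\alpha$, so the integral runs over a set of Lebesgue measure at most $C_d\p{n+1}^{\alpha d}$. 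Since $\rho$ is compactly supported and $\sigma$-Gevrey, Remark \ref{comp} gives $\rho \in \G_\sigma$, hence $\rho \in A_{R,\sigma}$ for some $R \geqslant 1$, and Lemma \ref{decroi} yields a constant $C_1 > 0$ with $\b{\hat{\rho}\p{\eta}} \leqslant C_1 \n{\rho}_{R,\sigma} e^{-C_1 \b{\eta}^{1/\sigma}}$ for all $\eta \in \R^d$. For every $\xi \in \textup{supp }\psi_{\Theta',n,i}$ one has $\b{\xi - k} \geqslant d\p{k,\textup{supp }\psi_{\Theta',n,i}}$, so throughout the domain of integration $\b{\hat{\rho}\p{\xi-k}}^2 \leqslant C_1^2 \n{\rho}_{R,\sigma}^2 \exp\p{-2C_1 d\p{k,\textup{supp }\psi_{\Theta',n,i}}^{1/\sigma}}$.

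Combining the three bounds gives $\n{\psi_{\Theta',n,i}\p{D}\rho_k}_2^2 \leqslant C' \p{n+1}^{\alpha d} \exp\p{-2C_1 d\p{k,\textup{supp }\psi_{\Theta',n,i}}^{1/\sigma}}$, and taking square roots, after enlarging the constant so that it also dominates $1/C_1$, yields \eqref{dreqw}. The degenerate band $n=0$ (where $\textup{supp }\psi_{\Theta',0,i} \subseteq \set{\b{\xi} \leqslant 2}$ and the factor $\p{n+1}^{\alpha d/2} = 1$) is covered by exactly the same estimate. I do not expect any genuine obstacle here: once Lemma \ref{decroi} is available, the lemma is merely a repackaging of the stretched-exponential Fourier decay of Gevrey functions into the form that will be needed for the trace-class estimates of Lemma \ref{nucmieuxw}; the only mild point of care is keeping track of which constant carries the dimensional powers and which carries the exponent.
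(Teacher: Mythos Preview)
Your proof is correct and follows exactly the same approach as the paper: pass to the Fourier side via Plancherel, use that the Fourier transform of $\psi_{\Theta',n,i}(D)\rho_k$ is $\psi_{\Theta',n,i}\cdot\hat{\rho}(\cdot-k)$, and then apply Lemma~\ref{decroi} together with the volume bound on $\textup{supp }\psi_{\Theta',n,i}$. The paper's proof is a two-line sketch of precisely this argument; you have simply written out the details.
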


\begin{proof}
The Fourier transform of $\psi_{\Theta',n,i}\p{D} \rho_k$ is $\psi_{\Theta',n,i}. \hat{\rho}\p{.-k}$. Thus the result follows from Lemma \ref{decroi}.
\end{proof}

\begin{rmq}\label{dr2w}
Up to enlarging $C$, the estimate \eqref{dreqw} may be improved to
\begin{equation*}
\n{\psi_{\Theta',n,i}\p{D} \rho_k}_2 \leqslant C \p{n+1}^{\frac{\alpha d}{2}} \exp\p{- C^{-1} d\p{k,\textup{ supp } \psi_{\Theta',n,s}}^{\frac{1}{\sigma}}} \exp\p{-C^{-1} \b{k}^{\frac{1}{\sigma}}}
\end{equation*}
provided that $\b{k} \geqslant C n^\alpha$.
\end{rmq}

For all $k \in \Z^d$ and $\p{\ell,j} \in \Gamma$ define
\begin{equation*}
\delta\p{k,\ell,j} = \inf_{x \in \textup{supp } G} d\p{k,{}^t D_x \mathcal{T}\p{\textup{supp } \tilde{\psi}_{\Theta,\ell,j}}}
.\end{equation*}

\begin{lm}\label{gauw}
For every $\epsilon > 0$ there is a constant $C >0$ such that, for all $k \in \Z^d$ and $\p{\ell,j} \in \Gamma$, the linear form $c_k \circ L \circ \psi_{\Theta,\ell,j}\p{D}$ belongs to the dual of $L^2\p{\R^d}$ with norm bounded by 
\begin{equation*}
C \p{\ell+1}^{\frac{\alpha d}{2}} \exp\p{-C^{-1} \delta\p{k,\ell,j}^{\frac{1}{\sigma+1}}} \textrm{ if } \delta\p{k,\ell,j} > \epsilon \ell^\alpha
\end{equation*}
and $C \p{\ell+1}^{\frac{\alpha d}{2}}$ otherwise.
\end{lm}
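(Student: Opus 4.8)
The plan is to write the linear form $c_k\circ L\circ\psi_{\Theta,\ell,j}\p{D}$ out as an oscillatory integral and then run a non-stationary phase argument exploiting Gevrey regularity, exactly in the spirit of Lemma \ref{decroi}. Take $u\in L^2\p{\R^d}$ with $\n{u}_2\leqslant 1$ and set $v=\psi_{\Theta,\ell,j}\p{D}u$, so $\hat v=\psi_{\Theta,\ell,j}\hat u$ is supported in $\textup{supp }\psi_{\Theta,\ell,j}$ with $\n{\hat v}_2\leqslant\p{2\pi}^{d/2}$. Since $G$, hence $Lv=G.\p{v\circ\mathcal{T}}$, is supported in $\left]-\pi,\pi\right[^d$, expanding $v\circ\mathcal{T}$ by Fourier inversion and applying Fubini gives
\begin{equation*}
c_k\p{Lv}=\frac{1}{\p{2\pi}^{2d}}\s{\R^d}{\hat v\p{\xi}\,I_k\p{\xi}}{\xi},\qquad I_k\p{\xi}=\s{\R^d}{e^{i\p{\mathcal{T}x\cdot\xi-kx}}G\p{x}}{x}.
\end{equation*}
By Cauchy--Schwarz $\b{c_k\p{Lv}}\leqslant\p{2\pi}^{-3d/2}\n{I_k}_{L^2\p{\textup{supp }\psi_{\Theta,\ell,j}}}$, and since by \eqref{bande} the annular sector $\textup{supp }\psi_{\Theta,\ell,j}\subseteq\textup{supp }\psi_\ell$ has volume $O\p{\p{\ell+1}^{\alpha d}}$, this is $\lesssim\p{\ell+1}^{\frac{\alpha d}{2}}\sup_{\xi\in\textup{supp }\psi_{\Theta,\ell,j}}\b{I_k\p{\xi}}$. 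Thus it suffices to bound $\b{I_k\p{\xi}}$ for $\xi\in\textup{supp }\psi_{\Theta,\ell,j}$; the trivial bound $\b{I_k\p{\xi}}\leqslant\n{G}_1$ gives the second ("otherwise") part of the statement, and also covers the first part whenever $\delta\p{k,\ell,j}$ stays bounded.

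For the main estimate, fix $\xi\in\textup{supp }\psi_{\Theta,\ell,j}\subseteq\textup{supp }\tilde\psi_{\Theta,\ell,j}$ (recall $\psi_{\Theta,\ell,j}\neq 0\Rightarrow\tilde\psi_{\Theta,\ell,j}=1$). For $x\in\textup{supp }G$ the phase $\Phi\p{x,\xi,k}=\mathcal{T}x\cdot\xi-kx$ has gradient $\nabla_x\Phi={}^tD_x\mathcal{T}\p{\xi}-k$, and by the very definition of $\delta\p{k,\ell,j}$ we have $\b{\nabla_x\Phi}\geqslant\delta:=\delta\p{k,\ell,j}$ there. Integrating by parts $N$ times with the operator $P=\p{i\b{\nabla_x\Phi}^2}^{-1}\nabla_x\Phi\cdot\nabla_x$, which fixes $e^{i\Phi}$, gives $I_k\p{\xi}=\s{\R^d}{e^{i\Phi}\,\p{{}^tP}^N G}{x}$ with ${}^tP G=i\,\nabla_x\cdot\p{wG}$ and $w=\nabla_x\Phi/\b{\nabla_x\Phi}^2$. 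Now in the regime $\delta>\epsilon\ell^\alpha$ (where $\epsilon$ is the one in the statement) one also has, on $\textup{supp }G$, $\delta\leqslant\b{\nabla_x\Phi}\lesssim\delta$ and $\b{k}\lesssim\delta$, because $\b{{}^tD_x\mathcal{T}\p{\xi}}\lesssim\b{\xi}\lesssim\ell^\alpha\leqslant\delta/\epsilon$. Hence $\nabla_x\Phi$ is $\sigma$-Gevrey in $x$ on $\textup{supp }G$ with amplitude $O\p{\delta}$ and is bounded below there by $\delta$; therefore $w$ — obtained from $\nabla_x\Phi$ by composing $\b{\nabla_x\Phi}^2$ with $t\mapsto 1/t$ and multiplying — is $\p{\sigma+1}$-Gevrey on $\textup{supp }G$ with amplitude $O\p{1/\delta}$, uniformly in $k$ and $\ell$, the loss from $\sigma$ to $\sigma+1$ being the familiar Bell-number (Faà di Bruno) overhead in differentiating a quotient. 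Since $G$ is $\sigma$-Gevrey, the standard estimate for iterates of a first-order operator with $\p{\sigma+1}$-Gevrey coefficients yields $\sup_{\textup{supp }G}\b{\p{{}^tP}^N G}\leqslant C_G\p{C/\delta}^N N^{\p{\sigma+1}N}$ for all $N$, with $C,C_G$ uniform in $k,\ell$. Plugging this into $\b{I_k\p{\xi}}\leqslant\b{\textup{supp }G}\,C_G\p{C/\delta}^N N^{\p{\sigma+1}N}$ and optimizing over $N$ (taking $N\approx\delta^{\frac{1}{\sigma+1}}$, exactly as at the end of the proof of Lemma \ref{decroi}, with the trivial bound used when this is vacuous) gives $\b{I_k\p{\xi}}\leqslant C\exp\p{-C^{-1}\delta^{\frac{1}{\sigma+1}}}$, which combined with the first paragraph is the claimed bound.

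The main obstacle is precisely the uniform control of the $\p{\sigma+1}$-Gevrey seminorms of $w=\nabla_x\Phi/\b{\nabla_x\Phi}^2$: one must track how the Gevrey constants degrade under inversion of $\b{\nabla_x\Phi}^2$ and check that they stay $O\p{1/\delta}$ uniformly in $\p{k,\ell}$, which is exactly where the hypothesis $\delta>\epsilon\ell^\alpha$ — forcing $\b{\xi},\b{k}=O\p{\delta}$ — is used, and why the exponent in the stretched-exponential decay is $\frac{1}{\sigma+1}$ rather than $\frac{1}{\sigma}$. Everything else (the reduction, Fubini, the volume count, and the optimization in $N$) is routine. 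Together with Lemma \ref{drw}, this is one of the two points where the Gevrey regularity of $\mathcal{T}$ and $G$ genuinely enters the proof of Theorem \ref{main}.
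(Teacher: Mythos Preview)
Your argument is correct and follows the same non-stationary phase strategy as the paper's: write the linear form as an oscillatory integral, integrate by parts $N$ times against the phase $\mathcal{T}x\cdot\xi-kx$, and optimise over $N$. The paper keeps the double integral in $\p{x,\eta}$ and tracks the resulting terms explicitly (at most $\p{5d}^p p!$ of the form \eqref{formatw}), while you first reduce to a pointwise bound on $I_k\p{\xi}$ via Cauchy--Schwarz and the volume of $\textup{supp }\psi_{\Theta,\ell,j}$; the two routes are equivalent.

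One bookkeeping point: the loss $\sigma\to\sigma+1$ does not come from Fa\`a di Bruno in forming $w=\nabla_x\Phi/\b{\nabla_x\Phi}^2$. Once you know $\delta\leqslant\b{\nabla_x\Phi}\lesssim_\epsilon\delta$ on $\textup{supp }G$ (which is exactly what the hypothesis $\delta>\epsilon\ell^\alpha$ buys, as you note), the normalised function $\b{\nabla_x\Phi}^2/\delta^2$ is $\sigma$-Gevrey with uniform constants and bounded below by $1$, so its reciprocal --- and hence $w$ --- stays $\sigma$-Gevrey with amplitude $O\p{1/\delta}$. If $w$ were only $\p{\sigma+1}$-Gevrey, iterating ${}^tP$ would yield $N^{\p{\sigma+2}N}$, not $N^{\p{\sigma+1}N}$. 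The extra factorial is instead the combinatorial cost of the iteration itself: $\p{{}^tP}^N G$ expands into $O\p{C^N N!}$ monomials, each bounded by $\p{C/\delta}^N N!^\sigma$, which is precisely the paper's explicit count $\p{5d}^p p!\times p^{\sigma p}$. With this relabelling of where the $+1$ enters, your estimate and its optimisation stand as written.
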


\begin{proof}
For all $u \in L^2\p{\R^d}$ one may write
\begin{equation*}
c_k \circ L \circ \psi_{\Theta,\ell,j}\p{D}\p{u} = \frac{1}{\p{2\pi}^d} \s{\left]-\pi,\pi\right[^d \times \R^d}{e^{i\p{\mathcal{T}\p{x}\eta - kx}} G\p{x} \tilde{\psi}_{\Theta,\ell,j}\p{\eta} \hat{u}\p{\eta}}{x}\mathrm{d}\eta.
\end{equation*}
For all $p \geqslant 0$ this may be rewritten as
\begin{equation*}
c_k \circ L \circ \psi_{\Theta,\ell,j}\p{D}\p{u} = \frac{1}{\p{2\pi}^d} \s{\left]-\pi,\pi\right[^d \times \R^d}{e^{i\p{\mathcal{T}\p{x}\eta - kx}} F_p\p{x,k,\eta} \tilde{\psi}_{\Theta,\ell,j}\p{\eta} \hat{u}\p{\eta}}{x}\mathrm{d}\eta
\end{equation*}
where $F_p$ is the sum of at most $\p{5d}^p p!$ terms of the form
\begin{equation}\label{formatw}
\p{x,k,\eta} \mapsto \pm \frac{\partial^a G\p{x}}{\p{\Phi\p{x}\p{k,\eta}}^{p+m}} \partial^{b_1} l_{j_1}\p{x}\p{k,\eta} \dots \partial^{b_p} l_{j_p}\p{x}\p{k,\eta} \partial^{\gamma_1} \Phi\p{x}\p{k,\eta} \dots \partial^{\gamma_m} \Phi\p{x}\p{k,\eta}
\end{equation}
where $m \leqslant p$ and $\b{a} + \b{b_1} + \dots + \b{b_p}  + \b{\gamma_1} + \dots + \b{\gamma_m} = p$, and $l_j\p{x}\p{k,\eta} = i\p{\partial_j \mathcal{T}\p{x} \eta - k_j}$ and $\Phi\p{x}\p{k,\eta} = \b{{}^t D_x \mathcal{T}\p{ \eta} - k}^2$. This may be proved by induction on $p$ : set $F_0\p{x,k,\eta} = G\p{x}$ and then
\begin{equation*}
F_{p+1}\p{x,.,.} = \sum_{j=1}^d \partial_j \p{\frac{l_j\p{x}F_p\p{x,.,.}}{\Phi\p{x}}},
\end{equation*}
using the formula
\begin{equation*}
\s{\R^d}{e^{i f\p{y}} g\p{y}}{y} = i \s{\R^d}{e^{i f\p{y}} \sum_{j=1}^d \partial_j \p{\frac{\partial_j f\p{y} g\p{y}}{\b{\nabla f\p{y}}^2}}}{y}.
\end{equation*}
Thus, each term of generation $p$ gives rise to at most $5d\p{p+1}$ terms of generation $p+1$.

Now, assume that $\delta\p{k,\ell,j} > \epsilon \ell^\alpha$ and notice that $\left|k\right| \leq \delta\p{k,\ell,j} + C\p{\ell+1}^\alpha$ for some $C > 0$, thus for every quadratic form $\Psi : \R^d \times \R^d \to \R$ and every linear form $l : \R^d \times \R^d \to \C$ we have
\begin{equation*}
\b{\frac{\Psi\p{k,\eta}}{\Phi\p{x}\p{k,\eta}}} \leqslant C \n{\Psi} \textrm{ and } \b{\frac{l\p{k,\eta}}{\Phi\p{x}\p{k,\eta}}} \leqslant C \n{l} d\p{k,{}^t D_x \mathcal{T} \p{\textup{supp } \tilde{\psi}_{\Theta,\ell,j}}}^{-1}
\end{equation*}
for any $x \in \R^d$ and $\eta \in \textup{supp } \tilde{\psi}_{\Theta,\ell,j}$, any choice of norms on the spaces of linear forms and quadratic forms on $\R^d \times \R^d$ and where $C$ only depends on $\epsilon$ and this choice of norms. Thus we see that each term of the form \eqref{formatw} may be bounded by
\begin{equation*}
C^{2p} \n{\partial^a G}_{\infty} \n{\partial^{b_1} l_{j_1}}_{\infty} \dots \n{\partial^{b_p} l_{j_p}}_{\infty} \n{\partial^{\gamma1} \Phi}_{\infty} \dots \n{\partial^{\gamma_m} \Phi}_{\infty} \delta\p{k,\ell,j}^{-p}.
\end{equation*}
However, the map $\Phi$ and $l_1,\dots,l_d$ are $\sigma$-Gevrey. Consequently, $F_p$ is bounded by 
\begin{equation*}
C M^p p^{\p{\sigma+1}p} \delta\p{k,l,j}^{-p}
\end{equation*}
for the relevant values of $k$ and $\eta$. This implies that
\begin{equation*}
\b{c_k \circ L \circ \tilde{\psi}_{\Theta,\ell,j}\p{D}\p{u}} \leqslant C M^p p^{\p{\sigma+1}p} \b{\textup{supp } G} \delta\p{k,\ell,j}^{-p} \p{\ell+1}^{\frac{\alpha d}{2}} \n{u}_2.
\end{equation*}
Then take $p=\max\p{0,\left\lfloor \frac{\delta\p{k,\ell,j}^{\frac{1}{\sigma+1}}}{e M^{\frac{1}{\sigma+1}}}\right\rfloor}$ to get the first estimate, the second one is easy.
\end{proof}

Notice that Lemmas \ref{drw} and \ref{gauw} (see also Remark \ref{dr2w}) imply that the right-hand side of \eqref{somme2w} converges in nuclear operator topology. The equality holds as we can see by testing it on $\mathcal{C}^\infty$ compactly supported functions. However, Lemmas \ref{drw} and \ref{gauw} alone are not enough to prove Proposition \ref{locrep}, we also need to use the generalized cone-hyperbolicity of $\mathcal{T}$.

Recall that $\nu$ has been chosen such that $1 < \nu < \Lambda^{\frac{1}{\alpha}}$, where $\Lambda$ is from \eqref{stronghypw}. Define then a relation $\hra$ on $\Gamma$ in the following way :
\begin{itemize}
\item if $i=j=0$, then $\p{\ell,j} \hra \p{n,i}$ if and only if $\ell \geqslant \nu n$;
\item if $i,j \in \set{r-1,r}$, then $\p{\ell,j} \hra \p{n,i}$ if and only if $n \geqslant \nu \ell$;
\item  if $i \geqslant j+1$ and $j \leqslant r-2$, then $\p{\ell,j} \hra \p{n,i}$ if and only if $n \geqslant \frac{a}{2} \ell$ where $a$ is from \eqref{defa}.
\end{itemize}
This relation indexes terms in \eqref{sommew} which correspond to high differentiability transitioning to low differentiability. Let us recall an elementary fact before stating and proving Lemma \ref{distsw} which is the main tool to use generalized cone-hyperbolicity of $\mathcal{T}$ in order to deal with zones of low regularity leaking into zones of high regularity.

\begin{lm}\label{distcone}
Let $C_+$ and $C_-$ be two transverse cones in $\R^d$ (that is we have $C_+ \cap C_- = \set{0}$) and write
\begin{equation*}
\mu = \min \p{d\p{C_+ \cap S^{d-1},C_-},d\p{C_- \cap S^{d-1},C_+}} > 0.
\end{equation*}
Then if $\xi \in C_+$ and $\eta \in C_-$ we have $d\p{\xi,\eta} \geqslant \mu \max\p{\b{\xi},\b{\eta}}$.
\end{lm}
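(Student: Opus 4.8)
The plan is to reduce everything to the unit sphere using the scaling invariance of cones. First I would note (although it is already part of the hypothesis) that $\mu > 0$ is automatic: $C_+ \cap S^{d-1}$ is compact, being the intersection of a closed cone with the compact sphere, while $C_-$ is closed, and the two sets are disjoint since $C_+ \cap C_- = \set{0}$ and $0 \notin S^{d-1}$; hence $d\p{C_+ \cap S^{d-1}, C_-} > 0$, and symmetrically for the other term, so their minimum $\mu$ is positive.

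For the inequality itself, take $\xi \in C_+$ and $\eta \in C_-$. If $\xi = \eta = 0$ there is nothing to prove, so assume $\max\p{\b{\xi},\b{\eta}} > 0$ and, exchanging the roles of $C_+$ and $C_-$ if necessary (which is legitimate because $\mu$ is defined symmetrically), suppose $\b{\xi} = \max\p{\b{\xi},\b{\eta}} > 0$. Then I would normalize by $\b{\xi}$: the vector $\xi / \b{\xi}$ lies in $C_+ \cap S^{d-1}$, and since $C_-$ is a cone the vector $\eta / \b{\xi}$ still lies in $C_-$. Consequently
\[
\frac{d\p{\xi,\eta}}{\b{\xi}} = \b{ \frac{\xi}{\b{\xi}} - \frac{\eta}{\b{\xi}} } \geqslant d\p{ \frac{\xi}{\b{\xi}} , C_- } \geqslant d\p{C_+ \cap S^{d-1}, C_-} \geqslant \mu ,
\]
which yields $d\p{\xi,\eta} \geqslant \mu \b{\xi} = \mu \max\p{\b{\xi},\b{\eta}}$, as desired.

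There is no genuine obstacle here; the lemma is elementary, and the only point requiring a moment's attention is bookkeeping: one must normalize by the \emph{larger} of the two norms, so that the resulting unit vector sits on the spherical slice of the cone whose distance term actually appears, while exploiting that the other cone is invariant under positive scaling. Taking the minimum of the two symmetric quantities in the definition of $\mu$ is exactly what makes both cases (whether $\b{\xi}$ or $\b{\eta}$ is the maximum) go through with the same constant.
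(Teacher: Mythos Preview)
Your proof is correct and follows exactly the same route as the paper's own argument: handle the trivial case, assume without loss of generality that $\b{\xi}$ is the larger norm, normalize both vectors by $\b{\xi}$, and use that $\xi/\b{\xi} \in C_+ \cap S^{d-1}$ while $\eta/\b{\xi} \in C_-$. The paper simply states these steps more tersely; your additional remark that $\mu > 0$ follows from compactness of $C_+ \cap S^{d-1}$ and closedness of $C_-$ is a welcome clarification but not required since positivity of $\mu$ is already asserted in the statement.
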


\begin{proof}
If $\xi = \eta = 0$ the result is trivial. Consequently, we may suppose without loss of generality that $\b{\xi} = \max\p{\b{\xi},\b{\eta}} > 0$. Then $d\p{\frac{\xi}{\b{\xi}}, \frac{\eta}{\b{\xi}}} \geqslant \mu$, which gives the announced estimates, multiplying by $\b{\xi}$.
\end{proof}

\begin{lm}\label{distsw}
There are a constant $c>0$ and an integer $N$ such that for all $\p{n,i},\p{\ell,j} \in \Gamma$ we have : $\p{\ell,j} \hra \p{n,i}$ or $\max\p{n,\ell} \leqslant N$ or 
\begin{equation}\label{distsuppw}
d\p{\textup{supp } \psi_{\Theta',n,i}, {}^tD_x \mathcal{T} \p{\textup{supp } \tilde{\psi}_{\Theta,\ell,j}} } \geqslant c \max\p{n,\ell}^\alpha
\end{equation}
for all $x \in \textup{supp } G$.
\end{lm}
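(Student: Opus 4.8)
The plan is to fix $(n,i),(\ell,j)\in\Gamma$ with $(\ell,j)\nhra(n,i)$ and $\max(n,\ell)$ larger than a threshold $N$ to be chosen at the very end, and to prove \eqref{distsuppw} uniformly in $x\in\textup{supp } G$. Throughout I would use the following elementary facts. The radial localisations: for $n\geqslant 1$, $\textup{supp }\psi_{\Theta',n,i}\subseteq\set{n^\alpha\leqslant\b\xi\leqslant(n+1)^\alpha+1}$, while $\textup{supp }\psi_{\Theta',0,i}\subseteq\set{\b\xi\leqslant 2}$, and for $\ell\geqslant 1$, $\textup{supp }\tilde\psi_{\Theta,\ell,j}\subseteq\set{(\ell-1)^\alpha\leqslant\b\xi\leqslant(\ell+2)^\alpha+1}$. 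The angular localisations: on $\textup{supp }\psi_{\Theta',n,i}$ one has $\xi/\b\xi\in\textup{supp }\phi_i'$, which is disjoint from a fixed neighbourhood of $C'_{i+2}\cap S^{d-1}$ when $i\leqslant r-2$, and similarly for $\tilde\psi_{\Theta,\ell,j}$ and $\tilde\phi_j$. And, for $x\in\textup{supp } G$, the two uniform bounds $\b{{}^tD_x\mathcal T\zeta}\leqslant\Lambda_0\b\zeta$ with $\Lambda_0:=\sup_{x\in\textup{supp } G}\n{{}^tD_x\mathcal T}<+\infty$, together with $\b{{}^tD_x\mathcal T\zeta}\geqslant a^\alpha\b\zeta$, the latter being just a rewriting of \eqref{defa}.

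\emph{Step 1 (separate incompatible frequency scales).} If $n^\alpha\geqslant 2\Lambda_0\p{(\ell+2)^\alpha+1}$, then every $\eta\in{}^tD_x\mathcal T\p{\textup{supp }\tilde\psi_{\Theta,\ell,j}}$ satisfies $\b\eta\leqslant\Lambda_0\p{(\ell+2)^\alpha+1}\leqslant\frac12 n^\alpha\leqslant\frac12\b\xi$ for $\xi\in\textup{supp }\psi_{\Theta',n,i}$, so $d(\xi,\eta)\geqslant\frac12 n^\alpha\geqslant\frac12\max(n,\ell)^\alpha$ and \eqref{distsuppw} holds. So assume $n^\alpha<2\Lambda_0\p{(\ell+2)^\alpha+1}$; for $N$ large this forces $\ell\geqslant 1$ and $n\leqslant c_1\ell$ for an explicit $c_1\geqslant 1$, hence $\max(n,\ell)^\alpha\leqslant c_1^\alpha\ell^\alpha$, so it now suffices to bound $d\p{\textup{supp }\psi_{\Theta',n,i},{}^tD_x\mathcal T\p{\textup{supp }\tilde\psi_{\Theta,\ell,j}}}$ from below by $c\ell^\alpha$, with $\ell$ as large as we like. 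If in addition $n=0$, then $\textup{supp }\psi_{\Theta',0,i}\subseteq\set{\b\xi\leqslant 2}$ while $\b\eta\geqslant a^\alpha(\ell-1)^\alpha\geqslant 4$ for $\ell$ large, so again $d\geqslant\frac12 a^\alpha(\ell-1)^\alpha\geqslant\frac{a^\alpha}{4}\ell^\alpha$; hence we may also assume $n\geqslant 1$.

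\emph{Step 2 (case analysis on $(i,j)$, mirroring the three clauses defining $\hra$).} The pairs $(i,j)$ are exhausted by: (A) $i\leqslant r-2$ and $j\geqslant\max(1,i)$; (B) $i\leqslant r-2$ and $j<i$; (C) $i,j\in\set{r-1,r}$; (D) $i\in\set{r-1,r}$ and $j\leqslant r-2$; together with the single pair $(0,0)$. In (A), \eqref{fuite} gives ${}^tD_x\mathcal T(C_j)\subseteq C'_{\min(j+2,r)}\subseteq C'_{i+2}$ (using $j\geqslant 1$, $j\geqslant i$ and $i\leqslant r-2$), while $\xi/\b\xi\in\textup{supp }\phi_i'$ stays away from $C'_{i+2}\cap S^{d-1}$; since $\textup{supp }\phi_i'\cap S^{d-1}$ and $C'_{i+2}\cap S^{d-1}$ are disjoint compact subsets of $S^{d-1}$, Lemma \ref{distcone} gives $d(\xi,{}^tD_x\mathcal T\zeta)\geqslant\mu\b{{}^tD_x\mathcal T\zeta}\geqslant\mu a^\alpha(\ell-1)^\alpha$. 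Cases (B) and (D) are precisely the index pairs governed by the third clause of the definition of $\hra$, so $(\ell,j)\nhra(n,i)$ means $n<\frac a2\ell$; then $\b\xi\leqslant(\frac a2\ell+1)^\alpha+1$ whereas $\b{{}^tD_x\mathcal T\zeta}\geqslant a^\alpha(\ell-1)^\alpha$, and since $2^{1-\alpha}<1$ (here $\alpha>1$) the latter exceeds $2\b\xi$ once $\ell$ is large, giving $d\geqslant\frac12 a^\alpha(\ell-1)^\alpha$. In (C) the second clause applies, so $n<\nu\ell$; here $\zeta\in C_{r-1}$, so \eqref{stronghypw} gives $\b{{}^tD_x\mathcal T\zeta}\geqslant\Lambda(\ell-1)^\alpha$, which beats $2\b\xi\leqslant 2\p{(\nu\ell+1)^\alpha+1}$ for $\ell$ large because $\nu^\alpha<\Lambda$, so $d\geqslant\frac12\Lambda(\ell-1)^\alpha$. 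Finally for $(0,0)$ the first clause applies, so $\ell<\nu n$: if ${}^tD_x\mathcal T\zeta\notin C'_2$ then \eqref{stronghypw2} gives $\b{{}^tD_x\mathcal T\zeta}\leqslant\Lambda^{-1}\b\zeta\leqslant\Lambda^{-1}\p{(\nu n+2)^\alpha+1}\leqslant\frac12 n^\alpha$ for $n$ large (as $\Lambda^{-1}\nu^\alpha<1$), whence $d\geqslant\frac12 n^\alpha$; if instead ${}^tD_x\mathcal T\zeta\in C'_2$, then $\textup{supp }\psi_{\Theta',n,0}$ avoids a neighbourhood of $C'_2\cap S^{d-1}$ (since $0\leqslant r-2$), and Lemma \ref{distcone} gives $d\geqslant\mu'\b\xi\geqslant\mu' n^\alpha$. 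In every case the bound obtained is at least $c\ell^\alpha$ for a fixed $c>0$: in (A)--(D) because $(\ell-1)^\alpha\geqslant\frac12\ell^\alpha$ for $\ell$ large, and in $(0,0)$ by using $\ell<\nu n$ to replace $n^\alpha$ by $\nu^{-\alpha}\ell^\alpha$.

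Taking $N$ to be the largest and $c$ the smallest of the finitely many thresholds and constants produced above (and then replacing $c$ by $c\,c_1^{-\alpha}$ to pass from $\ell^\alpha$ back to $\max(n,\ell)^\alpha$) finishes the proof. The only delicate point is bookkeeping: one must verify that the case split is exhaustive and is aligned exactly with the three clauses defining $\hra$, so that in each "not related" case the relevant numerical inequality ($\ell<\nu n$, or $n<\frac a2\ell$, or $n<\nu\ell$) is available, and one must keep track of which of $n$, $\ell$, or $\max(n,\ell)$ controls the estimate in each regime — that is the purpose of Step 1. The genuine dynamical content — that ${}^tD_x\mathcal T$ either pushes cones strictly inward towards the unstable end, or else strictly expands, or else strictly contracts, the relevant frequencies — enters only through \eqref{fuite}, \eqref{stronghypw}, \eqref{stronghypw2}, \eqref{defa} and Lemma \ref{distcone}, exactly as in the cone-hyperbolic estimates of \cite{Tsu}.
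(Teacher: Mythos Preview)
Your argument follows essentially the same case analysis as the paper's, invoking \eqref{fuite}, \eqref{stronghypw}, \eqref{stronghypw2}, \eqref{defa} and Lemma~\ref{distcone} in the same places; the only organisational difference is your preliminary Step~1, which first disposes of the regime where $n$ is much larger than $\ell$ and then reduces to bounding by $c\ell^\alpha$, whereas the paper estimates $\max(n,\ell)^\alpha$ directly in each case (in particular in your case~(A) the paper uses both sides of the $\max$ in Lemma~\ref{distcone}, so no preliminary reduction is needed there).

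There is one genuine quantitative slip that you should repair. In case~(C) you assert that $\Lambda(\ell-1)^\alpha$ beats $2\b{\xi}\leqslant 2\p{(\nu\ell+1)^\alpha+1}$ ``because $\nu^\alpha<\Lambda$'', and similarly in the $(0,0)$ case that $\Lambda^{-1}\p{(\nu n+2)^\alpha+1}\leqslant\tfrac12 n^\alpha$ ``as $\Lambda^{-1}\nu^\alpha<1$''. Both claims require $\Lambda>2\nu^\alpha$, which is not assumed (only $\nu^\alpha<\Lambda$ is). The fix is immediate and is exactly what the paper does: in~(C) simply use
\[
d\;\geqslant\;\b{{}^tD_x\mathcal T\zeta}-\b{\xi}\;\geqslant\;\Lambda(\ell-1)^\alpha-\p{(\nu\ell+1)^\alpha+1}\;\underset{\ell\to+\infty}{\sim}\;(\Lambda-\nu^\alpha)\,\ell^\alpha,
\]
and symmetrically in $(0,0)$ use $d\geqslant\b{\xi}-\b{{}^tD_x\mathcal T\zeta}\sim(1-\Lambda^{-1}\nu^\alpha)\,n^\alpha$. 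With this correction your proof is complete and matches the paper's.
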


\begin{proof}
The proof is based on an investigation of figure 1. Let us distinguish the different values of $i$ and $j$.

We deal first with the case $i,j \in \set{r-1,r}$. If $\p{\ell,j} \nhra \p{n,i}$ and $\max\p{n,\ell} > N$ then we have $n < \nu \ell$. If $\xi \in \textup{supp } \psi_{\Theta',n,i}$ and $\eta \in \textup{supp } \tilde{\psi}_{\Theta,\ell,j}$ then $\eta \in C_{r-1}$ and $\ell \neq 0$ (otherwise $\max\p{n,\ell} = 0 \leqslant N$) and thus, for all $x \in \textup{supp } G$, we have $\b{{}^t D_x \mathcal{T}\p{\eta}} \geqslant \Lambda \b{\eta} \geqslant \Lambda \ell^{\alpha}$. Consequently, the distance between $\xi$ and ${}^t D_x \mathcal{T}\p{\eta}$ is larger than
\begin{equation*}
\Lambda \p{\ell-1}^\alpha - \p{\p{n+1}^\alpha + 1} \geqslant \Lambda \p{\ell-1}^\alpha - \p{\p{\nu \ell + 1}^\alpha + 1} \underset{ \ell \to + \infty}{\sim} \p{\Lambda - \nu^\alpha} \ell^\alpha.
\end{equation*}
Notice that $\max\p{n,l} \leqslant \nu \ell$ and recall that $\nu$ has been chosen so that $\Lambda - \nu^\alpha > 0$ to conclude this first case.

Let us now deal with the case $\p{i,j} = \p{0,0}$ and assume that $\max\p{n,\ell} > N$ and $\p{\ell,j} \nhra \p{n,i}$, that is we have $\ell < \nu n$. If $\xi \in \textup{supp } \psi_{\Theta',n,i}$ and $\eta \in \textup{supp } \tilde{\psi}_{\Theta,\ell,j}$ then there are two possibilities. The first one is that ${}^t D_x \mathcal{T} \p{\eta}$ belongs to $C'_2$ and may be dealt using Lemma \ref{distcone}. Indeed, since $\phi'_0$ vanishes on a neighbourhood of $C'_2 \cap S^{d-1}$, the set $\textup{supp } \psi_{\Theta',n,0} $ is contained in a closed cone transverse to $C'_2$ (and independant of $n \geqslant 1$). The second possibility is that ${}^t D_ x\mathcal{T} \p{\eta}$ does not belong to $C'_2$. Then \eqref{stronghypw2} implies that
\begin{equation*}
\b{{}^t D_x \mathcal{T} \p{\eta}} \leqslant \frac{1}{\Lambda} \b{\eta} \leqslant \frac{\p{\ell+3}^\alpha+1}{\Lambda} \leqslant \frac{\p{\nu n+3}^\alpha + 1}{\Lambda}
\end{equation*} 
and consequently the distance between $\xi$ and ${}^t D_x \mathcal{T} \p{\eta}$ is larger than
\begin{equation*}
n^\alpha - \frac{\p{\nu n+3}^\alpha + 1}{\Lambda} \underset{ n \to + \infty}{\sim} \p{1 - \frac{\nu^\alpha}{\Lambda}} n^{\alpha}.
\end{equation*}
Then, since $\frac{\nu^\alpha}{\Lambda} <1$, we get the announced minoration, provided $N$ is large enough (recall that $\ell < \nu n$ and thus we only need to consider the case of large $n$, since $\ell$ is small when $n$ is small).

We deal now with the case $i \geqslant j+1$. If $j \geqslant r-1$, then we have $i,j \in \set{r-1,r}$ and this case has already been dealt with. Thus we assume that $ j \leqslant r-2$, $\max\p{n,\ell} > N$ and $\p{\ell,j} \nhra \p{n,i}$. Consequently we have $n < \frac{a}{2} \ell$. Now, if $\xi \in \textup{supp } \psi_{\Theta',n,i}$ and $\eta \in \textup{supp } \tilde{\psi}_{\Theta,\ell,j}$ the definition of $a$ implies that the distance between $\xi$ and ${}^t D_x \mathcal{T}\p{\eta}$ is greater than
\begin{equation*}
\b{{}^t D_x \mathcal{T}\p{\eta}} - \b{\xi} \geqslant a^\alpha \b{\eta} - \p{n+1}^\alpha - 1 \geqslant a^{\alpha} \ell^\alpha - \p{\frac{a}{2} \ell + 1}^\alpha - 1 \underset{\ell \to + \infty}{\sim} a^{\alpha}\p{1 - \frac{1}{2^\alpha}} \ell^\alpha.
\end{equation*}

We are left with the case $i < j+1$. If $i \leqslant r-1$ then we must have $i,j \in \set{r-1,r}$ and this case has already been dealt with. Thus, let us assume that $i < r-1$ and $\max\p{n,\ell} > N$ (we cannot have $\p{\ell,j} \hra \p{n,i}$ in this case). Consequently, for all $x \in \textup{supp } G$, the set ${}^t D_x \mathcal{T}\p{\textup{supp } \tilde{\psi}_{\Theta,\ell,j}}$ is contained in $C_{j+2}$ by \eqref{fuite}. However, since $i \leqslant r-2$, the function $\phi_i'$ vanishes on a neighborhood of $C_{j+2} \subseteq C_{i+2}$ and consequently $\textup{supp } \psi_{\Theta',n,i}$ is contained in a closed cone transverse to $C_{j+2}$ that does not depend of $n \geqslant 1$ (the case $n=0$ is easily dealt with separately by taking $N$ large enough). Applying Lemma \ref{distcone}, this ends the proof of the lemma.
\end{proof}

We need a last lemma to construct our auxiliary operator $\M : \B \to \B$, where $\B$ was defined in \eqref{spaceB}.

\begin{lm}\label{rcpq}
Let $\p{a_m}_{m\in \N}$ be a decreasing sequence of positive real numbers such that for some constant $M >0$,$\beta > 0$ and all $\epsilon >0$ sufficiently small we have
\begin{equation}\label{estime}
\# \set{m : a_m \geqslant \epsilon} \leqslant M \ln\p{\frac{1}{\epsilon}}^{\beta}.
\end{equation}
Then there are constants $C>0$ and $0 < \theta < 1$ such that for all $m \in \N$ we have
\begin{equation*}
a_m \leqslant C \theta^{m^\frac{1}{\beta}}.
\end{equation*}
\end{lm}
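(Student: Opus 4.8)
The proof is elementary, and the plan is to invert the counting estimate \eqref{estime} at the threshold value $\epsilon = a_m$. First I would record that the hypothesis forces $a_m \to 0$: for every sufficiently small $\epsilon > 0$ the set $\set{m : a_m \geqslant \epsilon}$ has cardinality at most $M \ln\p{1/\epsilon}^{\beta} < + \infty$, so that $\limsup_m a_m \leqslant \epsilon$; letting $\epsilon \to 0$ gives $a_m \to 0$. In particular there is an integer $m_1$ such that, as soon as $m \geqslant m_1$, the real number $a_m$ is small enough for \eqref{estime} to be applicable with $\epsilon = a_m$.

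Next I would fix $m \geqslant m_1$ and use monotonicity: since $\p{a_k}_{k \in \N}$ is decreasing, the $m+1$ terms $a_0, a_1, \dots, a_m$ all satisfy $a_k \geqslant a_m$, whence $\# \set{k \in \N : a_k \geqslant a_m} \geqslant m+1 > m$. Combining this with \eqref{estime} applied at $\epsilon = a_m$ yields $m < M \ln\p{1/a_m}^{\beta}$, that is $\ln\p{1/a_m} > \p{m/M}^{1/\beta}$, and therefore
\begin{equation*}
a_m < \exp\p{- M^{-1/\beta} m^{1/\beta}} = \theta^{m^{1/\beta}}, \qquad \text{where } \theta := e^{-M^{-1/\beta}} \in \left]0,1\right[ .
\end{equation*}

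It remains to absorb the finitely many indices $m < m_1$ into the constant: setting
\begin{equation*}
C = \max\p{1, \ \max_{0 \leqslant m < m_1} a_m \, \theta^{-m^{1/\beta}}} ,
\end{equation*}
one has $a_m \leqslant C \theta^{m^{1/\beta}}$ for $m < m_1$ by the definition of $C$, and for $m \geqslant m_1$ because $C \geqslant 1$ and $a_m < \theta^{m^{1/\beta}}$ by the previous step. This proves the lemma. There is no genuine difficulty here; the only points deserving a word of care are that \eqref{estime} is only assumed for small $\epsilon$ (which is why the argument is first run for $m \geqslant m_1$ and the small indices are handled by enlarging $C$) and that the monotonicity of $\p{a_m}_{m \in \N}$ is exactly what guarantees $\# \set{k \in \N : a_k \geqslant a_m} \geqslant m+1$.
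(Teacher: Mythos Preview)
Your proof is correct and follows essentially the same approach as the paper's: both invert the counting estimate \eqref{estime} at a threshold tied to the index $m$, obtain the same value $\theta = e^{-M^{-1/\beta}}$, and absorb finitely many initial indices into the constant $C$. The only cosmetic difference is that the paper plugs in $\epsilon = \theta^{m^{1/\beta}}$ and argues by contradiction, whereas you plug in $\epsilon = a_m$ directly; the two are equivalent.
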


\begin{proof}
Up to enlarging $C$ and $\theta$ at the end, we may remove a finite number of terms in $\p{a_m}_{m\in \N}$ and thus assume that \eqref{estime} holds for all $\epsilon < 1$ and that $a_m < \frac{1}{2}$ for all $m \in \N$. Then set $C=1$ and $\theta = \exp\p{- \frac{1}{M^{\frac{1}{\beta}}}}$ and notice that if $a_m > \theta^{m^{\frac{1}{\beta}}}$ then
\begin{equation*}
m + 1 \leqslant \# \set{k : a_k \geqslant \theta^{m^{\frac{1}{\beta}}}} \leqslant m,
\end{equation*}
which is absurd.
\end{proof}

We next exploit the facts we have gathered to construct $\M$ and to prove the key Lemma \ref{nucmieuxw} needed to show Propositions \ref{locrep} and \ref{trace}.

\begin{lm}\label{nucmieuxw}
The sum
\begin{equation*}
\sum_{\p{n,s},\p{\ell,t} \in \Gamma} i_{n,s} \circ S_{n,s}^{\ell,t} \circ \pi_{\ell,t} = \sum_{\substack{\p{n,s},\p{\ell,t} \in \Gamma \\ k \in \Z^d} } \p{i_{n,s}\p{\psi_{\Theta',n,s}\p{D} \rho_k}} \otimes \p{c_k \circ L \circ \widetilde{\psi}_{\Theta,\ell,t}\p{D} \circ \pi_{\ell,t}}
\end{equation*}
converges in nuclear operator topology to a trace class operator $\M$ that may be written as
\begin{equation*}
\M = \sum_{m \in \N} \lambda_m e_m \otimes l_m,
\end{equation*}
where the $l_m$ and $e_m$ have unit norm respectively in $\B'$ and $\B$, and $\lambda_m \in \C$, with $\b{\lambda_m} \leqslant C \theta^{m^{\frac{1}{\beta}}}$, where $\beta = 2 +\alpha d$ and for some constants $C>0$ and $0< \theta < 1$.
\end{lm}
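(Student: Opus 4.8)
The plan is to bound the total nuclear norm of the series and then sort the resulting rank-one terms with the help of Lemma \ref{rcpq}. Since $\n{\iota_{n,s}\p{v}}_{\B} = e^{nt_s}\n{v}_2$ and $\n{\pi_{\ell,t}}_{\B \to L^2} = e^{-\ell t_t}$, the nuclear norm of the $\p{\p{n,s},\p{\ell,t},k}$-th rank-one piece in the right-hand side is at most
\begin{equation*}
\mu_{n,s,\ell,t,k} := e^{nt_s - \ell t_t}\, \n{\psi_{\Theta',n,s}\p{D}\rho_k}_2 \, \n{c_k \circ L \circ \widetilde{\psi}_{\Theta,\ell,t}\p{D}}_{\p{L^2}'}.
\end{equation*}
First I would sum over $k \in \Z^d$ for fixed bands $\p{n,s}$, $\p{\ell,t}$. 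By Lemma \ref{drw} and Remark \ref{dr2w} one has $\n{\psi_{\Theta',n,s}\p{D}\rho_k}_2 \leqslant C\p{n+1}^{\frac{\alpha d}{2}}\exp\p{-C^{-1}d\p{k,\textup{supp}\,\psi_{\Theta',n,s}}^{\frac1\sigma}}$, so the $k$'s at large distance from $\textup{supp}\,\psi_{\Theta',n,s}$ contribute an absolutely summable amount, whereas for the $O\p{\p{n+1}^{\alpha d}}$ values of $k$ close to $\textup{supp}\,\psi_{\Theta',n,s}$ one has $\b{k} \sim n^\alpha$ and $\delta\p{k,\ell,t}$ is bounded below by $d\p{\textup{supp}\,\psi_{\Theta',n,s}, {}^tD_x\mathcal{T}\p{\textup{supp}\,\widetilde{\psi}_{\Theta,\ell,t}}}$ up to a controlled error.

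Next I would decompose the sum over $\p{n,s},\p{\ell,t} \in \Gamma$ according to the trichotomy of Lemma \ref{distsw}. If $\p{\ell,t} \hra \p{n,s}$, the conditions \eqref{sueur} on $\bar t$ are exactly tailored so that $e^{nt_s - \ell t_t} \leqslant C e^{-c\p{n+\ell}}$: inspecting the three defining branches of $\hra$, the branch $s=t=0$ (with $\ell \geqslant \nu n$) uses $t_0 > 0$, the branch $s,t \in \set{r-1,r}$ (with $n \geqslant \nu \ell$) reduces in its worst subcase $\p{s,t}=\p{r-1,r}$ to $t_r > \nu t_{r-1}$, and the branch $s \geqslant t+1$, $t \leqslant r-2$ (with $n \geqslant \frac a2 \ell$) to $t_{t+1} < \frac 2a t_t$; together with the polynomial factors coming from the $k$-sum this yields a summable contribution. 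The case $\max\p{n,\ell} \leqslant N$ involves only finitely many bands and is handled by summing the decay in $k$ given by Lemma \ref{drw}. In the remaining case \eqref{distsuppw} holds; choosing the $\epsilon$ of Lemma \ref{gauw} smaller than the constant $c$ of Lemma \ref{distsw}, one gets $\delta\p{k,\ell,t} \geqslant c\max\p{n,\ell}^\alpha > \epsilon \ell^\alpha$ for the relevant $k$, so Lemma \ref{gauw} contributes a factor $\exp\p{-C^{-1}\p{c\max\p{n,\ell}^\alpha}^{\frac1{\sigma+1}}}$. Since $\sigma + 1 < \alpha$ the exponent $\frac{\alpha}{\sigma+1}$ exceeds $1$, so this stretched exponential dominates both the polynomial factors $\p{n+1}^{O\p{d}}\p{\ell+1}^{O\p{d}}$ and the at-most-exponential weight $e^{nt_s-\ell t_t} = e^{O\p{\max\p{n,\ell}}}$, and the sum converges. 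Altogether the series converges absolutely in nuclear norm, hence in nuclear operator topology, and its sum $\M$ is trace class on the Hilbert space $\B$.

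Finally, for the quantitative representation, I would let $e_m \otimes l_m$ be the $m$-th rank-one term in a fixed enumeration of the nonzero terms $\iota_{n,s}\p{\psi_{\Theta',n,s}\p{D}\rho_k} \otimes \p{c_k \circ L \circ \widetilde{\psi}_{\Theta,\ell,t}\p{D} \circ \pi_{\ell,t}}$ ordered by non-increasing $\mu_{n,s,\ell,t,k}$, normalised to unit norm in $\B$ and $\B'$, and $\lambda_m$ the corresponding coefficient, and apply Lemma \ref{rcpq} with $\beta = 2 + \alpha d$. It then suffices to check $\#\set{\p{n,s,\ell,t,k} : \mu_{n,s,\ell,t,k} \geqslant \epsilon} \leqslant M\p{\ln\tfrac1\epsilon}^{2+\alpha d}$; but the estimates above show that $\mu_{n,s,\ell,t,k} \geqslant \epsilon$ forces $n,\ell = O\p{\ln\tfrac1\epsilon}$ (and even $O\p{\p{\ln\tfrac1\epsilon}^{\frac{\sigma+1}{\alpha}}}$ in the distance case) and, using $\b{k} \sim n^\alpha$ together with the decay in $k$, $\b{k} = O\p{\p{\ln\tfrac1\epsilon}^{\alpha}}$, so — $s,t$ ranging over the fixed finite set $\set{0,\dots,r}$ — there are $O\p{\p{\ln\tfrac1\epsilon}^2}$ admissible $\p{n,s,\ell,t}$ and $O\p{\p{\ln\tfrac1\epsilon}^{\alpha d}}$ admissible $k$, which gives the claim. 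The main obstacle is the bookkeeping of the second paragraph: correctly relating $\delta\p{k,\ell,t}$ to the support–support distance of Lemma \ref{distsw} (which is what forces the near/far splitting of the $k$-sum), calibrating $\epsilon$ in Lemma \ref{gauw} against the constant $c$ of Lemma \ref{distsw}, and verifying in every branch of the trichotomy that the Gevrey stretched-exponential gains genuinely beat the exponential weight $e^{nt_s-\ell t_t}$ and all polynomial factors.
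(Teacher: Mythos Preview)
Your proposal is correct and follows essentially the same route as the paper: split according to the trichotomy of Lemma \ref{distsw}, use \eqref{sueur} to get exponential gain in the $\hra$ branches, use the stretched exponential from Lemmas \ref{drw}--\ref{gauw} (after the near/far splitting of $k$ via the triangle inequality) in the distance branch, and then count $\#\{\mu_{n,s,\ell,t,k}\geqslant\epsilon\}$ to feed Lemma \ref{rcpq}. The only organisational difference is that you first argue absolute summability and only afterwards do the counting, whereas the paper goes directly to the counting (which already yields summability via $\sum_m \theta^{m^{1/\beta}}<\infty$); your extra step is harmless but redundant.
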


\begin{proof}
We shall see that there is a constant $M>0$ such that for all $\epsilon > 0$ small enough $\# A_\epsilon \leqslant M \ln\p{\frac{1}{\epsilon}}^{\beta}$ where
\begin{equation*}
A_\epsilon = \set{ \p{k,\p{\ell,j},\p{n,i}}\in \N \times \Gamma \times \Gamma : \n{c_k \circ L \circ \tilde{\psi}_{\Theta,\ell,j}\p{D}\circ \pi_{\ell,j}}_{\B'} \n{\iota_{n,i}\p{\psi_{\Theta',n,i}\p{D}\rho_k}}_{\B} \geqslant \epsilon} 
\end{equation*}
and then use Lemma \ref{rcpq}. To do so we will cover this set with three smaller sets: we deal separately with the $\p{k,\p{\ell,j},\p{n,i}}$ with $\p{\ell,j} \hra \p{n,i}$ or $\max\p{n,\ell} \leqslant N$ or $\p{\ell,j} \nhra \p{n,i}$ and $\max\p{n,\ell} > N$.

\begin{itemize}
\item \textit{First case}: $\p{\ell,j} \hra \p{n,i}$. As in the proof of Lemma \ref{distsw}, we distinguish subcases that corresponds to different values of $i$ and $j$. We deal first with the case $i,j \in \set{r-1,r}$. Using Lemmas \ref{drw} and \ref{gauw}, we have
\begin{equation}\label{petw}
\n{c_k \circ L \circ \tilde{\psi}_{\Theta,\ell,j}\p{D}\circ \pi_{\ell,j}}_{\B'} \n{\iota_{n,i}\p{\psi_{\Theta',n,i}\p{D}\rho_k}}_{\B} \leqslant C e^{t_i n} \p{n+1}^{\frac{\alpha d}{2}} e^{-t_j \ell} \p{\ell+1}^{\frac{\alpha d}{2}}
\end{equation}
for some constant $C > 0$ and if in addition $\b{k} \geqslant C n^{\alpha}$ then \eqref{petw} can be improved to (see Remark \ref{dr2w})
\begin{equation}\label{grdw}
\begin{split}
& \n{c_k \circ L \circ \tilde{\psi}_{\Theta,\ell,j}\p{D}\circ \pi_{\ell,j}}_{\B'} \n{\iota_{n,i}\p{\psi_{\Theta',n,i}\p{D}\rho_k}}_{\B} \\ & \qquad \qquad \qquad \leqslant C e^{t_i n} \p{n+1}^{\frac{\alpha d}{2}} e^{-t_j \ell} \p{\ell+1}^{\frac{\alpha d}{2}} \exp\p{-C^{-1} \b{k}^{\frac{1}{\sigma}}}.
\end{split}
\end{equation}
Choose $\tilde{t}_i > t_i$ and $\tilde{t}_j < t_j$ such that\footnote{The notation is a bit misleading since we can have $i=j$ but in this case we will never have $\tilde{t}_i = \tilde{t}_j$.} $\tilde{t}_i - \frac{\tilde{t}_j}{\nu} < 0 $ (where $\nu >1$ is from the definition of $\hra$). This is possible since $\nu > 1$, and $t_{r-1} > t_r > \nu t_{r-1}$ by assumption. Up to enlarging $C$, we can replace $e^{t_i n}\p{n+1}^{\frac{\alpha d}{2}}$ by $e^{\tilde{t}_i n}$ and $e^{- t_j \ell}\p{\ell+1}^{\frac{\alpha d}{2}}$ by $e^{- \tilde{t}_j \ell}$ in \eqref{petw} and \eqref{grdw}. Consequently, if $\p{k,\p{\ell,j},\p{n,i}}$ belongs to $A_\epsilon$ we have
\begin{equation*}
\ln C + n \p{\tilde{t}_i - \frac{\tilde{t}_j}{\nu}} \geqslant \ln C + n \tilde{t}_i - \ell \tilde{t}_j \geqslant \ln \epsilon
\end{equation*}
and thus
\begin{equation*}
n \leqslant \p{\frac{\tilde{t}_j}{\nu} - \tilde{t}_i}^{-1} \p{\ln\p{\frac{1}{\epsilon}} + \ln C}.
\end{equation*}
If $\b{k} \geqslant C \max\p{n,l}^\alpha$ we must in addition have
\begin{equation*}
\ln C - C^{-1} \b{k}^{\frac{1}{\sigma}} \geqslant \ln \epsilon
\end{equation*}
and thus
\begin{equation}\label{lesk}
\b{k} \leqslant C\p{\ln\p{\frac{1}{\epsilon}} + \ln C}^\sigma
.\end{equation}
Consequently, recalling that $n \geqslant \nu l$, there are at most $M \ln\p{\frac{1}{\epsilon}}^{2 + \alpha d} $ elements of $A_\epsilon$ such that $\p{\ell,j} \hra \p{n,i}$ and $i,j \in \set{r-1,r} $, for some constant $M$ and $\epsilon$ small enough (notice that $d \geqslant 2$). 

The case $i=j=0$ is similar, interverting $n$ and $\ell$.

We are left with the case $i \geqslant j+1$ and $j \leqslant r-2$. In this case $\p{\ell,j} \hra \p{n,i}$ implies that $n \geqslant \frac{a}{2} \ell$. Choose $\tilde{t}_i > t_i$ and $\tilde{t}_j < t_j$ such that $\tilde{t}_i - \frac{2}{a} \tilde{t}_j < 0$ (this is possible since $t_i \leqslant t_{j+1} < \frac{2}{a} t_j$ by assumption). Then, up to enlarging $C$ we may replace $e^{t_i n}\p{n+1}^{\frac{\alpha d}{2}}$ by $e^{\tilde{t}_i n}$ and $e^{- t_j \ell}\p{\ell+1}^{\frac{\alpha d}{2}}$ by $e^{- \tilde{t}_j \ell}$ in \eqref{petw} and \eqref{grdw}. And then, if $\p{k,\p{\ell,j},\p{n,i}}$ belongs to $A_\epsilon$ we have as before
\begin{equation*}
n \leqslant \frac{ \p{\ln\p{\frac{1}{\epsilon}} + \ln C}}{\tau}
\end{equation*}
where $\tau = \frac{2}{a} \tilde{t}_j - \tilde{t}_i$ if $j \neq 0$ and $\tau = - \tilde{t}_i$ if $j=0$. Furthermore, \eqref{lesk} remains true and thus we get the announced estimate as in the previous cases.

\item \textit{Second case}: $\max\p{n,\ell} \leqslant N$. There is a finite number of possible values for $n$ and $\ell$ in this case, and consequently we are only interested in $k$. Working as in the previous cases, we then see that there are at most $M \ln\p{\frac{1}{\epsilon}}^{\sigma d}$ elements in $A_\epsilon$ such that $\max\p{n,\ell} \leqslant N$ in this case.

\item \textit{Third case}: $\p{\ell,i} \nhra \p{n,i}$ \textit{and} $\max\p{n,\ell} > N$. In this case, where $c$ is from Lemma \ref{distsw}, for all $x \in \textup{supp } G$ we have
\begin{equation*}
d\p{k,\textup{supp } \psi_{\Theta',n,s}} + d\p{k,{}^t D_x \mathcal{T}\p{\textup{supp } \tilde{\psi}_{\Theta,\ell,t}}} \geqslant d\p{\textup{supp } \psi_{\Theta',n,s},{}^t D_x \mathcal{T}\p{\textup{supp } \tilde{\psi}_{\Theta,\ell,t}}} \geqslant c \max\p{n,l}^\alpha
\end{equation*}
and thus we have
\begin{equation*}
d\p{k,\textup{supp } \psi_{\Theta',n,s}} \geqslant \frac{c}{2} \max\p{n,\ell}^\alpha
\end{equation*}
or
\begin{equation*}
\delta\p{k,\ell,j}  \geqslant \frac{c}{2} \max\p{n,\ell}^\alpha.
\end{equation*}
This implies using Lemmas \ref{drw} and \ref{gauw} that (for some new constant $C$ and $b$ large enough) we have
\begin{equation}\label{trucw}
\n{c_k \circ L \circ \tilde{\psi}_{\Theta,\ell,j}\p{D}\circ \pi_{\ell,j}}_{\B'} \n{\iota_{n,i}\p{\psi_{\Theta',n,i}\p{D}\rho_k}}_{\B} \leqslant C b^{n+\ell} \exp\p{-C^{-1}\max\p{n,\ell}^{\frac{\alpha}{\sigma+1}}},
\end{equation}
which can be improved when $\b{k} \geqslant Cn^\alpha$ in
\begin{equation}\label{machinw}
\n{c_k \circ L \circ \tilde{\psi}_{\Theta,\ell,j}\p{D}\circ \pi_{\ell,j}}_{\B'} \n{\iota_{n,i}\p{\psi_{\Theta',l,t}\p{D}\rho_k}}_{\B} \leqslant C b^{n+\ell} \exp\p{-C^{-1}\p{\max\p{n,\ell}^{\frac{\alpha}{\sigma+1}} + \b{k}^{\frac{1}{\sigma}}}}.
\end{equation}
Provided that $\max\p{n,l}$ is large enough, which can be performed by making $N$ larger, \eqref{trucw} implies that $\max\p{n,\ell}$ is smaller than $M \ln\p{\frac{1}{\epsilon}}^{\frac{\sigma+1}{\alpha}}$ for some constants $M$ and provided that $\epsilon$ is small enough (we need $\alpha > \sigma + 1$ to get this estimate). And then \eqref{machinw} ensures that $\b{k} \leqslant C n^\alpha$ or $\b{k} \leqslant M \ln\p{\frac{1}{\epsilon}}^\sigma$. At the end, there are at most $M \ln \p{\frac{1}{\epsilon}}^{2 \frac{\sigma+1}{\alpha} + \p{\sigma+1}d}$ elements in $A_\epsilon$ in this third case (and $2 \frac{\sigma+1}{\alpha} + \p{\sigma + 1}d < \beta$).
\end{itemize}
\end{proof}

We have now all the information we need to prove Propositions \ref{locrep} and \ref{trace}.

\begin{proof}[Proof of Proposition \ref{locrep}]
Take $\p{u_{\ell,t}}_{\p{\ell,t} \in \Gamma} \in \B$ with finite support and such that $u_{\ell,t} \in \G_\sigma$, for all $\p{\ell,t} \in \Gamma$. Write $u= \sum_{\p{\ell,t} \in \Gamma} \tilde{\psi}_{\Theta,\ell,t}\p{D} u_{\ell,t} \in \G_\sigma$ and notice that $L u \in \G_\sigma \subseteq \h_{\Theta',\alpha,\bar{t}}$ and that $\mathcal{Q}_{\Theta'} Lu = \M \p{u_{\ell,t}}_{\p{\ell,t} \in \Gamma}$, where $\M$ is defined in Lemma \ref{nucmieuxw}. But the set of such sequences is easily seen to be dense in $\B$ and thus $\M$ sends $\B$ into $\mathcal{Q}_{\Theta'} \p{\h_{\Theta',\alpha,\bar{t}}}$.

We can then set $\L = \mathcal{Q}_{\Theta'}^{-1} \circ p \circ \M \circ \mathcal{Q}_{\Theta} $, where $p$ is the orthogonal projection on $\mathcal{Q}_{\Theta'} \p{\h_{\Theta',\alpha,\bar{t}}}$ (which is a closed subspace of $\B$ by Proposition \ref{descrip}). That $\L$ extends $L$ is a consequence of the calculation above (since the elements of $\G_\sigma$ whose Fourier transform is compactly supported are dense in $\h_{\Theta,\alpha,\bar{t}}$, recall Remark \ref{densité}). The representation \eqref{repre} is then an immediate consequence of Lemma \ref{nucmieuxw}, replacing the vector $e_m$ by $\mathcal{Q}_{\Theta'}^{-1} \circ p \p{e_m}$, and the linear form $l_m$ by $l_m \circ \mathcal{Q}_{\Theta}$, and normalizing.
\end{proof}

\begin{proof}[Proof of Proposition \ref{trace}]
From the construction of $\L$ in the proof of Proposition \ref{locrep}, we can deduce that $\M$ on $\B$ and $\L$ on $\h_{\Theta,\alpha,\bar{t}}$ have the same non-zero spectrum and thus, by the Lidskii trace theorem, the same trace. Now, $\M$ may be written as
\begin{equation*}
\M = \sum_{\p{n,i} \in \Gamma} \sum_{\p{\ell,j} \in \Gamma} \sum_{k \in \Z^d}  \p{\iota_{n,i}\p{\psi_{\Theta,n,i}\p{D} \rho_k}} \otimes \p{c_k \circ L \circ \tilde{\psi}_{\Theta,\ell,j}\p{D} \circ \pi_{\ell,j}}
\end{equation*}
where the sum converges in nuclear operator topology (see Lemma \ref{nucmieuxw}). Thus we have
\begin{align*}
\textrm{tr}\p{\M} & = \sum_{\p{n,i} \in \Gamma} \sum_{\p{\ell,j} \in \Gamma} \sum_{k \in \Z^d} c_k \circ L \circ \tilde{\psi}_{\Theta,\ell,j}\p{D} \circ \pi_{\ell,j} \circ \iota_{n,i}\circ \psi_{\Theta,n,i}\p{D} \p{\rho_k} \\
   & = \sum_{\p{n,i} \in \Gamma} \sum_{k \in \Z^d} c_k \circ L \circ \psi_{\Theta,n,i}\p{D} \p{\rho_k},
\end{align*}
where we used that $\iota_{n,i} \circ \pi_{\ell,j} = 0$ if $\p{n,i} \neq \p{\ell,j}$ and that $\tilde{\psi}_{\Theta,n,i}\p{D} \circ \psi_{\Theta,n,i}\p{D} = \psi_{\Theta,n,i}\p{D}$. Notice that
\begin{equation*}
c_k \circ L \circ \psi_{\Theta,n,i}\p{D} \p{\rho_k} = \frac{1}{\p{2\pi}^{2d}} \s{\R^d\times\R^d}{e^{i\p{\mathcal{T}\p{x} \eta - k x}} G\p{x} \psi_{\Theta,n,i}\p{\eta} \hat{\rho}\p{\eta - k}}{x} \mathrm{d}\eta.
\end{equation*}
Using dominated convergence ($\hat{\rho}$ is rapidly decaying), we can sum over $\p{n,i} \in \Gamma$ and get
\begin{equation*}
\textrm{tr}\p{\M} = \sum_{k \in \Z^d} \frac{1}{\p{2\pi}^{2d}} \s{\R^d \times \R^d}{e^{i\p{\mathcal{T}\p{x} \eta - k x}} G\p{x} \hat{\rho}\p{\eta - k}}{x} \mathrm{d}\eta.
\end{equation*}
Dominated convergence also provides
\begin{align*}
\frac{1}{\p{2\pi}^{2d}} \s{\R^d \times \R^d}{e^{i\p{\mathcal{T}\p{x} \eta - k x}}  g\p{x} & \hat{\rho}\p{\eta - k}}{x}  \mathrm{d}\eta \\ & = \lim_{\epsilon \to 0} \frac{1}{\p{2\pi}^{2d}} \s{\p{\R^d}^3}{e^{i\p{\mathcal{T}\p{x} \eta - k x + k y}} G\p{x} \rho\p{y}e^{- \epsilon \b{\eta}^2}}{x}  \mathrm{d}y \mathrm{d}\eta \\
       & = \lim_{\epsilon \to 0} \frac{1}{\p{2\pi}^{2d}} \s{\p{\R^d}^3}{e^{i\p{\mathcal{T}\p{x} \eta - k x + k y}} G\p{x} \rho\p{y}e^{-\epsilon \b{\eta}^2}}{x}  \mathrm{d}y \mathrm{d}\eta \\
       & = \lim_{\epsilon \to 0} \frac{1}{\p{2\pi}^{d}} \frac{1}{\p{2\sqrt{\pi\epsilon}}^d} \s{\p{\R^d}^2}{e^{ik\p{ y - x}} G\p{x} \rho\p{y}e^{-\frac{\p{\mathcal{T}\p{x} - y}^2}{4\epsilon}}}{x} \mathrm{d}y \\
       & = \frac{1}{\p{2\pi}^d} \s{\R^d}{e^{-ik\p{x - \mathcal{T}\p{x}}} \rho\p{\mathcal{T}\p{x}} G\p{x}}{x}.
\end{align*}
Generalized cone-hyperbolicity of $\mathcal{T}$ implies that $1$ is never an eigenvalue for $D_x \mathcal{T}$, and thus $x \mapsto \mathcal{T}\p{x} - x$ is a local diffeomorphism. Consequently, one can cover the support of $G$ by a finite number of open sets $\p{O_s}_{s \in \Sigma}$ such that for all $s \in \Sigma$ the restriction of $ x \mapsto \mathcal{T}\p{x}- x$ to $O_j$ can be extended to a diffeomorphism $H_s$ from $\R^d$ to itself (a similar construction will be carried out with more details in the proof of Lemma \ref{enfin}, see formula \eqref{exten}). Choose then a finite family $\p{\theta_s}_{s \in \Sigma}$ such that for all $s \in \Sigma$ the function $\theta_s : \R^d \to \left[0,1\right]$ is $\mathcal{C}^\infty$ and supported in $O_s$ and for all $x \in \textup{supp } G$ we have $\sum_{ s \in \Sigma} \theta_s\p{x} = 1$. Then we have, performing the change of variables "$u=H_s\p{x}$",
\begin{align*}
\frac{1}{\p{2\pi}^d} \s{\R^d}{e^{-ik\p{x - \mathcal{T}\p{x}}} \rho\p{\mathcal{T}\p{x}} G\p{x}}{x} & = \sum_{s \in \Sigma} \frac{1}{\p{2\pi}^d} \s{\R^d}{e^{-iku} A_s\p{u}}{u} = \frac{1}{\p{2 \pi}^d}\sum_{s \in \Sigma} \widehat{A}_s\p{k}
\end{align*}
where
\begin{equation*}
A_s\p{u} = \rho \circ \mathcal{T} \circ H_s^{-1}\p{u}  \theta_j \circ H_s^{-1}\p{u} G \circ H_s^{-1}\p{u} \b{\det D_u H_s^{-1}}.
\end{equation*}
Notice that $A_s$ is smooth and compactly supported, thus the Poisson summation formula provides
\begin{equation*}
\frac{1}{\p{2\pi}^d} \sum_{k \in \Z^d} \widehat{A}_s\p{k} = \sum_{k \in \Z^d} A_s\p{2 \pi k}.
\end{equation*}
However, if $k \neq 0$ then $H_s^{-1}\p{2 \pi k}$ and $\mathcal{T} \circ H_s^{-1}\p{2 \pi k} =  H_s^{-1}\p{2\pi k} - 2 \pi k$ cannot be both in $\left]-\pi,\pi,\right[^d$ and thus $A_s\p{2\pi k} = 0$ (since $\rho$ and $G$ are supported in $\left]-\pi,\pi\right[^d$). Consequently, we have
\begin{equation*}
\textrm{tr}\p{\M} = \sum_{s \in \Sigma} A_s\p{0}.
\end{equation*}
Furthermore, if $\mathcal{T}$ has a fixed point $x^*$ in $O_s$, it has only one and it is $H_s^{-1}\p{0}$, thus we can write $A_s\p{0} = \frac{\theta_j\p{x^*} G\p{x^*}}{\b{\det\p{I-D_{x^*} \mathcal{T}}}}$. Otherwise, $H_s^{-1}\p{0}$ does not belong to the support of $\theta_s$ and consequently $A_s\p{0} = 0$. We have then
\begin{equation*}
\sum_{s \in \Sigma} A_s\p{0} = \sum_{\mathcal{T}x = x} \frac{ G\p{x}}{\b{\det\p{I-D_{x} \mathcal{T}}}}
\end{equation*}
where we used that for all $x \in \textup{supp } G$ we have $\sum_{ s \in \Sigma} \theta_s\p{x} = 1$.
\end{proof}

\begin{rmq}
Nuclearity of the local transfer operator $\L$ and formula \eqref{fortra} for its trace may also be proven by giving an integral representation of the operators $S_{n,i}^{\ell,j}$. This is maybe a more natural way to work than the one exposed here. However, our method gives the representation \eqref{repre} with the estimate \eqref{reprequant}, which makes the Theorem \ref{main} much stronger thanks to Lemma \ref{abstrait}.
\end{rmq}

\section{Global space and proof of Theorem \ref{main}}\label{preuve}

Let us now prove Theorem \ref{main}. Recall that $T$ and $g$ are $\sigma$-Gevrey for some $\sigma>1$ which is now fixed. In order to define the global space $\h$, we shall glue together the local spaces $\h_{\Theta,\alpha,\bar{t}}$ from \S \ref{locspace}. To do so, we begin by proving the following lemma.

\begin{lm}\label{enfin}
There are a compact isolating neighbourhood $V$ for $K$, a finite set $\Omega$ and an integer $r \geqslant 2$ such that the following holds. For every $\omega \in \Omega$ there are a $\sigma$-Gevrey chart $\kappa_\omega : U_\omega \to V_\omega$ and a generalized polarization with $r$ cones $\Theta_\omega = \p{C_{i,\omega},\phi_{i,\omega}}_{0 \leqslant i \leqslant r}$ such that $V \subseteq \bigcup_{\omega \in \Omega} U_\omega$ and for every $\omega,\omega' \in \Omega$, if $T\p{U_\omega}\cap U_{\omega'} \neq \emptyset$, then the map
\begin{equation*}
\kappa_{\omega'} \circ T \circ \kappa_\omega^{-1} : \kappa_\omega\p{U_\omega \cap T^{-1}\p{U_{\omega'}}} \to V_{\omega'}
\end{equation*}
admits an extension $\mathcal{T}_{\omega,\omega'} : \R^d \to \R^d$ which is a $\sigma$-Gevrey generalized cone-hyperbolic diffeomorphism from $\Theta_\omega$ to $\Theta_\omega'$.
\end{lm}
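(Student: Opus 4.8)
The plan is to carry out, in the $\sigma$-Gevrey category and with the extra flexibility provided by Definition~\ref{ch}, the classical construction of a finite atlas adapted to the hyperbolic splitting of $K$. First I would record the linear data attached to $K$ itself. Since $\left.T\right|_{K}$ is transitive, $d_u:=\dim E^u_x$ is independent of $x\in K$; after replacing the metric by an adapted one, the hyperbolicity of $K$ yields a constant $\lambda>1$ such that $\left.D_xT\right|_{E^u_x}$ expands and $\left.D_xT\right|_{E^s_x}$ contracts by at least $\lambda$. Dualizing, the cotangent action ${}^tD_xT$ preserves, over $K$, the $d_u$-dimensional conormal distribution $N^s_x:=\p{E^s_x}^{\perp}$, is expanding on it, and, most importantly, contracts the angular distance of an \emph{arbitrary} covector to $N^s_x$ by a factor $\leq\lambda^{-2}$. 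By compactness of $K$ and continuity these properties survive with uniform constants on some open neighbourhood $W$ of $K$, once $E^u$, $E^s$ (hence $N^s$) are thickened into narrow, strictly invariant continuous cone fields.

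Next I would cover $K$ by finitely many relatively compact domains $U_\omega\Subset W$ of $\sigma$-Gevrey charts $\kappa_\omega\colon U_\omega\to V_\omega$, each $U_\omega$ contained in a larger chart domain and small enough that: (a) $F_{\omega,\omega'}:=\kappa_{\omega'}\circ T\circ\kappa_\omega^{-1}$ is defined and $\sigma$-Gevrey on a neighbourhood of the closure of $\kappa_\omega\p{U_\omega\cap T^{-1}\p{U_{\omega'}}}$; and (b) the chart expression of $N^s_x$ stays, for all $x\in U_\omega$, inside a cone of arbitrarily small prescribed aperture about a fixed $d_u$-dimensional model subspace $P_\omega\subset\R^d$. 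I would then fix once and for all an integer $r\geq2$ — taken large (how large depending on $\lambda$ and on the target apertures, and also so that weights $\bar t=\p{t_0,\dots,t_r}$ as in \eqref{sueur} exist, a finite and elementary condition) — and a ratio $\mu\in(\lambda^{-1},1)$, and around each $P_\omega$ build a nested family of closed $d_u$-dimensional cones $C_{r,\omega}\Subset\cdots\Subset C_{1,\omega}$ whose apertures decrease in geometric ratio $\mu$, together with $\sigma$-Gevrey cut-offs $\phi_{i,\omega}$ realizing the remaining requirements of Definition~\ref{gp}; this defines the generalized polarization $\Theta_\omega$. Using the factorization ${}^tDF_{\omega,\omega'}=\p{{}^tD\kappa_\omega}^{-1}\circ{}^tDT\circ{}^tD\kappa_{\omega'}$ at the relevant points, the angular-contraction estimate of the first paragraph, and the inequality $\mu^{2}\geq\lambda^{-2}$ (which is exactly what permits a one-step "two-level" cone jump even with weak hyperbolicity), one checks that, provided $W$, the cone fields and the charts were chosen narrow and small enough, $F_{\omega,\omega'}$ satisfies on $\kappa_\omega\p{U_\omega\cap T^{-1}\p{U_{\omega'}}}$ every inequality of Definition~\ref{ch} relating $\Theta_\omega$ and $\Theta_{\omega'}$, with strict inequalities and a uniform $\Lambda\in(1,\lambda)$.

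Then I would globalize each $F_{\omega,\omega'}$. On its domain it is $C^1$-close to the fixed linear map $L_{\omega,\omega'}:=D_{x_{\omega,\omega'}}F_{\omega,\omega'}$ evaluated at a base point, whose transpose already satisfies the (strict) conditions of Definition~\ref{ch}. Choosing a $\sigma$-Gevrey bump $\theta_{\omega,\omega'}$ equal to $1$ on a neighbourhood of $\overline{\kappa_\omega\p{U_\omega\cap T^{-1}\p{U_{\omega'}}}}$ and supported where $F_{\omega,\omega'}$ is defined, set
\begin{equation*}
\mathcal{T}_{\omega,\omega'}(x)=L_{\omega,\omega'}x+\theta_{\omega,\omega'}(x)\p{F_{\omega,\omega'}(x)-L_{\omega,\omega'}x}.
\end{equation*}
Since the $\sigma$-Gevrey class is closed under composition, inversion and multiplication and admits bump functions (\S\ref{gevsec}), and since the charts are small, $\mathcal{T}_{\omega,\omega'}$ is a $\sigma$-Gevrey diffeomorphism of $\R^d$ that coincides with $\kappa_{\omega'}\circ T\circ\kappa_\omega^{-1}$ where required and equals $L_{\omega,\omega'}$ off a compact set; moreover at every point its derivative is $C^1$-close to the relatively compact family of derivatives already known to satisfy the conditions of Definition~\ref{ch}, and these form an open set of linear maps, so $\mathcal{T}_{\omega,\omega'}$ is generalized cone-hyperbolic from $\Theta_\omega$ to $\Theta_{\omega'}$ on all of $\R^d$. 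Finally, $\bigcup_\omega U_\omega$ is an open neighbourhood of the isolated invariant set $K$, hence it contains a compact isolating neighbourhood $V$ of $K$ (any compact neighbourhood of $K$ lying inside $\bigcup_\omega U_\omega$ and inside a prescribed isolating neighbourhood works), and $r$ has been fixed, which completes the construction.

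The main obstacle is the globalization step: Definition~\ref{ch} is required on all of $\R^d$, not merely on the dynamically relevant subset, so the extension must keep \emph{every} derivative inside the open set of generalized cone-hyperbolic linear maps. This is precisely why one must work throughout with strict cone inclusions and a little slack in the apertures, and why the charts have to be chosen small enough that $F_{\omega,\omega'}-L_{\omega,\omega'}$ (together with its first derivative) is uniformly small; granting that, the cut-off interpolation goes through. Making "small enough" quantitative, arranging it simultaneously for all finitely many pairs $(\omega,\omega')$, and checking that the two-level cone-jump bookkeeping survives the bounded distortion introduced by passing between charts, is where the real care lies.
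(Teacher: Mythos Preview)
Your strategy is the same as the paper's: adapted metric, nested $d_u$-cones with geometrically decreasing apertures about the image of the unstable conormal, a two-step cone jump for ${}^tDT$, and a bump-function interpolation with the first-order Taylor model to globalize. The paper implements this with a four-level jump ${}^tD_xT(C_i(Tx))\subseteq C_{i+4}(x)$ in an adapted metric, then sandwiches the chart cones $C_{i,\omega}$ between $D\kappa(C_{i+1}(x))$ and $D\kappa(C_i(x))$ so that two of the four levels absorb the chart distortion; your ``small enough charts'' clause does the same job less explicitly, which is fine.

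There is one genuine slip in the globalization. Your formula
\[
\mathcal{T}_{\omega,\omega'}(x)=L_{\omega,\omega'}x+\theta_{\omega,\omega'}(x)\bigl(F_{\omega,\omega'}(x)-L_{\omega,\omega'}x\bigr)
\]
interpolates with the \emph{linear} part $L_{\omega,\omega'}=DF_{\omega,\omega'}(x_{\omega,\omega'})$ rather than the affine first-order Taylor polynomial. Its differential contains the term $D\theta_{\omega,\omega'}\cdot(F_{\omega,\omega'}-L_{\omega,\omega'}x)$, and $F_{\omega,\omega'}(x)-L_{\omega,\omega'}x$ carries the constant $F_{\omega,\omega'}(x_{\omega,\omega'})-L_{\omega,\omega'}x_{\omega,\omega'}$, which need not be small (your charts $\kappa_\omega$ depend only on $\omega$, so you cannot recenter them pairwise). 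Hence you do not control $D\mathcal{T}_{\omega,\omega'}$ on the transition annulus, and neither the diffeomorphism property nor the cone conditions of Definition~\ref{ch} follow. The paper uses instead
\[
\mathcal{T}(y)=\rho\!\left(\tfrac{y-z_j}{\epsilon}\right)F(y)+\Bigl(1-\rho\!\left(\tfrac{y-z_j}{\epsilon}\right)\Bigr)\bigl(F(z_j)+DF(z_j)(y-z_j)\bigr),
\]
for which the analogous cross term is $O(\epsilon^{-1})\cdot O(\epsilon^{2})=O(\epsilon)$; it then refines the cover a second time (indexing $\Omega$ by pairs (chart, base point)) so that each $U_\omega$ sits inside the small ball where this particular affine interpolation agrees with $\kappa_{\omega'}\circ T\circ\kappa_\omega^{-1}$. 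Replacing $L_{\omega,\omega'}x$ by the affine model and shrinking your $U_\omega$ once more fixes the gap; the rest of your outline goes through.
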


\begin{proof}
Choose $\tilde{\lambda} < \lambda$ such that $\tilde{\lambda} > 1$. We may define a Mather metric $\b{\cdot}_x$ on a neighborhood of $K$. For this metric, the spaces $E_x^u$ and $E_x^s$ from Definition \ref{bashyp} are orthogonal and the restriction of $D_x T$ to these spaces is respectively a dilatation or a contraction by a factor $\tilde{\lambda}$, respectively $\tilde{\lambda}^{-1}$. If $x \in K$ and $v \in T_xM$, we write $v = v_u+v_s$ for the decomposition of $v$ with respect to $T_x M = E_x^u \oplus E_x^s$. Choose $\gamma_2 > 0$ large enough so that $\tilde{\lambda} > \sqrt{1+ \frac{1}{\gamma_2^2}} $ and define for all $i \geqslant 1$ the number $\gamma_i = \tilde{\lambda}^{1-\frac{i}{2}} \gamma_1$. Fix $r \geqslant 2$ large enough so that $\tilde{\lambda} > \sqrt{1 + \gamma_{r-1}^2}$.

Then for all $x \in K$ define closed cones $\p{C_i\p{x}}_{i \geqslant 1}$ depending Hölder continuously on $x$ by
\begin{equation*}
C_i\p{x} = \set{ v \in T_x M : \b{v_s}_x \leqslant \gamma_i \b{v_u}_x},
\end{equation*}
for $i \in \set{1,\dots,r+1}$ and set $C_0\p{x} = T_x M$.

Let us prove that if $x \in K$ and $i \in \set{1,\dots,r+1}$ then ${}^t D_x T\p{C_i\p{Tx}} \subseteq C_{i+4}\p{x}$. Indeed, with respect to the orthogonal decomposition $T_x M = E_x^u \oplus E_x^s$, the matrix of $D_x T$ has the form
\begin{equation*}
D_x T = \left[
\begin{array}{cc}
A_x & 0 \\
0 & B_x
\end{array}
\right]
\end{equation*}
with $A_x$ and $B_x$ respectively a dilatation of factor $\tilde{\lambda}$ and a contraction of factor $\tilde{\lambda}^{-1}$. Now, if $v = v_u + v_s \in C_i\p{Tx}$ then we have ${}^t D_x T\p{v} = {}^t A_x \p{v_u} + {}^t B_x \p{v_s}$ and
\begin{equation*}
\b{{}^t B_x \p{v_s}}_x \leqslant \frac{1}{\tilde{\lambda}}\b{v_s}_{Tx} \leqslant \frac{\gamma_i}{ \tilde{\lambda}} \b{v_u}_{Tx}\leqslant \frac{\gamma_i}{\tilde{\lambda}^2} \b{{}^t A_x v_u}_{x},
\end{equation*}
but $\gamma_{i+4} = \frac{\gamma_i}{\tilde{\lambda}^2}$ by definition of the $\gamma_i$.

We shall now prove that if $x \in K$ then
\begin{equation*}
\Lambda \coloneqq \min\p{ \inf_{\substack{ \b{\xi} = 1 \\ \xi \in C_{r-1}\p{Tx}}} \b{{}^t D_x T \p{\xi}}_x, \p{\sup_{\substack{\b{\xi} = 1 \\ {}^t D_x T\p{\xi} \in T_x M \setminus C_2\p{x}}} \b{{}^t D_x T\p{\xi}}_x}^{-1}} > 1.
\end{equation*}
Let $\xi  = \xi_u + \xi_s \in C_{r-1}\p{Tx}$ then
\begin{equation*}
\b{{}^t D_x T\p{\xi}}_x \geqslant \b{{}^t A_x \xi_u}_x \geqslant \tilde{\lambda} \b{\xi_u}_{Tx},
\end{equation*}
but $\b{\xi_s}_{Tx} \leqslant \gamma_{r-1} \b{\xi_u}_{Tx}$ and thus $\b{\xi}_{Tx} \leqslant \sqrt{1 + \gamma_{r-1}^2} \b{\xi_u}_{Tx}$, and consequently we get
\begin{equation*}
\b{{}^t D_x T\p{\xi}}_x \geqslant \frac{\tilde{\lambda}}{\sqrt{1 + \gamma_{r-1}^2}} \b{\xi}_{Tx}
\end{equation*}
and $r$ has be chosen so that $\frac{\tilde{\lambda}}{\sqrt{1 + \gamma_{r-1}^2}} >1$. The converse inequality is similar.

Now, for all $x \in K$ choose a $\sigma$-Gevrey chart $\kappa_x : U_x \to V_x$ such that $x \in U_x$, $U_x \subseteq W$ and $D_x \kappa_x : T_x M \to \R^d$ is an isometry. Then choose a polarization $\Theta_x = \p{C_{i,x},\phi_{i,x}}_{0 \leqslant i \leqslant r}$ such that $D_x \kappa_x \p{C_{i+1}\p{x}} \sqsubset C_{i,x} \sqsubset D_x \kappa_x \p{C_i\p{x}} $ for $i \in \set{1,\dots,r}$. Since the $C_i\p{y}$ depend continuously on $y$, we can find an open ball $B_x$ centered at $x$ such that $\bar{B_x} \subseteq U_x $, and for all $y \in \bar{B_x}$ we have for $i \in \set{1,\dots,r}$
\begin{equation*}
\p{{}^t D_y \kappa_x}^{-1} C_{i+1}\p{y} \sqsubset C_{i,x} \sqsubset \p{{}^t D_y \kappa_x}^{-1} C_{i}\p{y}
\end{equation*}
and
\begin{equation*}
\n{D_y \kappa_x} \leqslant 1 + \epsilon \textup{ and } \n{\p{D_y \kappa_x}^{-1}}^{-1} \geqslant 1 - \epsilon,
\end{equation*}
where $\epsilon > 0$ is small enough so that
\begin{equation*}
\frac{1 - \epsilon}{1 + \epsilon} \Lambda > 1.
\end{equation*}
Since $K$ is compact, we can find a finite number of point $x_1,\dots,x_m$ in $K$ such that $K \subseteq \bigcup_{k=1}^m \kappa_{x_k}^{-1}\p{B_{x_k}} $.

Choose $z \in \bigcup_{k=1}^m \kappa_{x_k}^{-1}\p{\bar{B_{x_k}}}$, and let $j,k \in \set{1,\dots,m}$ be such that $z \in \kappa_{x_j}^{-1}\p{\bar{B_{x_j}}}$ and $Tz \in \kappa_{x_j}^{-1}\p{\bar{B_{x_j}}}$. We consider the map
\begin{equation*}
T_{z,j,k} = \kappa_{x_k} \circ T \circ \kappa_{x_j}^{-1} : V_{x_j}\cap \kappa_{x_j}^{-1}\p{T^{-1} U_{x_k}} \to \R^d.
\end{equation*} 
Write $z_j = \kappa_{x_j}\p{z}$ and notice that for $i \in \set{1,\dots,r}$ we have
\begin{align*}
{}^t D_{z_j} T_{z,j,k} \p{C_{i,x_k}} & \sqsubset \p{{}^t D_{z_j} \kappa_{x_j}}^{-1} \circ {}^t D_z T \p{C_i\p{Tz}} \subseteq \p{{}^t D_{z_j} \kappa_{x_j}}^{-1} \p{C_{i+4}\p{z}} \subseteq C_{\min\p{i+2,r},x_j}
\end{align*}
and
\begin{equation*}
\min\p{ \inf_{\substack{ \b{\xi} = 1 \\ \xi \in C_{r-1,x_k}}} \b{{}^t D_{z_j} T_{z,j,k} \p{\xi}}, \p{\sup_{\substack{\b{\xi} = 1 \\ {}^t D_{z_j} T_{z,j,k}\p{\xi} \in \R^d \setminus C_{1,x_j}}} \b{{}^t D_{z_j} T_{z,j,k}\p{\xi}}}^{-1}} \geqslant \frac{1-\epsilon}{1 + \epsilon} \Lambda > 1.
\end{equation*}
This is due to the construction of $B_{x_j}$ and $B_{x_k}$. Choose $\rho : \R^d \to \left[0,1\right]$ a $\sigma$-Gevrey function such that $\rho\p{y} = 1$ if $\b{y} \leqslant \frac{1}{2}$, and $\rho\p{y} = 0$ if $\b{y} \geqslant 1$. For $\epsilon = \epsilon\p{z,j,k}>0$ that we shall take small enough (in particular so that \eqref{exten} makes sense), define
\begin{equation}\label{exten}
\mathcal{T}_{z,j,k}: y \mapsto \rho\p{\frac{y-z_j}{\epsilon}}T_{z,j,k}\p{y} + \p{1-\rho\p{\frac{y-z_j}{\epsilon}}}\p{T_{z,j,k}\p{z_j} + D_{z_j} T_{z,j,k}\p{y-z_j}}.
\end{equation}
By taking $\epsilon$ small enough $\mathcal{T}_{z,j,k}$, can be made arbitrarily close in the $\mathcal{C}^1$ topology to the affine map $y \mapsto T_{z,j,k}\p{z_j} + D_{z_j} T_{z,j,k}\p{y-z_j}$. In particular, it is a diffeomorphism from $\R^d$ to itself (use for instance Hadamard--Levy's theorem), generalized cone-hyperbolic from $\Theta_{x_j}$ to $\Theta_{x_k}$. Furthermore, there is a neighbourhood $W_{z,j,k} \subseteq U_{x_j}$ of $z$ such that $T_{z,j,k}$ and $\mathcal{T}_{z,j,k}$ coincide on $\kappa_{x_j}\p{W_{z,j,k}}$. If $j$ and $k$ are such that $z \notin \kappa_{x_j}^{-1}\p{\bar{B_{x_j}}}$ or $Tz \notin \kappa_{x_k}^{-1}\p{\bar{B_{x_k}}}$, just set $W_{z,j,k} = M$.

Set now $\mathcal{W}_z = \bigcap_{j,k=1}^m W_{z,j,k}$ for all $z \in K$, and choose a neighbourhood $E_z \subseteq \mathcal{W}_z$ of $z$ in $M$ small enough so that if $T\p{E_z} \cap \kappa_{x_j}\p{B_{x_j}} \neq \emptyset$ for some $j \in \set{1,\dots,m}$ then $Tz \in \kappa_{x_j}\p{\bar{B_{x_j}}}$. By compacity of $K$, one can find a finite number of points $z_1,\dots,z_n$ such that $K \subseteq \bigcup_{k=1}^n E_{z_k}$. Finally, we define
\begin{equation*}
\Omega = \set{ \omega=\p{j,k} : j \in \set{1,\dots,m}, k \in \set{1,\dots,n} \textrm{ and } U_\omega := E_{z_k} \cap \kappa_{x_j}\p{B_{x_j}} \neq \emptyset}.
\end{equation*}
If $\omega = \p{j,k} \in \Omega$ the associated chart is $\kappa_\omega = \left.\kappa_{x_j}\right|_{U_\omega} : U_{\omega} \to \kappa_{x_j}\p{U_\omega}$ and the generalized polarization is $\Theta_\omega = \Theta_{x_j}$. If $\omega' = \p{j',k'}$ is such that $T\p{U_\omega} \cap U_{\omega'} \neq \emptyset$ then the map $\kappa_{\omega'} \circ T \circ \kappa_{\omega}^{-1}$ may be extended by $\mathcal{T}_{\omega,\omega'} = \mathcal{T}_{z_k,j,j'}$. Finally, let $V$ be any compact isolating neighbourhood of $K$ that is contained in $\bigcup_{\omega \in \Omega} U_\omega$.
\end{proof}

We next define the space $\h$ announced in Theorem \ref{main}. We need more notation.

Choose a $\sigma$-Gevrey partition of unity $\p{\theta_\omega}_{\omega \in \Omega}$ on $V$ subordinated to $\p{U_\omega}_{\omega \in \Omega}$, that is, for every $\omega \in \Omega$ the function $\theta_\omega : M \to \left[0,1\right]$ is $\sigma$-Gevrey and supported in $U_\omega$, and, in addition, for all $x \in V$, we have $\sum_{\omega \in \Omega} \theta_\omega\p{x} = 1$. Up to reducing $V$, we may suppose that $\bigcup_{\omega \in \Omega} U_\omega$ is isolating for $K$. Notice that if $\omega \in \Omega$ and $u \in \U_\sigma\p{V}$ then $\p{\theta_\omega u} \circ \kappa_\omega^{-1}$ is a well-defined compactly supported element of $\U_\sigma$ (see Remark~\ref{ose}). Consequently, we can define a map
\begin{equation}\label{defPhi}
\begin{array}{ccccc}
\Phi & : & \U_\sigma\p{V} & \to & \oplus_{\omega \in \Omega} \U_\sigma \\
 & & u & \mapsto & \p{\p{\theta_\omega u} \circ \kappa_\omega^{-1}}_{\omega \in \Omega}
\end{array}
\end{equation}
as well as
\begin{equation}\label{defS}
\begin{array}{ccccc}
S & : & \oplus_{\omega \in \Omega} \U_\sigma & \to & \U_\sigma\p{M} \\
& & \p{u_\omega}_{\omega \in \Omega} & \mapsto & \sum_{\omega \in \Omega} \p{h_\omega u_\omega} \circ \kappa_\omega
\end{array}
\end{equation}
where for all $\omega \in \Omega$ the function $h_\omega$ is $\sigma$-Gevrey and compactly supported in $V_\omega$, and $h_\omega\p{x} = 1$ for all $x \in \kappa_\omega\p{\textup{supp } \theta_\omega}$. Notice that $S \circ \Phi$ is the inclusion of $\U_\sigma\p{V}$ into $\U_\sigma\p{M}$.

Fix $\alpha > \sigma +1$ and notice that for all $\omega,\omega' \in \Omega$ the map $\mathcal{T}_{\omega,\omega'}$, if defined, is cone hyperbolic from $\Theta_\omega$ to $\Theta_{\omega'}$ and thus provides a $ \Lambda_{\omega,\omega'}$ (from Definition \ref{ch}). We choose $\bar{t} = \p{t_0,\dots,t_r} \in \R^{r+1}$ such that \eqref{sueur} holds with $\nu > 1$ strictly smaller than the $\Lambda_{\omega,\omega'}^{\frac{1}{\alpha}}$'s and $a$ that satisfies \eqref{defa} with $\mathcal{T}$ replaced by $\mathcal{T}_{\omega,\omega'}$ (when defined) for all $x \in \R^d$ (this is possible since the differential of $\mathcal{T}_{\omega,\omega'}$ is constant outside from a compact subset of $\R^d$). Then define a Hilbert space $\h_{\Omega,\alpha,\bar{t}} = \oplus_{\omega \in \Omega} \h_{\Theta_\omega,\alpha,\bar{t}}$, where the $\h_{\Theta_\omega,\alpha,\bar{t}}$ are the Hilbert spaces defined in \S \ref{locspace}. Define the auxiliary space $ \widetilde{\h} = \set{u \in \U_\sigma\p{V} : \Phi u \in \h_{\Omega,\alpha,\bar{t}} } $ that we endow with the Hermitian norm $\n{u}_{\widetilde{\h}} = \n{\Phi u}_{\h_{\Omega,\alpha,\bar{t}}}$.

\begin{lm}\label{verif}
$\widetilde{\h}$ is a separable Hilbert space (or equivalently $\Phi\p{\widetilde{\h}}$ is a closed subspace of $\h_{\Omega,\alpha,\bar{t}}$). The inclusion of $\widetilde{\h}$ in $\U_\sigma\p{V}$ is continuous, and $\G_\sigma$ is contained in $\widetilde{\h}$ the inclusion being continuous.
\end{lm}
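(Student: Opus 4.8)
The plan is to verify the three assertions in turn, reducing everything to properties already established for the local spaces $\h_{\Theta_\omega,\alpha,\bar t}$ in Proposition \ref{loc}. For the first assertion, I would show that $\Phi(\widetilde{\h})$ is closed in $\h_{\Omega,\alpha,\bar t}$, which is exactly what is needed since $\Phi$ is by construction an isometry from $\widetilde{\h}$ onto $\Phi(\widetilde{\h})$, and a closed subspace of a separable Hilbert space is again a separable Hilbert space. So let $(u_k)_{k\in\N}$ be a sequence in $\widetilde{\h}$ with $\Phi u_k \to (v_\omega)_{\omega\in\Omega}$ in $\h_{\Omega,\alpha,\bar t}$; I need to produce $u\in\widetilde{\h}$ with $\Phi u = (v_\omega)_\omega$. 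The natural candidate is $u = S\big((v_\omega)_\omega\big) = \sum_{\omega}(h_\omega v_\omega)\circ\kappa_\omega$, which makes sense as an element of $\U_\sigma(V)$ because each $v_\omega$ lies in $\h_{\Theta_\omega,\alpha,\bar t}\subseteq \U_\sigma$ (Proposition \ref{loc}) and the operations $v\mapsto (h_\omega v)\circ\kappa_\omega$ are continuous on ultradistributions by Remark \ref{ose}; moreover each $(h_\omega v_\omega)\circ\kappa_\omega$ is supported in $U_\omega$, so the sum is supported in $V$. Since $S\circ\Phi$ is the inclusion $\U_\sigma(V)\hookrightarrow\U_\sigma(M)$, one has $S\,\Phi u_k = u_k$, and continuity of $S$ (with respect to the weak-$*$ topology, or more simply: $\Phi u_k\to (v_\omega)_\omega$ in $\h_{\Omega,\alpha,\bar t}$ hence in $\oplus_\omega\U_\sigma$ by Proposition \ref{loc}, and $S$ is continuous from $\oplus_\omega\U_\sigma$ to $\U_\sigma(M)$) gives $u_k\to u$ in $\U_\sigma(M)$.

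The delicate point — and the one I expect to be the main obstacle — is to check that $\Phi u = (v_\omega)_\omega$, i.e. that $(\theta_\omega u)\circ\kappa_\omega^{-1} = v_\omega$ for each $\omega$. One has $\theta_\omega u = \theta_\omega\, S\big((v_{\omega'})_{\omega'}\big) = \sum_{\omega'}\theta_\omega\cdot\big((h_{\omega'}v_{\omega'})\circ\kappa_{\omega'}\big)$. Because $h_{\omega'} \equiv 1$ on $\kappa_{\omega'}(\operatorname{supp}\theta_{\omega'})$ and because $\sum_{\omega'}\theta_{\omega'}=1$ on $V$ with all terms supported in $V$, one can reorganise this using the partition of unity to recover $\theta_\omega u = \theta_\omega\cdot(\text{something})$; the clean way is to observe that $\Phi\,S = \operatorname{id}$ on $\Phi(\widetilde\h)$, or rather to compute directly, pushing through $\kappa_\omega^{-1}$, that $(\theta_\omega u)\circ\kappa_\omega^{-1}$ equals $v_\omega$ up to multiplication by $\sum_{\omega'}(\theta_{\omega'}\circ\kappa_\omega^{-1})=1$ on the relevant set. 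In fact it is cleanest to pass to the limit: since $\Phi$ is continuous from $\U_\sigma(V)$ to $\oplus_\omega\U_\sigma$ (again using that each component map is continuous on ultradistributions, Remark \ref{ose}), from $u_k\to u$ in $\U_\sigma(V)$ we get $\Phi u_k\to\Phi u$ in $\oplus_\omega\U_\sigma$; but also $\Phi u_k\to(v_\omega)_\omega$ in $\h_{\Omega,\alpha,\bar t}$ hence in $\oplus_\omega\U_\sigma$; by uniqueness of limits in $\oplus_\omega\U_\sigma$, $\Phi u = (v_\omega)_\omega$. This shows $u\in\widetilde\h$ with $\Phi u$ the given limit, so $\Phi(\widetilde\h)$ is closed.

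For the continuity of $\widetilde\h\hookrightarrow\U_\sigma(V)$: if $u\in\widetilde\h$ then $u = S\,\Phi u$, and $S$ restricted to $\h_{\Omega,\alpha,\bar t}$ factors through the continuous inclusions $\h_{\Theta_\omega,\alpha,\bar t}\hookrightarrow\U_\sigma$ of Proposition \ref{loc} followed by the continuous operations $v\mapsto(h_\omega v)\circ\kappa_\omega$ and summation; composing with $\Phi$, which is an isometry onto its (closed) image, yields that the inclusion $\widetilde\h\hookrightarrow\U_\sigma(V)$ is continuous. Finally, for $\G_\sigma\subseteq\widetilde\h$ with continuous inclusion: given $f\in\G_\sigma$ supported in $V$, each $(\theta_\omega f)\circ\kappa_\omega^{-1}$ is a $\sigma$-Gevrey function (product of Gevrey functions precomposed with a Gevrey diffeomorphism, Remark \ref{ose} / closure of Gevrey class) that is compactly supported, hence lies in $\G_\sigma$ by Remark \ref{comp}, and thus in $\h_{\Theta_\omega,\alpha,\bar t}$ with continuous dependence by Proposition \ref{loc} (recall $\sigma<\alpha$); summing over the finite set $\Omega$ shows $\Phi f\in\h_{\Omega,\alpha,\bar t}$ with $\n{f}_{\widetilde\h}=\n{\Phi f}_{\h_{\Omega,\alpha,\bar t}}\lesssim\n{f}_{A_{R,\sigma}}$ for an appropriate $R$, giving the continuous inclusion. (The density statement — that $\G_\sigma$ is dense in $\widetilde\h$ — will presumably be proved separately, or follows since $S$ maps the dense subspace $\oplus_\omega\G_\sigma$ of $\h_{\Omega,\alpha,\bar t}$ into $\G_\sigma(V)$ and $S$ has dense range onto $\widetilde\h$.)
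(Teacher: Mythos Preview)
Your approach is correct and essentially the same as the paper's: both rely on the continuity of $\Phi$ and $S$ between the ultradistribution spaces (via Proposition \ref{loc} and Remark \ref{ose}) to show that $\Phi(\widetilde{\h})$ is closed. The paper packages this a bit more cleanly by observing directly that
\[
\Phi(\widetilde{\h}) = \Phi(\U_\sigma(V)) \cap \h_{\Omega,\alpha,\bar t}
\quad\text{and}\quad
\Phi(\U_\sigma(V)) = \bigl\{\,u \in \textstyle\bigoplus_{\omega} \U_\sigma : \Phi S u = u \text{ and } Su \text{ is supported in } V\,\bigr\},
\]
the right-hand side being closed in $\oplus_\omega \U_\sigma$ as the preimage of closed sets under continuous maps. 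Your sequential argument unwinds exactly this: you produce $u = S\bigl((v_\omega)_\omega\bigr)$ and then verify $\Phi u = (v_\omega)_\omega$ by passing to the limit, which amounts to checking that the limit lies in the fixed-point set of $\Phi S$.

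One small slip: you assert that ``each $(h_\omega v_\omega)\circ\kappa_\omega$ is supported in $U_\omega$, so the sum is supported in $V$''. But $V \subseteq \bigcup_\omega U_\omega$, not the other way around, so this does not follow. The fix is immediate with your own ingredients: since each $u_k$ is supported in the compact set $V$ and $u_k \to u$ in $\U_\sigma(M)$, the limit $u$ is also supported in $V$ (this is the ``$Su$ supported in $V$'' condition that the paper writes explicitly into its characterization). With that correction your argument is complete.
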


\begin{proof}
The proof of the second assertion is easy and left to the reader (use $\Phi$ and $S$ from \eqref{defPhi} and \eqref{defS}, and the analogous statement for the $\h_{\Theta_\omega,\alpha,\bar{t}}$). The definition of $\widetilde{\h}$ readily implies that
\begin{equation*}
\Phi\p{\widetilde{\h}} = \Phi\p{\U_\sigma\p{V}} \cap \h_{\Omega,\alpha,\bar{t}}.
\end{equation*}
However, from Proposition \ref{loc}, we know that the injection of $\h_{\Omega,\alpha,\bar{t}}$ in $\oplus_{\omega \in \Omega} \U_\sigma$ is continuous. Thus we only need to prove that $\Phi\p{\U_\sigma\p{V}}$ is closed in $\oplus_{\omega \in \Omega} \U_\sigma$. However, one can see that
\begin{equation*}
\Phi\p{\U_\sigma\p{V}} = \set{u \in \oplus_{\omega \in \Omega} \U_\sigma : \Phi S u = u \textrm{ and } Su \textrm{ is supported in } V}
\end{equation*}
and consequently $\Phi\p{\U_\sigma\p{V}}$ is closed in $\oplus_{\omega \in \Omega} \U_\sigma$ (see Remark \ref{ose}).
\end{proof}

We can now define the space $\h$ announced in Theorem \ref{main} : $\h$ is the closure of $\G_\sigma\p{V}$ in $\widetilde{\h}$.

\begin{proof}[Proof of Theorem \ref{main}]
Notice that with the definition of $\h$ and Lemma \ref{verif}, the first two points of Theorem \ref{main} are satisfied. Recall that the transfer operator that we want to extend is defined on $\G_\sigma\p{V}$ by
\begin{equation*}
Lu\p{x} = g\p{x} u \circ T\p{x}.
\end{equation*}
If $\omega,\omega'$ are such that $T\p{U_\omega} \cap U_{\omega'} \neq \emptyset$, define the auxiliary operator $L_{\omega,\omega'}$ on $\G_\sigma$ by
\begin{equation*}
L_{\omega,\omega'}u\p{x} = G_{\omega,\omega'}\p{x} u \circ \mathcal{T}_{\omega,\omega'}\p{x}
\end{equation*}
where
\begin{equation*}
G_{\omega,\omega'}\p{x} = \theta_{\omega} \circ \kappa_{\omega}^{-1}\p{x} g \circ \kappa_{\omega}^{-1} \p{x} h_{\omega'} \circ \mathcal{T}_{\omega,\omega'}\p{x}
\end{equation*}
properly extended by zero. If $T\p{U_\omega} \cap U_{\omega'} = \emptyset$ just set $L_{\omega,\omega'} = 0$. We can then define an operator $\widetilde{L}$ that acts on $\oplus_{\omega \in \Omega} \G_\sigma$ as the matrix $\p{L_{\omega,\omega'}}_{\omega,\omega' \in \Omega}$, that is 
\begin{equation}\label{matrice}
\widetilde{L} \p{u_\omega}_{\omega \in \Omega} = \p{\sum_{\omega' \in \Omega} L_{\omega,\omega'} u_{\omega'} }_{\omega \in \Omega}.
\end{equation}
For every $\omega,\omega' \in \Omega$, Proposition \ref{locrep} provides a trace class operator $\L_{\omega,\omega'} : \h_{\Theta_{\omega'},\alpha,\bar{t}} \to \h_{\Theta_\omega,\alpha,\bar{t}}$ that extends $L_{\omega,\omega'}$, and thus we have with \eqref{matrice} a trace class operator $\widetilde{\L} : \h_{\Omega,\alpha,\bar{t}} \to \h_{\Omega,\alpha,\bar{t}}$ that extends $\widetilde{L}$. But notice that $\widetilde{L}$ sends $\oplus_{\omega \in \Omega} \G_\sigma$ into $\Phi\p{\G_{\sigma}\p{V}}$ (the operator $\widetilde{L}$ is defined by the formula $\widetilde{L} = \Phi \circ L \circ S$, and $g$ is supported in $V$). Thus $\widetilde{\L}$ sends $\h_{\Omega,\alpha,\bar{t}}$ into $\Phi\p{\h}$. Since $\Phi$ induces an isometry $\Psi$ between $\h$ and $\Phi\p{\h}$, we may define $\L$ as  $\L = \Psi^{-1} \circ \widetilde{\L} \circ \Psi$. With this definition, $\L$ extends $L$ (since $\widetilde{L} = \Phi \circ L \circ S$) and is trace class since it is conjugated with the operator induced by $\widetilde{\L}$ on $\Phi\p{\h}$. Moreover, $\L$ and $\tilde{\L}$ have the same non-zero spectrum and consequently we have $\textrm{tr}\p{\L} = \textrm{tr}\p{\widetilde{\L}}$. Using the Lidskii trace theorem, it is then easy to see that
\begin{equation*}
\textrm{tr}\p{\L} = \sum_{\omega \in \Omega} \textrm{tr}\p{\L_{\omega,\omega}}
\end{equation*}
which gives by Proposition \ref{trace}
\begin{align*}
\textrm{tr}\p{\L} & = \sum_{\omega \in \Omega} \sum_{\mathcal{T}_{\omega,\omega} x = x} \frac{G_{\omega}\p{x}}{\b{\det\p{I-D_x\mathcal{T}_{\omega,\omega}}}} = \sum_{\omega \in \Omega}\sum_{\mathcal{T}_{\omega,\omega} x = x}\frac{\theta_\omega \circ \kappa_\omega^{-1}\p{x} g \circ \kappa_{\omega}^{-1}\p{x}}{\b{\det\p{I-D_x\mathcal{T}_{\omega,\omega}}}}\\
& = \sum_{\omega \in \Omega} \sum_{\substack{y \in M \\ Ty=y}} \frac{\theta_\omega\p{y}g\p{y}}{\b{\det\p{I-D_y T}}} = \sum_{\substack{ y \in K \\ Ty=y}} \frac{g\p{y}}{\b{\det\p{I-D_y T}}} =\tf{\L},
\end{align*}
where we used that any fixed point of $T$ in $U = \bigcup_{\omega \in \Omega} U_\omega$ is in fact in $K$, since $U$ is an isolating neighbourhood for $K$. We could prove that $\textrm{tr}\p{\L^m} = \tf{\L^m}$ for $m \geqslant 2$ by giving a similar decomposition of $\widetilde{\L}^m$ but there is a faster way to show this. From Lemma \ref{equiva} and the Lidskii trace theorem, we know that $\textrm{tr}\p{\L}$, and thus $\tf{\L}$, is the sum of the Ruelle resonances for $\p{T,g}$. But then, we can apply this result to the system $\p{T^m,g^{\p{m}}}$ and it appears that $\tf{\L^m}$ is equal to the sum of the $m$th powers of Ruelle resonances for $\p{T,g}$, which also are the non-zero eigenvalues of $\L^m$ using Lemma \ref{equiva} again, and finally the Lidskii trace theorem implies that $\tf{\L^m} = \textrm{tr}\p{\L^m}$.

To prove points \ref{better} and \ref{butter}, we need to establish a representation of the type \eqref{volrep} with the estimate \eqref{dif} for $\L$, which will allow us to use Lemma \ref{abstrait}. However, from Proposition \ref{locrep}, we know that the $\L_{\omega,\omega'}$ admit such a representation, thus $\widetilde{\L}$ does too, and $\L$ as well (it is conjugated with the operator induced by $\widetilde{\L}$ on a closed subspace of $\h_{\Omega,\alpha,\bar{t}}$). This proves points \ref{better} and \ref{butter} for $\beta = 2 + \alpha d$. Recalling that $\alpha$ may be chosen to be any real number strictly greater than $\sigma + 1$, the proof of Theorem \ref{main} is complete.
\end{proof}

\appendix

\section*{Appendix}

\section{Proof of Lemma \ref{equiva} }\label{proofequiva}

We shall apply \cite[Lemma A.1]{Tsu}. To do so, let $\widetilde{\B}$ be a Banach space given by Theorem \ref{fond}. Notice that we have
\begin{equation*}
\G_\sigma\p{V} \subseteq \mathcal{C}^{\infty}\p{V} \subseteq \widetilde{\B} \subseteq \mathcal{D}'\p{V} \subseteq \U_\sigma\p{V} 
\end{equation*}
with all inclusions continuous. But $\G_\sigma\p{V}$ is dense in $\mathcal{C}^\infty\p{V}$ and thus is dense in $\widetilde{\B}$. Thus, \cite[Lemma A.1]{Tsu} ensures that outside of the closed disc of radius $\epsilon$, the spectrum of $\L$ acting on $\B$ and $\B'$ is the same. \qed

\section{Proof of Lemma \ref{abstrait}}\label{proofabstrait}

We will need two preparatory technical lemmas.

\begin{lm}\label{preced}
For all $\beta \geqslant 0$ there is a constant $C > 0$ such that for all $r \geqslant 2$ we have
\begin{equation}\label{fr}
r \int_{\log r}^{+ \infty} e^{-u} u^\beta \mathrm{d}u \leqslant C \p{\log r}^{\beta}
\end{equation}
\end{lm}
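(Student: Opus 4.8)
The plan is to prove the estimate \eqref{fr} by a direct change of variables that reduces the integral to a standard incomplete-Gamma-type quantity, and then to extract the polynomial-in-$\log r$ bound. First I would substitute $u = v + \log r$ in the integral $\int_{\log r}^{+\infty} e^{-u} u^\beta \, \mathrm{d}u$, which turns it into $\frac{1}{r}\int_0^{+\infty} e^{-v} (v + \log r)^\beta \, \mathrm{d}v$; multiplying by $r$ cancels the factor $\frac1r$ and leaves $\int_0^{+\infty} e^{-v}(v + \log r)^\beta \, \mathrm{d}v$. The task is then to bound this last integral by $C (\log r)^\beta$ uniformly for $r \geqslant 2$, i.e. uniformly for $L := \log r \geqslant \log 2 > 0$.

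To do this I would use the elementary inequality $(v + L)^\beta \leqslant 2^\beta (v^\beta + L^\beta)$, valid for $v, L \geqslant 0$ (if $\beta \leqslant 1$ one can even use subadditivity of $x \mapsto x^\beta$, but the crude bound with the $2^\beta$ factor works for all $\beta \geqslant 0$). This splits the integral as
\begin{equation*}
\int_0^{+\infty} e^{-v}(v+L)^\beta\,\mathrm{d}v \leqslant 2^\beta \int_0^{+\infty} e^{-v} v^\beta \,\mathrm{d}v + 2^\beta L^\beta \int_0^{+\infty} e^{-v}\,\mathrm{d}v = 2^\beta \Gamma(\beta+1) + 2^\beta L^\beta.
\end{equation*}
Since $r \geqslant 2$ gives $L = \log r \geqslant \log 2$, we have $1 \leqslant (\log 2)^{-\beta} L^\beta$, so the constant term $2^\beta \Gamma(\beta+1)$ is itself bounded by $2^\beta \Gamma(\beta+1) (\log 2)^{-\beta} L^\beta$. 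Combining, the whole expression is at most $C L^\beta = C (\log r)^\beta$ with $C = 2^\beta\big(1 + \Gamma(\beta+1)(\log 2)^{-\beta}\big)$, which only depends on $\beta$. This is exactly \eqref{fr}.

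There is no real obstacle here; the only mild point of care is handling the non-integer-$\beta$ case, where one cannot use plain subadditivity $(v+L)^\beta \leqslant v^\beta + L^\beta$ and must instead invoke the convexity-type bound $(a+b)^\beta \leqslant 2^{\beta-1}(a^\beta + b^\beta)$ for $\beta \geqslant 1$ (or subadditivity for $0 \leqslant \beta \leqslant 1$); in either case a constant depending only on $\beta$ suffices, so replacing $2^{\beta-1}$ by $2^\beta$ throughout gives a uniform clean statement. One should also note $\Gamma(\beta+1) = \int_0^\infty e^{-v} v^\beta\,\mathrm{d}v$ is finite for every $\beta \geqslant 0$, which is what makes the first piece a genuine constant. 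The lemma will presumably be used later (together with similarly elementary estimates) to turn the coefficient bound $|a_n| \leqslant M \exp(-D n^{1+1/\beta})$ into the growth estimate \eqref{crois} via a Borel-type summation/optimization over $n$, but that is downstream of the present statement.
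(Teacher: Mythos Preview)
Your proof is correct and more direct than the paper's. The paper argues differently: it observes that the left-hand side of \eqref{fr} is log-convex as a function of $\beta$ (since $\beta \mapsto \int e^{-u} u^\beta \,\mathrm{d}u$ is a moment integral), which reduces the estimate to integer values of $\beta$; for integer $\beta$ one then integrates by parts $\beta$ times and obtains that $r\int_{\log r}^{+\infty} e^{-u} u^\beta\,\mathrm{d}u$ equals an explicit polynomial of degree $\beta$ in $\log r$, from which the bound follows. Your route---the substitution $u = v + \log r$ followed by the pointwise inequality $(v+L)^\beta \leqslant 2^\beta(v^\beta + L^\beta)$---avoids the case distinction entirely, handles all $\beta \geqslant 0$ at once, and even produces an explicit constant $C = 2^\beta\bigl(1 + \Gamma(\beta+1)(\log 2)^{-\beta}\bigr)$. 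The paper's approach has the minor aesthetic advantage of giving an exact identity in the integer case, but for the purposes of the lemma your argument is both shorter and more transparent.

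One small correction to your closing remark: in the paper the lemma is not used directly to pass from the coefficient bound to the growth estimate \eqref{crois}; rather it feeds into the proof of the next lemma (Lemma \ref{ruse}), which controls the sums $\sum_{m_1 < \dots < m_n} \theta^{\sum m_j^{1/\beta}}$ via the growth of the canonical product $\prod_m(1+\theta^{m^{1/\beta}}z)$. But this is tangential to the correctness of your argument.
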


\begin{proof}
The logarithm of the left-hand side of \eqref{fr} is convex as a function of $\beta$ and thus we only need to prove \eqref{fr} when $\beta$ is an integer. But then performing $\beta$ integrations by parts, we see that the left-hand side of \eqref{fr} is a polynomial of degree $\beta$ in $\log r$.
\end{proof}

\begin{lm}\label{ruse}
Let $\beta >0$ and $0 < \theta < 1$ then there are constants $M,D >0$ such that for all $n \geqslant 1$ we have
\begin{equation*}
\sum_{m_1 < \dots < m_n} \theta^{\sum_{j=1}^n m_j^{\frac{1}{\beta}}} \leqslant M \exp\p{-Dn^{1 + \frac{1}{\beta}}}.
\end{equation*}
\end{lm}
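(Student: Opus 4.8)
The plan is to reduce the multi-indexed sum to a product of independent geometric-type sums and then extract the desired stretched-exponential decay in $n$ by a careful counting argument. First I would bound the ordered sum by the unordered one divided by $n!$: since the $m_j$ are distinct, we have
\begin{equation*}
\sum_{m_1 < \dots < m_n} \theta^{\sum_{j=1}^n m_j^{\frac{1}{\beta}}} \leqslant \frac{1}{n!} \p{\sum_{m \geqslant 1} \theta^{m^{\frac{1}{\beta}}}}^n = \frac{S^n}{n!},
\end{equation*}
where $S = \sum_{m \geqslant 1} \theta^{m^{1/\beta}} < + \infty$ because $m^{1/\beta} \to + \infty$ (the series converges by comparison with a geometric series, or by the integral test). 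This already gives a super-exponential bound, but $S^n/n! \sim (eS/n)^n/\sqrt{2\pi n}$ only decays like $n^{-n}$ up to exponential factors, which is much stronger than $\exp(-Dn^{1+1/\beta})$ when $\beta \geqslant 1$, but one must check it still beats $\exp(-Dn^{1+1/\beta})$ for $\beta < 1$ — and indeed $n^{1+1/\beta}$ grows only polynomially in $n$ while $\log(n!/S^n) \sim n \log n$ dominates any fixed power of $n$ once $n$ is large. So the crude bound $S^n/n!$ is in fact more than enough, and the remaining work is purely to absorb small $n$ into the constant $M$.

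More precisely, the key step is: for any fixed $\beta > 0$, we have $n^{1+1/\beta} = o(n \log n)$ as $n \to + \infty$, hence $\exp(-Dn^{1+1/\beta})$ is eventually larger than $S^n/n!$ for every choice of $D$. Thus I would argue: choose $N_0$ large enough that $n! \geqslant (2S)^n \exp(n^{1+1/\beta})$ for all $n \geqslant N_0$ (possible by Stirling, since $\log(n!) \sim n\log n$ dominates $n\log(2S) + n^{1+1/\beta}$), giving $S^n/n! \leqslant 2^{-n}\exp(-n^{1+1/\beta}) \leqslant \exp(-n^{1+1/\beta})$ for $n \geqslant N_0$; then enlarge the constant to handle $1 \leqslant n < N_0$ by setting $M = \max\p{1, \max_{1 \leqslant n < N_0} \frac{S^n}{n!} \exp\p{n^{1+\frac{1}{\beta}}}}$ and taking $D = 1$. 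This yields the claim with these explicit $M$ and $D$.

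The main obstacle — such as it is — is purely bookkeeping: making sure the asymptotic comparison $n^{1+1/\beta} \ll n \log n$ is invoked correctly and that the finitely many small values of $n$ are genuinely absorbed into $M$ (the case $n=1$ must be included). There is no analytic difficulty; the only point requiring a moment's care is that one should \emph{not} try to optimize $D$, since any positive $D$ works once $M$ is chosen accordingly. If one instead wants a bound uniform in the structure of the original application (where the $m_j$ index Fourier-type modes), the same estimate $S^n/n!$ suffices verbatim.
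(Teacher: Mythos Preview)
Your argument contains a fatal error: the asymptotic comparison is backwards. You assert that ``for any fixed $\beta > 0$, we have $n^{1+1/\beta} = o(n \log n)$ as $n \to + \infty$'', but this is false. Dividing by $n$, the claim becomes $n^{1/\beta} = o(\log n)$, whereas in fact any positive power of $n$ dominates $\log n$: one has $\log n = o(n^{1/\beta})$ and hence $n\log n = o(n^{1+1/\beta})$. Consequently the crude bound $S^n/n!$, which behaves like $\exp\p{-n\log n + O(n)}$ by Stirling, is \emph{weaker} than the target $\exp\p{-Dn^{1+1/\beta}}$ for every $D>0$, and the inequality you attempt to verify, $n! \geqslant (2S)^n \exp(n^{1+1/\beta})$, fails for all large $n$. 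Your step (1) is a valid upper bound, but it discards precisely the information that makes the lemma true.

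The mechanism you are missing is that the ordering $m_1 < \dots < m_n$ forces $m_j \geqslant j$, so that $\sum_{j=1}^n m_j^{1/\beta} \geqslant \sum_{j=1}^n j^{1/\beta} \geqslant c\, n^{1+1/\beta}$ for some $c>0$; this is where the exponent $1+\frac{1}{\beta}$ actually comes from. An elementary route that does work is to split $\theta^{m_j^{1/\beta}} = \theta^{m_j^{1/\beta}/2}\theta^{m_j^{1/\beta}/2} \leqslant \theta^{j^{1/\beta}/2}\theta^{m_j^{1/\beta}/2}$ and then sum, obtaining a bound of the form $\theta^{\frac{1}{2}\sum_{j=1}^n j^{1/\beta}}\cdot \tilde S^n$ with $\tilde S = \sum_{m\geqslant 1}\theta^{m^{1/\beta}/2}$; the first factor gives the required $\exp(-Dn^{1+1/\beta})$ and absorbs the merely exponential second factor. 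The paper instead takes a complex-analytic route: it recognises the sum as the $n$th Taylor coefficient of the canonical product $f(z)=\prod_{m\geqslant 1}(1+\theta^{m^{1/\beta}}z)$, bounds $\log|f|$ on large circles via the zero-counting function $n(r)\leqslant C(\log_+ r)^\beta$ and \cite[3.5.1]{Boas} (together with Lemma \ref{preced}), and then applies Cauchy's estimate with $r=\exp\p{(n/(\tilde C(\beta+1)))^{1/\beta}}$.
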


\begin{proof}
Notice that the series $\sum_{m \geqslant 1}\theta^{m^{\frac{1}{\beta}}}$ converges and thus the infinite product $\prod_{m \geqslant 1} \p{1 + \theta^{m^{\frac{1}{\beta}}} z}$ converges uniformly on all compact of $\C$ to a holomorphic function $f$ that we may write $f : z \mapsto \sum_{n \geqslant 0} a_n z_n$ with $a_n = \sum_{m_1 < \dots < m_n} \theta^{\sum_{j=1}^n m_j^{\frac{1}{\beta}}} $ when $n \geqslant 1$. Notice that $f$ has genus zero and the number $n\p{r}$ of zeroes of $f$ of modulus smaller than $r$ is bounded by $C \p{\log_+ r}^\beta$ for some $C > 0$. Thus \cite[3.5.1]{Boas} implies that for $r$ large enough (with the change of variable "$u = \log t$" and Lemma \ref{preced})
\begin{equation*}
\sup_{\b{z} \leqslant r} \log \b{f\p{z}} \leqslant C \int_0^r \frac{\p{\log_+ t}^\beta}{t} \mathrm{d}t + r\int_r^{+ \infty} \frac{\p{\log_+ t}^\beta}{t^2} \mathrm{d}t \leqslant \widetilde{C} \p{\log r}^{\beta + 1}.
\end{equation*}
Now, using Cauchy's formula, we get for all $r$ large enough and $n \in \N$
\begin{equation*}
a_n \leqslant \frac{e^{\widetilde{C} \p{\log r}^{\beta +1}}}{r^n}
\end{equation*}
and the result follows by taking $r = \exp\p{\p{\frac{n}{\widetilde{C}\p{\beta + 1}}}^{\frac{1}{\beta}}}$ (which is large when $n$ is large).
\end{proof}

\begin{proof}[Proof of Lemma \ref{abstrait}]
We have (see page 17 of the second part of \cite{Groth}, the $a_n$ are defined by \eqref{defan})
\begin{equation*}
a_n = \p{-1}^n \sum_{0 \leqslant m_1 < \dots < m_n} \p{\prod_{j=1}^n \lambda_{m_j}} \det\p{\p{l_{m_i}\p{e_{m_j}}}_{1 \leqslant i,j \leqslant n}}
\end{equation*}
then using Hadamard's theorem, \eqref{dif} and Lemma \ref{ruse} we get
\begin{equation*}
\b{a_n} \leqslant C^n M \exp\p{-D n^{1 + \frac{1}{\beta}}} n^{\frac{n}{2}} 
\end{equation*}
and we can remove the factor $C^n n^{\frac{n}{2}}$ by making $M$ larger and $D$ smaller. 

Write $\theta = \exp\p{-D}$. Let $z \in \C$ be large enough and choose $n_0 \in \N$ then we have
\begin{equation*}
\b{\det\p{I+z \L}} \leqslant M n_0 \b{z}^{n_0} + M \sum_{n = n_0}^{+ \infty} \p{\theta^{n_0^{\frac{1}{\beta}}}\b{z}}^n \leqslant n_0 \b{z}^{n_0} + M \b{z}^{n_0} \frac{\theta^{n_0^{1 + \frac{1}{\beta}}}}{1 - \b{z} \theta^{n_0^{\frac{1}{\beta}}}}
\end{equation*}
and we get \eqref{crois} by taking $n_0 = \left\lfloor \p{\frac{- \ln 2 - \ln \b{z}}{\ln \theta}}^\beta \right\rfloor$.

Finally notice that ($N$ is decreasing)
\begin{equation*}
N\p{r} \leqslant \frac{1}{\ln 2} \int_{\frac{1}{r}}^{\frac{2}{r}} \frac{N\p{\frac{1}{t}}}{t} \mathrm{d}t \leqslant \frac{2}{\ln 2} \sup_{\b{z} \leqslant 2r} \log \b{\det\p{I + z \L}}
\end{equation*}
where we used Jensen's formula (see \cite[1.2.1 p.2]{Boas} for instance). The estimate \eqref{nombre} follows.
\end{proof}

\section*{Acknowledgements}
I would like to thank Sebastien Gouëzel who suggested to consider Gevrey dynamics in the context of trace formulae. I am also grateful to Maciej Zworski and Semyon Dyatlov for useful discussions and to Viviane Baladi for numerous suggestions and careful reading of the different versions of this work. Finally, I would like to thank the organizers of the summer school "Analytical aspects of hyperbolics flows" helds in the university of Nantes in July 2017, during which I started this work, and the Mittag--Leffler, where I finished the first version of that paper during the workshop "Fractals and dimension" in December 2017. This research is supported by the European Research Council (ERC) under the European Union’s Horizon 2020 research and innovation programme (grant agreement No 787304).

\bibliographystyle{plain}
\bibliography{biblio}

\begin{thebibliography}{10}

\bibitem{adam}
Alexander Adam.
\newblock Generic non-trivial resonances for {A}nosov diffeomorphisms.
\newblock {\em Nonlinearity}, 30(3):1146--1164, 2017.

\bibitem{port}
Viviane Baladi.
\newblock Optimality of {R}uelle's bound for the domain of meromorphy of
  generalized zeta functions.
\newblock {\em Portugal. Math.}, 49(1):69--83, 1992.

\bibitem{linresp}
Viviane Baladi.
\newblock Linear response, or else.
\newblock In {\em Proceedings of the {I}nternational {C}ongress of
  {M}athematicians---{S}eoul 2014. {V}ol. {III}}, pages 525--545. Kyung Moon
  Sa, Seoul, 2014.

\bibitem{Bal2}
Viviane Baladi.
\newblock {\em Dynamical {Z}eta {F}unctions and {D}ynamical {D}eterminants for
  {H}yperbolic {M}aps}, volume~68 of {\em Ergebnisse}.
\newblock Springer, 2018.

\bibitem{Tsu}
Viviane Baladi and Masato Tsujii.
\newblock Dynamical determinants and spectrum for hyperbolic diffemorphisms.
\newblock {\em Geometric and probabilistic structures in dynamics}, Amer. Math.
  Soc., Providence, RI(469):29--68, 2008.

\bibitem{BanNaud}
Oscar Bandtlow and Frédéric Naud.
\newblock Lower bounds for the {R}uelle spectrum of analytic expanding circle
  maps.
\newblock {\em arXiv : 1605.06247}, 2016.
\newblock (to appear ETDS).

\bibitem{Boas}
Ralph Boas.
\newblock {\em Entire functions}.
\newblock Academic {P}ress, 1954.

\bibitem{whitnqa}
José Bonet, Rüdiger~W. Braun, Reinhold~G. Meise, and B.~Alan Taylor.
\newblock Whitney's extension theorem for nonquasianalytic classes of
  ultradifferentiable functions.
\newblock {\em Studia Math.}, 99(2):155--184, 1991.

\bibitem{sabowen}
Rufus Bowen.
\newblock One-dimensional hyperbolic sets for flows.
\newblock {\em J. Differential Equations}, 12:173--179, 1972.

\bibitem{pilipoconv}
Richard~D. Carmichael and Stevan Pilipovi\'c.
\newblock On the convolution and the {L}aplace transformation in the space of
  {B}eurling-{G}evrey tempered ultradistributions.
\newblock {\em Math. Nachr.}, 158:119--131, 1992.

\bibitem{PLgevdet}
Hua Chen and Luigi Rodino.
\newblock Nonlinear microlocal analysis and applications in {G}evrey classes.
\newblock In {\em Differential equations, asymptotic analysis, and mathematical
  physics ({P}otsdam, 1996)}, volume 100 of {\em Math. Res.}, pages 47--53.
  Akademie Verlag, Berlin, 1997.

\bibitem{DR}
Nguyen~Viet Dang and Gabriel Rivière.
\newblock Spectral analysis of {M}orse-{S}male gradient flows.
\newblock
  http://math.univ-lyon1.fr/homes-www/dang/Dang-Riviere-gradient-flow-2018.pdf(to
  appear Annales de l'ENS).

\bibitem{DZdet}
Semyon Dyatlov and Maciej Zworski.
\newblock Dynamical zeta functions for {A}nosov flows via microlocal analysis.
\newblock {\em Ann. Sci. \'Ec. Norm. Sup\'er. (4)}, 49(3):543--577, 2016.

\bibitem{fried}
David Fried.
\newblock The zeta functions of {S}elberg and {R}uelle. {I}.
\newblock {\em Ann. Sci. \'Ecole Norm. Sup.}, 19(4):491--517, 1986.

\bibitem{friedzeta}
David Fried.
\newblock Meromorphic zeta functions for analytic flows.
\newblock {\em Comm. Math. Phys.}, 174(1):161--190, 1995.

\bibitem{gevorg}
Maurice Gevrey.
\newblock Sur la nature analytique des solutions des \'equations aux
  d\'eriv\'ees partielles. {P}remier m\'emoire.
\newblock {\em Ann. Sci. \'Ecole Norm. Sup. (3)}, 35:129--190, 1918.

\bibitem{Gohb}
Israel Gohberg, Seymour Goldberg, and Nahum Krupnik.
\newblock {\em {T}races and {D}eterminants of {L}inear {O}perators}.
\newblock Operator theory : Advances and applications, 116. Birkhaüser Verlag,
  Basel-Boston-Berlin, 2000.

\bibitem{GouLiv2}
S\'ebastien Gou\"ezel and Carlangelo Liverani.
\newblock Compact locally maximal hyperbolic sets for smooth maps: fine
  statistical properties.
\newblock {\em J. Differential Geom.}, 79(3):433--477, 2008.

\bibitem{GLK}
Sebastien Gouëzel and Carlangelo Liverani.
\newblock Banach spaces adapted to {A}nosov systems.
\newblock {\em Ergod. Th. and Dyn. Sys.}, 26:189--217, 2006.

\bibitem{Groth}
Alexandre Grothendieck.
\newblock {\em Produits tensoriels topologiques et espaces nucléaires}.
\newblock American Mathematical Society, 1955.

\bibitem{jenk}
Oliver Jenkinson and Mark Pollicott.
\newblock Rigorous effective bounds on the {H}ausdorff dimension of continued
  fraction {C}antor sets: a hundred decimal digits for the dimension of
  {$E_2$}.
\newblock {\em Adv. Math.}, 325:87--115, 2018.

\bibitem{ltfzwor}
Long Jin and Maciej Zworski.
\newblock A local trace formula for {A}nosov flows.
\newblock {\em Ann. Henri Poincar\'e}, 18(1):1--35, 2017.
\newblock With appendices by Fr\'ed\'eric Naud.

\bibitem{Ko1}
Hikosaburo Komatsu.
\newblock Ultradistributions. {I}. {S}tructure theorems and a characterization.
\newblock {\em J. Fac. Sci. Univ. Tokyo Sect. IA Math.}, 20:25--105, 1973.

\bibitem{Ko2}
Hikosaburo Komatsu.
\newblock Ultradistributions. {II}. {T}he kernel theorem and ultradistributions
  with support in a submanifold.
\newblock {\em J. Fac. Sci. Univ. Tokyo Sect. IA Math.}, 24(3):607--628, 1977.

\bibitem{Ko3}
Hikosaburo Komatsu.
\newblock Ultradistributions. {III}. {V}ector-valued ultradistributions and the
  theory of kernels.
\newblock {\em J. Fac. Sci. Univ. Tokyo Sect. IA Math.}, 29(3):653--717, 1982.

\bibitem{nqa}
Andreas Kriegl, Peter~W. Michor, and Armin Rainer.
\newblock The convenient setting for non-quasianalytic {D}enjoy-{C}arleman
  differentiable mappings.
\newblock {\em J. Funct. Anal.}, 256(11):3510--3544, 2009.

\bibitem{livdet}
Carlangelo Liverani.
\newblock Fredholm determinants, {A}nosov maps and {R}uelle resonances.
\newblock {\em Discrete Contin. Dyn. Syst.}, 13(5):1203--1215, 2005.

\bibitem{LivTsu}
Carlangelo Liverani and Masato Tsujii.
\newblock Zeta functions and dynamical systems.
\newblock {\em Nonlinearity}, 19(10):2467--2473, 2006.

\bibitem{MunozMarco}
Vicente Mu\~noz and Ricardo P\'erez~Marco.
\newblock On the genus of meromorphic functions.
\newblock {\em Proc. Amer. Math. Soc.}, 143(1):341--351, 2015.

\bibitem{MunozMarco2}
Vicente Mu\~noz and Ricardo P\'erez~Marco.
\newblock Unified treatment of explicit and trace formulas via
  {P}oisson--{N}ewton formula.
\newblock {\em Comm. Math. Phys.}, 336(3):1201--1230, 2015.

\bibitem{Palais}
Richard~S. Palais.
\newblock Local triviality of the restriction map for embeddings.
\newblock {\em Comment. Math. Helv.}, 34:305--312, 1960.

\bibitem{PP}
William Parry and Mark Pollicott.
\newblock Zeta functions and the periodic orbit structure of hyperbolic
  dynamics.
\newblock {\em Ast\'erisque}, (187-188):268, 1990.

\bibitem{pilipotemp}
Stevan Pilipovi\'c.
\newblock Tempered ultradistributions.
\newblock {\em Boll. Un. Mat. Ital. B (7)}, 2(2):235--251, 1988.

\bibitem{Poll}
Mark Pollicott and Polina Vytovna.
\newblock Linear response and periodic points.
\newblock {\em Nonlinearity}, 29(10):3047--3066, 2016.

\bibitem{Ruelle1}
David Ruelle.
\newblock {Z}eta-{F}unctions for {E}xpanding {M}aps and {A}nosov {F}lows.
\newblock {\em Inventiones Math.}, 34:231--242, 1976.

\bibitem{R1}
Hans~Henrik Rugh.
\newblock The correlation spectrum for hyperbolic analytic maps.
\newblock {\em Nonlinearity}, 5:1237--1263, 1992.

\bibitem{R2}
Hans~Henrik Rugh.
\newblock {G}eneralized {F}redholm determinants and {S}elberg zeta functions
  for {A}xiom {A} dynamical systems.
\newblock {\em Ergod. Th. and Dyn. Sys.}, 16:805--819, 1996.

\bibitem{whitney}
Hassler Whitney.
\newblock Analytic extensions of differentiable functions defined in closed
  sets.
\newblock {\em Trans. Amer. Math. Soc.}, 36(1):63--89, 1934.

\end{thebibliography}

\end{document}